\documentclass[a4paper,10pt,leqno,english]{amsart}
\usepackage[utf8]{inputenc}
\usepackage[T1]{fontenc}
\usepackage{microtype}
\usepackage[french, main=english]{babel}
\usepackage{amssymb,latexsym}

\usepackage{mathrsfs,tgschola,mathabx}
\usepackage[normalem]{ulem}
\usepackage{url}
\usepackage{xcolor}
\usepackage{comment}
\definecolor{violet}{rgb}{0.0,0.2,0.7}
\definecolor{rouge2}{rgb}{0.8,0.0,0.2}
\usepackage{tikz}
\usepackage{empheq}
\usepackage{tikz-cd}
\usetikzlibrary{matrix,arrows,decorations.pathmorphing}
\usepackage{hyperref}
\usepackage{mathpazo}
\usepackage{enumerate}
\usepackage{geometry}

\usepackage[all]{xy}
\usepackage{booktabs}
\usepackage{longtable}

\hypersetup{
    unicode=false,          
    pdftoolbar=true,        
    pdfmenubar=true,        
    pdffitwindow=false,     
    pdfstartview={FitH},    
    pdftitle={Logarithmic Donaldson-Thomas 
theory for local Calabi-Yau 4-folds},    
    pdfauthor={Xiaolong Liu},     
    colorlinks=true,       
   linkcolor=rouge2,          
    citecolor=violet,        
    filecolor=black,      
    urlcolor=cyan}           
\usepackage{enumitem}
\usepackage{appendix}

\theoremstyle{definition}
\newtheorem{defi}{Definition}[section]
\newtheorem{exam}[defi]{Example}
\newtheorem{nota}[defi]{Remark}

\newtheorem{conj}[defi]{Conjecture}
\newtheorem{Q}[defi]{Question}

\theoremstyle{plain}
\newtheorem{thm}[defi]{Theorem}
\newtheorem{prp}[defi]{Proposition}
\newtheorem{lem}[defi]{Lemma}
\newtheorem{coro}[defi]{Corollary}
\newtheorem{ass}[defi]{Assumption}

\newcommand{\scr}{\mathscr}
\newcommand{\bb}{\mathbb}
\newcommand{\mcal}{\mathcal}
\newcommand{\spec}{\operatorname{Spec}}

\newcommand{\RR}{\mathbf{R}}
\newcommand{\LL}{\mathbf{L}}

\newcommand{\loc}{\mathsf{loc}}
\newcommand{\vir}{\mathsf{vir}}

\newcommand{\Hilb}{\mathsf{Hilb}}
\newcommand{\dHilb}{\mathsf{dHilb}}

\newcommand{\PP}{\mathsf{P}}
\newcommand{\dP}{\mathsf{dP}}

\newcommand{\Crit}{\mathsf{Crit}}
\newcommand{\dCrit}{\mathsf{dCrit}}
\newcommand{\QM}{\mathsf{QM}}
\newcommand{\dQM}{\mathsf{dQM}}

\newcommand{\dMap}{\mathsf{dMap}}

\newcommand{\CH}{\operatorname{CH}}
\newcommand{\vdim}{\operatorname{vdim}}

\newcommand{\PT}{\mathsf{PT}}
\newcommand{\DT}{\mathsf{DT}}
\newcommand{\Tot}{\operatorname{Tot}}

\newcommand{\Res}{\mathsf{Res}}

\newcommand{\rank}{\operatorname{rank}}
\newcommand{\coker}{\operatorname{coker}}
\newcommand{\BM}{\operatorname{BM}}
\newcommand{\TT}{\mathbf T}
\newcommand{\Exp}{\mathsf{Exp}}
\newcommand{\pr}{\mathrm{pr}}

\newcommand{\sslash}{\mathbin{/\mkern-14mu/}}

\title[On logarithmic Donaldson-Thomas 
invariants for local Calabi-Yau $4$-folds]{On logarithmic Donaldson-Thomas 
invariants for local Calabi-Yau $4$-folds}

\author{Xiaolong Liu}
\address{Institute of Mathematics, AMSS, Chinese Academy of Sciences, 55 Zhongguancun East Road, Beijing, 100190, China}
\email{liuxiaolong@amss.ac.cn}
\date{\today.}
\subjclass[2020]{
Primary 14N35; Secondary 14C17, 14D23,
}
\keywords{Logarithmic Donaldson-Thomas invariants, log Calabi-Yau $4$-pairs, degeneration formula, simple normal crossing divisors}

\begin{document}

\begin{abstract}
In this paper, we study logarithmic Donaldson-Thomas invariants for local log Calabi-Yau $4$-folds with simple normal crossing divisors. 
Using the family version of shifted Lagrangian classes announced by Khan-Kinjo-Park-Safronov, we construct relative and family logarithmic $\mathsf{DT}_4$-theories 
for logarithmic Hilbert schemes of curves, and prove a degeneration formula. 
For logarithmic Hilbert schemes of points, we construct virtual classes and 
prove a degeneration formula in Chow groups; in particular, it is independent of shifted Lagrangian classes. 
Finally, combining our degeneration formula with logarithmic cobordism, we explicitly 
compute the zero-dimensional logarithmic $\mathsf{DT}_4$-invariants for local surface snc pairs.
\end{abstract}

\maketitle

\hypersetup{linkcolor=black}

\section{Introduction}
\subsection{Background}
Donaldson-Thomas theory originated from Thomas' thesis \cite{Thomas2000}
as a sheaf-theoretic framework for enumerative geometry of Calabi-Yau and Fano $3$-folds,
built on the two-term perfect obstruction theories of Li-Tian \cite{LT98} and Behrend-Fantechi \cite{BF97}.
Later, Maulik-Nekrasov-Okounkov-Pandharipande \cite{MNOP1,MNOP2} extended 
Donaldson-Thomas invariants of Hilbert schemes on any $3$-folds.

Unlike the $3$-fold case, the obstruction theory for Calabi-Yau
$4$-folds is a $3$-term symmetric complex, requiring substantially
different methods. The corresponding $\mathsf{DT}_4$-theory
was initially studied by Cao-Leung \cite{CL14} by
constructing virtual classes in several special cases.
A complete formulation was given by Borisov-Joyce
\cite{BJ17}, using the shifted
symplectic geometry of Pantev-Toën-Vaquié-Vezzosi \cite{PTVV13} and derived differential geometry.
Later, Oh-Thomas gave an algebraic construction in \cite{OT23} and established its comparison with the 
Borisov-Joyce theory in \cite{OT24}. 
To make $\mathsf{DT}_4$ virtual classes
functorial, Park \cite{park21,park24} developed virtual pullbacks for locked
$(-2)$-shifted symplectic fibrations. More recently, Khan-Kinjo-Park-Safronov have
announced forthcoming work \cite{KKPS3} constructing Lagrangian
classes for oriented exact $(-1)$-shifted Lagrangian correspondences,
proving a conjecture of Joyce \cite{JS19,ABB17}, and providing a new
construction of $\mathsf{DT}_4$ theory.

Relative Donaldson-Thomas theory for $3$-folds with smooth boundary
divisors was developed via expanded degenerations by Li-Wu
\cite{LW15}, and was extended to smooth Deligne-Mumford stacks by
Zhou \cite{Zhou18}. More recently, Maulik-Ranganathan
\cite{MR24,MR25,MR26} developed logarithmic Donaldson-Thomas theory
for general normal crossing pairs of $3$-folds, establishing
degeneration formulae and proposing logarithmic GW/PT and DT/PT
correspondences.

For $4$-folds, relative $\mathsf{DT}_4$ theory with smooth
anti-canonical divisors was studied by Cao-Leung \cite{CL17} in special cases.
Recently, Cao-Zhao-Zhou \cite{CZZ24} developed relative
$\mathsf{DT}_4$ theory for \emph{smooth} log Calabi-Yau pairs using
shifted symplectic geometry, proved a degeneration formula, and
computed the zero-dimensional invariants for $\bb C^4$ and local curves.

The present paper extends the framework to \emph{simple normal
crossing} anti-canonical divisors of $4$-folds by combining the logarithmic Hilbert scheme machinery in
\cite{MR24,MR25} with the shifted symplectic geometry \cite{PTVV13}.
For the general curve-counting theory, we construct the relative logarithmic
$\mathsf{DT}_4$-theory and prove a degeneration formula 
using the family version of shifted
Lagrangian classes announced in the forthcoming work \cite{KKPS3},
which we formulate precisely in Conjecture \ref{conj-Joyce-conj} 
and Remark \ref{rmk-KKPS-lagrangian-classes}. By contrast,
the zero-dimensional theory, including the construction of virtual classes and the proof of the degeneration formula, 
can be constructed without shifted Lagrangian classes. As an
application, we compute the zero-dimensional
logarithmic $\mathsf{DT}_4$-invariants with tautological insertions for
the local surface snc pairs explicitly.

\subsection{Logarithmic \texorpdfstring{$\mathsf{DT}_4$}{DT4}-theory for simple normal crossing pairs}
In order to develop logarithmic $\mathsf{DT}_4$-theory of Hilbert stacks of curves, we 
generalize the shifted-symplectic construction of \cite{CZZ24} and formulate it via
the logarithmic Hilbert schemes $\Hilb_{\beta,\chi}(X|D)$ and stacks
of expansions $\mathsf{Exp}_{\beta,\chi}(X|D)$ developed in \cite{MR24,MR25}. 
Consequently, we have the following 
result.

\begin{thm}[Relative theory, {Theorem \ref{thm-curves-shifted-lag}}]\label{thm-intro-log-DT4}
Let $(X,D)$ be an snc pair with $X$ a smooth quasi-projective $4$-fold which is log Calabi-Yau, that is, $K_X+D\sim0$.
Choose compactifiable cone structures as in Definition \ref{def-compactifiable-cone-structures}.
Then we have the following properties.
\begin{enumerate}
\item The restriction morphism has a canonical derived enhancement\footnote{The stack $\dHilb^{\mathbf n}(D|\partial D)$
is defined over the universal divisor over $\mathsf{Exp}_{\beta,\chi}(X| D)$,
rather than over the stack used in \cite{MR25}. See \S~\ref{sec-rel-log-DT4} for details.}
\[
\mathsf{r}_{X\to D}:\dHilb_{\beta,\chi}(X| D)\to\dHilb^{\mathbf n}(D|\partial D)
\]
which is a $(-1)$-shifted Lagrangian relative to $\mathsf{Exp}_{\beta,\chi}(X| D)$.
\item If there exists an snc pair $(Y,S)$ with $\dim Y=3$ such that $X=\mathrm{Tot}(\omega_Y(S))$ and $D=\mathrm{Tot}(\omega_S)$ (we call such an $(X,D)$ a \textsf{local pair}),
    then $\mathsf{r}_{X\to D}$ is moreover exact and
    admits a canonical orientation relative to $\mathsf{Exp}_{\beta,\chi}(X| D)$.
\end{enumerate}
Therefore, under the identifications $X=\mathrm{Tot}(\omega_Y(S))$ and $D=\mathrm{Tot}(\omega_S)$,
we have
the \textsf{relative logarithmic $\mathsf{DT}_4$-theory}:
\[
[\mathsf r_{X\to D}]^{\mathrm{Lag}}:
H^*_{\mathrm{crit}}(\dHilb^{\mathbf n}(D|\partial D))\to H^{\BM}_{\vdim \dHilb_{\beta,\chi}(X| D)-*}(\dHilb_{\beta,\chi}(X| D),\bb Q).
\]
\end{thm}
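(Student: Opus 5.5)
The plan is to reduce everything to the PTVV/Calaque transgression formalism for derived mapping stacks, carried out in families over the stack of expansions. Write $\pi:\mathcal X\to\mathsf{Exp}_{\beta,\chi}(X|D)$ for the universal expanded target and $\mathcal D\subset\mathcal X$ for the universal (snc) boundary divisor. Let $\mathcal M_{\mathcal X}$ be the derived moduli stack of perfect complexes on the fibres of $\mathcal X$ relative to the base, and define $\mathcal M_{\mathcal D}$ similarly. Then $\dHilb_{\beta,\chi}(X|D)$ is the open derived substack of the derived Quot-type stack of ideal sheaves that are log-transverse (strongly transverse to the expanded boundary), and $\dHilb^{\mathbf n}(D|\partial D)$ is defined in the same way over $\mathcal D$.

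For part (1), the steps are as follows.

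\begin{enumerate}
\item Use that $\mathcal X\to\mathsf{Exp}$ is a family of log Calabi--Yau $4$-folds: the log condition $K_X+D\sim0$ propagates to expansions, since the exceptional components are $\mathbb P^1$-bundle-type pieces glued along the double loci. This gives a relative $4$-dimensional orientation $[\mathcal X,\mathcal D]$, i.e. a relative Calabi--Yau structure on the boundary morphism $\mathcal D\to\mathcal X$, with $\mathcal D$ carrying a relative $3$-orientation. This is where the snc hypothesis enters: $\mathcal D$ is only snc, so I would work with log-smooth fibres and the relative dualizing complex $\omega_{\mathcal X/\mathsf{Exp}}(\mathcal D)\simeq\mathcal O$.
\item Apply Calaque's relative AKSZ theorem in the family version over $\mathsf{Exp}$. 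The restriction $\mathbf{Perf}(\mathcal X/\mathsf{Exp})\to\mathbf{Perf}(\mathcal D/\mathsf{Exp})$ is a $(2-4)=(-2)$\ldots{} Concretely, the stack $\mathbf{Perf}$ is $2$-shifted symplectic, and mapping out of an oriented $3$-dimensional $\mathcal D$ gives a $(-1)$-shifted symplectic structure on $\mathcal M_{\mathcal D}$. The boundary structure then makes $\mathcal M_{\mathcal X}\to\mathcal M_{\mathcal D}$ a $(-1)$-shifted Lagrangian relative to $\mathsf{Exp}$.
\item Pass to ideal sheaves with fixed trivial determinant, working with the traceless part as in \cite{CZZ24}.
\item Restrict to the open loci of transverse ideal sheaves. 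Openness preserves both the symplectic and the Lagrangian structure, and transversality guarantees that restriction to $\mathcal D$ lands in $\dHilb^{\mathbf n}(D|\partial D)$ over the universal divisor.
\item Finally, check that the underlying classical map is the restriction morphism of \cite{MR25}, so that the enhancement is canonical.
\end{enumerate}

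For part (2), exactness and orientation come from the local structure. Write $X=\mathrm{Tot}(\omega_Y(S))$ with projection $p$. For an expansion of $(X,D)$ I would use the induced expansion of $(Y,S)$ (the cone structures being compatible, which is where the compactifiable choice is used), so that each fibre of $\mathcal X$ is again a total space of the log canonical bundle of a $3$-fold fibre $\mathcal Y$. The $(-2)$-shifted closed $2$-form on $\mathcal M_{\mathcal X}$ comes from the pairing with a class in $H^{4}$. Since $\mathcal X$ is a total space, this class is exact: following Cao--Zhao--Zhou, the symplectic form is the de Rham differential of the $1$-form obtained by transgressing the tautological Liouville-type form on the total space. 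The same holds for $\mathcal D=\mathrm{Tot}(\omega_{\mathcal S})$, and compatibility gives an exact Lagrangian structure.

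For the orientation, the relative determinant $\det(\mathbb L_{\mathsf r})$ is to be identified with $\det(\RR\pi_*\RR\mathcal Hom(I,I)_0)$ twisted by boundary terms. Using $p_*$ and the decomposition $\RR\mathcal Hom_X(I,I)\simeq$ (complex on $\mathcal Y$) $\oplus$ (its Serre dual shifted), the determinant becomes canonically a square, $L\otimes L^\vee[\ldots]\otimes L$-type. This should mimic the Cao--Leung/CZZ argument for local $3$-folds, carried out relative to $\mathsf{Exp}$ and with log dualizing sheaves.

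With (1) and (2) established, the displayed map is exactly the family Lagrangian class of Conjecture \ref{conj-Joyce-conj} and Remark \ref{rmk-KKPS-lagrangian-classes}, applied over $\mathsf{Exp}$. The main obstacle I expect is step 1 of part (1) together with its compatibility in step 4: producing a genuine relative Calabi--Yau boundary structure for the snc, non-proper universal pair over the Artin stack $\mathsf{Exp}$. This requires both a compactly supported or properness argument, where the quasi-projective $X$ needs the compactifiable cone structures so that the Hilbert stacks have proper support, and the log-dualizing identification on expanded fibres. I would first try to reduce, étale locally on $\mathsf{Exp}$, to the smooth-divisor case of \cite{CZZ24} via the toric local models and then glue using canonicity.
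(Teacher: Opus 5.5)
Your plan for (1) follows the paper's route: a family AKSZ/Calaque null-homotopy over $\mathsf{Exp}_{\beta,\chi}(X|D)$, the relative log Calabi--Yau trivialization of Lemma~\ref{lem:log-crepant-MR-pair}, traceless parts, and openness of the transverse locus. The gap is that the step you call the main obstacle is left open, and your fallback would not close it.

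\emph{Non-properness.} Your $\mathcal M_{\mathcal X}$, perfect complexes on the fibres of the non-proper family $\mathfrak X\to\mathsf{Exp}$, is not a stack to which Calaque's theorem applies. An \'etale-local reduction to the smooth-divisor case of \cite{CZZ24} is not available either. \'Etale locally on $\mathsf{Exp}$ the universal family is still a multi-component expansion, not a Li--Wu family. Gluing local structures would also require exactly the base-change-compatible global construction you are trying to avoid.

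The missing ingredient is what the compactifiability hypothesis is for. You invoke it in (2) for compatibility with expansions of $(Y,S)$, but that compatibility holds for any cone structure, since $\Sigma_{X|D}=\Sigma_{Y|S}$. What compactifiability actually gives is a flat projective family $\mathfrak X\subset\mathfrak X^c$ over smooth $\mathsf{Exp}$. Extending ideal sheaves by $\mathcal O$ then makes $\dHilb_{\beta,\chi}(X|D)$ open in $\mathsf{dPerf}(\mathfrak X^c/\mathsf{Exp})_0$ (Lemma~\ref{lem-derived-hilb}). The construction then runs as follows.
\begin{enumerate}
\item Since $\mathbb I^c$ is canonically trivial off the universal subscheme, $\RR\pi^c_*\RR\mathcal{H}om(\mathbb I^c,\mathbb I^c)_0\simeq\RR\pi_*\RR\mathcal{H}om(\mathbb I,\mathbb I)_0$.
\item By excision, the forms $\int\mathrm{vol}\wedge\widetilde{\mathrm{ev}^*\Omega_{\mathsf{dPerf}}}$ are computed on the original family, using lifts supported on the universal substack.
\item Non-degeneracy is supported relative Serre duality.
\end{enumerate}
The snc issue is also not handled by log-smooth fibres. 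The rank-one expansion removes all codimension $\geq2$ strata, so $\mathfrak D=\coprod_i\mathfrak D_i$ with $\omega_{\mathfrak D_i/\mathsf{Exp}}\cong\mathcal O$ and $\partial(\partial\mathfrak D)=\emptyset$. This is why the boundary stack is genuinely $(-1)$-shifted symplectic.

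\emph{Exactness.} As you state it, this step does not go through. The AKSZ integration pairs $\mathrm{vol}$ only with the $\mathcal O$-part in the $\mathfrak X$-direction, via the trace on $\omega_{\mathfrak X/\mathsf{Exp}}$. This framework has no fibre integration of a $3$-form along singular, non-proper expansions, so ``transgressing the Liouville form'' is undefined. Moreover, the relative statement is not about a $(-2)$-shifted form on $\mathcal M_{\mathcal X}$. Exactness of the two sides separately plus ``compatibility'' is not an exact Lagrangian structure. You need a single primitive for the pair $\xi=(\Omega,h)$ in the fibre $\mathcal C$ of $\mathsf r_{X\to D}^*$ between relative de Rham complexes.

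The paper gets this primitive from the fibre $\bb C^*$-action, in four steps.
\begin{enumerate}
\item The action lifts to $(\mathfrak X,\mathfrak D)$ with trivial action on $\mathsf{Exp}$.
\item The volume form and its residue have weight one, so $\xi$ has weight one.
\item With $\iota_E(\alpha,\beta)=(\iota_{E_D}\alpha,-\iota_{E_X}\beta)$, Cartan's formula gives $d_{\mathcal C}(\iota_E\xi)=\mathcal L_E\xi=\xi$.
\item Splitting $\iota_E\xi$ by Hodge degree gives the exact lift.
\end{enumerate}
Your form $\iota_E\mathrm{vol}$ is the target-level shadow of this argument.

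\emph{Orientation.} Your plan agrees with the paper's (via \cite[Thm.~4.6]{CZZ24}), with two corrections. The spectral construction gives a self-dual triangle, not a direct sum. And you must first pass through the forgetful map from ideal sheaves to compactly supported sheaves $\mathbb F$. That step produces the boundary factor $\mathsf r_{X\to D}^*F_D^*\det(\pi_*\mathbb F_D)$. The remaining determinant comparison with $\mathfrak S$ then comes from that triangle, with the positive sign choice.
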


\begin{nota}\label{rmk-stable-pairs-results}
See Remark \ref{rmk-stable-pairs} for stable pairs. Note that all 
the constructions here hold for stable pairs by similar arguments.
We only state them for Hilbert schemes for simplicity.
\end{nota}

Fix a compactifiable cone structure for expanded pairs. Let 
$\alpha_{\bullet}\in H^{k}_{\mathrm{crit},\bb T}
\left(
\dHilb^{\mathbf n}(D|\partial D)_{\bullet}
\right)$ be a series of
compatible logarithmic boundary insertions
as $\bullet$ varies, in the sense of \S~\ref{sec-DT-PT-corr}. 
Under suitable proper condition, it defines the logarithmic
DT/PT series with tautological insertions
\[
\DT_{\beta}(X|D;\alpha)_{\scr L,m},
\qquad
\PT_{\beta}(X|D;\alpha)_{\scr L,m}.
\]
We conjecture the following logarithmic analogue of the absolute
tautological DT/PT correspondence conjectured in \cite[Conj.~0.13]{CKM22}.
\begin{conj}[Tautological logarithmic DT/PT correspondence, Conjecture \ref{conj-tautological-log-DT-PT}]
For any such compatible 
$\alpha$, one has
\[
\frac{
\DT_{\beta}
(X|D;\alpha)_{\scr L,m}
}{
\DT_0(X|D)_{\scr L,m}
}
=
\PT_{\beta}
(X|D;\alpha)_{\scr L,m}.
\]
\end{conj}

The expected closed formula for the denominator $\DT_0(X|D)_{\scr L,m}$ is stated separately in 
Conjecture \ref{conj-log-zero-dim-DT} below.

\subsection{Simple normal crossing degeneration and family theory}
To formulate a degeneration formula, we need a family theory that compares virtual classes 
of the smooth and singular fibres of a one-parameter degeneration. 
Recall that a simple normal crossing degeneration is a flat morphism
\[\pi:\mcal X\to\bb A^1\]
from a smooth quasi-projective variety such that $\mcal X_b$ is smooth
for $b\neq0$ and the central fibre $\mcal X_0$ is a reduced snc divisor
in $\mcal X$; it is Calabi-Yau if $\omega_{\mcal X/\bb A^1}\cong\scr O_{\mcal X}$.
See Definition \ref{defi-snc-degeneration} for the precise definition.

Following the expansion formalism of Maulik-Ranganathan \cite{MR24,MR25}, the 
tropicalization of $\pi$ determines a stack of vertical expansions
\[\mathsf{Exp}_{\beta_t,\chi_t}(\mcal X/\bb A^1)\to\bb A^1,\] 
which keeps track of expanded targets and their combinatorial types throughout the 
degeneration. Its universal expanded target supports the
family logarithmic derived Hilbert stack. We have the following existence 
result of the virtual class.

\begin{thm}[Family theory, {Proposition \ref{prp-family-shifted}}]
Let
\[
\pi:\mcal X\to\bb A^1
\]
be a Calabi-Yau simple normal crossing degeneration of $4$-folds
in the sense of Definition \ref{defi-snc-degeneration},
satisfying Assumption \ref{ass-compactifiable-families}.
\begin{enumerate}
\item Then there exists a derived enhancement
\[
\varpi_{\beta_t,\chi_t}:
\dHilb_{\beta_t,\chi_t}(\mcal X/\bb A^1)\to\mathsf{Exp}_{\beta_t,\chi_t}(\mcal X/\bb A^1)
\]
which admits an exact $(-2)$-shifted symplectic structure.
\item If there exists an snc degeneration $\mcal Y\to\bb A^1$ of relative dimension $3$ such that $\mcal X=\mathrm{Tot}(\omega_{\mcal Y/\bb A^1})$,
    then $\varpi_{\beta_t,\chi_t}$ admits a canonical orientation.
\end{enumerate}
Therefore, under the identification $\mcal X=\mathrm{Tot}(\omega_{\mcal Y/\bb A^1})$,
we have the
\textsf{family virtual Lagrangian class}:
\[
[\dHilb_{\beta_t,\chi_t}(\mcal X/\bb A^1)]^{\vir}\in H^{\BM}_{\vdim\dHilb_{\beta_t,\chi_t}(\mcal X/\bb A^1)+1}(\dHilb_{\beta_t,\chi_t}(\mcal X/\bb A^1),\bb Q).
\]
\end{thm}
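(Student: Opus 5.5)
The plan is to realize $\dHilb_{\beta_t,\chi_t}(\mcal X/\bb A^1)$ as an open substack of a relative derived mapping stack over $\mathsf{Exp}_{\beta_t,\chi_t}(\mcal X/\bb A^1)$, and then apply the relative AKSZ/PTVV transgression.

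Let $\mathcal U\to\mathsf{Exp}_{\beta_t,\chi_t}(\mcal X/\bb A^1)$ be the universal expanded target. It is a flat family whose fibres are the expansions $\mcal X_b$ for $b\neq0$, and over $0$ the snc expansions $\mcal X_0^{\mathrm{exp}}$ glued along double loci. We define $\dHilb_{\beta_t,\chi_t}(\mcal X/\bb A^1)$ as the open substack of the relative derived moduli of perfect complexes $\mathbf{Perf}(\mathcal U/\mathsf{Exp})$ given by ideal sheaves of subschemes that are strongly transverse to the expanded double loci, as in \cite{MR24,MR25}. The classical truncation is then the family logarithmic Hilbert stack.

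The central geometric input is that each fibre of $\mathcal U$ is Gorenstein and proper-in-the-curve-direction with trivial dualizing sheaf. Indeed, $\omega_{\mcal X/\bb A^1}\cong\scr O$ pulls back along the expansion to $\omega_{\mathcal U/\mathsf{Exp}}\cong\scr O$, since expansions are log modifications and components glue along log divisors with $K+D\sim0$ on each piece. This gives a relative $\scr O$-orientation of degree $4$ on $\mathcal U/\mathsf{Exp}$. Since $\mathcal U$ is only quasi-projective, the compactifiable cone structures of Assumption \ref{ass-compactifiable-families} are used to embed it in a relatively proper family. Because the supports of the sheaves are proper, integration still makes sense via compactly supported pushforward.

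Relative transgression (PTVV in the family form) applied to the $2$-shifted symplectic $\mathbf{Perf}$ then produces a $(-2)$-shifted symplectic form on $\varpi_{\beta_t,\chi_t}$. Exactness comes from the family version of the argument in \cite{CZZ24}: the form is the transgression of the exact closed form on $\mathbf{Perf}$ paired with a relative fundamental class. We need to check that the fundamental class extends across the central fibre, which holds because $\mcal X\to\bb A^1$ is flat and Gorenstein.

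For orientation in the local case $\mcal X=\Tot(\omega_{\mcal Y/\bb A^1})$, we compactly support in the fibre direction. Every subscheme with proper support pushes forward to $\mathcal U_{\mcal Y}$, and the determinant line of the obstruction complex $\RR\mathcal{H}om_p(I,I)_0$ can be written as $L\otimes L^\vee$-type. Concretely, $\det\RR\mathcal Hom_p(I,I)\cong\det(E)\otimes\det(E)^{\vee}$-twisted via Serre duality on the $3$-dimensional expanded family $\mathcal U_{\mcal Y}$, in the manner of \cite{CZZ24} and the Cao–Kool–Monavari argument. This gives a canonical square root. Relative Serre duality on the expanded $3$-fold family needs $\omega_{\mathcal U_{\mcal Y}/\mathsf{Exp}}$ to be the pullback of $\omega_{\mcal Y/\bb A^1}$, and this again follows from log-étaleness of expansions.

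Given an exact oriented $(-2)$-shifted symplectic fibration over the base $\mathsf{Exp}$, which is a smooth Artin stack of dimension $1$ relative to its codimension structure, the family Lagrangian class (Conjecture \ref{conj-Joyce-conj}, Remark \ref{rmk-KKPS-lagrangian-classes}) yields a class of degree $\vdim+1$, after pulling back the fundamental class of $\mathsf{Exp}$, which has relative dimension $1$ over a point modulo the expansion directions.

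The main obstacle is the central fibre. We must show that the shifted form remains nondegenerate there, i.e. that relative Serre duality holds for the singular snc expanded fibres together with strong transversality. Our first attempt would be to show that the cotangent complex of $\mathcal U$ relative to $\mathsf{Exp}$ agrees with the log cotangent complex, so that log-smoothness gives a perfect relative dualizing structure.
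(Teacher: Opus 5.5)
Your overall architecture matches the paper's: an open substack of the relative $\mathsf{dPerf}$ of the compactified universal family over $\Exp_{\beta_t,\chi_t}(\mcal X/\bb A^1)$, relative AKSZ transgression using $\omega_{\mathfrak X/\Exp}\cong\scr O$, and an orientation coming from the three-dimensional family $\mcal Y$. However, your exactness argument in (1) does not work. The canonical $2$-shifted form $\Omega_{\mathsf{dPerf}}$ is \emph{not} exact. Its de Rham class is essentially $\mathrm{ch}_2$, which is already nonzero after restriction along $B\bb G_m\to\mathsf{Perf}$. So exactness cannot be obtained by transgressing an exact form. Flatness and the Gorenstein property of $\mcal X\to\bb A^1$, i.e.\ the extension of the fundamental class across the central fibre, are irrelevant to it.

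Exactness of the transgressed form comes instead from the \emph{dimension of the supports}. The obstruction is the transgression of this nonzero de Rham class. Since the universal complex is $\scr O$ away from the universal subscheme, its fibrewise contribution lives in $H^4_{\mathrm{sing}}(\mathfrak X_b,\mathfrak X_b\setminus Z)$. Following \cite[Prop.~5.11, App.~C]{CZZ24} and \cite[\S~6]{park24}, the paper reduces exactness to the vanishing $H^4_{\mathrm{sing}}(\mathfrak X_b,\mathfrak X_b\setminus Z)=0$. This is required for every expanded fibre $\mathfrak X_b$ and every subscheme $Z$ of dimension at most one. The paper proves it by induction on the number of components, using that all codimension $\geq2$ strata have been removed from the expansions. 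On the singular central fibres this is not a direct consequence of Poincar\'e duality. This is the missing idea. In the local situation of (2) you could instead get exactness from the weight-one fibre $\bb C^*$-action, as the paper does in Theorem \ref{thm-curves-shifted-lag}(3). But (1) asserts exactness for an arbitrary Calabi--Yau snc degeneration, where no such action is available.

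The gap propagates to the final class. Exactness is exactly what \cite[Cor.~3.1.3]{park24} needs to turn $\varpi_{\beta_t,\chi_t}$ into a Lagrangian $\dHilb_{\beta_t,\chi_t}(\mcal X/\bb A^1)\to\TT^*[-1]\Exp_{\beta_t,\chi_t}(\mcal X/\bb A^1)$. Its Lagrangian class is what produces the class in degree $\vdim+1$; the extra $1$ is the dimension of the expansion stack, which has relative dimension zero over $\bb A^1$.

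Conversely, the step you single out as the main obstacle is not one. Nondegeneracy on the central fibre follows from relative Grothendieck--Serre duality for the flat family $\mathfrak X\to\Exp$, which has Gorenstein fibres and $\omega_{\mathfrak X/\Exp}\cong\scr O$ by Lemma \ref{lem:crepant-MR-degeneration}. This is applied on the original family by excision, since the universal complex is trivial near $\mathfrak X^c\setminus\mathfrak X$. Perfectness of the ideal sheaves along the double loci comes from algebraic transversality. Your suggested route, identifying $\bb L_{\mathfrak X/\Exp}$ with the log cotangent complex, would in fact fail. The morphism $\mathfrak X\to\Exp$ is not smooth along the double loci, whereas the log cotangent complex is locally free there.
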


\begin{nota}\label{rmk-intro-compactifiability}
By Lemma \ref{lem-compactifiable-local-families}, Assumption \ref{ass-compactifiable-families}
can be satisfied for total spaces of vector bundles over projective snc degenerations,
including the local curve, surface and threefold families arising from projective bases.
All degeneration formulae below use this assumption, with the horizontal boundary included in the log Calabi-Yau case.
\end{nota}

The preceding constructions depend a priori on a choice of compactifiable cone
structure on the expansion stack. With compatible orientations, for the zero-dimensional theory and
for the restrictions of the family class to individual fibres, the
resulting virtual classes are independent of this choice under proper pushforward
from common refinements;
see \cite[Thm.~5.2.1]{MR24}, Theorem \ref{thm-independent-cone-models}
and its remark. For the relative curve-counting theory, the corresponding independence statement is formulated in Conjecture \ref{conj-cone-structure-relative-log}.

\subsection{Degeneration formulae for logarithmic \texorpdfstring{$\mathsf{DT}_4$}{DT4}-theory}
The family virtual class allows us to compare the theory of a smooth
fibre with the contributions carried by the expanded components of the
central fibre. 
Our main theorem proves the following
degeneration formula in logarithmic $\mathsf{DT}_4$-theory using the associativity
of family shifted Lagrangian classes.
\begin{thm}[{Theorem \ref{thm-degeneration-general-curves}}]\label{spthmA}
Let
\[
\pi:\mcal X\to\bb A^1
\]
be a Calabi-Yau simple normal crossing degeneration of $4$-folds
satisfying Assumption \ref{ass-compactifiable-families}.
Assume that there is an snc degeneration
$\mcal Y\to\bb A^1$ such that $\mcal X=\mathrm{Tot}(\omega_{\mcal Y/\bb A^1})$.
Then we have the following results.
\begin{enumerate}
\item If $0\neq b\in\bb A^1$, then
\[
i_b^![\dHilb_{\beta_t,\chi_t}(\mcal X/\bb A^1)]^{\vir}=[\dHilb_{\beta_b,\chi_b}(\mcal X_b)]^{\vir}.
\]
\item If $b=0\in\bb A^1$, then
\[
i_0^![\dHilb_{\beta_t,\chi_t}(\mcal X/\bb A^1)]^{\vir}
=n_*\sum_\gamma m_\gamma[\dHilb_\gamma(X_0)]^{\vir}.
\]
Here $\gamma$ indexes the reduced central expansion components,
$n$ is induced by their disjoint union, and
$m_\gamma=\operatorname{ord}_{\mathsf{Exp}_\gamma(X_0)}(t)$ for the degeneration parameter $t$.
Moreover, for each $\gamma$, let $Y_\nu$ label the components of the generic
expanded fibre, including tube components, and set $D_\nu:=\partial Y_\nu$.
Then we have
\[
i_{\Delta,\gamma,*}[\dHilb_\gamma(X_0)]^{\vir}
=
\left[{\prod_\nu}_{/\mathsf{Exp}_\gamma(X_0)}
\mathsf r_{Y_\nu\to D_\nu}\right]^{\mathrm{Lag}}
\left(\Delta_\gamma^{\mathrm{Lag}}\right).
\]
Here $i_{\Delta,\gamma}$ is the base change of the diagonal
(Proposition \ref{prp-fiberprod-gluing-general}), and the operator on the right
is the Lagrangian pullback of the fibre product of restriction maps
$\mathsf r_{Y_\nu\to D_\nu}$ over $\mathsf{Exp}_\gamma(X_0)$;
see \S~\ref{sec-cutting-maps} for details.
The diagonal class
\[
\Delta_\gamma^{\mathrm{Lag}}\in
H^*_{\mathrm{crit}}\left(
{\prod_\nu}_{/\mathsf{Exp}_\gamma(X_0)}
\dHilb_{\Delta,\gamma(\nu)}(D_\nu|\partial D_\nu)
\right)
\]
is described in Theorem \ref{thm-degeneration-general-curves}.
\end{enumerate}
\end{thm}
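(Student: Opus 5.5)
The plan is to deduce the degeneration formula from functoriality and associativity of family shifted Lagrangian classes (Conjecture \ref{conj-Joyce-conj}, Remark \ref{rmk-KKPS-lagrangian-classes}), applied to the exact oriented $(-2)$-shifted symplectic fibration $\varpi_{\beta_t,\chi_t}$ of Proposition \ref{prp-family-shifted}.

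\textbf{Generic fibres, part (1).} For $b\neq0$ the expansion stack $\mathsf{Exp}_{\beta_t,\chi_t}(\mcal X/\bb A^1)$ is trivial over $b$, so the fibre of $\dHilb_{\beta_t,\chi_t}(\mcal X/\bb A^1)$ over $b$ is $\dHilb_{\beta_b,\chi_b}(\mcal X_b)$. Its $(-2)$-shifted symplectic form and orientation are the restrictions of the family data, because $\omega_{\mcal X/\bb A^1}|_{\mcal X_b}=\omega_{\mcal X_b}$ and $\mathrm{Tot}(\omega_{\mcal Y/\bb A^1})$ restricts to $\mathrm{Tot}(\omega_{\mcal Y_b})$. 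The base change property of family Lagrangian classes along the regular embedding $i_b$ then gives (1). This is the standard compatibility of Park's symplectic pullback with Gysin maps.

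\textbf{Central fibre, first half of (2).} Here I would use the tropical description of $\mathsf{Exp}_{\beta_t,\chi_t}(\mcal X/\bb A^1)$ following \cite{MR24,MR25}.
\begin{enumerate}
\item The fibre $\mathsf{Exp}\times_{\bb A^1}\{0\}$ is a Cartier divisor with components $\mathsf{Exp}_\gamma(X_0)$, indexed by the rays or vertices $\gamma$ of the cone structure over $\bb R_{\ge0}$ with reduced central type. The multiplicity of each component is $m_\gamma=\operatorname{ord}_{\mathsf{Exp}_\gamma(X_0)}(t)$.
\item This gives the equality of divisor cycles $[\mathsf{Exp}_0]=\sum_\gamma m_\gamma[\mathsf{Exp}_\gamma(X_0)]$.
\item Base change of family classes along the flat map $\varpi$ (the symplectic pullback commutes with refined Gysin maps on the base) then gives $i_0^![\dHilb]^{\vir}=n_*\sum_\gamma m_\gamma[\dHilb_\gamma(X_0)]^{\vir}$.
\item Here $[\dHilb_\gamma(X_0)]^{\vir}$ is the family class over $\mathsf{Exp}_\gamma(X_0)$, with orientation restricted from the family.
\end{enumerate}

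\textbf{Gluing, second half of (2).} Over $\mathsf{Exp}_\gamma(X_0)$ the universal target is a union of components $Y_\nu$, including tubes, glued along $D_\nu$. Proposition \ref{prp-fiberprod-gluing-general} identifies $\dHilb_\gamma(X_0)$ as the derived fibre product of $\prod_\nu\dHilb(Y_\nu|D_\nu)$ with the diagonal of the boundary Hilbert stacks, all taken relative to $\mathsf{Exp}_\gamma(X_0)$. The argument then runs as follows.
\begin{enumerate}
\item By Theorem \ref{thm-curves-shifted-lag}, each $\mathsf r_{Y_\nu\to D_\nu}$ is an exact oriented $(-1)$-shifted Lagrangian. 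Note that each $(Y_\nu,D_\nu)$ is a local pair, since $Y_\nu$ is a component of $\mathrm{Tot}(\omega_{\mcal Y/\bb A^1})|_0$.
\item The diagonal $\Delta_\gamma$ in the product of boundary stacks is a Lagrangian in the $(-1)$-shifted symplectic product, once the symplectic forms on paired boundaries are taken with opposite signs.
\item The Lagrangian intersection of these two is exactly $\dHilb_\gamma(X_0)$ with its $(-2)$-shifted structure.
\item The composition or associativity axiom of family Lagrangian classes then gives $i_{\Delta,\gamma,*}[\dHilb_\gamma(X_0)]^{\vir}=[\prod_\nu\mathsf r]^{\mathrm{Lag}}(\Delta_\gamma^{\mathrm{Lag}})$.
\end{enumerate}

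\textbf{Main obstacle.} The hard step is the compatibility of data in this gluing. Three things must match:
\begin{enumerate}
\item The exact $(-2)$-shifted structure restricted from the family must agree with the one produced by the Lagrangian intersection. This needs a canonical identification of the symplectic forms via the residue along $D_\nu$, with opposite residues on the two sides of each double locus.
\item The orientations must match. I would try to check this via the local pair structure: both orientations come from the relative dimension $3$ Serre duality on $\mcal Y$.
\item The class $\Delta_\gamma^{\mathrm{Lag}}$ must be well defined in critical cohomology.
\end{enumerate}
Tube components need separate treatment, since both of their boundaries glue and they have no inherited curve class.
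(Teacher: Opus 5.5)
Your proposal is correct and follows the paper's proof. Part (1) and the multiplicity formula come from base change of the family class along $i_b$, $i_0$ and the finite normalization $n$, which is what the paper uses Conjecture \ref{conj-Joyce-conj}(3)(4) for. The gluing identity comes from the Lagrangian-intersection presentation of Proposition \ref{prp-fiberprod-gluing-general}, with opposite signs on the two flags of each edge, together with associativity of Lagrangian classes.

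The only step the paper writes out beyond yours is the six-functor bookkeeping that turns associativity into the stated equality. It compares the two composite morphisms via $\mathrm{Ex}_!^*$ and adjunction, using that $\delta_\gamma$ and $i_{\Delta,\gamma}$ are closed immersions, so $!$- and $*$-pushforwards agree. Tube components need no separate treatment: they are simply included among the vertices $\nu$, and tube subschemes do carry a curve class, since they are unions of $\bb P^1$-fibres.
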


\begin{nota}\label{rmk-sec-modify-dege}
The component Hilbert stacks in the second identity are defined over
$\mathsf{Exp}_\gamma(X_0)$ by partially normalizing the original universal family,
with the inherited expansion data. So the sources of restriction
maps $\mathsf r_{Y_\nu\to D_\nu}$ here are not Deligne-Mumford
and the formula is over a common base and does not require a comparison
with independently chosen expansion stacks for the components.
\end{nota}

\begin{Q}\label{Q-dege-log-rel}
Under what conditions can the component restriction maps in
Theorem \ref{spthmA} be compared with
those of independently defined relative logarithmic $\mathsf{DT}_4$ theories in Theorem \ref{thm-intro-log-DT4},
after suitable modifications?
Can such a comparison express the degeneration formula in terms
of these relative theories, as in \cite[\S~8.2]{MR25}?
\end{Q}
A corresponding degeneration formula for simple degenerations can also be stated
using Li-Wu's construction; see Corollary \ref{coro-degeneration-formula-smooth-pair}.
Hence we can answer Question \ref{Q-dege-log-rel} in Li-Wu's case.

\subsection{Zero-dimensional logarithmic \texorpdfstring{$\mathsf{DT}_4$}{DT4}-theory}
We now specialize to point counting. Since only $0$-complexes occur,
it suffices to use the stack of $0$-complexes $\mathsf{Exp}_{0}(X|D)$,
and both the virtual classes and the degeneration formula can be
constructed in Chow groups without using shifted Lagrangian classes.

\begin{prp}[{Proposition \ref{prp-points-exact-sympl}}]\label{prp-zero-dim-virtual}
Let $(X,D)$ be a log Calabi-Yau pair with reduced simple
normal crossing divisor $D$ and $\dim X=4$ and $R\in K^{\mathrm{num}}_{\leq0,\mathrm{cpt}}(X)$.
Choose compactifiable rank-zero cone structures as in Definition \ref{def-compactifiable-cone-structures}.
\begin{enumerate}
\item The canonical morphism
\[
r_R:\dHilb^R(X|D)\to\mathsf{Exp}_{0,R}(X|D)
\]
admits an exact $(-2)$-shifted symplectic structure.
\item If there exists an snc pair $(Y,S)$ with $\dim Y=3$ such that $X=\mathrm{Tot}(\omega_Y(S))$ and $D=\mathrm{Tot}(\omega_S)$,
then
\[
r_R:\dHilb^R(X|D)\to\mathsf{Exp}_{0,R}(X|D)
\]
admits a canonical orientation.
\end{enumerate}
Therefore,
assuming the existence of an orientation $o$, we obtain the \textit{virtual class}:
\[
[\Hilb^R(X|D)]^{\vir}_{\bb T,o}:=\sqrt{r_R^!}[\mathsf{Exp}_{0,R}(X|D)]\in \CH_*^{\bb T}(\Hilb^R(X|D)),
\]
where $\bb T$ is any (possibly trivial) torus preserving the Calabi-Yau form; we call it a Calabi-Yau torus.
\end{prp}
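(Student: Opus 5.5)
The plan is to construct the relative $(-2)$-shifted symplectic structure fibrewise over the expansion stack, check exactness and orientation on the local pairs, and then apply the Oh--Thomas/Park square-root pullback.

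\textbf{Step 1: the relative symplectic form.} We realise $\dHilb^R(X|D)$ as an open substack of the derived moduli of perfect complexes (ideal sheaves of $0$-dimensional subschemes) on the universal expanded target. The universal expansion is
\[
\mathcal X^{\exp}\to\mathsf{Exp}_{0,R}(X|D).
\]
Because $R$ is $0$-dimensional, we may restrict to the stack of $0$-complexes. There the universal subscheme is supported away from the universal boundary divisor $\mathcal D^{\exp}$ and away from the double loci of the expansion; this is the defining transversality condition of \cite{MR24}. We therefore pass to the open complement
\[
\mathcal U:=\mathcal X^{\exp}\setminus \mathcal D^{\exp}.
\]
This is a smooth family of quasi-projective $4$-folds over $\mathsf{Exp}_{0,R}(X|D)$. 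The log Calabi--Yau condition $K_X+D\sim0$ pulls back along the expansion maps, which are log \'etale, to a relative trivialisation $\omega_{\mathcal U/\mathsf{Exp}}\cong\mathcal O$. We then apply the relative PTVV/Calabi--Yau integration construction (as in \cite{CZZ24} and the families version of \cite{PTVV13}) to the mapping stack of perfect complexes with compact support on $\mathcal U$. This gives a $(-2)$-shifted symplectic structure on $r_R$.

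\textbf{Step 2: exactness.} Exactness should be inherited from the fact that the Calabi--Yau form, viewed in relative de Rham cohomology, comes from a closed form. Properness of supports is needed to integrate. For this we use the compactifiable cone structure (Definition~\ref{def-compactifiable-cone-structures}): it provides a relative compactification in which the support of the universal family stays proper over the base. This lets us define the fibre integration of the Atiyah-class pairing together with its closed extension. We then invoke the standard criterion that symplectic forms obtained by integration against $H^0$ of a trivialised canonical bundle are exact in the sense of \cite{park24}.

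\textbf{Step 3: orientation.} For $X=\mathrm{Tot}(\omega_Y(S))$ we use the $\mathbb C^*$-scaling on fibres together with the analogous description of $\mathcal U$ as the total space of the relative canonical bundle over the expanded $3$-fold $\mathcal Y^{\exp}\setminus\mathcal S^{\exp}$. By properness of supports, the determinant line $\det(\RR\mathcal Hom_{\pi}(I,I)_0)$ reduces, via pushforward along the projection to $\mathcal Y$, to a square $\det(E)^{\otimes2}$ for a $3$-fold complex $E$. This follows Cao--Kool--Monavari/CZZ for local Calabi--Yau $4$-folds, done relatively over the expansion stack, and gives a canonical square root.

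\textbf{Step 4: the class.} The stack $\mathsf{Exp}_{0,R}(X|D)$ is a smooth Artin stack (log smooth, hence smooth after using the cone structure). We therefore get Park's square-root pullback $\sqrt{r_R^!}$ for an oriented, exact $(-2)$-shifted symplectic morphism, torus-equivariantly for a Calabi--Yau torus. Applying it to the fundamental class gives the stated class in $\CH^{\bb T}_*$.

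The main obstacle will be Step 1/2 over the non-separated, non-Deligne--Mumford base. Specifically, we must ensure that the relative Serre duality and the trivialisation of $\omega$ on $\mathcal U$ are compatible with the expansion automorphisms (the torus actions on tubes), so that the form descends. I would try first to verify equivariance of the trivialising section under the bubble tori: these tori act on $\mathbb P^1$-bundle tube components, where the log canonical form is invariant.
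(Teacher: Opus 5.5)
Your Steps~1, 3 and 4 match the paper. For rank-zero expansions all codimension-$\geq1$ strata are removed, so the horizontal boundary of the universal family $\mathfrak X$ is empty and Lemma~\ref{lem:log-crepant-MR-pair} gives $\omega_{\mathfrak X/\Exp_{0,R}(X|D)}\cong\mathcal O_{\mathfrak X}$. The form is the relative AKSZ form, computed on $\mathfrak X$ by excision. The orientation comes from the spectral triangle, as in \cite[Thm.~4.6]{CZZ24}, and the class is Park's square-root pullback. Two small corrections. First, turning the trivialized log canonical bundle into $\omega_{\mathfrak X/\Exp}\cong\mathcal O$ uses the reduced-fibre condition $f^*E_{\Exp}=E_{\mathfrak X}$, not just log \'etaleness. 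Second, that isomorphism is canonical on the universal family over the stack itself, so the descent issue you single out as the main obstacle does not arise.

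The genuine gap is Step~2. Exactness does not follow from the form being obtained by integrating against a trivialized relative canonical bundle, and the ``standard criterion'' you invoke is not available in that generality. The form is $\int_{\mathfrak X/\Exp}\mathrm{vol}\wedge\widetilde{\mathrm{ev}^*\Omega_{\mathsf{dPerf}}}$. The canonical $2$-shifted form $\Omega_{\mathsf{dPerf}}$ is closed but not exact: its de Rham class is the universal $\mathrm{ch}_2$. So closedness of $\mathrm{vol}$ plus the closed extension only produces a closed $2$-form. Heuristically, the obstruction to exactness is the pairing of the supported class $\mathrm{ch}_2^{Z}\in H^4_Z$ with $[\mathrm{vol}]$, and you need an actual reason for it to vanish.

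One option is that $[\mathrm{vol}]$ is de Rham exact, via the weight-one Euler-field argument used in Theorem~\ref{thm-curves-shifted-lag}(3). That only covers local pairs, whereas (1) is asserted for all log Calabi--Yau pairs. The paper instead uses \cite[Prop.~5.11, App.~C]{CZZ24} and \cite[\S~6]{park24} to reduce exactness to the topological vanishing $H^4_{\mathrm{sing}}(\mathfrak X_b,\mathfrak X_b\setminus Z)=0$, for every expanded fibre $\mathfrak X_b$ and every subscheme $Z$ in the family. In the zero-dimensional case this is immediate: $\mathfrak X_b$ is a smooth $4$-fold and $Z$ is finite, so $H^k(\mathfrak X_b,\mathfrak X_b\setminus Z)$ is concentrated in degree $8$.

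You need to supply this input. Step~4 depends on it as well, because the isotropy condition needed to define $\sqrt{r_R^!}$ is obtained from exactness (\cite[Rmk.~5.2.5]{park24}).
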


For the logarithmic Hilbert scheme of points, we can generalize it to log Calabi-Yau degenerations
instead of Calabi-Yau case:

\begin{thm}[{Theorem \ref{coro-zero-degeneration}}]\label{coro-zero-degeneration-intro}
Let $(\mcal X,\mcal D')\to\bb A^1$ be a log Calabi-Yau snc degeneration
with flat horizontal divisor $\mcal D'$ which is fibrewise snc.
Let $\mcal D=\mcal D'\cup X_0$ be snc
and $R_t\in K^{\mathrm{num}}_{\leq0,\mathrm{cpt}}(X_t)$.
Suppose that Assumption \ref{ass-compactifiable-families} holds for the vertical $0$-complexes,
including the horizontal boundary.

Assume moreover that there exists an snc degeneration
$(\mcal Y,\mcal S)\to\bb A^1$ of relative dimension $3$ such that
$\mcal X=\mathrm{Tot}(\omega_{\mcal Y/\bb A^1}(\mcal S))$ and
$\mcal D'=\mathrm{Tot}(\omega_{\mcal S/\bb A^1})$,
then we have:
\begin{itemize}
\item If $0\neq b\in\bb A^1$, then
\[
i_b^![\Hilb^{R_t}((\mcal X|\mcal D)/\bb A^1)]^{\vir}
=[\Hilb^{R_b}(X_b|\partial X_b)]^{\vir}.
\]
\item If $b=0$, then
\[
i_0^![\Hilb^{R_t}((\mcal X|\mcal D)/\bb A^1)]^{\vir}
\leftrightsquigarrow\sum_\gamma \bigboxtimes_{\nu\text{ from }\gamma}
[\Hilb^{\gamma(\nu)}(X_{0,\nu}|\partial X_{0,\nu})]^{\vir},
\]
where $\leftrightsquigarrow$ means they connected by some
common refinement;
see Theorem \ref{coro-zero-degeneration} for details.
\end{itemize}
\end{thm}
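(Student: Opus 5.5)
The plan follows the zero-dimensional pattern of \cite{CZZ24}, with Li-Wu expansions replaced by the Maulik-Ranganathan expansion stack $\mathsf{Exp}_{0,R_t}((\mcal X|\mcal D)/\bb A^1)$. The family virtual class will come from a relative exact $(-2)$-shifted symplectic structure. The two identities will then be obtained by base-changing the Oh-Thomas/Park square-root pullback $\sqrt{r^!}$ along $i_b$, using the compatibility of $\sqrt{r^!}$ with refined Gysin pullbacks.

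\textbf{Step 1: the family class.} I will first run the argument of Proposition \ref{prp-zero-dim-virtual} in families. The family derived log Hilbert stack $\dHilb^{R_t}((\mcal X|\mcal D)/\bb A^1)\to\mathsf{Exp}_{0,R_t}$ is a derived mapping/Quot stack on the universal expanded target. Because the ideal sheaves have zero-dimensional quotients supported away from the boundary, the log Calabi-Yau form $\omega_{\mcal X/\bb A^1}(\mcal D')\cong\scr O$ integrates to an exact $(-2)$-shifted symplectic form relative to the expansion stack. The local hypothesis $\mcal X=\mathrm{Tot}(\omega_{\mcal Y/\bb A^1}(\mcal S))$ then gives a canonical orientation, via the standard dimensional-reduction orientation of the derived stack of $\mcal Y$-sheaves, exactly as in part (2) of Proposition \ref{prp-zero-dim-virtual}. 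I then define
\[
[\Hilb^{R_t}((\mcal X|\mcal D)/\bb A^1)]^{\vir}:=\sqrt{r^!}\,[\mathsf{Exp}_{0,R_t}((\mcal X|\mcal D)/\bb A^1)].
\]
Assumption \ref{ass-compactifiable-families} is needed here so that the expansion stack is a well-behaved (log smooth, hence flat-over-$\bb A^1$ after the standard modification) algebraic stack with a fundamental class.

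\textbf{Step 2: commuting $\sqrt{r^!}$ with $i_b^!$.} Since $\sqrt{r^!}$ commutes with refined Gysin maps, this gives
\[
i_b^!\sqrt{r^!}[\mathsf{Exp}]=\sqrt{r_b^!}\,(i_b^![\mathsf{Exp}]).
\]
The symplectic structure and the orientation restrict to the fibres by base change. For $b\neq0$, the fibre of $\mathsf{Exp}$ is $\mathsf{Exp}_{0,R_b}(X_b|\partial X_b)$ and $i_b^![\mathsf{Exp}]$ is its fundamental class, because the family is log smooth and hence flat away from $0$. This gives the first identity.

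For $b=0$, I will compute $i_0^![\mathsf{Exp}]=\sum_\gamma m_\gamma[\mathsf{Exp}_\gamma(X_0)]$, where the sum runs over the components of the central fibre of the expansion stack, indexed by rigid tropical types $\gamma$. By the gluing statement (Proposition \ref{prp-fiberprod-gluing-general}, specialized to points), each central stratum of $\dHilb$ over $\mathsf{Exp}_\gamma$ decomposes as a fibre product over $\mathsf{Exp}_\gamma$ of the component Hilbert stacks $\dHilb^{\gamma(\nu)}(X_{0,\nu}|\partial X_{0,\nu})$. For points there is no boundary restriction data, since the supports avoid the double loci. Consequently the diagonal/Lagrangian correction disappears, and the symplectic form and the orientation split as a sum and a product over $\nu$.

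\textbf{Step 3: the main obstacle.} The hardest step is relating $\mathsf{Exp}_\gamma(X_0)$, together with its multiplicity $m_\gamma$, to the product of the independently chosen expansion stacks $\mathsf{Exp}_{0,\gamma(\nu)}(X_{0,\nu}|\partial X_{0,\nu})$. These are not equal, only birational after refining cone structures. My first attempt will be to choose a common subdivision of the tropical moduli, giving proper birational maps from a refinement to both sides. Theorem \ref{thm-independent-cone-models} shows that the virtual classes are preserved under pushforward along such refinements. On the refinement, $\sqrt{r^!}$ of the fundamental class of the product becomes $\bigboxtimes_\nu\sqrt{r_\nu^!}$, by multiplicativity of square-root pullbacks for products of orientable $(-2)$-shifted symplectic fibrations. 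I expect the multiplicity $m_\gamma$ to be absorbed into the lattice-index/degree of the comparison map, as in \cite[§8]{MR25}. That degree bookkeeping is exactly the content of the symbol $\leftrightsquigarrow$, and it is where I expect the difficulty to lie.
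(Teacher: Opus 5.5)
Your Steps 1 and 2 agree with the paper. The family class is $\sqrt{\varpi^{R_t,!}}[\mathsf{Exp}_{0,R_t}((\mcal X|\mcal D)/\bb A^1)]$, the case $b\neq0$ is formal, and for points the universal family over a central component is a disjoint union, so no diagonal class appears.

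The gap is Step 3. It is the actual content of the theorem, you leave it as an expectation, and the picture behind that expectation is wrong:
\begin{itemize}
\item In a general compactifiable model, the central-fibre components are indexed by rigid $0$-complexes $\delta$, not by distributions $\gamma$ of $R_t$ among the $X_{0,\nu}$. A positive-height ray can put a vertex in the interior of a cell of $\Delta_{X_0}$. For example, the ray through $e_\nu+e_{\nu'}$ in a double-locus cone has height $2$. Its component parametrizes expansions with a torus-bundle component over the double locus, and there your decomposition into the stacks $\dHilb^{\gamma(\nu)}(X_{0,\nu}|\partial X_{0,\nu})$ is false.
\item Consequently, writing $i_0^![\mathsf{Exp}]=\sum_\gamma m_\gamma[\mathsf{Exp}_\gamma(X_0)]$ and letting each $m_\gamma$ cancel against the degree of a single comparison map does not account for these extra components or their multiplicities.
\item There is no a priori morphism $\mathsf{Exp}_\gamma(X_0)\to\prod_\nu\mathsf{Exp}_{\gamma(\nu)}(X_{0,\nu}|\partial X_{0,\nu})$ along which the universal families match.
\item Any subdivision used to produce such a morphism must extend from the star of the $\gamma$-ray to the whole family base, keeping the preimage a Cartier divisor, because $i_0^!$ and $\sqrt{\varpi^{R_t,!}}$ are applied there.
\end{itemize}

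The missing idea is to first build a special model $\lambda$, as in Lemma \ref{lem-zero-dimensional-starting-comparison}. Ordering the partial sums $b_{i,a}=\sum_{d\le a}x_{i,d}$ and the horizontal coordinates $z_{i,j}$ gives unimodular cones. In them every primitive positive-height ray has height one and places every vertex at an original vertical vertex. Three consequences follow:
\begin{itemize}
\item $\operatorname{div}_{\mathsf{Exp}_\lambda}(t)=\sum_\gamma\mathsf{Exp}_{\gamma,\lambda}$ is reduced.
\item Its components are indexed exactly by the distributions $\gamma$.
\item The star of each $\gamma$-ray gives a projective logarithmic modification $c_\gamma$ of pure degree one to the product of component expansion stacks, along which the universal family pulls back as a disjoint union.
\end{itemize}
For an arbitrary model, you then pass to a common refinement $\varphi$, as in Lemma \ref{lem-zero-dimensional-Cartier-comparison}. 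The subdivisions are extended to the whole base by hyperplane subdivisions, roots and star subdivisions. You pull back the Cartier divisors $\mathsf{Exp}_{\gamma,\lambda}$ scheme-theoretically to get possibly non-reduced $W_\gamma$. These satisfy $\sum_\gamma[W_\gamma]=[\mathsf{Exp}_{\varphi,0}]$ and $\rho_{\gamma,*}[W_\gamma]=[\mathsf{Exp}_{\gamma,\mu}]$.

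In this way the multiplicities $m_{\delta,\varphi}$, including those of bubble components, are split among the $W_\gamma$ by the Cartier pullback, rather than absorbed into a lattice index. The conclusion then follows from two facts. The first is the cartesian square of Proposition \ref{prp-decompo-derived}. The second is the commutation $\rho_{\gamma,H,*}\sqrt{\varpi^{R_t,!}_{W_\gamma}}=\sqrt{\varpi^{!}_{\gamma,\mu}}\rho_{\gamma,*}$ of square-root pullback with proper pushforward, applied to the possibly non-reduced class $[W_\gamma]$.
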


\begin{nota}
Note that the multiplicities are retained in the relation $\leftrightsquigarrow$.
Now this theorem answers Question \ref{Q-dege-log-rel} in the zero-dimensional case.
\end{nota}

The preceding degeneration formula provides a tool for computing
zero-dimensional logarithmic $\mathsf{DT}_4$-invariants with
tautological insertions. We first formulate the expected closed
formula for these invariants.

\begin{conj}\label{conj-log-zero-dim-DT}
Let $(X,D)$ be a log Calabi-Yau pair with reduced snc divisor $D$ such that $\dim X=4$.
Assume $\bb T$ is a possibly trivial Calabi-Yau torus acting on $X$ such that 
$\Hilb^n(X|D)^{\bb T}$
is proper for all $n$. Let $\scr L$ be a $\bb T$-equivariant line bundle.
Then there exists a choice of orientation such that
\begin{align*}
	\mathsf{DT}_0(X|D)_{\scr L,m}&:=1+\sum_{n>0}q^n
\int_{[\Hilb^n(X|D)]^{\vir}_{\bb T}}
e^{\bb T\times\bb C^*_m}\left((\scr L^{[n]})^{\vee}\otimes e^m\right)\\ 
&=M(-q)^{\int_Xc_1^{\bb T\times\bb C^*_m}(\scr L\otimes e^{-m})c_3^{\bb T}(T_X(-\log D))},
\end{align*}
where $M(q)=\prod_{n\geq1}(1-q^n)^{-n}$ is the MacMahon function, $\scr L^{[n]}$ is the tautological bundle and $\bb C^*_m$ is a trivial
torus with weight $m$; see Definition \ref{def-0dim-log-DT4} for details.
\end{conj}

When $D=\emptyset$, Conjecture \ref{conj-log-zero-dim-DT} specializes to the conjectural 
formula of Cao-Kool \cite{CK18}. 
Cao-Kool verified this formula for small numbers of points when $\scr L=\scr O_X(E)$ 
for a smooth divisor $E$. Under the same type of hypothesis, with 
$E$ smooth and connected and $X$ projective, 
Park \cite[Cor.~3.3]{park21} proved the formula for all numbers of points using the Lefschetz 
principle. 
Using the
Gross-Joyce-Tanaka wall-crossing conjecture, which was recently
proved by Bojko-Kuhn-Liu-Thimm \cite{BKLT26}, Bojko \cite{Bojko24} proved the formula for arbitrary line bundles
on projective Calabi-Yau $4$-folds.
All these results are proved under the assumption that the required orientations
exist\footnote{See \cite{CGJ20,CGJ25} and \cite{JU25} for results on the existence of orientations.}.

In the local Calabi-Yau case, where canonical orientations always exist, the formula for 
$(\bb C^4,\emptyset;\scr L)$ was established by two approaches: 
by Cao-Zhao-Zhou \cite[Thm.~1.13]{CZZ24} using degeneration methods, and by 
Kool-Rennemo \cite[Thm.~1.1]{KR25} using factorizability.
This was extended in 
\cite[Thm.~A]{Liu25} to $(X,\emptyset;\scr L)$ with $X=Y\times\bb C$ for any toric Calabi-Yau 
$3$-fold $Y$.

With nonempty logarithmic boundary, Cao-Zhao-Zhou
\cite[Thm.~6.16]{CZZ24} proved Conjecture
\ref{conj-log-zero-dim-DT} for the local curves
$
(\mathrm{Tot}_C(\scr L_1\oplus\scr L_2\oplus\scr L_3),
 \sqcup_{i=1}^r\{p_i\}\times\bb C^3;\scr L),
$
where $C$ is a projective curve,
$\scr L_1\otimes\scr L_2\otimes\scr L_3\cong\omega_C(p_1+\cdots+p_r)$,
and $\scr L$ is pulled back from $C$.

As the main application of Theorem
\ref{coro-zero-degeneration-intro}, we combine the degeneration
formula with the logarithmic cobordism groups of Guzman \cite{Guz25} to
verify Conjecture \ref{conj-log-zero-dim-DT} for local surfaces.

\begin{thm}[Theorem \ref{thm-total-surface-log-DT4}]
Consider an snc pair $(S,C)$ with $S$ a smooth projective surface. Let $X=\mathrm{Tot}(\omega_S(C))\times\bb C$
and $D=\mathrm{Tot}(\omega_C)\times\bb C$ and 
consider a Calabi-Yau torus $\bb T=\{t_1t_2=1\}\subset(\bb C^*)^2_{t_1,t_2}$ acting on the fibres of these rank-two vector bundles.
Let $\scr L=\eta^*\scr L'$ for $\scr L'\in\mathrm{Pic}(S)$ and $\eta:\mathrm{Tot}(\omega_S(C))\times\bb C\to S$.
Then Conjecture \ref{conj-log-zero-dim-DT} holds.
\end{thm}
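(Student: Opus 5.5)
The strategy is to show that the generating series $\mathsf{DT}_0(X|D)_{\scr L,m}$ depends only on the class of $(S,C;\scr L')$ in a suitable logarithmic algebraic cobordism group, and to check that the conjectural exponent $\int_X c_1(\scr L\otimes e^{-m})c_3(T_X(-\log D))$ depends on the same data. Since $X=\mathrm{Tot}(\omega_S(C))\times\bb C$ carries the $\bb T$-action on the fibres, $\Hilb^n(X|D)^{\bb T}$ is supported over the zero section times $\bb C$. After also letting $\bb C^*$ act on the $\bb C$ factor, or by a standard properness argument, the invariant localizes to $S$. Localization then expresses each coefficient as an integral over $\Hilb^n(S|C)$-type data of universal polynomials in Chern classes of $T_S(-\log C)$, $\omega_S(C)$ and $\scr L'$, together with the boundary data.

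\textbf{Step 1 (multiplicativity).} Use the degeneration formula, Theorem \ref{coro-zero-degeneration-intro}. Take a double point or snc degeneration $(\mcal S,\mcal C)\to\bb A^1$ of surface pairs, with $\scr L'$ extended over the family. Form $\mcal Y=\mathrm{Tot}(\omega_{\mcal S/\bb A^1}(\mcal C))\times\bb C$, so that $\mcal X$ satisfies the hypotheses. Lemma \ref{lem-compactifiable-local-families} provides Assumption \ref{ass-compactifiable-families}, since the bases are projective. The formula writes $\mathsf{DT}_0$ of the general fibre as the product of the $\mathsf{DT}_0$ series of the components $X_{0,\nu}$ with their enlarged log boundaries. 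Two points need checking here:
\begin{itemize}
\item multiplicities and refinements do not affect the numbers after integrating tautological insertions, because the relation $\leftrightsquigarrow$ is proper pushforward and $\scr L^{[n]}$ pulls back compatibly;
\item the zero-dimensional point count is additive in $n$, so the series is multiplicative.
\end{itemize}
The exponent $\int c_1 c_3(T(-\log))$ is additive under the same degenerations, because log tangent bundles glue along double loci.

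\textbf{Step 2 (cobordism reduction).} Both sides define homomorphisms from Guzman's logarithmic cobordism group \cite{Guz25} of snc surface pairs with a line bundle to the multiplicative group $1+q\bb Q(t,m)[[q]]$. Logarithmic double point relations are exactly the degenerations handled in Step 1. It therefore suffices to check equality on generators, for example toric pairs such as $(\bb P^2,\text{lines})$ and $(\bb P^1\times\bb P^1,\text{toric boundary})$ with toric line bundles, or products of curve pairs.

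\textbf{Step 3 (generators).} For toric generators, use the full torus: the fixed loci are isolated. The pair $(X,D)$ with full toric boundary degenerates, via toric degeneration or by regarding the whole space as $\bb C^4$ pieces glued with boundary, into local vertex contributions. Each contribution is $\bb C^4$ relative to some coordinate hyperplanes, times $\bb C$. Alternatively, one can reduce to the local curve case of \cite[Thm.~6.16]{CZZ24}: degenerate $S$ to a ruled surface over $C$ (deformation to the normal cone), or use product generators $C_1\times C_2$ with points, which fit the local curve result after degenerating $C_2$ into rational components relative to points. The unboundary case is covered by \cite{CZZ24,KR25}.

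The main obstacle is Step 2: identifying precisely which generators span the logarithmic cobordism group (with line bundle). A secondary difficulty is making sure that every degeneration relation is realized by an snc degeneration $(\mcal S,\mcal C)$ that meets the hypotheses of Theorem \ref{coro-zero-degeneration-intro}, including the flatness of the horizontal boundary and the choice of compatible orientations. My first attempt would use generators of the form (ruled surface over a curve relative to sections and fibres). Their invariants reduce, through one more degeneration, to the local curve computation of \cite{CZZ24}, where the torus on the fibres of $\omega_S(C)\oplus\scr O$ matches the rank-three local curve setting.
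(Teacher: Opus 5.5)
Your Steps 1--2 match the skeleton of the paper's proof. The degeneration formula makes $(S,C;\scr L')\mapsto\mathsf{DT}_0$ a homomorphism out of Guzman's group $\omega^{\log}_{2,1^1}$ (Proposition~\ref{prp-homo-omega2-DT}), and the exponent is additive (Lemma~\ref{lem-c1L-TX-logD}). The gap is that the argument then rests on two things you do not supply: a spanning set (you flag this yourself as ``the main obstacle''), and a way to compute the invariants of that spanning set.

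The paper's key input is Proposition~\ref{prp-span-omega-2-11-log}. It shows that $\omega^{\log}_{2,1^1}\otimes\bb Q$ is spanned by seven toric pairs whose boundary is \emph{empty or a single smooth curve}. The proof goes as follows.
\begin{enumerate}
\item The degenerations $\mathrm{Bl}_{C_k\times\{0\}}(S\times\bb P^1)\to\bb P^1$ peel off boundary components one at a time. Each step trades $(S,C_1\cup\dots\cup C_k)$ for $(S,C_1\cup\dots\cup C_{k-1})$ minus a $\bb P^1$-bundle over $C_k$, which lies in the image of Guzman's map from $\omega^{\log}_{1,1^2}$.
\item Fibre boundaries over points are removed by analogous blow-ups of $C\times\bb P^1$, together with the relation $(\bb P^1,\{0\})=\tfrac12(\bb P^1,\emptyset)$.
\item The Lee--Pandharipande/Tzeng generators of $\omega_{2,1^1}$ and $\omega_{1,1^2}$ finish the argument.
\end{enumerate}
Since the spanning is only rational, ``check on generators'' is not enough as stated. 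You get $\lambda[S,C;\scr L]=\sum_i a_i[S_i,C_i;\scr L_i]$ with $\lambda>0$, and must then use uniqueness of $\lambda$-th roots of series with constant term $1$ over $\bb Q(\hbar,m)$.

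Your Step~3 would not go through as written.
\begin{itemize}
\item The toric generators you suggest with full toric boundary, such as $(\bb P^2,\text{three lines})$, have torus-fixed points where two boundary divisors meet. The local model there is $\bb C^4$ relative to two coordinate hyperplanes (note $X$ is already a $4$-fold, so there is no extra factor $\bb C$). That relative ``corner'' contribution is not computed anywhere; \cite{CZZ24} only treats a single smooth divisor.
\item The alternative route to the local-curve theorem \cite[Thm.~6.16]{CZZ24} via ruled surfaces or products $C_1\times C_2$ gives no actual mechanism. Over such $S$, $\Tot(\omega_S(C))\times\bb C$ is a rank-two bundle over a surface, not $\Tot_C(\scr L_1\oplus\scr L_2\oplus\scr L_3)$ relative to fibres. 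Degenerating $C_2$ into rational components does not change this unless you add a torus acting on the ruling and a localization argument, which you do not give.
\end{itemize}

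The paper sidesteps both problems through its choice of generators.
\begin{itemize}
\item For empty boundary it uses \cite[Thm.~A]{Liu25} for toric Calabi--Yau $3$-folds times $\bb C$. The $\bb C^4$ results of \cite{CZZ24,KR25} alone do not handle gluing vertices with compatible orientations.
\item For smooth toric boundary it localizes with the full torus $\bb T'$, so each fixed point on $D$ meets only one divisor. This produces $\bb C^4$ vertex factors and the rubber factors $M(-q)^{c_1(\scr L|_{p_a}\otimes e^{-m})/\lambda_1^a}$ of \cite[Cor.~6.10]{CZZ24}. These assemble by Atiyah--Bott into the logarithmic $c_3$ exponent (Proposition~\ref{prp-toric-smooth-pair-surface-total}).
\end{itemize}
Because the theorem concerns the fibre torus, one must also restrict from $\bb T'$ to $\bb T_{\mathrm f}$, using properness of the zero section (Remark~\ref{rmk-equivariant-prp2}). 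Your proposal omits this step.

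Finally, your opening claim that the invariant localizes to universal polynomials over ``$\Hilb^n(S|C)$-type data'' is unjustified, and it is not used anywhere in your argument.
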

As a special case, we compute the zero-dimensional $\mathsf{DT}_4$-invariants 
for $K_S\times\bb C$ for any smooth projective surface $S$ (possibly non-toric).

\subsection{Related works and future directions}
For completeness, Appendix \ref{appendix-GLSM} records the
corresponding splitting and genus reduction formulae for GLSMs
as another application.

The first direction is to develop logarithmic surface-counting theories
for (log-) Calabi-Yau $4$-folds, extending the theories of \cite{BKP22,BKP24},
and to establish the corresponding degeneration formulae. This would require an analogue of the
logarithmic Hilbert scheme for $2$-dimensional subschemes, whose
degenerations should be encoded by $2$-dimensional polyhedral
complexes in the tropicalization of the target, rather than the
tropical curves governing the curve-counting theory.
The construction of such logarithmic surface-counting theories
is the subject of the forthcoming work of Pim Spelier.

A second direction is to extend the theory to the logarithmic Quot schemes described in \cite{KHR25}
and the moduli space of higher rank coherent sheaves in logarithmic
geometry developed in \cite{DHK+26}.

\subsection*{Conventions and Notation}
\begin{itemize}
	\item All schemes and stacks are defined over $\bb C$. We always consider the counting theory 
	of varieties with $\dim\geq3$.
	\item All constructible sheaves, (co-) homology groups and Chow groups have $\bb Q$-coefficients.
	\item For a Gorenstein space $X$, $\omega_X$ always denotes the coherent dualizing sheaf.
	\item Throughout the paper, all logarithmic Hilbert stacks parameterize properly supported subschemes of dimension at most one, whose numerical classes lie in
$K_{\mathrm{cpt},\leq1}^{\mathrm{num}}(-)$. In the $1$-dimensional case, we write a numerical class
as $(\beta,\chi)$, where $\beta$ is the curve class and $\chi$ is the Euler characteristic.
\end{itemize}

\subsection*{Acknowledgments}

The author would like to thank Professor Yalong Cao for his continued support, 
encouragement, and generous help throughout this project. 
He is also grateful to Martijn Kool, Hyeonjun Park and Pim Spelier for valuable discussions and 
comments on the draft of the paper.
In particular, he thanks
Hyeonjun Park for clarifying the details of the forthcoming work \cite{KKPS3} on 
shifted Lagrangian classes. 

\subsection*{AI Usage Declaration}

During the preparation of this manuscript, the author used GPT-5.6 Sol and GPT-6 Astra only for language polishing and 
for assisting in checking the mathematical proofs. 
The author independently reviewed and verified all AI-assisted changes and all mathematical 
results, and takes full responsibility for the content of the manuscript.

\section{Preliminaries}
\subsection{Six-functor formalism of (ind-)constructible sheaves}
We collect the aspects of the six-functor formalism for ind-constructible sheaves on derived Artin stacks that will be used below.

\begin{thm}\label{thm-six-functor}
For any derived Artin stack $\mathcal X$ locally of finite type over
$\bb C$, there exists a stable presentable $\bb Q$-linear
	$\infty$-category \[\mathbf D(\mathcal X):=\mathbf D(\mathcal X_{\mathrm{cl}})\]
such that the following statements hold.
\begin{enumerate}
	\item There is a canonical equivalence
	\[
	\mathbf D(\mathcal X)
	\simeq
	\lim_{(U\to \mathcal X_{\mathrm{cl}})\in
	(\mathsf{Sch}^{\mathrm{sep,ft}}_{\bb C}/\mathcal X_{\mathrm{cl}})}
	\operatorname{Ind}\bigl(\mathbf D_c^b(U,\bb Q)\bigr),
	\]
	where the transition functors are given by pullbacks.

	\item For every morphism
	$f:\mathcal X\to\mathcal Y$ of derived Artin stacks locally of
	finite type, there are functors
	\[
	f^*,f^!:
	\mathbf D(\mathcal Y)\longrightarrow\mathbf D(\mathcal X),
	\qquad
	f_*,f_!:
	\mathbf D(\mathcal X)\longrightarrow\mathbf D(\mathcal Y),
	\]
	together with the tensor product and internal Hom. They satisfy
	the usual adjunctions
	\[
	f^*\dashv f_*,
	\qquad
	f_!\dashv f^!,
	\]
	as well as the usual composition isomorphisms.

	\item More precisely, consider a diagram
	\[
	\begin{tikzcd}
		\mathcal X' \arrow[r,"g"] \arrow[d,"{f'}"']&
		\mathcal X \arrow[d,"f"]\\
		\mathcal Y' \arrow[r,"h"']&
		\mathcal Y
	\end{tikzcd}
	\]
	whose induced diagram of classical truncations is Cartesian,
	there are canonical exchange isomorphisms
	\[
	\mathrm{Ex}_{!}^{*}:
	(f')_!g^*
	\xrightarrow{\ \cong\ }
	h^*f_!,
	\qquad
	\mathrm{Ex}_{*}^{!}:
	f^!h_*
	\xrightarrow{\ \cong\ }
	g_*(f')^!.
	\]
	The second isomorphism is the right-adjoint mate of the first one.
	These exchange isomorphisms are compatible with compositions of
	Cartesian diagrams.

	\item If $f:\mathcal X\to\mathcal Y$ is schematic and proper, then
	$f$ is cohomologically proper. In particular, the canonical
	natural transformation
	\[
	f_!\longrightarrow f_*
	\]
	is an isomorphism.

	\item If $f:\mathcal X\to\mathcal Y$ is smooth of pure relative
	dimension $d$, then there is a canonical purity isomorphism
	\[
	f^!\cong f^*[2d].
	\]

	\item If $X$ is a separated scheme of finite type over $\bb C$,
	then
	\[
	\mathbf D(X)
	=
	\operatorname{Ind}\bigl(\mathbf D_c^b(X,\bb Q)\bigr).
	\]
	Moreover, for a morphism $f:X\to Y$ between separated schemes of
	finite type, the six operations restrict to the usual bounded
	constructible derived categories. See
	\cite[Prop.~2.9]{KKPS1} for these assertions.
\end{enumerate}
\end{thm}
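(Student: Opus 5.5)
This is essentially a compilation of known results on the six-functor formalism for constructible sheaves on Artin stacks, so I will assemble it from the literature rather than build it from scratch. First I reduce to classical stacks. Constructible sheaves only see the underlying topological space, so I set $\mathbf D(\mathcal X):=\mathbf D(\mathcal X_{\mathrm{cl}})$. Every operation is then defined through $f_{\mathrm{cl}}$. This is legitimate because the étale sites of $\mathcal X$ and $\mathcal X_{\mathrm{cl}}$ coincide.

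\emph{Item (6).} For a separated finite type scheme $X$, I take $\mathbf D(X)=\operatorname{Ind}(\mathbf D^b_c(X,\bb Q))$. The classical six operations on $\mathbf D^b_c$ (Beilinson–Bernstein–Deligne, Verdier duality) preserve compact objects. They therefore extend uniquely to colimit-preserving functors on Ind-categories, with $f_*,f^!$ obtained as right adjoints. This is exactly the statement cited from \cite[Prop.~2.9]{KKPS1}.

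\emph{Item (1).} I define $\mathbf D$ on Artin stacks by right Kan extension along $*$-pullbacks from separated finite type schemes over $\mathcal X_{\mathrm{cl}}$. Smooth descent for $\operatorname{Ind}\mathbf D^b_c$ shows this agrees with the limit over a smooth atlas. I would cite Liu–Zheng or Laszlo–Olsson, in the form used by Khan and by \cite{KKPS1}, for the $\infty$-categorical enhancement.

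\emph{Items (2)–(5).} I obtain the operations by the Liu–Zheng DESCENT machinery, or via Mann/Scholze's construction of six-functor formalisms from a geometric setup. The input data are:
\begin{itemize}
\item the class $E$ of all morphisms locally of finite type;
\item the decomposition into "proper-like" maps (schematic proper maps, with $f_!=f_*$) and smooth maps (with $f_!=f^*[2d]$ up to twist, and hence $f^!\cong f^*[2d]$ by purity).
\end{itemize}
Purity for smooth schematic maps of schemes is classical. It extends to stacks by descent along the atlas, which gives (5). Base change (3) holds first for Cartesian diagrams of schemes by proper and smooth base change. It then propagates by descent, and its coherence with composition is part of the output of the formalism. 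Since all functors factor through classical truncations, it suffices that the classical diagram be Cartesian. The mate statement $\mathrm{Ex}^!_*$ is formal adjunction. For (4), when $f$ is schematic and proper, I check $f_!\to f_*$ smooth-locally on $\mathcal Y$. This uses base change plus the scheme case.

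\emph{Main obstacle.} The hard part is the $\infty$-categorical coherence: constructing $f_!$ for non-representable maps between stacks and making the base change compatible with all compositions. I would not reprove this but invoke the Liu–Zheng enhanced operation map, restricted to constructible $\bb Q$-coefficients and Ind-completed. Here the compatibility with Ind-completion needs only that $f^*,f_!$ preserve colimits.
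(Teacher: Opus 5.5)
Your strategy matches the paper's: its proof is also a compilation of citations. It takes (1), (2) and (6) from \cite[Prop.~2.9, 2.10]{KKPS1}, (4) from \S~2.2 after Prop.~2.15 of \cite{KKPS1}, and (5) from \cite[Prop.~2.14]{KKPS1}. The real difference is item (3). The paper does not prove base change on schemes and then propagate it. It notes that $\mathbf D$ is a lisse-extended preweave on Artin stacks \cite[Prop.~2.10]{KKPS1}. The invertibility of the exchange maps, their mates and their compatibility with composition of squares are then formal consequences of the preweave structure \cite[App.~A.2.1]{Kha25}. Your route checks proper base change, purity and $f_!\cong f_*$ on schemes and spreads them by smooth descent. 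It shows more explicitly where each property comes from. However, it hands the coherence part of (3) to a Liu--Zheng or Mann--Scholze enhanced operation map treated as a black box, whereas the preweave formalism supplies that coherence formally.

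If you keep your route, three points need tightening.
\begin{enumerate}
\item Liu--Zheng and Laszlo--Olsson treat \'etale sheaves with torsion or $\ell$-adic coefficients. For constructible $\bb Q$-sheaves in the classical topology, you must feed the Betti formalism on separated finite type $\bb C$-schemes into the abstract descent machine; this is exactly what \cite{KKPS1} provides.
\item The Ind-completion must be done on schemes before extending to stacks, as in your item (1). For non-representable $f$, already $B\bb G_m\to\mathrm{pt}$, the functor $f_!$ leaves $\mathbf D^b_c$. So your closing phrase ``restricted to constructible $\bb Q$-coefficients and Ind-completed'' cannot be applied to a stacky formalism.
\item Smooth maps are not a class with $f_!\dashv f^*$ in a Mann-type geometric setup. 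On schemes one factors morphisms as an open immersion followed by a proper map. Smooth maps enter only in the extension to stacks, through $f^!\cong f^*[2d]$; your ``$f_!=f^*[2d]$'' should read this.
\end{enumerate}
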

\begin{proof}
(1) and (2) follow from \cite[Prop.~2.10]{KKPS1}.
(4) follows from 
	\cite[\S~2.2, after Prop.~2.15]{KKPS1}.
(5) follows from \cite[Prop.~2.14]{KKPS1}.
(6) follows from \cite[Prop.~2.9]{KKPS1}.

For (3), the existence of the lisse-extended preweave on Artin stacks
	follows from \cite[Prop.~2.10]{KKPS1}, while the invertibility of
	the exchange transformations follows formally from the preweave
	structure; see
	\cite[Appendix~A.2.1, formulas~(A.7)--(A.8)]{Kha25}.
\end{proof}

\begin{nota}[{\cite[\S~2.2, after Prop.~2.10]{KKPS1}}]
	An object $F\in\mathbf D(\mathcal X)$ is called
	\textit{constructible} if, for every smooth morphism
	$u:U\to \mcal X_{\mathrm{cl}}$ from a separated scheme of finite type over $\bb C$,
	one has
	\[
	u^*F\in \mathbf D_c^b(U,\bb Q).
	\]
	We denote the full subcategory of such objects by
	$\mathbf D_c^b(\mathcal X,\bb Q)$.
For a general Artin stack $\mathcal X$, the six operations are not asserted to preserve
$\mathbf D_c^b$ for arbitrary morphisms of Artin stacks; see also \cite[Rmk.~2.20]{KKPS1}.

There is a $t$-structure on $\mathbf D_c^b(\mathcal X,\bb Q)$ such that for every smooth morphism 
$u:U\to \mcal X$ from a separated scheme of finite type over $\bb C$ the functor $u^![-\dim u]$ is $t$-exact.
This defines the notion of \textit{perverse sheaves} on Artin stacks.
\end{nota}

\begin{defi}
For any derived Artin stack $\mathcal X$ locally of finite type over
$\bb C$, we define its \textit{Borel-Moore homology} as 
\[
H^{\BM}_k(\mcal X,\bb Q):=H^{-k}(\mcal X,\bb D\bb Q_{\mcal X}).
\]
\end{defi}
\begin{nota}\label{rmk-constructions-BM}
We will use the following constructions later:
\begin{enumerate}
	\item For any $f:\mcal X\to \mcal Y$ and $F\in \mathbf D(\mcal Y)$, any morphism 
	$\psi:f^*F[k]\to\bb D\bb Q_{\mcal X}$ induces
	\begin{align*}
    &H^n(\mathcal Y,F)\to H^n(\mathcal Y,f_*f^*F)=H^n(\mathcal X,f^*F)
    \xrightarrow{\psi}H^{n-k}(\mathcal X,\bb D\bb Q_{\mcal X})=H^{\BM}_{k-n}(\mcal X,\bb Q).
    \end{align*}
	\item For a cohomologically proper morphism $f:\mcal X\to \mcal Y$ (for example, if it is 
	a schematic proper morphism), the pushforward $f_*:H^{\BM}_k(\mcal X)\to H^{\BM}_k(\mcal Y)$ is induced by
    \[
        (\bb Q_{\mcal X}[k]\xrightarrow{\alpha}\bb D\bb Q_{\mcal X})\mapsto
        (\bb Q_{\mcal Y}[k]\to f_*\bb Q_{\mcal X}[k]
        \xrightarrow{\alpha} f_*\bb D\bb Q_{\mcal X}\cong f_!f^!\bb D\bb Q_{\mcal Y}\to\bb D\bb Q_{\mcal Y}).
    \]
\end{enumerate}
\end{nota}

\subsection{Family version of shifted Lagrangian classes}
Here we state the main conjecture we will use in general logarithmic $\mathsf{DT}_4$-theory,
generalizing the original conjecture due to Joyce in \cite{JS19,ABB17}.
See \cite{PTVV13,park24,PY26} for the detailed definitions of shifted symplectic 
structures and shifted Lagrangian correspondences.

\begin{thm}[Perverse pullbacks, {\cite[Thm.~5.24,~\S~6.2]{KKPS1}}]\label{thm-perverse-pullback}
Let $\pi:X\to B$ denote an lfp morphism between derived Artin stacks equipped with a relative exact $(-1)$-shifted symplectic structure
and an orientation.
Then there exists a $t$-exact \textit{perverse pullback} functor:
\[
\pi^{\varphi}:\mathbf D(B)\to\mathbf D(X)
\]
with the following properties.
\begin{itemize}
	\item On local Darboux charts, it restricts to the vanishing-cycle functor.
	\item It is compatible with smooth pullback, finite pushforward, the Thom--Sebastiani isomorphism, and Verdier duality.
\end{itemize}
\end{thm}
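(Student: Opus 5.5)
The plan is to build $\pi^{\varphi}$ locally from vanishing cycles and then descend, following the strategy of \cite{KKPS1}.

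\emph{Local model.} By the relative Darboux theorem for exact $(-1)$-shifted symplectic morphisms, smooth-locally on $X$ (and on $B$) the map $\pi$ looks like a derived critical locus. Concretely, there is a smooth morphism $U\to B$ together with a function $f:U\to\mathbb A^1$ such that the chart is $\mathrm{Crit}_{U/B}(f)\to X$. On such a chart we put
\[
\pi^{\varphi}_U(F):=\varphi_f\bigl(q^*F[\dim U/B]\bigr)|_{\mathrm{Crit}(f)},
\]
where $q:U\to B$ is the structure map. The orientation supplies a square root of $\det\mathbb L_{X/B}$. We use it to twist by the $\mathbb Z/2$-local system that compares the canonical orientations of different charts. $t$-exactness on a chart follows because $q^*[\dim q]$ is $t$-exact for smooth $q$ and $\varphi_f[-1]$ preserves perversity.

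\emph{Gluing.} For two charts, or a chart and a stabilization of it (adding a nondegenerate quadratic form $f\boxplus Q$), we compare the local functors using Thom--Sebastiani. This gives $\varphi_{f\boxplus Q}\simeq\varphi_f\boxtimes\varphi_Q$, and $\varphi_Q$ is a rank-one local system trivialized exactly by an orientation of $Q$. The Joyce/Brav--Bussi--Dupont--Joyce--Szendrői argument shows that any two critical charts become comparable after stabilization, up to an étale or smooth local change of coordinates. For the relative version we run this argument fibrewise over smooth charts of $B$.

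\emph{Main obstacle: higher coherence.} The key step, and the one I expect to be hardest, is coherence. The isomorphisms from the previous step must satisfy cocycle conditions up to all higher homotopies in order to descend to $\mathbf D(X)$, which is an $\infty$-categorical limit. I would not check cocycles by hand. Instead I would restrict to the heart: the local functors are $t$-exact, and perverse sheaves form a stack of abelian (1-)categories, so gluing there only needs the cocycle condition on triple overlaps. The orientation is exactly what makes the sign ambiguities cancel on those triple overlaps. After that, I would extend from perverse sheaves to all of $\mathbf D(B)$ using the realization functor and Ind-completion, or alternatively build the functor directly through the pullback formula on smooth atlases.

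\emph{Properties.} Compatibility with smooth pullback holds by construction, since charts pull back to charts. Finite pushforward commutes with vanishing cycles via proper base change. Thom--Sebastiani on products of charts is the local statement above, and Verdier duality follows from the fact that $\varphi_f$ commutes with $\mathbb D$ on the heart.
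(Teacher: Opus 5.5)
The paper does not prove this statement itself---it is quoted from \cite[Thm.~5.24, \S~6.2]{KKPS1}---and your outline follows the chart-and-gluing strategy behind that result: vanishing cycles on relative critical charts, BBDJS-style comparison by stabilization and Thom--Sebastiani with the orientation absorbing the sign ambiguities, gluing in the stack of perverse sheaves, then passing to $\mathbf D(B)$. If you write it out, two points need tightening: first, the comparisons cannot literally be done fibrewise over $B$, because vanishing cycles do not commute with restriction to the fibres of $q$, and what you need are natural isomorphisms of the functors $F\mapsto\varphi_f(q^*F[\dim q])$, with the exact structure used to pin down $f|_{\mathrm{Crit}_{U/B}(f)}$ (replacing $f$ by $f+q^*h$ leaves $\mathrm{Crit}_{U/B}(f)$ unchanged but changes $\varphi_f(q^*F)$); second, the realization-functor extension is only available over schemes, since for stacks such as $B\mathbb G_m$ the functor $D^b(\mathrm{Perv}(B))\to\mathbf D^b_c(B)$ is not an equivalence, so the passage to $\mathbf D(B)$ has to go through smooth atlases and descent.
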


\begin{conj}[Family version of shifted Lagrangian classes]\label{conj-Joyce-conj}
Consider the following oriented $(-1)$-shifted exact Lagrangian correspondence over $B$:
\[\begin{tikzcd}
	& L & \\
	X && Y
	\arrow["s"', from=1-2, to=2-1]
	\arrow["t", from=1-2, to=2-3]
\end{tikzcd}\]
with associated morphism $\pi_X:X\to B$ and $\pi_Y:Y\to B$ and compatible orientations.
Then we have a canonical morphism 
\[
[L]^{\mathrm{Lag}}:s^*\pi_X^{\varphi}(-)[\vdim (L/B)]\to t^!\pi_Y^{\varphi}(-)
\] of functors from $\mathbf D(B)$ to $\mathbf D(L)$,
with the following properties.
\begin{enumerate}
	\item \textbf{Unitality:} If $s$ and $t$ are identity morphisms with induced diagonal Lagrangian structures, 
	then the desired morphism is the identity.
	\item \textbf{Associativity:} Assume we have the following composition of oriented $(-1)$-shifted exact Lagrangian correspondences over $B$:
\[\begin{tikzcd}
	&& L && \\
	& {L_1} && {L_2} \\
	X && Y && Z
	\arrow["a"', from=1-3, to=2-2]
	\arrow["b", from=1-3, to=2-4]
	\arrow["s", from=2-2, to=3-1]
	\arrow["t"', from=2-2, to=3-3]
	\arrow["{s'}", from=2-4, to=3-3]
	\arrow["{t'}", from=2-4, to=3-5]
\end{tikzcd}\]
where the middle square is a cartesian diagram. Then we have the following commutative diagram:
\[\begin{tikzcd}
	{f^*\pi^{\varphi}_X(-)[\vdim(L/B)]} && {g^!\pi^{\varphi}_Z(-)} \\
	{a^*s^*\pi^{\varphi}_X(-)[\vdim(L/B)]} && {b^!(t')^!\pi^{\varphi}_Z(-)} \\
	{a^*t^!\pi^{\varphi}_Y(-)[\vdim(L_2/B)]} && {b^!(s')^*\pi^{\varphi}_Y(-)[\vdim(L_2/B)]}
	\arrow["{[L]^{\mathrm{Lag}}}", from=1-1, to=1-3]
	\arrow["\cong", from=1-1, to=2-1]
	\arrow["\cong"', from=1-3, to=2-3]
	\arrow["{a^*[L_1]^{\mathrm{Lag}}}", from=2-1, to=3-1]
	\arrow[from=3-1, to=3-3]
	\arrow["{b^![L_2]^{\mathrm{Lag}}}", from=3-3, to=2-3]
\end{tikzcd}\]
where $f=s\circ a$ and $g=t'\circ b$.
\item \textbf{Base-change via finite morphisms:} Consider a cartesian diagram 
\[\begin{tikzcd}
	{L_S} & {X_S} & S \\
	L & X & B
	\arrow["{u_S}", from=1-1, to=1-2]
	\arrow["{c_L}", from=1-1, to=2-1]
	\arrow["\square"{description}, draw=none, from=1-1, to=2-2]
	\arrow["{\pi_{X_S}}", from=1-2, to=1-3]
	\arrow["{c_X}", from=1-2, to=2-2]
	\arrow["\square"{description}, draw=none, from=1-2, to=2-3]
	\arrow["c", from=1-3, to=2-3]
	\arrow["u", from=2-1, to=2-2]
	\arrow["{\pi_X}", from=2-2, to=2-3]
\end{tikzcd}\]
where $u:L\to X$ is an oriented $ (-1) $-shifted exact
Lagrangian morphism over $B$, and $u_S:L_S\to X_S$ is its
derived base change equipped with the induced Lagrangian structure and orientation.
Assume that $c$ is a finite morphism and 
let $t:L\to B$ and $t_S:L_S\to S$ denote the compositions.
Then we have the following commutative diagram 
\[\begin{tikzcd}
	{u^*\pi_X^{\varphi}c_*(-)[\vdim L/B]} && {t^!c_*(-)} \\
	{u^*c_{X,*}\pi_{X_S}^{\varphi}(-)[\vdim L/B]} \\
	{c_{L,*}u_S^*\pi_{X_S}^{\varphi}(-)[\vdim L/B]} && {c_{L,*}t_S^!(-)}
	\arrow["{[L]^{\mathrm{Lag}}c_*}", from=1-1, to=1-3]
	\arrow["\cong", from=1-1, to=2-1]
	\arrow["{\mathrm{Ex}_*^!,\cong}", from=1-3, to=3-3]
	\arrow["{\mathrm{BC}}", from=2-1, to=3-1]
	\arrow["{c_{L,*}[L_S]^{\mathrm{Lag}}}", from=3-1, to=3-3]
\end{tikzcd}\]
where the right vertical isomorphism is the exchange isomorphism in
Theorem~\ref{thm-six-functor}(3),
the upper-left isomorphism is given by \cite[Thm.~5.24(2)]{KKPS1} and $\mathrm{BC}$
is the Beck-Chevalley transform.
	\item \textbf{Comparison with Oh-Thomas' class:} Let $f:X\to Y$ be an oriented $(-2)$-shifted exact symplectic fibration which induces
an oriented $(-1)$-shifted Lagrangian structure $F:X\to\TT^*[-1]Y$.
From the construction in Remark \ref{rmk-constructions-BM}(1), the shifted Lagrangian class 
\[
F^*\varphi_{\TT^*[-1]Y}[\vdim X]\to \bb D\bb Q_X
\]
induce 
\[
[F]^{\mathrm{Lag}}:H^{\BM}_{k}(Y,\bb Q)\cong H^{\vdim Y-k}(\varphi_{\TT^*[-1]Y})
\to H^{\BM}_{\vdim(X/Y)+k}(X,\bb Q),
\]
where the first isomorphism follows from dimension reduction proved in 
\cite{kin22,KKPS2}, satisfies
\[[F]^{\mathrm{Lag}}=\mathrm{cl}\left(\sqrt{f^!}\right).\]
Here $\mathrm{cl}$ is the cycle class map $\mathrm{cl}:\CH_*\to H^{\BM}_*$
and $\sqrt{f^!}$ is the square-root virtual pullback defined 
in \cite{park21,park24}.
\end{enumerate}
\end{conj}

If $B=\mathrm{pt}$, statements (1) and (2) reduce to Joyce's original conjecture;
see also \cite{JS19,ABB17}. 

\begin{nota}\label{rmk-KKPS-lagrangian-classes}
Hyeonjun Park has informed the author that the cases of
Conjecture \ref{conj-Joyce-conj} needed in this paper are covered
by the forthcoming work \cite{KKPS3}.
These include Lagrangian classes over smooth bases evaluated on
the constant sheaf, together with associativity,
and the compatibility with finite base change.
\end{nota}

Here we recall that the logarithmic curve-counting theory in
Sections $\ref{sec-logDT4-curves}$ and \ref{sec-general-degeneration}, including 
its degeneration formula, depends on
shifted Lagrangian classes in Remark \ref{rmk-KKPS-lagrangian-classes}. 
The quasimap invariants and their
gluing formulae in Appendix \ref{appendix-GLSM} use its $F_0$-equivariant version.

By contrast, the zero-dimensional theory in Section \ref{sec-zero-dim-DT4} and its
applications in Section \ref{sec-applications} are constructed in Chow groups and are
independent of Conjecture \ref{conj-Joyce-conj}. Shifted Lagrangian
classes are used there only to provide parallel formulations in
Borel-Moore homology.

\section{Logarithmic Donaldson-Thomas theory for log Calabi-Yau \texorpdfstring{$4$}{4}-pairs}
\label{sec-logDT4-curves}
In this section, we construct the shifted-symplectic geometry of logarithmic Hilbert
schemes in both the relative and family settings by combining the frameworks of
\cite{CZZ24,MR24}, and define the logarithmic Donaldson-Thomas theory.
\subsection{About logarithmic Hilbert schemes}
Let $(X,D)$ be a simple normal crossings pair with divisor components $D_1,\ldots,D_r$.
\begin{itemize}
	\item For simplicity, we assume that every finite intersection of the
components $D_i$ is either connected or empty; see also \cite{MR24}.
This assumption can be removed by allowing disconnected tropicalizations.
\end{itemize}
One canonically associates a \textit{cone complex} $\Sigma_{X|D}$ to $(X|D)$ \cite[Def.~2.3.1]{MR25}:
it is the subfan of $\bb R^r_{\geq0}$ consisting of cones that correspond to nonempty intersections $D_I=\bigcap_{i\in I}D_i$.
A \textit{$1$-complex} in $\Sigma_{X|D}$ is an embedded polyhedral complex of dimension at most $1$;
a \textit{$0$-complex} is an embedded polyhedral complex of dimension $0$.
These combinatorial objects parameterize the combinatorial types of expansions.

\begin{nota}
Thus the strata of
codimension at least $2$ are removed.
If we only consider $0$-complexes, then all the codimension $\geq1$ loci will be removed in the expansions.
\end{nota}

The main construction of \cite{MR24} outputs, for each $(X,D)$, a moduli stack $\mathsf{Exp}(X|D)$ of \textit{expansions} of $X$ along $D$.
Its $\bb C$-points parameterize expansions in the sense of \cite[Def.~2.1.2]{MR25}:
unions of strata in the rank $1$ locus of reduced rough expansions of $(X,D)$.
The construction uses the dictionary between Artin fans and cone spaces \cite[Thm.~3]{MR24}.
Let $|T(\Sigma_{X|D})|$ be the topological moduli space of $1$-complexes in $\Sigma$.
The set $|T(\Sigma_{X|D})|$ admits a family of cone space structures $T(\Sigma_{X|D})_\lambda$, 
organized by a directed partially ordered set $(\Lambda,\prec)$, together with a universal 
$1$-complex $\Upsilon_\lambda\to T(\Sigma_{X|D})_\lambda$ that is combinatorially flat.
Passing to associated Artin fans, one obtains
\[
\mathsf{Exp}(X|D)_{\lambda}:=\varinjlim_{\sigma}A_\sigma=\varinjlim_{\sigma}\left[\spec\bb C[\mathsf{S}_{\sigma}]/\spec\bb C[M]\right],
\]
where $T(\Sigma_{X|D})_\lambda=\varinjlim_\sigma(\sigma,M)$ is the colimit of rational polyhedral cones
and $\mathsf{S}_{\sigma}:=\sigma^{\vee}\cap M$ is the dual monoid of $\sigma$.
By \cite[Prop.~2.3.2]{MR25}, the points of $\mathsf{Exp}(X|D)_{\lambda}$ are in natural bijection with cones of $T(\Sigma_{X|D})_\lambda$.

\begin{nota}[Compare with Li-Wu]
If $D$ is a smooth divisor, then the tropicalization is given by $\Sigma_{X|D}=\bb R_{\geq0}$.
Then Li-Wu's construction described in \cite{LW15,Zhou18} is given by the $1$-complexes of form 
$\{\{q_1<...<q_L\}\subset\bb R_{\geq0}\}$. The corresponding set of $1$-complexes $|T^{\mathrm{LW}}(\Sigma_{X|D})|\subset|T(\Sigma_{X|D})|$
has the canonical cone structure $T^{\mathrm{LW}}(\Sigma_{X|D})$, where two points lie in the same cone if the 
associated $1$-complexes have the same number of vertices. 
Then we have the corresponding stack of expansions $\mathsf{Exp}^{\mathrm{LW}}(X|D)\cong\mcal A$
and its universal family. See \cite[\S~6.7]{MR24} for further descriptions.

For the point-counting case, we can choose the subcone consisting of $0$-complexes
$\{q_1<...<q_L\}$. The resulting expanded pair is given by the disjoint union of $X\backslash D$
and $\{\widetilde{D}\}_{1,...,L}$ for $\bb C^*$-bundle $\widetilde{D}=\Tot(N_{D/X})\setminus D$ on $D$.
\end{nota}

\begin{exam}
For example, consider an snc pair $(\overline{X},D)$ such that $D=D_1\cup_{\ell}D_2$ has two components with connected 
intersection $\ell$. Then $\Sigma_{\overline{X}|D}=\bb R_{\geq0}^2$. Consider the following $1$-complex
on the left. The corresponding expansion is described on the right:
\begin{center}
\tikzset{every picture/.style={line width=0.75pt}}

\begin{tikzpicture}[x=0.75pt,y=0.75pt,yscale=-1,xscale=1]
\tikzset{
  geometric edge/.style={
    line width=0.8pt,
    decorate,
    decoration={
      snake,
      amplitude=0.75pt,
      segment length=5.5pt,
      pre length=1pt,
      post length=1pt
    }
  },
  deleted locus/.style={
    line width=0.75pt,
    dash pattern={on 0.84pt off 2.51pt}
  },
  vertex/.style={
    circle,
    fill=black,
    inner sep=0pt,
    minimum size=5.5pt
  },
  right edge label/.style={
    fill=white,
    fill opacity=1,
    text opacity=1,
    inner sep=1.1pt,
    outer sep=0pt
  }
}

\draw (42.5,203) -- (42.5,72);
\draw [shift={(42.5,70)},rotate=90,line width=0.75pt]
  (10.93,-3.29) .. controls (6.95,-1.4) and (3.31,-0.3) ..
  (0,0) .. controls (3.31,0.3) and (6.95,1.4) .. (10.93,3.29);

\draw (42.5,203) -- (179.5,203);
\draw [shift={(181.5,203)},rotate=180,line width=0.75pt]
  (10.93,-3.29) .. controls (6.95,-1.4) and (3.31,-0.3) ..
  (0,0) .. controls (3.31,0.3) and (6.95,1.4) .. (10.93,3.29);

\draw (42.5,136.5) -- (128.5,136.5);
\draw (128.5,136.5) -- (128.5,203);
\draw (128.5,136.5) -- (128.5,76);
\draw [shift={(128.5,74)},rotate=90,line width=0.75pt]
  (10.93,-3.29) .. controls (6.95,-1.4) and (3.31,-0.3) ..
  (0,0) .. controls (3.31,0.3) and (6.95,1.4) .. (10.93,3.29);

\draw[geometric edge] (385.89,77) -- (236.89,77);
\draw[geometric edge] (385.89,146.5) -- (236.5,146.5);
\draw[geometric edge] (311.39,77) -- (311.39,218);
\draw[geometric edge] (385.89,146.5) -- (385.89,219);
\draw[deleted locus] (385.89,77) -- (385.89,146.5);

\node[vertex] at (311.39,77) {};
\node[vertex] at (311.39,147.5) {};
\node[vertex] at (385.89,77) {};
\node[vertex] at (385.89,146.5) {};

\draw (25,201.4) node [anchor=north west][inner sep=0.75pt] {$X$};
\draw (122,205.4) node [anchor=north west][inner sep=0.75pt] {$Y$};
\draw (26,128.4) node [anchor=north west][inner sep=0.75pt] {$Z$};
\draw (131,127.4) node [anchor=north west][inner sep=0.75pt] {$W$};
\draw (131,159.4) node [anchor=north west][inner sep=0.75pt] {$a$};
\draw (80,184.4) node [anchor=north west][inner sep=0.75pt] {$b$};
\draw (31,160.4) node [anchor=north west][inner sep=0.75pt] {$c$};
\draw (83,118.4) node [anchor=north west][inner sep=0.75pt] {$d$};
\draw (32,93.4) node [anchor=north west][inner sep=0.75pt] {$e$};
\draw (154,181.4) node [anchor=north west][inner sep=0.75pt] {$f$};
\draw (132,93.4) node [anchor=north west][inner sep=0.75pt] {$g$};

\draw (260,178.4) node [anchor=north west][inner sep=0.75pt] {$X$};
\draw (340,176.4) node [anchor=north west][inner sep=0.75pt] {$Y$};
\draw (266,104.4) node [anchor=north west][inner sep=0.75pt] {$Z$};
\draw (342,102.4) node [anchor=north west][inner sep=0.75pt] {$W$};

\node[right edge label] at (346,146.5) {$a$};
\node[right edge label] at (311.39,172.4) {$b$};
\node[right edge label] at (268,146.5) {$c$};
\node[right edge label] at (311.39,104.4) {$d$};
\node[right edge label] at (266,77) {$e$};
\node[right edge label] at (385.89,171.4) {$f$};
\node[right edge label] at (341,77) {$g$};

\end{tikzpicture}
\end{center}
Note that the direction of $b$ (resp. $c$) corresponds to $D_1$ (resp. $D_2$)
and the black bullets indicate the loci of codimension $\geq2$, which are removed.
The component $X$ is the main component and is isomorphic to $\overline X\backslash\ell$.
Moreover, we have $Y\subset\overline{Y}\cong\bb P_{D_1}(N_{D_1/\overline{X}}\oplus\scr O)$, 
$Z\subset\overline{Z}\cong\bb P_{D_2}(N_{D_2/\overline{X}}\oplus\scr O)$,
and 
\[
W\subset\overline W\cong
\bb P_{\ell}(N_{D_1/\overline X}|_{\ell}\oplus\scr O_{\ell})
\times_{\ell}
\bb P_{\ell}(N_{D_2/\overline X}|_{\ell}\oplus\scr O_{\ell}).
\]
by removing the boundary divisor shown as a
dashed line, together with all strata of codimension at least two.
\end{exam}
A cone structure on $T(\Sigma_{X|D})$ determines the expansion
stack, whereas the construction of its universal family requires
an additional compatible combinatorial choice.
Precisely, there is another partially ordered set $L$ and a system of spaces 
$\{\mathfrak X_{\ell}\}_{\ell\in L}$. 
For each choice of $\ell$ and $\lambda$,
we say they are \textit{compatible} if there exists a natural projection 
$\mathfrak X_{\ell}\to\mathsf{Exp}(X|D)_{\lambda}$ which is flat with reduced fibres.

A component of $X_p$ is called \textit{linear $2$-valent} if its vertex
is $2$-valent and, near that vertex, its two incident edges or rays lie
on a line in the relative interior of a single target cone.
A \textit{tube subscheme} on such a component is the schematic inverse
image of a zero-dimensional subscheme under its $\bb P^1$-bundle projection.

Another notion is \textit{tube components}: certain distinguished $\bb P^1$-bundle components in each expansion fibre,
depending on the choice of $\ell\in L$.
Here we note that a linear $2$-valent component need not be a tube component.

The locus of rank $0$ expansions is an open substack
$\mathsf{Exp}_0(X|D)\subset\mathsf{Exp}(X|D)$,
corresponding to the cone subspace $P(\Sigma_{X|D})\subset T(\Sigma_{X|D})$
parameterizing $0$-complexes.

We will drop $\lambda$ and $\ell$ if we have chosen such structures. We always 
choose so that $\mathsf{Exp}(X|D)_{\lambda}$ is smooth.
With the stack $\mathsf{Exp}(X|D)$ established, one can define moduli spaces of sheaves over it \cite[\S~2.4]{MR25}.
\begin{defi}
Let $X_p$ be an expansion over $\spec\bb C$ (the fibre over a point $p\in\mathsf{Exp}(X|D)$).
A properly supported subscheme $Z\subset X_p$ of dimension at most $1$ is \textit{algebraically transverse} if it meets all locally closed strata properly and non-trivial, and for any codimension $1$ stratum $S$, the multiplication map $I_Z\otimes\mcal O_S\to\mcal O_{X_p}\otimes\mcal O_S$ is injective \cite[Def.~2.4.1]{MR25}.
It is \textit{DT-stable} if, for every linear $2$-valent
component $Y$, the restriction $Z\cap Y$ is a tube subscheme if and only if
$Y$ is a tube component; see \cite[\S~4.1, Def.~4.2.1]{MR24}.
Then we can define the \textit{logarithmic Hilbert scheme} 
\[
\Hilb(X|D)\subset\Hilb(\mathfrak X/\mathsf{Exp}(X|D))
\]
which parameterizes maps $S\to\mathsf{Exp}(X|D)$ together with properly supported algebraically transverse and DT-stable families of subschemes;
see also \cite[Def.~2.4.3]{MR25} for details.
Let $\Hilb_{\beta,\chi}(X|D)$ be the open and closed substack with fixed pushforward curve class $\beta$ and holomorphic 
Euler characteristic $\chi$.
\end{defi}

The following is a fundamental result on logarithmic Hilbert stacks 
due to Maulik-Ranganathan.

\begin{thm}[{\cite{MR24},\cite[Thm.~2.4.5]{MR25}}]
The stack $\Hilb_{\beta,\chi}(X|D)$ is a separated Deligne--Mumford stack of finite type over $\bb C$.
If $X$ is proper, then $\Hilb_{\beta,\chi}(X|D)$
is a proper Deligne--Mumford stack.
The analogous statements hold for $\PP_{\beta,\chi}(X|D)$.
\end{thm}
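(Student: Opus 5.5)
The plan is to reduce the statement to the construction of Maulik--Ranganathan: I will view $\Hilb_{\beta,\chi}(X|D)$ as an open substack of the relative Hilbert stack $\Hilb(\mathfrak X/\mathsf{Exp}(X|D))$ of the universal expansion $\mathfrak X\to\mathsf{Exp}(X|D)$, for a compatible pair of choices $(\lambda,\ell)$. The classical relative Hilbert functor of a flat family with reduced fibres is an algebraic space, locally of finite type and relatively separated over the base, by Grothendieck and Artin. The logarithmic Hilbert stack is the open substack cut out by three conditions: algebraic transversality, DT-stability, and proper support. Openness of transversality follows because the failure of injectivity of $I_Z\otimes\mcal O_S\to\mcal O_S$, and improper intersection with strata, are closed conditions in flat families. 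Openness of DT-stability follows because being a tube subscheme on a linear $2$-valent component is locally constant along strata of the base.

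For the Deligne--Mumford property, I would show that automorphism groups are finite. The expansion stack is an Artin fan, so its stabilizers are tori acting by rescaling the expanded components. An automorphism of a point of $\Hilb(X|D)$ is therefore an element of such a torus fixing $Z$. For a non-tube component, transversality and nontriviality of $Z$ on every component force the stabilizer to be finite, since a subscheme invariant under the full $\bb C^*$ on the $\bb P^1$-bundle direction would be a tube. DT-stability is what handles the tube components: there $Z$ is a tube, and the extra rescaling is absorbed by the choice of $\ell$, so the stabilizer is again finite.

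For finite type I would bound the combinatorics. Fixing $(\beta,\chi)$ bounds the tropical degree data, so only finitely many combinatorial types of $1$-complexes carry stable transverse subschemes of this class. Once the type is fixed, the Hilbert schemes of the pieces have bounded Hilbert polynomial and are of finite type.

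Separatedness and properness I would check via the valuative criterion over a DVR $R$ with fraction field $K$. Given a family over $K$, the limit is produced by taking the flat closure in a sufficiently subdivided expansion. This expansion is obtained from the tropicalization of the family, namely a $1$-complex over $\operatorname{Spec}R$, which by combinatorial flatness of $\Upsilon_\lambda\to T(\Sigma)_\lambda$ and the directedness of $\Lambda$ can be realized after a finite base change. Transversality of the limit is ensured by choosing the subdivision so that the flat limit meets no stratum improperly; this is the logarithmic analogue of Li--Wu's expansion argument. Uniqueness up to finite ambiguity, which gives separatedness, comes from DT-stability forbidding the insertion or removal of tube components.

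The main obstacle is the existence and uniqueness step in the valuative criterion in the snc (not merely smooth-divisor) case: one must show that the tropical limit $1$-complex actually lies in a cone of the chosen cone structure $T(\Sigma_{X|D})_\lambda$, and that two limits differ only by tube insertions. I would attack this by the polyhedral subdivision and toric flat-limit argument of \cite{MR24}. Finally, the statement for $\PP_{\beta,\chi}(X|D)$ follows by the same argument with stable pairs replacing ideal sheaves, using openness of purity and finiteness of stable pairs moduli.
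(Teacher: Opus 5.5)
The paper does not prove this statement; it imports it from \cite{MR24} and \cite[Thm.~2.4.5]{MR25}. Your outline matches the architecture of that result: an open substack of the relative Hilbert stack of the universal expansion, finite stabilizers via tubes and DT-stability, finitely many combinatorial types, and the valuative criterion via tropicalization. The step that does not work as you wrote it is the existence half of the valuative criterion. Invoking the directedness of $\Lambda$ does not help, because $\lambda$ is part of the definition of $\Hilb_{\beta,\chi}(X|D)$; refining it gives a limit in a different stack. Conversely, the ``main obstacle'' you name is not one. $T(\Sigma_{X|D})_\lambda$ is a cone structure on the whole of $|T(\Sigma_{X|D})|$, so every $1$-complex lies in one of its cones, and a finite base change handles integrality and root structures.

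The real difficulty is elsewhere. The set-theoretic tropicalization of $Z_K$ in $\Sigma_{X|D}$ need not see the linear $2$-valent vertices at which the limit fails to be a tube. This already happens for smooth $D$, where $\Sigma_{X|D}=\bb R_{\geq0}$ and every Li--Wu bubble is such a vertex. So the tropicalization alone does not give the expansion, and the argument needs three steps.
\begin{enumerate}
\item Produce \emph{some} subdivision in which the flat limit is algebraically transverse, by a Li--Wu or Gr\"obner-type argument on initial degenerations.
\item Contract the linear $2$-valent components on which the limit is a tube, giving a $1$-complex $\Gamma$ determined by $Z_K$.
\item Pass to the fibre of the universal family for $\lambda$ over $[\Gamma]$, and check that the limit is a tube on the tube components it re-inserts. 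This holds because initial degenerations are invariant along open edges of $\Gamma$.
\end{enumerate}
Your ``sufficiently subdivided expansion'' omits the contraction and re-insertion. An over-refined expansion is not a point of the stack, since it carries tubes on non-tube components or has empty components.

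The same fact also gives uniqueness: an algebraically transverse limit determines $\Gamma$ up to linear $2$-valent vertices carrying tubes. This is needed because $\mathsf{Exp}(X|D)_\lambda$ is not separated, so extensions of $\operatorname{Spec}K\to\mathsf{Exp}(X|D)_\lambda$ are far from unique. Similarly, your Deligne--Mumford argument only treats $\bb P^1$-bundle components. You also need that if $Z\cap Y_v$ is invariant under a nontrivial cocharacter of the fibre torus of an arbitrary component, then the tropical star at $v$ is a line. That is what reduces the stabilizer analysis to linear $2$-valent components, where DT-stability applies.
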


In order to construct the logarithmic Donaldson-Thomas theory,
we need the following result, which is a natural generalization 
of \cite[Prop.~2.5]{CZZ24} and shows that the universal pair
preserves the log Calabi-Yau property after fixing a compatible
cone structure and universal family satisfying the flatness and
reducedness conditions below.

\begin{lem}\label{lem:log-crepant-MR-pair}
Let $(X,D)$ be a smooth quasi-projective variety with $D$ a reduced simple normal crossing
divisor. Fix a cone structure $\lambda$ such that $\Exp(X|D)_\lambda$
is smooth and the universal cone family
$\Upsilon_\lambda\to T_\lambda$ is combinatorially flat with reduced fibres, with
the compatible universal expansion
\[
        f:\mathfrak X\to\Exp(X|D)_\lambda.
\] 
Let $\mathfrak D\subset \mathfrak X$
be the universal strict transform of boundary divisor, and let
$p:\mathfrak X\to X\times \Exp(X|D)_\lambda$
be the contraction map. Then there is a canonical isomorphism
\[
        \omega_{\mathfrak X/\Exp(X|D)_\lambda}(\mathfrak D)
        \cong
        p^*\pr_X^*\omega_X(D).
\]
In particular, if $\omega_X(D)\cong \mathcal O_X$, any choice of
trivialization of $\omega_X(D)$ induces an isomorphism
\[
        \omega_{\mathfrak X/\Exp(X|D)_\lambda}(\mathfrak D)
        \cong
        \mathcal O_{\mathfrak X}.
\]
In this case, for each original component $D_i\subset D$ with corresponding 
universal boundary
$\mathfrak D_i\subset\mathfrak D$,
we have
$\omega_{\mathfrak D_{i}/\Exp(X|D)_\lambda}
        \cong
        \mathcal O_{\mathfrak D_{i}}$.
\end{lem}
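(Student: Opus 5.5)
Since $\Exp(X|D)_\lambda$ is smooth and $f$ is flat with reduced (hence Gorenstein snc) fibres, I will use toric/log geometry to identify both sides. The key idea is that $p$ is a log modification relative to the base, so it is log étale; log canonical bundles are then preserved.

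Concretely, I equip $\mathfrak X$ with the divisorial log structure given by the full boundary $\mathfrak D\cup f^{-1}(\partial\Exp)$, where $\partial\Exp$ is the toric boundary of the Artin fan charts. I equip $X\times\Exp(X|D)_\lambda$ with the product log structure from $D$ and $\partial\Exp$. By the construction of \cite{MR24}, $\mathfrak X$ is étale locally the toric variety attached to the cone complex $\Upsilon_\lambda$, mapping to the toric chart of $\Sigma_{X|D}\times T_\lambda$ by a subdivision. Hence $p$ is log étale and $\Omega^{\log}_{\mathfrak X}\cong p^*\Omega^{\log}_{X\times\Exp}$. Combinatorial flatness with reduced fibres makes $f$ log smooth and integral and saturated. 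The relative log cotangent sheaf $\Omega_{\mathfrak X/\Exp}(\log)$ is then locally free of rank $\dim X$, and it fits into the exact sequence
\[
0\to f^*\Omega^{\log}_{\Exp}\to\Omega^{\log}_{\mathfrak X}\to\Omega_{\mathfrak X/\Exp}(\log)\to0.
\]
Comparing with the product sequence gives $\Omega_{\mathfrak X/\Exp}(\log)\cong p^*\pr_X^*\Omega_X(\log D)$, and taking determinants gives $\det\Omega_{\mathfrak X/\Exp}(\log)\cong p^*\pr_X^*\omega_X(D)$.

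It remains to identify $\det\Omega_{\mathfrak X/\Exp}(\log)$ with $\omega_{\mathfrak X/\Exp}(\mathfrak D)$. On the open locus where $\mathfrak X$ and $\Exp$ have trivial log structure away from $\mathfrak D$, this is the classical identity $\det\Omega_X(\log D)=\omega_X(D)$. Along the singular fibres, the local model of $f$ is $xy=t$ times smooth factors. There the relative log differentials $dx/x=-dy/y$ generate, and the relative log canonical bundle equals the relative dualizing sheaf $\omega_{\mathfrak X/\Exp}$ with no twist by the double locus. Indeed, the Poincaré residue $dx\wedge\cdots/(\,\cdot\,)$ matches $dx/x\wedge\cdots$ times the unit factor. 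The same holds for higher codimension toric fibre singularities: for a flat family of reduced toric degenerations, $\omega_{\mathfrak X/\Exp}=\det\Omega_{\mathfrak X/\Exp}(\log)(-\mathfrak D)$ only twists along horizontal boundary. Both sheaves are reflexive rank one on the normal (Cohen–Macaulay) $\mathfrak X$, and they agree off codimension $2$, so they agree everywhere. This yields the first isomorphism, canonically, since every identification used was canonical.

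For the consequences, the trivialization statement is immediate. For $\mathfrak D_i$, I use adjunction for the Cartier divisor $\mathfrak D_i\subset\mathfrak X$, flat over the base:
\[
\omega_{\mathfrak D_i/\Exp}\cong\omega_{\mathfrak X/\Exp}(\mathfrak D_i)|_{\mathfrak D_i}=\omega_{\mathfrak X/\Exp}(\mathfrak D)|_{\mathfrak D_i}\otimes\mathcal O(-\textstyle\sum_{j\ne i}\mathfrak D_j)|_{\mathfrak D_i}.
\]
The components $\mathfrak D_j$ for $j\neq i$ do not meet $\mathfrak D_i$ in the expansion, because codimension $\geq2$ strata were removed, so the last factor is trivial.

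The main obstacle is the local comparison of $\omega_{\mathfrak X/\Exp}$ with the relative log canonical bundle at points where the universal expansion is not just an $xy=t$ model (higher-valent vertices, tube components). I would handle it via the general fact that for log smooth integral saturated morphisms with reduced fibres, $\omega_{\mathfrak X/\Exp}\cong\det\Omega^{\log}_{\mathfrak X/\Exp}(-\text{horizontal boundary})$, for instance Tsuji's result or the toric canonical divisor $-\sum(\text{all toric divisors})$ argument applied chartwise.
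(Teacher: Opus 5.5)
Your skeleton is the paper's: log étaleness of $p$ gives $\omega^{\log}_{\mathfrak X/\Exp}\cong p^*\pr_X^*\omega_X(D)$. You then compare $\omega^{\log}_{\mathfrak X/\Exp}$ with $\omega_{\mathfrak X/\Exp}(\mathfrak D)$, and finish with relative adjunction and disjointness of the $\mathfrak D_j$. The first and last steps are fine. The gap is in the middle comparison: you put the effort where none is needed and skip the one place where the hypothesis is used.

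Since both sheaves are reflexive of rank one on the normal $\mathfrak X$, only codimension-one points of $\mathfrak X$ matter. The singular loci of the fibres lie over $\partial\Exp$ and have codimension one in the fibres, hence codimension $\geq 2$ in $\mathfrak X$. So the $xy=t$ residue computation, and your worry about higher-valent vertices and tube components, are beside the point. (Under the stated hypotheses the local model is $xy=t^a$, or $xy$ equal to a monomial in base coordinates, not only $xy=t$.)

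Let $E_{\mathfrak X}$ be the boundary divisors of $\mathfrak X$ lying over the toroidal boundary $E_{\Exp}$. The codimension-one points your two explicit loci do not cover are the generic points of the components $E_a$ of $E_{\mathfrak X}$. Over the rank-zero locus, where the lemma is later applied with $\mathfrak D=\emptyset$, the fibres have no double loci at all. So these points are not near any $xy=t$ chart.

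At such a point the log chart is $\bb N\to\bb N$, $1\mapsto m$. Writing $\omega^{\log}=\omega(\text{boundary})$ on the total space and the base gives
\[
\omega^{\log}_{\mathfrak X/\Exp}\cong\omega_{\mathfrak X/\Exp}\bigl(\mathfrak D+E_{\mathfrak X}-f^*E_{\Exp}\bigr).
\]
So the identity you need holds exactly when $m=1$ for every vertical ray, i.e.\ $f^*E_{\Exp}=E_{\mathfrak X}$. If $m>1$, the fibre is $u^m=0$, nonreduced along $E_a$, and the isomorphism fails by a twist by $(1-m)E_a$.

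This is the step the paper isolates. Reducedness of fibres forces the lattice map on each rank-one cone to have primitive image, so $m=1$. The boundary twists then cancel globally, with no local analysis of fibre singularities.

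Your closing appeal to Tsuji-type results for saturated morphisms, or to the chartwise toric canonical divisor, would supply this, but the proposal never identifies multiplicity one as the point. Add that argument and your reflexive-extension route goes through; the $xy=t$ discussion can then be dropped.
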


\begin{proof}
Consider the canonical logarithmic structures
on $\Exp(X|D)_\lambda$, $(X,D)$ and $X\times\Exp(X|D)_\lambda$.
Checking locally in smooth toroidal charts, we know that 
$f$ is flat and Gorenstein by the fact that we consider the rank one locus.
In this case 
the horizontal boundary $\mathfrak D$ is a disjoint union of
relative effective Cartier divisors $\mathfrak D_i$ over
$\Exp(X|D)_\lambda$.

Let $E_{\Exp(X|D)_\lambda}$ be the toroidal boundary of
$\Exp(X|D)_\lambda$, then the toroidal boundary of
$\mathfrak X$ is $\partial\mathfrak X=E_{\mathfrak X}+\mathfrak D$, where
$E_{\mathfrak X}$ is the union of the boundary divisors mapping into
$E_{\Exp(X|D)_\lambda}$. We first claim that
\[
        f^*E_{\Exp(X|D)_\lambda}=E_{\mathfrak X}.
\]
This is local and can be checked at the generic point
of a codimension-one toroidal boundary divisor of $\mathfrak X$ lying over
$E_{\Exp(X|D)_\lambda}$. Here the characteristic monoids have rank one, and
on the local chart it is given by $\mathbb N\to\mathbb N$, $1\mapsto m$, that is,
a primitive local equation of the corresponding component of
$E_{\Exp(X|D)_\lambda}$ pulls back to the $m$-th power of a primitive local
equation on $\mathfrak X$. The reduced-fibre condition imply that the induced map on the
corresponding rank-one lattices has primitive image. Hence $m=1$, and the
claim follows.

The contraction
$p:\mathfrak X\to X\times\Exp(X|D)_\lambda$ is the open restriction of the
logarithmic modification obtained from the cone subdivision defining the
universal expansion; see also \cite[\S3.7]{MR24}. Since open immersion 
and logarithmic modifications
are logarithmically étale, we have
\[
        \omega^{\log}_{\mathfrak X/\Exp(X|D)_\lambda}
        \cong
        p^*\omega^{\log}_{X\times\Exp(X|D)_\lambda/\Exp(X|D)_\lambda}
		\cong
        p^*\pr_X^*\omega_X(D).
\]
On the other hand, we have
\[
\begin{aligned}
        \omega^{\log}_{\mathfrak X/\Exp(X|D)_\lambda}
        &=
        \omega_{\mathfrak X}(E_{\mathfrak X}+\mathfrak D)
        \otimes
        f^*\omega_{\Exp(X|D)_\lambda}(E_{\Exp(X|D)_\lambda})^{-1}  \\
        &\cong
        \omega_{\mathfrak X}\otimes f^*\omega_{\Exp(X|D)_\lambda}^{-1}
        \otimes\mathcal O_{\mathfrak X}(\mathfrak D)=
        \omega_{\mathfrak X/\Exp(X|D)_\lambda}(\mathfrak D),
\end{aligned}
\]
where we used $f^*E_{\Exp(X|D)_\lambda}=E_{\mathfrak X}$. Combining the two
identifications gives
\[
        \omega_{\mathfrak X/\Exp(X|D)_\lambda}(\mathfrak D)
        \cong
        p^*\pr_X^*\omega_X(D).
\]
If $\omega_X(D)\cong\mathcal O_X$, then any chosen trivialization gives
$\omega_{\mathfrak X/\Exp(X|D)_\lambda}(\mathfrak D)\cong
\mathcal O_{\mathfrak X}$.

Finally, for each original component $D_i\subset D$, let $\mathfrak D_i$ be
its universal strict transform in $\mathfrak X$. This is the induced expansion
of $(D_i,\partial D_i)$. Now $\mathfrak D_i$ is flat over $\Exp(X|D)_\lambda$. Relative adjunction gives
\[
        \omega_{\mathfrak D_i/\Exp(X|D)_\lambda}
        \cong
        \left(
        \omega_{\mathfrak X/\Exp(X|D)_\lambda}(\mathfrak D_i)
        \right)\big|_{\mathfrak D_i}.
\]
Since the codimension $\geq2$ strata of $X$ have
been removed, the different horizontal boundary pieces $\mathfrak D_j$
are disjoint from $\mathfrak D_i$, and
$\mathcal O_{\mathfrak X}(\mathfrak D_i)|_{\mathfrak D_i}\cong
\mathcal O_{\mathfrak X}(\mathfrak D)|_{\mathfrak D_i}$. Hence
\[
        \omega_{\mathfrak D_i/\Exp(X|D)_\lambda}
        \cong
        (p^*\pr_X^*\omega_X(D))\big|_{\mathfrak D_i} \cong\scr O_{\mathfrak D_i}.
\]
This finishes the proof.
\end{proof}

\subsection{Derived enhancements of logarithmic Hilbert schemes}
Fix an snc pair $(X,D)$ with universal family $\mathfrak X\to\mathsf{Exp}(X|D)$.
We can equip $\Hilb(X|D)$ and $\Hilb_{\beta,\chi}(X|D)$ with canonical derived enhancements as follows. 
Here we need an elementary lemma.

\begin{lem}\label{lem-snc-compactification}
There exists an open immersion
$X\hookrightarrow\overline X$ into a smooth projective variety
such that, writing $\overline D_i$ for the closure of $D_i$,
$\overline D=\bigcup_i\overline D_i$, 
for every nonempty subset $I\subseteq\{1,\ldots,r\}$,
we have $\bigcap_{i\in I}\overline D_i=\overline{\bigcap_{i\in I}D_i}$.
Hence taking closures induces bijections between the irreducible
components of the corresponding intersections, compatible
with their incidence relations.
\end{lem}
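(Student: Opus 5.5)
Start with an arbitrary compactification and then repair it by blowups. By Nagata's theorem (in the quasi-projective setting, simply take the closure in a projective space), choose an open immersion $X\hookrightarrow X_1$ with $X_1$ projective. Apply Hironaka's resolution of singularities, which is an isomorphism over the smooth locus $X$, to assume $X_1$ is smooth. Then apply embedded resolution/principalization to the closed subset $(X_1\setminus X)\cup\bigcup_i\overline D_i$. We may do so through blowups with centres lying over $X_1\setminus X$; this is possible because $D$ is already snc on $X$. After this step, $X_2\setminus X$ together with the closures $\overline D_i$ forms an snc divisor $\overline D\cup B$, where $B=X_2\setminus X$ (with its reduced structure, a union of components $B_j$). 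In particular each $\overline D_i$ is smooth, and all intersections of the $\overline D_i$ with each other and with the $B_j$ are transverse.

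It remains to check the key property $\bigcap_{i\in I}\overline D_i=\overline{\bigcap_{i\in I}D_i}$. The inclusion $\supseteq$ is clear. For $\subseteq$, the snc property of $\overline D\cup B$ makes $\overline D_I:=\bigcap_{i\in I}\overline D_i$ smooth of pure codimension $|I|$. Moreover, $B$ meets $\overline D_I$ in a divisor of $\overline D_I$, since the $B_j$ are transverse to the stratum. Hence no irreducible component of $\overline D_I$ is contained in $B$, so every component of $\overline D_I$ meets $X$ in a dense open subset. This gives $\overline D_I=\overline{\overline D_I\cap X}=\overline{D_I}$. In particular, if $D_I=\emptyset$, then $\overline D_I$ has no component meeting $X$ and so is empty.

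The component bijection follows from the same fact: the components of $\overline D_I$ are exactly the closures of the components of $D_I$. Both are smooth, so their components are their connected components, and open dense inclusion preserves the incidence relations.

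The main obstacle is ensuring that no intersection $\overline D_I$ acquires an extra component entirely inside the boundary. This can happen, for instance, if $\overline D_1$ and $\overline D_2$ meet only at infinity. The fix is precisely to demand that $\overline D\cup B$ is snc, which forces pure codimension $|I|$ and transversality to $B$. If the resolution step is done carefully (for example, Bierstone–Milman functorial principalization applied to the ideal of $\bigcup_i\overline D_i\cup(X_1\setminus X)$ with centres over the boundary), this is achieved, and everything else is formal.
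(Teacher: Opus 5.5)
Your proposal is correct and follows essentially the same route as the paper: take a smooth projective compactification via log resolution so that $\overline D+E$ is snc with the boundary $E=\overline X\setminus X$, then use that each boundary component meets $\bigcap_{i\in I}\overline D_i$ in codimension $|I|+1$, so no irreducible component of that intersection lies at infinity. Smoothness of $\bigcap_{i\in I}\overline D_i$ (disjoint components) then gives the equality and the bijection of components, and your remark that the blowup centres can be taken over the boundary is a useful detail the paper leaves implicit.
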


\begin{proof}
Choose a projective compactification of $X$.
By log resolution, we obtain
a smooth projective compactification $\overline X$ such that
$E=\overline X\setminus X$ is a divisor and $\overline D+E$ is snc.

Fix $I$ and let $W$ be an irreducible component of
$\bigcap_{i\in I}\overline D_i$.
Then $W$ has codimension $|I|$ in $\overline X$.
If $W\cap X=\emptyset$, then $W$ is contained in an irreducible
component $E_a$ of $E$. This is impossible, since the snc
condition implies that
$\bigcap_{i\in I}\overline D_i\cap E_a$, if nonempty, is pure
of codimension $|I|+1$.
Thus every such $W$ meets $X$.

Since $\bigcap_{i\in I}\overline D_i$ is smooth, its irreducible
components are disjoint. Intersecting them with $X$ therefore
gives precisely the irreducible components of
$\bigcap_{i\in I}D_i$, proving the asserted equality and
bijections.
\end{proof}

Fix a compactification $(X,D)\subset(\overline X,\overline D)$
as in Lemma~\ref{lem-snc-compactification}, and 
a choice of cone structure on $\Exp(X|D)_{\lambda}$.
Following \cite[Rmk.~3.5.2]{MR24}, we work over finite-type open
substacks of $\Exp(X|D)_{\lambda}$ and suppress this restriction from the notation.
After compatible subdivisions of the universal cone complex and the
base, we may complete the universal embedded $1$-complex to a \textit{complete}
projective subdivision over $\Sigma_{\overline X|\overline D}=\Sigma_{X|D}$.
By further subdivisions and, if necessary, root constructions,
we can construct the resulting map of cone stacks of 
the full universal family of expansions of
$(\overline X,\overline D)$:
\[
    \pi^c:\mathfrak X^c\longrightarrow\Exp(X|D)
\]
which is projective, combinatorially flat with reduced fibres and the base
is smooth; see \cite[Prop.~1.7.2]{MR25}.
We continue to denote the resulting expansion stack by $\Exp(X|D)$.

Define $\mathfrak X\subset\mathfrak X^c$ to be the 
open substack correspond to the rank-one universal family of $(X,D)$.
The composite
\[
    \pi:\mathfrak X\hookrightarrow\mathfrak X^c
    \xrightarrow{\pi^c}\Exp(X|D)
\]
is still combinatorially flat with reduced fibres.
In particular, $\pi$ is flat and its geometric fibres are reduced,
being open subschemes of the corresponding fibres of $\pi^c$.
All expansion stacks and universal families below are understood
with these refined cone and integral structures.

Consider the derived stack of perfect complexes $\mathsf{dPerf}$ and the derived 
mapping stack
\[
\mathsf{dPerf}(\mathfrak X^c/\mathsf{Exp}(X|D))
:=\mathsf{dMap}_{\mathsf{Exp}(X|D)}(\mathfrak X^c,\mathsf{dPerf}\times\mathsf{Exp}(X|D)).
\]
Similarly, define $\mathsf{dPic}(\mathfrak X^c/\mathsf{Exp}(X|D)):=\mathsf{dMap}_{\mathsf{Exp}(X|D)}(\mathfrak X^c,\mathsf{dPic}\times\mathsf{Exp}(X|D))$.
Then we have the following homotopical pullback diagram 
of derived stacks:
\[\begin{tikzcd}
	{\mathsf{dPerf}(\mathfrak X^c/\mathsf{Exp}(X|D))_0} & {\{\scr O\}} \\
	{\mathsf{dPerf}(\mathfrak X^c/\mathsf{Exp}(X|D))} & {\mathsf{dPic}(\mathfrak X^c/\mathsf{Exp}(X|D)).}
	\arrow[from=1-1, to=1-2]
	\arrow[from=1-1, to=2-1]
	\arrow["\square"{description}, draw=none, from=1-1, to=2-2]
	\arrow[from=1-2, to=2-2]
	\arrow["\det", from=2-1, to=2-2]
\end{tikzcd}\]

\begin{lem}\label{lem-derived-hilb}
With the compactification chosen above, there exists an open substack
\[
    \dHilb(X|D)\subset\mathsf{dPerf}(\mathfrak X^c/\Exp(X|D))_0
\]
such that $\left(\dHilb(X|D)\right)_{\mathrm{cl}}\cong\Hilb(X|D)$.
Moreover, this derived enhancement is independent
of the compactification up to a common refinement of the stacks of expansions
and the universal families. 
The same statements hold for the open and closed substack $\dHilb_{\beta,\chi}(X|D)$.
\end{lem}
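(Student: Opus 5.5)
The derived enhancement will be an open substack of $\mathsf{dPerf}(\mathfrak X^c/\Exp(X|D))_0$, cut out by conditions on the classical truncation. Openness for derived stacks is determined on classical truncations, so this suffices. The argument then comes in two parts.

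\emph{Step 1: the classical Hilbert stack as a locus of perfect complexes.} We have $(\mathsf{dPerf}(\mathfrak X^c/\Exp(X|D))_0)_{\mathrm{cl}}=\mathsf{Perf}(\mathfrak X^c/\Exp(X|D))_0$, the moduli of perfect complexes with trivialized determinant on fibres of $\pi^c$. Since $\pi^c$ is projective and flat and $\Exp(X|D)$ is smooth, the fibres of $\pi^c$ are Gorenstein. Hence for a flat family of properly supported subschemes $Z\subset\mathfrak X_S\subset\mathfrak X^c_S$ of dimension $\le1$, the ideal sheaf $I_Z$ taken in $\mathfrak X^c_S$ is perfect on each fibre. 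Its determinant is canonically trivial, because $Z$ has codimension $\ge2$ in the fibres of $\pi^c$. This yields a morphism $\Hilb(\mathfrak X^c/\Exp(X|D))_{\le1}\to\mathsf{Perf}(\mathfrak X^c/\Exp(X|D))_0$. This morphism is an open immersion, by the standard argument identifying ideal sheaves of $\le1$-dimensional subschemes with perfect complexes $E$ of rank one and trivial determinant such that $\mathcal H^0(E)$ is torsion free, $\mathcal H^{i}(E)=0$ for $i\neq0$, and $E\hookrightarrow E^{\vee\vee}\cong\mathcal O$ (compare Pandharipande--Thomas, or \cite[Prop.~2.5]{CZZ24} in the relative setting). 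Each of these conditions is open in flat families over the projective morphism $\pi^c$. Properness of support and the condition that $Z$ lies in the open $\mathfrak X\subset\mathfrak X^c$ (avoiding $\mathfrak X^c\setminus\mathfrak X$, which is closed) are also open conditions, the latter because $\pi^c$ is proper.

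\emph{Step 2: the logarithmic conditions.} Inside this open locus, \cite[Def.~2.4.3]{MR25} defines $\Hilb(X|D)$ by algebraic transversality and DT-stability. I will invoke \cite{MR24,MR25}: $\Hilb(X|D)\subset\Hilb(\mathfrak X/\Exp(X|D))$ is open. Transversality is open via flatness of restrictions to strata, and DT-stability is open by \cite[\S~4]{MR24}. Composing the two open immersions and lifting the resulting open substack of the classical truncation to the derived stack defines $\dHilb(X|D)$ with $(\dHilb(X|D))_{\mathrm{cl}}\cong\Hilb(X|D)$. The components $\dHilb_{\beta,\chi}(X|D)$ are the preimages of the open and closed components of $\Hilb_{\beta,\chi}(X|D)$, since discrete invariants are locally constant.

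\emph{Step 3: independence of the compactification (main obstacle).} Take two compactifications $\overline X_1,\overline X_2$ as in Lemma~\ref{lem-snc-compactification}. They are dominated by a third, $\overline X_3$, obtained by resolving the closure of the diagonal while keeping $X$ and the strata unchanged, which is possible by that lemma. After passing to common subdivisions of the cone structures, we get completed universal families $\mathfrak X^c_j$ over a common $\Exp(X|D)$. These share the open substack $\mathfrak X$ and admit proper maps $\mathfrak X^c_3\to\mathfrak X^c_j$ that are isomorphisms over $\mathfrak X$.

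The point is that the derived structure at a point $[Z]$ depends only on a neighbourhood of $\mathrm{supp}\,Z\subset\mathfrak X$. I will check this through the tangent complexes, which are the relative $\RR\mathcal Hom_{\pi^c}(I,I)_0[1]$. Since $I$ is trivial away from $Z$, the local-to-global spectral sequence reduces them to $\mathcal Ext$ sheaves supported on $Z\subset\mathfrak X$. Globally, I will use pullback along $\mathfrak X^c_3\to\mathfrak X^c_j$, namely $E\mapsto$ the derived pullback. This pullback preserves the relevant open loci and induces a morphism of derived stacks $\dHilb_{(j)}\to\dHilb_{(3)}$ that is an isomorphism on classical truncations. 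By the tangent comparison above, together with the projection formula for the proper map of fibres, which is an isomorphism near the supports, it is also an isomorphism on cotangent complexes, hence an equivalence. Making this comparison rigorous for the derived mapping stacks over a non-proper overlap is the expected main difficulty.
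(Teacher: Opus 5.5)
\textbf{Step 1 has a genuine gap.} Gorenstein fibres do not make ideal sheaves perfect. By Serre's criterion, at any singular point of a fibre the ideal of that point has infinite Tor-amplitude; already over $k[x,y]/(xy)$ the maximal ideal has a periodic, infinite free resolution. The fibres of $\pi^c$ are singular along their double loci, and flatness over a smooth base does not by itself give Gorenstein fibres either. Consequently $\mathfrak Z\mapsto I_{\mathfrak Z}$ is not defined on $\Hilb(\mathfrak X^c/\Exp(X|D))_{\leq1}$, so the first open immersion you compose with does not exist.

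The order has to be reversed, as in the paper:
\begin{itemize}
\item Start from $\Hilb(X|D)$, where algebraic transversality is built in. Perfectness of $I_{\mathfrak Z}$ on $\mathfrak X_S$ then follows from transversality via the local Li--Wu model \cite[Cor.~5.9]{Wu07}. On $\mathfrak X^c_S\setminus\mathfrak Z$ the ideal is just $\mathcal O$.
\item For openness, the smooth-variety characterization you quote ($\mathcal H^0(E)$ torsion free, $E\hookrightarrow E^{\vee\vee}\cong\mathcal O$) is not adequate on reducible fibres. The paper instead uses the local argument of \cite[Lem.~2.27]{CZZ24}: flat torsion-free rank-one sheaves normal to the codimension-one strata, equal to $\mathcal O$ near $\mathfrak X^c\setminus\mathfrak X$, with fibrewise cosupport of dimension $\le1$.
\item It then adds DT-stability \cite[Lem.~4.3.2]{MR24} and lifts to the derived stack via \cite[Prop.~2.1]{STV15}.
\end{itemize}

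\textbf{Step 3 relies on an unjustified input.} Your tangent-complex argument (étale plus an isomorphism on truncations) would work if you had proper maps $f\colon\mathfrak X^c_3\to\mathfrak X^c_j$ of flat projective families that are isomorphisms over $\mathfrak X$. Nothing produces them. A dominating $\overline X_3$ need not give a logarithmic morphism $(\overline X_3,\overline D_3)\to(\overline X_j,\overline D_j)$: blowing up points of $\overline D_j\cap E_j$ puts exceptional components into $f^*\overline D_j$ and changes the normal bundle of the boundary.

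For instance, take $X=\mathbb A^2$, $D=\{x=0\}$, and $\overline X_3=\mathrm{Bl}_2\mathbb P^2$ dominating both $\mathbb P^2$ and $\mathbb P^1\times\mathbb P^1$. The bubble over the boundary is $\mathbb F_0$ for $\overline X_3$ but $\mathbb F_1$ for $\mathbb P^2$. There is no birational morphism $\mathbb F_0\to\mathbb F_1$ extending the identity on the open part.

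The paper needs no morphism at all. After the common refinement of \cite[Prop.~3.6.1]{MR24}, both full families contain the same $\mathfrak X$ over the same base, and one compares functors of points directly:
\begin{itemize}
\item An $S$-point restricts to a perfect complex $\mathcal I$ on $\mathfrak X_S$ with trivialized determinant.
\item Off the classical support $Z$, this complex is a line bundle \cite[Rmk.~2.3]{PS23}, trivialized by the determinant.
\item By descent it therefore glues with $\mathcal O_{V_a}$ on the complement $V_a$ of $Z$ in $(\mathfrak X^c_a)_S$, and extension and restriction are mutually inverse.
\end{itemize}
This is exactly how the ``non-proper overlap'' difficulty you flag is resolved.
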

\begin{proof}
We first claim that the morphism
\[
    \Hilb(X|D)\longrightarrow
    \mathsf{Perf}^{\geq0}(\mathfrak X^c/\Exp(X|D))_{0,\mathrm{rk}=1},
    \qquad \mathfrak Z\longmapsto I_{\mathfrak Z/\mathfrak X^c},
\]
is an open immersion, where the target consists of universally gluable
perfect complexes of virtual rank one with trivialized determinant.
A family $\mathfrak Z\subset\mathfrak X_S$ proper over $S$ is also closed
in $\mathfrak X^c_S$ by definition.
The morphism is well-defined for the following reasons:
\begin{itemize}
\item On $\mathfrak X_S$, algebraic transversality and the local Li-Wu model
imply that the ideal sheaf is perfect; see \cite[Cor.~5.9]{Wu07}.
Away from $\mathfrak Z$ it is the structure sheaf, so it is perfect on $\mathfrak X^c_S$.
\item It has a canonical trivialization of its determinant, since
$\mathfrak Z$ has fibrewise codimension at least two.
\end{itemize}

By the local argument of \cite[Lem.~2.27]{CZZ24}, the ideal sheaves of subschemes
contained in $\mathfrak X$ and normal to its codimension-one strata
are characterized by the following conditions on $F$:
\begin{itemize}
\item $F$ is a flat family of torsion-free rank-one coherent sheaves,
normal to the codimension-one strata of $\mathfrak X$.
\item $F$ is a structure sheaf near $\mathfrak X^c\setminus\mathfrak X$.
\item We also require the corresponding subschemes to have fibrewise dimension
at most one; this is an open condition on the resulting proper Hilbert functor.
\end{itemize}
These conditions are open on the proper family. In particular, the failure locus
of the second condition has closed image in the base, since the complement
$\mathfrak X^c\setminus\mathfrak X$ is proper over $\Exp(X|D)$;
see also \cite[\href{https://stacks.math.columbia.edu/tag/0DPE}{Tag~0DPE}]{SP26}.
Moreover, DT-stability gives a further open substack by \cite[Lem.~4.3.2]{MR24},
proving the claim. The existence of derived enhancement follows from \cite[Prop.~2.1]{STV15}.

Finally, we consider the independence of the compactification.
Consider two compactifications with associated cone structures $\lambda$ and $\mu$.
By \cite[Prop.~3.6.1]{MR24}, after further subdivisions and root constructions,
we obtain a common refinement satisfying the preceding conditions:
\[
    \Exp(X|D)_\lambda\longleftarrow\Exp(X|D)_\varphi
    \longrightarrow\Exp(X|D)_\mu.
\]
Pullback their two universal families, then
they contain the same rank-one universal family.
We denote them as
\[
    \mathfrak X\hookrightarrow\mathfrak X_a^c
    \longrightarrow\Exp(X|D)_{\varphi},\qquad a=1,2.
\]
Let $S\to\Exp(X|D)$ be affine derived, and let $\mathcal I$ be a perfect complex
on $\mathfrak X_S$ with trivialized determinant, whose classical restriction is
$I_Z$ for $Z\in\Hilb(X|D)(S_{\mathrm{cl}})$, with its canonical determinant trivialization.
Now $Z$ closed in $(\mathfrak X_a^c)_{S_{\mathrm{cl}}}$ for each $a$,
consider their complements $V_a\subset(\mathfrak X_a^c)_S$.
On $\mathfrak X_S\cap V_a$, the complex $\mathcal I$ is a line bundle
by \cite[Rmk.~2.3]{PS23}, and its determinant trivialization gives
$\mathcal I|_{\mathfrak X_S\cap V_a}\cong\mathcal O_{\mathfrak X_S\cap V_a}$.
Descent glues $\mathcal I$ and $\mathcal O_{V_a}$ to a perfect complex
with trivialized determinant on $(\mathfrak X_a^c)_S$,
whose classical restriction is the ideal sheaf of $Z$.
Conversely, every point of the corresponding enhancement is canonically trivial on $V_a$.
Thus extension and restriction are inverse equivalences of spaces, naturally in $S$.
Both compactifications therefore represent the same functor on the common
refined cone structure, proving the asserted independence.
\end{proof}

\begin{defi}\label{def-compactifiable-cone-structures}
A pair of cone structures
on the stack of expansions and its universal family is called
\emph{compactifiable} if it admits a full projective containing family
as in the construction preceding Lemma \ref{lem-derived-hilb}.
\end{defi}
\begin{nota}\label{rmk-intrinsic-derived-hilb} 
By the proof of Lemma \ref{lem-derived-hilb}, the derived enhancement
has the following intrinsic description. For an affine derived scheme $S$ over the expansion stack, 
its $S$-points are perfect complexes on the original universal family over $S$, 
with trivialized determinant, whose restrictions to $S_{\mathrm{cl}}$ are ideal sheaves 
of families in the classical logarithmic Hilbert stack, with their canonical 
trivialized determinant. Once the original universal family is fixed, the same 
extension-and-restriction argument applies to any flat projective family containing 
it as an open substack: the complexes are extended by the structure sheaf 
away from their classical support.
\end{nota}

For $(X,D)$, all subsequent choices of cone structures and common refinements
are taken to be compactifiable.

\begin{nota}[Derived logarithmic moduli space for stable pairs]
	\label{rmk-stable-pairs}
Here we consider the case of stable pairs in \cite[Def.~2.4.4]{MR25}.
We expect that this can be generalized to $Z_t$-stable objects in \cite{CT22}.
We use compactifiable cone structures as in
Definition \ref{def-compactifiable-cone-structures} and the compactification
$\mathfrak X\hookrightarrow\mathfrak X^c$ above.
Let $p:\spec\bb C\longrightarrow\mathsf{Exp}(X|D)$ 
be a geometric point, and let $X_p$ be the corresponding expansion of
$(X,D)$. A \emph{logarithmic PT-stable pair} on $X_p$ is a pair
\[
(F,s),
\qquad
s:\mcal O_{X_p}\longrightarrow F,
\]
where $F$ is a properly supported pure one-dimensional coherent sheaf
on $X_p$, satisfying the following conditions:
\begin{enumerate}
\item The cokernel $Q:=\coker(s)$
is zero-dimensional, and its support is disjoint from all rank $1$
strata and all tube components of $X_p$.
\item The scheme-theoretic support of $F$ is algebraically transverse to all strata of $X_p$.
\item For every linear $2$-valent component $Y\subset X_p$, the restricted
pair $(F|_Y,s|_Y)$ is invariant, up to isomorphism of pairs, under the
fibrewise $\bb G_m$-action if and only if $Y$ is a tube component.
\end{enumerate}
One defines flat families similarly, requiring $\operatorname{Supp}(F)\to S$
to be proper for a family over $S$.
Therefore, we can define $\mathsf{P}(X|D)$ and Deligne-Mumford stack $\mathsf{P}_{\beta,\chi}(X|D)$
with canonical structure morphism
\[
\mathsf{P}_{\beta,\chi}(X|D)\subset\mathsf{P}(X|D)\to\mathsf{Exp}(X|D)
\]
as above; see also \cite[Rmk.~2.4.6]{MR25}.

Now we claim that we have an open immersion of derived stacks
\[
\mathsf{dP}_{\beta,\chi}(X|D)\subset\mathsf{dP}(X|D)\subset\mathsf{dPerf}(\mathfrak X^c/\mathsf{Exp}(X|D))_0
\]
such that $\left(\mathsf{dP}(X|D)\right)_{\mathrm{cl}}\cong\mathsf{P}(X|D)$
and $\left(\mathsf{dP}_{\beta,\chi}(X|D)\right)_{\mathrm{cl}}\cong\mathsf{P}_{\beta,\chi}(X|D)$
over $\mathsf{Exp}(X|D)$. As in Lemma \ref{lem-derived-hilb},
we only need to prove that 
\[
\mathsf{P}(X|D)\subset\mathsf{Perf}^{\geq0}(\mathfrak X^c/\mathsf{Exp}(X|D))_{0,\mathrm{rk=1}}
\]
is an open immersion. The morphism sends a family $(F,s)$ over $S$ to
$(\mcal O_{\mathfrak X^c_S}\to F^c)$, where $F^c$ is the extension of $F$
by zero. Since this assertion
is local for the smooth topology on $\mathsf{Exp}(X|D)$, we can
pullback to a smooth atlas of $\mathsf{Exp}(X|D)$ and work locally near a
fixed expansion.

On the complement of the rank $1$ strata and the tube components in
$\mathfrak X$, the ambient space is smooth, where we can use the usual
local characterization and openness conditions for fixed-determinant
stable-pair complexes; see also \cite[Thm.~0.1]{CT22}.
On a neighborhood of the rank $1$ strata and the tube components,
logarithmic stable pairs are surjective and correspond to ideal sheaves. On this
neighborhood, the required openness is therefore exactly the openness
of the logarithmic Hilbert stack proved in
Lemma~\ref{lem-derived-hilb}. We apply these local descriptions on the
proper family $\mathfrak X^c\to\Exp(X|D)$, with the trivial
condition near $\mathfrak X^c\setminus\mathfrak X$.
The openness argument of Lemma \ref{lem-derived-hilb}, together with
DT-stability, therefore proves the claim.
After a compactifiable common refinement, the independence argument in
the proof of Lemma \ref{lem-derived-hilb} applies as well.
\end{nota}

\begin{nota}
As in Li-Wu's construction, we also have 
the weighted stack of expansions $\Exp_{\beta,\chi}(X|D)$ equipped with 
an étale morphism 
\[
q_{\beta,\chi}:\Exp_{\beta,\chi}(X|D)\to\Exp(X|D)
\]
such that $\Hilb_{\beta,\chi}(X|D),\mathsf{P}_{\beta,\chi}(X|D)\to\Exp(X|D)$ factor through 
this morphism.
The same holds for the zero-dimensional case and the family version, which will be described
later.
Let
\[
\begin{aligned}
\mathfrak X_{\beta,\chi}
&:=
\mathfrak X\times_{\Exp(X|D)}\Exp_{\beta,\chi}(X|D),\\
\mathfrak X^c_{\beta,\chi}
&:=
\mathfrak X^c\times_{\Exp(X|D)}\Exp_{\beta,\chi}(X|D).
\end{aligned}
\]
We use the induced cone and integral structures on the weighted expansion
stack. The open immersion $\mathfrak X_{\beta,\chi}\hookrightarrow
\mathfrak X^c_{\beta,\chi}$ is the base change of the chosen compactification,
so these structures are again compactifiable.
Using compatibility of the compactified relative mapping stacks with
base change and Lemma~\ref{lem-derived-hilb}, the same derived stack is obtained as the derived
open substack
\[
\dHilb_{\beta,\chi}(X|D)
\subset
\mathsf{dPerf}
\bigl(
\mathfrak X^c_{\beta,\chi}/
\Exp_{\beta,\chi}(X|D)
\bigr)_0
\]
whose classical truncation is $\Hilb_{\beta,\chi}(X|D)$ and whose
weights agree with the topological data of the universal subscheme;
see \cite[Cor.~2.2.2.9]{TV-HAGII} for more details.
The same statement holds for stable pairs.

Since $q_{\beta,\chi}$ is étale, the transitivity triangle gives
\[
\bb L_{\dHilb_{\beta,\chi}(X|D)/
\Exp_{\beta,\chi}(X|D)}
\simeq
\bb L_{\dHilb_{\beta,\chi}(X|D)/
\Exp(X|D)}.
\]
Thus the relative shifted symplectic and Lagrangian structures
constructed over $\Exp(X|D)$ restrict canonically to the fixed
topological locus and may equivalently be regarded as structures
relative to $\Exp_{\beta,\chi}(X|D)$. The same convention applies to
the boundary Hilbert stacks of points, family Hilbert stacks and zero-dimensional 
Hilbert stacks.

Henceforth, we do not distinguish between these two canonically
equivalent derived presentations and suppress the subscript
$(\beta,\chi)$ from the base-changed universal expansions and their compactifications, while
keeping the base of each relative construction explicit.
\end{nota}

\subsection{Relative logarithmic \texorpdfstring{$\mathsf{DT}_4$}{DT4}-theory}\label{sec-rel-log-DT4}
Next, we will consider relative logarithmic curve counting. 
Fix a compactifiable cone structure with universal family 
$\mathfrak X\subset\mathfrak X^c\to\Exp_{\beta,\chi}(X|D)$.
Let $\mathfrak D$ be the universal boundary
expansion induced by the universal expansion of $(X|D)$
and $\mathfrak D^c\subset\mathfrak X^c$ be the full horizontal boundary with branches 
$\mathfrak D_i^c$. Note that $\mathfrak D^c$ is projective and flat over 
$\Exp_{\beta,\chi}(X|D)$. 
We have 
the following derived restriction map which is compatible with determinant maps, i.e. we have 
\[\begin{tikzcd}
	{\mathsf{dPerf}(\mathfrak X^c/\mathsf{Exp}_{\beta,\chi}(X| D))} & {\mathsf{dPerf}(\mathfrak D^c/\mathsf{Exp}_{\beta,\chi}(X| D))} \\
	{\mathsf{dPic}(\mathfrak X^c/\mathsf{Exp}_{\beta,\chi}(X| D))} & {\mathsf{dPic}(\mathfrak D^c/\mathsf{Exp}_{\beta,\chi}(X| D))}
	\arrow["{r_{X\to D}}", from=1-1, to=1-2]
	\arrow["\det", from=1-1, to=2-1]
	\arrow["\det", from=1-2, to=2-2]
	\arrow["{r_{X\to D}}", from=2-1, to=2-2]
\end{tikzcd}\]
By Lemma \ref{lem-derived-hilb},
this diagram induces
\[\begin{tikzcd}
	{\dHilb_{\beta,\chi}(X| D)} & {\dHilb^{\mathbf n}(D|\partial D)} \\
	{\mathsf{dPerf}(\mathfrak X^c/\mathsf{Exp}_{\beta,\chi}(X| D))_0} & {\mathsf{dPerf}(\mathfrak D^c/\mathsf{Exp}_{\beta,\chi}(X| D))_0} \\
	{\mathsf{Exp}_{\beta,\chi}(X| D)}
	\arrow["{\mathsf{r}_{X\to D}}", from=1-1, to=1-2]
	\arrow[hook, from=1-1, to=2-1]
	\arrow[hook, from=1-2, to=2-2]
	\arrow["{r_{X\to D}}", from=2-1, to=2-2]
	\arrow[from=2-1, to=3-1]
	\arrow[from=2-2, to=3-1]
\end{tikzcd}\]
where the classical truncation $\Hilb^{\mathbf n}(D|\partial D)$ parameterizes the 
prescribed DT-stable families of points supported in $\mathfrak D$,
with $\mathbf n=(n_1,\ldots,n_r)$ and of total length $n_i=\beta\cdot D_i$ 
on the transform of $D_i$. 
Note that our boundary target is defined over $\Exp_{\beta,\chi}(X|D)$, 
unlike the target in \cite[\S~3.6-3.8]{MR25}\footnote{For compatible cone and 
integral structures, MR construct $\Exp_{\beta,\chi}(X|D)\to\Exp_0(D|\partial D)$ 
with $\mathfrak D\cong\mathfrak D_{\mathrm{MR}}\times_{\Exp_0(D|\partial D)}\Exp_{\beta,\chi}(X|D)$. 
Base change therefore give $\Hilb^{\mathbf n}(D|\partial D)\cong\Hilb^{\mathbf n}(D|\partial D)_{\mathrm{MR}}\times_{\Exp_0(D|\partial D)}\Exp_{\beta,\chi}(X|D)$.
The Maulik-Ranganathan target is Deligne-Mumford, whereas its displayed base change need not be.}.

\begin{thm}\label{thm-curves-shifted-lag}
Let $(X,D)$ be a log Calabi-Yau snc pair with $X$ an $n$-dimensional smooth quasi-projective variety.
Then we have the following properties.
\begin{enumerate}
	\item The canonical morphism \[\dHilb^{\mathbf n}(D|\partial D)\to\mathsf{Exp}_{\beta,\chi}(X| D)\]
admits a $(3-n)$-shifted symplectic structure.
\item The restriction morphism
\[\begin{tikzcd}
	{\dHilb_{\beta,\chi}(X| D)} & {\dHilb^{\mathbf n}(D|\partial D)} \\
	{\mathsf{Exp}_{\beta,\chi}(X| D)}
	\arrow["{\mathsf{r}_{X\to D}}", from=1-1, to=1-2]
	\arrow["{F_{\beta,\chi}}", from=1-1, to=2-1]
	\arrow["{\mathrm{ev}_{\mathbf n,D}}", from=1-2, to=2-1]
\end{tikzcd}\]
is a $(3-n)$-shifted Lagrangian relative to $\mathsf{Exp}_{\beta,\chi}(X| D)$.
\item Assume $n=4$ and that there exists an snc pair $(Y,S)$ with $\dim Y=3$ such that $X=\mathrm{Tot}(\omega_Y(S))$ and $D=\mathrm{Tot}(\omega_S)$.
    Then 
    \[\mathsf{r}_{X\to D}:\dHilb_{\beta,\chi}(X| D)\to\dHilb^{\mathbf n}(D|\partial D)\]
	is an exact $(-1)$-shifted Lagrangian relative to $\mathsf{Exp}_{\beta,\chi}(X| D)$
    with a canonical orientation relative to $\mathsf{Exp}_{\beta,\chi}(X| D)$.
\end{enumerate}
\end{thm}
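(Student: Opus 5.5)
The plan is to run the construction of \cite[Prop.~2.5,~Thm.~2.x]{CZZ24} in families over the expansion stack, using Lemma \ref{lem:log-crepant-MR-pair} as the input that makes the universal pair relatively log Calabi-Yau. The basic tool is the relative AKSZ/PTVV transgression \cite{PTVV13}: for a flat proper family with an $\mathcal O$-orientation of relative dimension $d$, the derived mapping stack into $\mathsf{dPerf}$ carries a $(2-d)$-shifted symplectic structure relative to the base. For a family with boundary, a boundary structure turns the restriction map into a Lagrangian (Calabi \cite{Calaque}-type relative orientations).

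\textbf{Step 1: part (1).} Each branch $\mathfrak D_i^c\to\Exp_{\beta,\chi}(X|D)$ is projective and flat of relative dimension $n-1$. By the last assertion of Lemma \ref{lem:log-crepant-MR-pair}, extended over the compactification, it carries a relative $\mathcal O$-orientation $\omega_{\mathfrak D_i/\Exp}\cong\mathcal O$. The subtlety is that on $\mathfrak D^c\setminus\mathfrak D$ the dualizing sheaf need not be trivial. To handle this, I would use the fact that points of $\dHilb^{\mathbf n}(D|\partial D)$ are complexes that are canonically trivial near $\mathfrak D^c_i\setminus\mathfrak D_i$ (Remark \ref{rmk-intrinsic-derived-hilb}). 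The cotangent complex is computed by $R\mathcal{H}om(I,I)_0$, which is supported on the compact support inside $\mathfrak D$. The transgressed pairing therefore only sees the open part, where the trivialization exists.

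Concretely, I would define the $2$-form on tangent complexes as the trace pairing $R\pi_*(R\mathcal Hom(I,I)_0\otimes R\mathcal Hom(I,I)_0)\to R\pi_*\omega_{\pi}[n-1]\to\mathcal O[1-n]$. This gives shift $2-(n-1)=3-n$. Closedness follows from the standard argument that the form is transgressed from the $2$-shifted form on $\mathsf{dPerf}$ and restricted to the determinant-trivial part. Nondegeneracy follows from relative Grothendieck–Serre duality for compactly supported objects.

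\textbf{Step 2: part (2).} Take $\mathfrak X^c$ with boundary $\mathfrak D^c$. Lemma \ref{lem:log-crepant-MR-pair} gives $\omega_{\mathfrak X/\Exp}(\mathfrak D)\cong\mathcal O$, equivalently a relative boundary orientation: a class $\mathcal O_{\mathfrak X}\to\omega_{\mathfrak X/\Exp}(\mathfrak D)$ whose residue is the orientation of $\mathfrak D$. The fibre sequence
\[
\mathcal{O}(-\mathfrak D)\to\mathcal O\to\mathcal O_{\mathfrak D}
\]
then gives a nullhomotopy of the pulled-back form $\mathsf r_{X\to D}^*\omega_D$. The Lagrangian nondegeneracy condition is exactly the duality
\[
R\pi_*R\mathcal Hom(I,I(-\mathfrak D))_0\simeq \bigl(R\pi_*R\mathcal Hom(I,I)_0\bigr)^\vee[-n],
\]
again using compact support and algebraic transversality, so that derived restriction to $\mathfrak D$ agrees with classical restriction. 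This is the key place where transversality enters: the cotangent triangle of $\mathsf r_{X\to D}$ must be the restriction triangle.

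\textbf{Step 3: part (3).} Exactness and orientation come from the local model. Using $X=\Tot(\omega_Y(S))$, I would show that $\mathbb C^*$ scaling the fibres acts on the derived stacks with weight one on the symplectic form, as in \cite{CZZ24}. This gives exactness via the Euler vector field contraction, relative to the base. It requires the expansions of $(X,D)$ to be pulled back from those of $(Y,S)$ compatibly, which holds because the cone complexes agree ($\Sigma_{X|D}=\Sigma_{Y|S}$).

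For the orientation, the relative determinant $\det R\pi_*R\mathcal Hom(I,I)_0$ has a canonical square root. Pushing forward along $\Tot(\omega_{\mathfrak Y}(\mathfrak S))\to\mathfrak Y$, the $\mathcal Ext$ complex splits as $E\oplus E^\vee[-3]$-type pieces. Equivalently, one uses the canonical orientation for local CY4 of \cite{CZZ24} applied fibrewise, and checks that it is compatible with the boundary orientation on $\dHilb^{\mathbf n}(D|\partial D)$, where $D=\Tot(\omega_S)$ is a local CY3.

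The main obstacle I expect is Step 2 over a non-smooth-fibred, open, non-proper family. The transgression must be made rigorous using the compactification $\mathfrak X^c$, whose boundary is not log CY. The plan there is to work with compactly supported $\mathcal Hom$ and to verify that every pairing factors through the open part $\mathfrak X$.
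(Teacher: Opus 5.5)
For (1), (2) and the exactness part of (3) you follow the paper's proof: excision to the original family, the relative AKSZ/trace form on the boundary built from $\omega_{\mathfrak D_i/\Exp}\cong\mathcal O$ with a compactly supported lift, the Calaque-type boundary structure from $\omega_{\mathfrak X/\Exp}(\mathfrak D)\cong\mathcal O$ and its residue, supported relative Serre duality for nondegeneracy, and the weight-one fibre $\mathbb C^*$-action. One small refinement is needed there. An exact Lagrangian requires an exact lift of the whole pair $(\Omega,h)$, so you should note that $h$ also has weight one, because the residue of the weight-one log volume form has weight one. You should then run the Euler-contraction argument in the homotopy fibre of $\mathsf r_{X\to D}^*$ on relative de Rham complexes, as the paper does.

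The genuine gap is the orientation, which does not work as you formulate it.

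\emph{Wrong object.} You look for a canonical square root of $\det\mathbf R\pi_*\mathbf R\scr Hom(I,I)_0$ on $\dHilb_{\beta,\chi}(X|D)$, or for the local $\mathrm{CY}_4$ orientation applied fibrewise. But $\dHilb_{\beta,\chi}(X|D)$ is only Lagrangian. Since $\omega_{\mathfrak X/\Exp}\cong\mathcal O(-\mathfrak D)$, Serre duality pairs $\mathbf R\scr Hom(I,I)_0$ with $\mathbf R\scr Hom(I,I(-\mathfrak D))_0$, not with itself, so the fibres carry no self-dual obstruction theory to orient. Also, a relative orientation is a global isomorphism of line bundles over $\Exp_{\beta,\chi}(X|D)$, not fibrewise data. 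What you must produce is an orientation $K^{-1/2}$ of the $(-1)$-shifted symplectic boundary. You also need an isomorphism $\det\mathbb T_{\dHilb(X|D)/\Exp}\cong\mathsf r_{X\to D}^*K^{-1/2}$ whose square is the isomorphism $(\det\mathbb T_{\dHilb(X|D)/\Exp})^{\otimes 2}\cong\mathsf r_{X\to D}^*\det\mathbb T_{\dHilb(D|\partial D)/\Exp}$ supplied by the Lagrangian structure.

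\emph{Spectral splitting.} Your splitting also fails as stated. First, $I$ is not compactly supported along $\mathfrak X\to\mathfrak Y$. Second, for $G$ the pushforward of $\mathcal O_Z$, the pieces $\mathbf R\scr Hom(G,G)$ and $\mathbf R\scr Hom(G,G\otimes\omega_{\mathfrak Y/\Exp}(\mathfrak S))[-1]$ are not Serre dual. Duality pairs the second with $\mathbf R\scr Hom(G,G(-\mathfrak S))$, and the defect is a term supported on $\mathfrak S$.

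\emph{How the paper fills it.} The paper handles both problems as follows.
\begin{enumerate}
\item It passes to sheaves via the forgetful maps $F_X,F_D$. The determinant identity \eqref{eqn-001} then acquires the boundary correction $\mathsf r_{X\to D}^*F_D^*\det(\pi_{M_D,*}\mathbb F_D)$.
\item It defines $K^{-1/2}_{\Hilb(D|\partial D)}$ as $F_D^*\det(\pi_{M_D,*}\mathbb F_D)\otimes\det(p_\dagger^*\pi_{M_S,*}\mathbf R\scr Hom(\mathbb F_S,\mathbb F_S))^{-1}$, using the local $\mathrm{CY}_3$ structure of $D$.
\item It identifies $\det\pi_{M_X,*}\mathbf R\scr Hom(\mathbb F,\mathbb F)[1]$ with $r^*p_\dagger^*\det\pi_{M_S,*}\mathbf R\scr Hom(\mathbb F_S,\mathbb F_S)[1]$ by working relative to $M_{\mathrm{cpt}}(\mathfrak S/\Exp)$ rather than $\Exp$. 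Since $r$ is Lagrangian and $p_\dagger$ is a Lagrangian fibration, $p_\dagger\circ r$ is $(-2)$-shifted symplectic. The relative obstruction complex is therefore self-dual of the form $\mathbb V\to\mathbb V^\vee$, and the sign choice $(-\sqrt{-1})^{\rank\mathbb V}$ makes the isomorphism canonical.
\end{enumerate}
This comparison relative to the boundary sheaf moduli, with its boundary correction term, is the idea your sketch is missing.
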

\begin{proof}
Let $\bb I^c$ be the universal perfect complex on
$\mathfrak X^c\times_{\Exp(X|D)}\dHilb(X|D)$,
and let $\bb I$ be its restriction to
$\mathfrak X\times_{\Exp(X|D)}\dHilb(X|D)$.
Write $\pi^c$ and $\pi$ for the respective projections to $\dHilb(X|D)$.
Away from the universal substack, $\bb I^c$
is canonically trivialized by its determinant, therefore
\[
\mathbf R\pi^c_*\mathbf R\scr Hom(\bb I^c,\bb I^c)_0
\cong
\mathbf R\pi_*\mathbf R\scr Hom(\bb I,\bb I)_0.
\]
The same comparison holds for the boundary.
By excision, the supported Toën-Vezzosi Chern characters and trace constructions
are computed on the original families.
We therefore apply Lemma \ref{lem:log-crepant-MR-pair}
to $\mathfrak X$ and $\mathfrak D$ throughout this proof.

For (1), the second statement of Lemma \ref{lem:log-crepant-MR-pair},
the proof of \cite[Thm.~3.3]{CZZ24}
and the fact that we only consider boundary divisors and all codimension $\geq2$ strata 
have been removed (hence $\partial(\partial\mathfrak D)=\emptyset$), we know that 
\[\dHilb^{\mathbf n}(D|\partial D)\to\mathsf{Exp}_{\beta,\chi}(X| D)\]
is a $(4-n)$-shifted Lagrangian relative to $\mathsf{Exp}_{\beta,\chi}(X| D)$
which induces the $(3-n)$-shifted symplectic structure
of \[\dHilb^{\mathbf n}(D|\partial D)\to\mathsf{Exp}_{\beta,\chi}(X| D).\]
See \cite[Lem.~3.7]{CZZ24} or \cite[Lem.~1.9]{PY26} for details.
Note that the shifted symplectic form agrees with the form obtained directly from the AKSZ construction 
(since by the second statement of Lemma \ref{lem:log-crepant-MR-pair} again,
$\mathfrak D/\mathsf{Exp}_{\beta,\chi}(X| D)$ is Calabi-Yau), which is given by 
\[
\int_{\mathfrak D/\Exp(X|D)}\mathrm{vol}_{\mathfrak D/\Exp(X|D)}
\wedge\widetilde{\mathrm{ev}_D^*\Omega_{\mathsf{dPerf}}},
\]
where $\Omega_{\mathsf{dPerf}}$ is the canonical $2$-shifted form on $\mathsf{dPerf}$,
with the evaluation map
$\mathrm{ev}_D:\mathfrak D\times_{\Exp(X|D)}\dHilb(D|\partial D)\to\mathsf{dPerf}\times\Exp(X|D)$,
and $\widetilde{\mathrm{ev}_D^*\Omega_{\mathsf{dPerf}}}$ is a lift of ${\mathrm{ev}_D^*\Omega_{\mathsf{dPerf}}}$ 
to a compactly supported form on the universal substack $\mathfrak Z_D$.
Moreover, the trivialization $\mathrm{vol}_{\mathfrak D/\Exp(X|D)}:\scr O\cong\omega_{\mathfrak D/\Exp(X|D)}$ 
and the trace map described in \cite[Eqn.~(3.4)]{CZZ24} induce the integration
$\int_{\mathfrak D/\Exp(X|D)}\mathrm{vol}_{\mathfrak D/\Exp(X|D)}
\wedge(-)$.

For (2), it follows from Lemma \ref{lem:log-crepant-MR-pair}
and the same arguments in \cite[Thm.~3.3]{CZZ24}. Indeed,
it suffices to construct a null-homotopy of
\[
\mathsf{r}_{X\to D}^*\int_{\mathfrak D/\Exp(X|D)}\mathrm{vol}_{\mathfrak D/\Exp(X|D)}
\wedge\widetilde{\mathrm{ev}_D^*\Omega_{\mathsf{dPerf}}}.
\]
By the arguments in \cite[Thm.~3.3]{CZZ24}, we have:
\[\begin{tikzcd}
	{\dHilb(X|D)} & {\mathfrak D\times_{\Exp(X|D)}\dHilb(X|D)} & {\mathfrak X\times_{\Exp(X|D)}\dHilb(X|D)} \\
	{\dHilb(D|\partial D)} & {\mathfrak D\times_{\Exp(X|D)}\dHilb(D|\partial D)} & {\mathsf{dPerf}\times\Exp(X|D).}
	\arrow["{\mathsf{r}_{X\to D}}", from=1-1, to=2-1]
	\arrow["\square"{description}, draw=none, from=1-1, to=2-2]
	\arrow[from=1-2, to=1-1]
	\arrow["{i_{D\subset X}}", from=1-2, to=1-3]
	\arrow["{\tilde{r}_{X\to D}}", from=1-2, to=2-2]
	\arrow["{\circlearrowright }"{description}, draw=none, from=1-2, to=2-3]
	\arrow["{\mathrm{ev}_{X,D}}", from=1-3, to=2-3]
	\arrow[from=2-2, to=2-1]
	\arrow["{\mathrm{ev}_D}", from=2-2, to=2-3]
\end{tikzcd}\]
Then we have 
\begin{align*}
&\mathsf{r}_{X\to D}^*\int_{\mathfrak D/\Exp(X|D)}\mathrm{vol}_{\mathfrak D/\Exp(X|D)}
\wedge\widetilde{\mathrm{ev}_D^*\Omega_{\mathsf{dPerf}}}\\ 
&=
\int_{\mathfrak D/\Exp(X|D)}^{\dHilb(X|D)}(1\times\tilde{r}_{X\to D})^*\mathrm{vol}_{\mathfrak D/\Exp(X|D)}
\wedge\widetilde{\mathrm{ev}_D^*\Omega_{\mathsf{dPerf}}}\\
&=\int_{\mathfrak D/\Exp(X|D)}^{\dHilb(X|D)}\mathrm{vol}_{\mathfrak D/\Exp(X|D)}
\wedge i_{D\subset X}^*\widetilde{\mathrm{ev}_{X,D}^*\Omega_{\mathsf{dPerf}}}.
\end{align*}
Here $\int_{\mathfrak D/\Exp(X|D)}$ is the integration from 
$\mathfrak D\times_{\Exp(X|D)}\dHilb(D|\partial D)$ to $\dHilb(D|\partial D)$
and $\int_{\mathfrak D/\Exp(X|D)}^{\dHilb(X|D)}$ is the integration from 
$\mathfrak D\times_{\Exp(X|D)}\dHilb(X|D)$ to $\dHilb(X|D)$.
This follows from the compatibility results in \cite[Thm.~A.0.10(i),~Cor.~A.0.12]{Pre}.
Then by arguments similar to those in \cite[Eqn.~(3.12)(3.13)(3.14)]{CZZ24} and by the family version 
of \cite[\S~3.2.1,~\S~2.2.3]{Cal15}, we construct a null-homotopy of the map
\begin{align*}
&\int_{\mathfrak D/\Exp(X|D)}^{\dHilb(X|D)}\mathrm{vol}_{\mathfrak D/\Exp(X|D)}
\wedge i_{D\subset X}^*(-):\\
&\mathrm{NC}_{\mathfrak Z_{X,D}}((\mathfrak X\times_{\Exp(X|D)}\dHilb(X|D))/\Exp(X|D))(2)
\to\mathrm{NC}(\dHilb(X|D)/\Exp(X|D))[-n+1](2),
\end{align*}
for universal substack $\mathfrak Z_{X,D}\subset\mathfrak X\times_{\Exp(X|D)}\dHilb(X|D)$.
The resulting isotropic structure is non-degenerate by supported
relative Serre duality and Lemma \ref{lem:log-crepant-MR-pair}.

For (3), inspired from \cite[proof of Prop.~2.34]{Saf23}, we
consider the fibre $\bb C^*$-action on
$X=\mathrm{Tot}(\omega_Y(S))$. It lifts to the original universal pair
$(\mathfrak X,\mathfrak D)$ with trivial action on
$\mathsf{Exp}_{\beta,\chi}(X|D)$. By
Remark \ref{rmk-intrinsic-derived-hilb}, it induces actions on the two
derived Hilbert stacks for which $\mathsf r_{X\to D}$ is $\bb C^*$-equivariant.
The canonical relative log Calabi-Yau volume form and its boundary residue
have $\bb C^*$-weight one. 
Consequently, the resulting symplectic form $\Omega$ and this specified
isotropic homotopy $h$ in (1) and (2) together have weight one.

Write $\mathrm{DR}$ for the global Hodge-completed relative de Rham complex
with differential
$d_{\mathrm{tot}}=d_{\mathrm{int}}+d_{\mathrm{dR}}$.
Equip the following homotopy fibre with its induced Hodge filtration:
\[
\mathcal C:=\operatorname{fib}\bigl(
	\mathrm{DR}(\dHilb^{\mathbf n}(D|\partial D)/\mathsf{Exp}_{\beta,\chi}(X|D))
	\xrightarrow{\mathsf r_{X\to D}^*}
	\mathrm{DR}(\dHilb_{\beta,\chi}(X|D)/\mathsf{Exp}_{\beta,\chi}(X|D))\bigr).
\]
In the standard fibre model, a degree-$k$ element is a pair $(\alpha,\beta)$,
where $\alpha$ has degree $k$ in the boundary complex and $\beta$ has
degree $k-1$ in the source complex, with total differential
\[
d_{\mathcal C}(\alpha,\beta)
=(d_{\mathrm{tot}}\alpha,\mathsf r_{X\to D}^*\alpha-d_{\mathrm{tot}}\beta).
\]
Choose equivariant representatives so that
$\xi=(\Omega,h)\in Z^1(\mathrm{Fil}^2\mathcal C)$ has $\bb C^*$-weight one.
Thus $d_{\mathrm{tot}}\Omega=0$ and
$d_{\mathrm{tot}}h=\mathsf r_{X\to D}^*\Omega$.
Let $E_X,E_D$ be the induced relative Euler vector fields and
equivariance gives $\mathsf r_{X\to D}^*\iota_{E_D}
=\iota_{E_X}\mathsf r_{X\to D}^*$. Consider 
$\iota_E(\alpha,\beta)=(\iota_{E_D}\alpha,-\iota_{E_X}\beta)$, Cartan's formula implies
\[
d_{\mathcal C}\iota_E+\iota_Ed_{\mathcal C}=\mathcal L_E.
\]
Since $\xi$ is of weight one, one has 
$\mathcal L_E\xi=\xi$. Therefore, if we define
$\eta:=\iota_E\xi\in \mathrm{Fil}^1\mathcal C^0$, then using 
$d_{\mathcal C}\xi=0$, we have
\[
d_{\mathcal C}\eta=\xi.
\]
Thus $\bar\eta\in Z^0(\mathrm{Gr}^1\mathcal C)$ has image
represented by $\xi$ under the connecting morphism for
\[
\mathrm{Fil}^2\mathcal C\longrightarrow \mathrm{Fil}^1\mathcal C
\longrightarrow\mathrm{Gr}^1\mathcal C.
\]
Write $\eta=\eta_1+\eta_{\geq2}$ according to Hodge degree.
The inner part $d_{\mathrm{int},\mathcal C}$ annihilates $\eta_1$.
The de-Rham part of differential is
$d_{\mathrm{dR},\mathcal C}(\alpha,\beta)
=(d_{\mathrm{dR}}\alpha,-d_{\mathrm{dR}}\beta)$, we therefore have
\[
\xi=d_{\mcal C}\eta=d_{\mathrm{int},\mcal C}\eta_{\geq2}+d_{\mathrm{dR},\mathcal C}\eta
\quad\Rightarrow\quad
\xi-d_{\mathrm{dR},\mathcal C}\eta_1=d_{\mathcal C}\eta_{\geq2}.
\]
Here $\bar\eta$ gives a $(-1)$-shifted $1$-form on the boundary Hilbert
stack together with a null-homotopy of its pullback, and $\eta_{\geq2}$
specifies a homotopy identifying its de Rham differential with the
original pair $(\Omega,h)$. This gives an exact lift of the entire
isotropic structure; see also \cite[Def.~3.10(4)]{BCS24}.
Together with the non-degeneracy proved in (2), this proves that
$\mathsf r_{X\to D}$ is an exact $(-1)$-shifted Lagrangian relative to
$\mathsf{Exp}_{\beta,\chi}(X|D)$.

Finally, we will consider the existence of orientations.
Indeed, this follows from the same arguments in \cite[Thm.~4.6]{CZZ24}
and the choice in \cite[Rmk.~4.10]{CZZ24}.
Here we give a sketch. Consider the forgetful morphisms $(\scr O\twoheadrightarrow\scr F)\mapsto\scr F$:
\[\begin{tikzcd}
	{\Hilb(X| D)} & {M_{\mathrm{cpt}}(\mathfrak X/\mathsf{Exp}(X| D))} \\
	{\Hilb(D|\partial D)} & {M_{\mathrm{cpt}}(\mathfrak D/\mathsf{Exp}(X| D))}
	\arrow["{F_X}", from=1-1, to=1-2]
	\arrow["{\mathsf{r}_{X\to D}}"', from=1-1, to=2-1]
	\arrow["r", from=1-2, to=2-2]
	\arrow["{F_D}", from=2-1, to=2-2]
\end{tikzcd}\]
where $M_{\mathrm{cpt}}$ denotes
the locus of flat families of coherent sheaves that are perfect on
the corresponding base-changed universal family, have support proper
over their parameter scheme, and are fibrewise normal to the relevant
codimension-one strata. 
Let $\bb F,\bb F_D,\bb F_S$ denote
the universal sheaves lying on 
\[M_{\mathrm{cpt}}(\bullet/\mathsf{Exp}(X| D))
\times_{\mathsf{Exp}(X| D)}\bullet\] for $\bullet=\mathfrak X,\mathfrak D,\mathfrak S$,
respectively. Consider the following diagram
\begin{footnotesize}
\[\begin{tikzcd}
	{M_{\mathrm{cpt}}(\mathfrak X/\mathsf{Exp}(X| D)) \times_{\mathsf{Exp}(X| D)}\mathfrak X} & {M_{\mathrm{cpt}}(\mathfrak X/\mathsf{Exp}(X| D)) \times_{\mathsf{Exp}(X| D)}\mathfrak D} & {M_{\mathrm{cpt}}(\mathfrak D/\mathsf{Exp}(X| D)) \times_{\mathsf{Exp}(X| D)}\mathfrak D} & {} \\
	& {M_{\mathrm{cpt}}(\mathfrak X/\mathsf{Exp}(X| D))} & {M_{\mathrm{cpt}}(\mathfrak D/\mathsf{Exp}(X| D))} & {}
	\arrow["{\pi_{M_X}}"', from=1-1, to=2-2]
	\arrow[from=1-2, to=1-1]
	\arrow[from=1-2, to=1-3]
	\arrow[from=1-2, to=2-2]
	\arrow["\square"{description}, draw=none, from=1-2, to=2-3]
	\arrow[from=1-3, to=1-4]
	\arrow["{\pi_{M_D}}"', from=1-3, to=2-3]
	\arrow["r", from=2-2, to=2-3]
	\arrow["{=}", from=2-3, to=2-4]
\end{tikzcd}\]
\end{footnotesize}
\begin{footnotesize}
\[\begin{tikzcd}
	{} & {M_{\mathrm{cpt}}(\mathfrak D/\mathsf{Exp}(X| D)) \times_{\mathsf{Exp}(X| D)}\mathfrak S} & {M_{\mathrm{cpt}}(\mathfrak S/\mathsf{Exp}(X| D)) \times_{\mathsf{Exp}(X| D)}\mathfrak S} \\
	{} & {M_{\mathrm{cpt}}(\mathfrak D/\mathsf{Exp}(X| D))} & {M_{\mathrm{cpt}}(\mathfrak S/\mathsf{Exp}(X| D)).}
	\arrow[from=1-1, to=1-2]
	\arrow[from=1-2, to=1-3]
	\arrow[from=1-2, to=2-2]
	\arrow["\square"{description}, draw=none, from=1-2, to=2-3]
	\arrow["{\pi_{M_S}}", from=1-3, to=2-3]
	\arrow["{=}", from=2-1, to=2-2]
	\arrow["{p_{\dagger}}", from=2-2, to=2-3]
\end{tikzcd}\]
\end{footnotesize}
Then we can prove that 
\begin{equation}\label{eqn-001}
	\begin{aligned}
&\det\bb T_{\dHilb(X| D)/\mathsf{Exp}(X| D)}|_{\Hilb(X| D)}\\
&\cong F_X^*\left(\det\pi_{M_X,*}\mathbf R\scr Hom(\bb F,\bb F)[1]\right)
\otimes(\mathsf{r}_{X\to D}^*F_D^*\det(\pi_{M_D,*}\bb F_D))
\end{aligned}
\end{equation}
and 
\begin{align*}
&\det\bb T_{\dHilb(D|\partial D)/\mathsf{Exp}(X| D)}|_{\Hilb(D|\partial D)}
\cong F_D^*\frac{\det(\pi_{M_D,*}\bb F_D)^{\otimes2}}{\det(p_{\dagger}^*\pi_{M_S,*}
\mathbf R\scr Hom(\bb F_S,\bb F_S))^{\otimes2}}.
\end{align*}
Then we can define 
\[
K^{-1/2}_{\Hilb(D|\partial D)}:=F_D^*\frac{\det(\pi_{M_D,*}\bb F_D)}{\det(p_{\dagger}^*\pi_{M_S,*}
\mathbf R\scr Hom(\bb F_S,\bb F_S))}
\]
which forces 
\[
\det\bb T_{\dHilb(D|\partial D)/\mathsf{Exp}(X| D)}|_{\Hilb(D|\partial D)}\cong
\left(K^{-1/2}_{\Hilb(D|\partial D)}\right)^{\otimes2}.
\]
Moreover, to compare $\mathsf{r}_{X\to D}^*K^{-1/2}_{\Hilb(D|\partial D)}$
and Eqn. (\ref{eqn-001}), we need to compare 
\[\det\pi_{M_X,*}\mathbf R\scr Hom(\bb F,\bb F)[1]\text{ and }
r^*p_{\dagger}^*\det\pi_{M_S,*}
\mathbf R\scr Hom(\bb F_S,\bb F_S)[1].\]
Consider the following projection maps 
\[\begin{tikzcd}
	{\mathsf{d}M_{\mathrm{cpt}}(\mathfrak X/\mathsf{Exp}(X| D))} & {\mathsf{d}M_{\mathrm{cpt}}(\mathfrak Y/\mathsf{Exp}(X| D))} \\
	{\mathsf{d}M_{\mathrm{cpt}}(\mathfrak D/\mathsf{Exp}(X| D))} & {\mathsf{d}M_{\mathrm{cpt}}(\mathfrak S/\mathsf{Exp}(X| D))}
	\arrow["{\pi_{\dagger}}", from=1-1, to=1-2]
	\arrow["r", from=1-1, to=2-1]
	\arrow[from=1-2, to=2-2]
	\arrow["{p_{\dagger}}", from=2-1, to=2-2]
\end{tikzcd}\]
where $r$ admits a $(-1)$-shifted Lagrangian structure relative to $\mathsf{Exp}(X| D)$
with Lagrangian fibration $p_{\dagger}$ relative to $\mathsf{Exp}(X| D)$.
This forces $p_{\dagger}\circ r$ to have a $(-2)$-shifted symplectic structure.
The spectral construction gives us a distinguished triangle
\begin{small}
	\begin{align*}
		&\pi_{\dagger}^*\bb L_{\mathsf{d}M_{\mathrm{cpt}}(\mathfrak Y/\mathsf{Exp}(X| D))/\mathsf{d}M_{\mathrm{cpt}}(\mathfrak S/\mathsf{Exp}(X| D))}[-2]
\to\bb T_{\mathsf{d}M_{\mathrm{cpt}}(\mathfrak X/\mathsf{Exp}(X| D))/\mathsf{d}M_{\mathrm{cpt}}(\mathfrak S/\mathsf{Exp}(X| D))}\\
&\to\pi_{\dagger}^*\bb T_{\mathsf{d}M_{\mathrm{cpt}}(\mathfrak Y/\mathsf{Exp}(X| D))/\mathsf{d}M_{\mathrm{cpt}}(\mathfrak S/\mathsf{Exp}(X| D))}
\to
	\end{align*}
\end{small}
which is self-dual via the isomorphism induced by the shifted symplectic form.
Let $\bb E:=\bb L_{\mathsf{d}M_{\mathrm{cpt}}(\mathfrak X/\mathsf{Exp}(X| D))/\mathsf{d}M_{\mathrm{cpt}}(\mathfrak S/\mathsf{Exp}(X| D))}|_{M_{\mathrm{cpt}}(\mathfrak X/\mathsf{Exp}(X| D))}$.
This gives $\bb E=(\bb V\to\bb V^{\vee})$, with orientation
$\pm(-\sqrt{-1})^{\rank\bb V}:\scr O\to\scr O$.
We always choose the \textit{positive sign}.
The positive choice determines a canonical choice of isomorphism 
\[
\det(\pi_{M_X,*}\mathbf R\scr Hom(\bb F,\bb F)[1])\cong
r^*p_{\dagger}^*\det(\pi_{M_S,*}
\mathbf R\scr Hom(\bb F_S,\bb F_S)[1])
\]
which concludes the argument.
\end{proof}

\begin{nota}
Here we note that, unlike in the Li-Wu and Cao-Zhao-Zhou's settings,
the universal divisor in our setting is not constant over
$\mathsf{Exp}_{\beta,\chi}(X|D)$. However, the proofs of
\cite[Thm.~3.2, Thm.~3.3, Thm.~4.6]{CZZ24} only use the relative
AKSZ construction, relative Serre duality, and the determinant-line
identities; hence they apply verbatim to our universal families.
\end{nota}

Now we will use this and shifted Lagrangian classes to deduce the 
logarithmic relative Donaldson-Thomas theory. Let $n=4$ and assume that 
all the pairs are total spaces as described in Theorem \ref{thm-curves-shifted-lag}(3).

By Theorem \ref{thm-curves-shifted-lag}(3), the map $\mathrm{ev}_{\mathbf n,D}:\dHilb^{\mathbf n}(D|\partial D)\to \mathsf{Exp}_{\beta,\chi}(X| D)$
is an exact $(-1)$-shifted symplectic fibration and $\mathsf r_{X\to D}$ is an exact Lagrangian.
Hence Theorem \ref{thm-perverse-pullback} gives the t-exact
perverse pullback $\mathrm{ev}_{\mathbf n,D}^{\varphi}$.

Then Remark \ref{rmk-KKPS-lagrangian-classes}
yields the canonical morphism
	\[
	\mathsf r_{X\to D}^*\mathrm{ev}_{\mathbf n,D}^{\varphi}[\vdim F_{\beta,\chi}]
	\to F_{\beta,\chi}^!.
	\]
	Since $\mathsf{Exp}_{\beta,\chi}(X| D)$ is smooth of dimension $0$, we apply $\mathbb Q_{\mathsf{Exp}_{\beta,\chi}(X| D)}[0]$
	to this morphism of functors. Then we have 
	\[
	\mathsf r_{X\to D}^*\varphi_{\mathrm{ev}_{\mathbf n,D}}[\vdim \dHilb_{\beta,\chi}(X| D)]
	\to \bb D\mathbb Q_{\dHilb_{\beta,\chi}(X| D)}
	\]
for $\varphi_{\mathrm{ev}_{\mathbf n,D}}:=\mathrm{ev}_{\mathbf n,D}^{\varphi}\mathbb Q_{\mathsf{Exp}_{\beta,\chi}(X| D)}[0]$.
Define the \textit{cohomological Donaldson-Thomas theory}
by 
\[
H^*_{\mathrm{crit}}(\dHilb^{\mathbf n}(D|\partial D)):=H^*(\varphi_{\mathrm{ev}_{\mathbf n,D}}).
\]

\begin{defi}
In this case, the \textit{logarithmic relative Donaldson-Thomas theory}
is the canonical morphism 
\[
	[\mathsf r_{X\to D}]^{\mathrm{Lag}}:
	H^*_{\mathrm{crit}}(\dHilb^{\mathbf n}(D|\partial D))\to H^{\BM}_{\vdim \dHilb_{\beta,\chi}(X| D)-*}(\dHilb_{\beta,\chi}(X| D),\bb Q)
	\]
induced by the construction in Remark~\ref{rmk-constructions-BM}(1).
\end{defi}
\begin{nota}
Note that $\vdim\dHilb_{\beta,\chi}(X| D)=2\chi-D\cdot\beta$ by the Riemann-Roch theorem.
The same holds for $\vdim\dP_{\beta,\chi}(X| D)$.
\end{nota}

Finally we will consider how the choice of cone structure can affect
the logarithmic relative Donaldson-Thomas theory.
We always consider the total-space assumptions and orientations now.
By \cite[Prop.~3.6.1]{MR24} and the common-refinement construction in Lemma~\ref{lem-derived-hilb}, 
it suffices to compare two such models related by a compactifiable refinement $\mu\to\lambda$.
Set
\[
(\overline{\mathfrak X}^{\mu}_{\lambda},\overline{\mathfrak D}^{\mu}_{\lambda})
:=(\mathfrak X^\lambda,\mathfrak D^\lambda)\times_{\mathsf{Exp}_{\beta,\chi}(X|D)_\lambda}\mathsf{Exp}_{\beta,\chi}(X|D)_\mu.
\]
By the proof of \cite[Thm.~5.2.1]{MR24}, the canonical morphism $\mathfrak X^\mu\to\overline{\mathfrak X}^{\mu}_{\lambda}$ contracts the additional tube components. These insertions do not change the induced boundary $0$-complex, and restriction of the contraction gives
\[
\mathfrak D^\mu\cong\overline{\mathfrak D}^{\mu}_{\lambda}.
\]
For the base-changed family, choose the flat projective compactification
\[
\bigl((\overline{\mathfrak X}^{\mu}_{\lambda})^c,(\overline{\mathfrak D}^{\mu}_{\lambda})^c\bigr)
:=\bigl((\mathfrak X^\lambda)^c,(\mathfrak D^\lambda)^c\bigr)\times_{\mathsf{Exp}_{\beta,\chi}(X|D)_\lambda}\mathsf{Exp}_{\beta,\chi}(X|D)_\mu.
\]
Define
\[
\overline{\dHilb}_{\beta,\chi}(X|D)_{\mu;\lambda}
:=\dHilb_{\beta,\chi}(X|D)_\lambda\times_{\mathsf{Exp}_{\beta,\chi}(X|D)_\lambda}\mathsf{Exp}_{\beta,\chi}(X|D)_\mu.
\]
By base change for derived mapping stacks, this is the open substack of 
$\mathsf{dPerf}((\overline{\mathfrak X}^{\mu}_{\lambda})^c/\mathsf{Exp}_{\beta,\chi}(X|D)_\mu)_0$.
The identification of the original boundary families and Remark~\ref{rmk-intrinsic-derived-hilb} similarly give
\[
\dHilb^{\mathbf n}(D|\partial D)_\mu
\cong\dHilb^{\mathbf n}(D|\partial D)_\lambda\times_{\mathsf{Exp}_{\beta,\chi}(X|D)_\lambda}\mathsf{Exp}_{\beta,\chi}(X|D)_\mu.
\]
Consequently, restriction gives the following diagram, with the right vertical open immersion understood via this identification:
\[\begin{tikzcd}
{\overline{\dHilb}_{\beta,\chi}(X|D)_{\mu;\lambda}} & {\dHilb^{\mathbf n}(D|\partial D)_\mu} \\
{\mathsf{dPerf}((\overline{\mathfrak X}^{\mu}_{\lambda})^c/\mathsf{Exp}_{\beta,\chi}(X|D)_\mu)_0} & {\mathsf{dPerf}((\overline{\mathfrak D}^{\mu}_{\lambda})^c/\mathsf{Exp}_{\beta,\chi}(X|D)_\mu)_0}
\arrow["{\overline{\mathsf r}_{X\to D}^{\mu,\lambda}}",from=1-1,to=1-2]
\arrow[hook,from=1-1,to=2-1] \arrow[hook,from=1-2,to=2-2]
\arrow["{\overline r_{X\to D}^{\mu,\lambda}}",from=2-1,to=2-2]
\end{tikzcd}\]
The upper horizontal morphism is the base change of $\mathsf r_{X\to D}^{\lambda}$, hence is an oriented exact $(-1)$-shifted 
Lagrangian relative to $\mathsf{Exp}_{\beta,\chi}(X|D)_\mu$. 
Thus we obtain two oriented exact $(-1)$-shifted Lagrangians over $\mathsf{Exp}_{\beta,\chi}(X|D)_\mu$:
\[\begin{tikzcd}
{\dHilb_{\beta,\chi}(X|D)_\mu} && {\overline{\dHilb}_{\beta,\chi}(X|D)_{\mu;\lambda}} \\
& {\dHilb^{\mathbf n}(D|\partial D)_\mu.}
\arrow["{\mathsf r_{X\to D}^{\mu}}"',from=1-1,to=2-2]
\arrow["{\overline{\mathsf r}_{X\to D}^{\mu,\lambda}}",from=1-3,to=2-2]
\end{tikzcd}\]
DT-stability requires the subschemes on the additional tube components to be tube subschemes. Pullback and contraction therefore give inverse operations on classical families, preserving the boundary restrictions; see the proof of \cite[Thm.~5.2.1]{MR24}. We obtain a canonical isomorphism
\[
p_{\mu,\lambda}:\Hilb_{\beta,\chi}(X|D)_\mu\xrightarrow{\ \cong\ }\overline{\Hilb}_{\beta,\chi}(X|D)_{\mu;\lambda},
\]
where the right-hand side is the classical truncation of $\overline{\dHilb}_{\beta,\chi}(X|D)_{\mu;\lambda}$. 
By construction, we have the following homotopical pullback diagram with proper vertical morphisms:
\[\begin{tikzcd}
{\overline{\dHilb}_{\beta,\chi}(X|D)_{\mu;\lambda}} & {\dHilb^{\mathbf n}(D|\partial D)_\mu} & {\mathsf{Exp}_{\beta,\chi}(X|D)_\mu} \\
{\dHilb_{\beta,\chi}(X|D)_\lambda} & {\dHilb^{\mathbf n}(D|\partial D)_\lambda} & {\mathsf{Exp}_{\beta,\chi}(X|D)_\lambda.}
\arrow["{\overline{\mathsf r}_{X\to D}^{\mu,\lambda}}",from=1-1,to=1-2] \arrow[from=1-2,to=1-3]
\arrow["{q_{X|D,\mu,\lambda}}",from=1-1,to=2-1] \arrow["{q_{D,\mu,\lambda}}",from=1-2,to=2-2] \arrow[from=1-3,to=2-3]
\arrow["\square"{description},draw=none,from=1-1,to=2-2] \arrow["\square"{description},draw=none,from=1-2,to=2-3]
\arrow["{\mathsf r_{X\to D}^{\lambda}}",from=2-1,to=2-2] \arrow[from=2-2,to=2-3]
\end{tikzcd}\]
\begin{conj}\label{conj-cone-structure-relative-log}
There are canonical pushforwards $q_{D,\mu,\lambda,*}$ on critical cohomology, compatible with composition of refinements, 
such that the following diagram commutes:
\[\begin{tikzcd}
& {H^*_{\mathrm{crit}}(\Hilb^{\mathbf n}(D|\partial D)_\mu)} & {H^*_{\mathrm{crit}}(\Hilb^{\mathbf n}(D|\partial D)_\lambda)} \\
{H^{\BM}_*(\Hilb_{\beta,\chi}(X|D)_\mu,\bb Q)} & {H^{\BM}_*(\overline{\Hilb}_{\beta,\chi}(X|D)_{\mu;\lambda},\bb Q)} & {H^{\BM}_*(\Hilb_{\beta,\chi}(X|D)_\lambda,\bb Q)}
\arrow["{q_{D,\mu,\lambda,*}}",from=1-2,to=1-3]
\arrow["{[\mathsf r_{X\to D}^{\mu}]^{\mathrm{Lag}}}"',from=1-2,to=2-1]
\arrow["{[\overline{\mathsf r}_{X\to D}^{\mu,\lambda}]^{\mathrm{Lag}}}",from=1-2,to=2-2]
\arrow["{[\mathsf r_{X\to D}^{\lambda}]^{\mathrm{Lag}}}",from=1-3,to=2-3]
\arrow["{p_{\mu,\lambda,*}}",from=2-1,to=2-2] \arrow["{q_{X|D,\mu,\lambda,*}}",from=2-2,to=2-3]
\end{tikzcd}\]
\end{conj}

\subsection{Tautological logarithmic DT/PT correspondence}\label{sec-DT-PT-corr}
Here we fix a compactifiable cone structure and only consider local pairs.
By Remark \ref{rmk-stable-pairs-results} and Remark \ref{rmk-stable-pairs},
we can consider both logarithmic DT and PT invariants.

Let $(X,D)$ be a log local Calabi-Yau snc pair of dimension $4$, and let
$\beta\neq 0$ be an effective curve class. Write
\[
D=\bigcup_{i=1}^rD_i,
\qquad
\mathbf n=(n_1,\ldots,n_r),
\qquad
n_i=\beta\cdot D_i.
\]
Suppose that a Calabi-Yau torus $\bb T$ (could be trivial) acts on $(X,D)$.
Assume that the fixed loci $\Hilb_{\beta,\chi}(X|D)^{\bb T}$, 
$\mathsf{P}_{\beta,\chi}(X|D)^{\bb T}$ and $\Hilb^n(X|D)^{\bb T}$
are proper for every $\chi$ and every $n>0$.
Let $\scr L$ be a $\bb T$-equivariant line bundle on $X$,
and let $\bb C_m^*$ be an auxiliary trivial torus with equivariant
parameter $m$.

Since we assume that our spaces are all total spaces of snc pairs of dimension
$3$, we have relative logarithmic theories:
\begin{align*}
	&[\mathsf r_{X\to D}]^{\mathrm{Lag}}:
H^*_{\mathrm{crit},\bb T}
\left(
\dHilb^{\mathbf n}(D|\partial D)_{\beta,\chi}
\right)
\longrightarrow
H^{\BM,\bb T}_{
\vdim\dHilb_{\beta,\chi}(X|D)-*}
\left(
\Hilb_{\beta,\chi}(X|D),
\bb Q
\right)_{\loc},\\ 
	&
[\mathsf r^{\PT}_{X\to D}]^{\mathrm{Lag}}:
H^*_{\mathrm{crit},\bb T}
\left(
\dHilb^{\mathbf n}(D|\partial D)_{\beta,\chi}
\right)
\longrightarrow
H^{\BM,\bb T}_{
\vdim\mathsf{dP}_{\beta,\chi}(X|D)-*}
\left(
\mathsf P_{\beta,\chi}(X|D),
\bb Q
\right)_{\loc}.
\end{align*}

Suppressing $\beta,\chi$ from the notation, let
\[
q_{X|D}:\mathfrak X\times_{\mathsf{Exp}_{\beta,\chi}(X|D)}\Hilb_{\beta,\chi}(X|D)
\longrightarrow\Hilb_{\beta,\chi}(X|D)
\]
and
\[
p_{X|D}:\mathfrak X\times_{\mathsf{Exp}_{\beta,\chi}(X|D)}\Hilb_{\beta,\chi}(X|D)
\longrightarrow\mathfrak X\longrightarrow X
\]
be the natural morphisms, and let
\[
\mathfrak Z\subset\mathfrak X
\times_{\mathsf{Exp}_{\beta,\chi}(X|D)}\Hilb_{\beta,\chi}(X|D)
\]
be the universal subscheme.
Define the tautological perfect complex
\[
\scr L_{\DT}^{[\beta,\chi]}:=\RR q_{X|D,*}\left(p_{X|D}^*\scr L
\otimes^{\LL}\mcal O_{\mathfrak Z}\right)\in K^0_{\bb T}
\left(\Hilb_{\beta,\chi}(X|D)\right).
\]
Its virtual rank is 
$\rank\scr L_{\DT}^{[\beta,\chi]}=\chi\left(\mcal O_{\mathfrak Z}\otimes\scr L\right)
=\chi+c_1(\scr L)\cdot\beta$.
Set
\[
\mathsf e_{\beta,\chi}^{\DT}(\scr L,m):=e^{\bb T\times\bb C_m^*}
\left(\left(\scr L_{\DT}^{[\beta,\chi]}\right)^\vee\otimes e^m\right).
\]

Let $\dHilb^{\mathbf n}(D|\partial D)_{\mathrm{prime}}\subset\mathsf{dPerf}(\mathfrak D^c/\Exp(X|D))_0$ 
be the derived open substack whose classical truncation consists of the prescribed DT-stable 
points supported in $\mathfrak D$, proper over their base, with numerical datum 
$\mathbf n$. Its $(\beta,\chi)$-weighted base change is $\dHilb^{\mathbf n}(D|\partial D)_{\beta,\chi}$, 
by base change and Remark~\ref{rmk-intrinsic-derived-hilb}.
By \cite[Thm.~5.24(1)]{KKPS1}, we know that $\varphi_{\mathrm{ev}_{\mathbf n,D;\beta,\chi}}$
is the pullback of $\varphi_{\mathrm{ev}_{\mathbf n,D;\mathrm{prime}}}$.

For $\alpha\in H^{k}_{\mathrm{crit},\bb T}
\left(\dHilb^{\mathbf n}(D|\partial D)_{\mathrm{prime}}\right)$,
we define a sequence of compatible classes 
$\alpha_{\beta,\chi}\in H^{k}_{\mathrm{crit},\bb T}
\left(\dHilb^{\mathbf n}(D|\partial D)_{\beta,\chi}\right)$
pulled back from $\alpha$.
Then we
define the \textit{relative logarithmic tautological DT invariant
associated to $\alpha$}:
\[
\DT_{\beta}(X|D;\alpha)_{\scr L,m}:=\sum_{\chi\in\bb Z}q^\chi
\int_{[\mathsf r_{X\to D}]^{\mathrm{Lag}}(\alpha_{\beta,\chi}),\bb T}
\mathsf e_{\beta,\chi}^{\DT}(\scr L,m)\in\operatorname{Frac}
H^*_{\bb T\times\bb C_m^*}(\mathrm{pt})((q)).
\]
Similarly, we can define the \textit{relative logarithmic tautological PT invariant
associated to $\alpha$}:
\[
\PT_{\beta}(X|D;\alpha)_{\scr L,m}:=\sum_{\chi\in\bb Z}q^\chi
\int_{[\mathsf r^{\PT}_{X\to D}]^{\mathrm{Lag}}(\alpha_{\beta,\chi}),\bb T}
\mathsf e_{\beta,\chi}^{\PT}(\scr L,m)\in\operatorname{Frac}H^*_{\bb T\times\bb C_m^*}(\mathrm{pt})((q)).
\]
\begin{nota}
For $\alpha_{\beta,\chi}\in
H^{k}_{\mathrm{crit},\bb T}
\left(
\dHilb^{\mathbf n}(D|\partial D)_{\beta,\chi}
\right)$
pulled back from $\alpha$, the degrees of 
$[\mathsf r_{X\to D}]^{\mathrm{Lag}}(\alpha_{\beta,\chi})$
and $[\mathsf r^{\PT}_{X\to D}]^{\mathrm{Lag}}(\alpha_{\beta,\chi})$ are
$2\chi-D\cdot\beta-k$. Hence the degrees of 
$[\mathsf r_{X\to D}]^{\mathrm{Lag}}(\alpha_{\beta,\chi})\cap\mathsf e_{\beta,\chi}^{\DT}(\scr L,m)$
and $[\mathsf r^{\PT}_{X\to D}]^{\mathrm{Lag}}(\alpha_{\beta,\chi})\cap \mathsf e_{\beta,\chi}^{\PT}(\scr L,m)$ are
\[2\chi-D\cdot\beta-k-2\left(
\chi+c_1(\scr L)\cdot\beta
\right)=-k-(D+2c_1(\scr L))\cdot\beta\]
which is independent of $\chi$. Hence to match the dimension,
we can take some \[\alpha\in H^{-(D+2c_1(\scr L))\cdot\beta}_{\mathrm{crit},\bb T}
\left(
\dHilb^{\mathbf n}(D|\partial D)_{\mathrm{prime}}
\right)\] uniformly.
\end{nota}

Recall that the degree-zero logarithmic DT partition function
with tautological insertion is
\[
\DT_0(X|D)_{\scr L,m}:=1+\sum_{n>0}q^n\int_{[\Hilb^n(X|D)]^{\vir}_{\bb T}}e^{\bb T\times\bb C_m^*}
\left((\scr L^{[n]})^\vee\otimes e^m\right).
\]
Note that one can define this using the classical theory 
in \S~\ref{sec-zero-dim-DT4}.

\begin{conj}[Tautological logarithmic DT/PT correspondence]
\label{conj-tautological-log-DT-PT}
There exist compatible choices of orientations such that, for every
\[
\alpha\in H^{k}_{\mathrm{crit},\bb T}
\left(\dHilb^{\mathbf n}(D|\partial D)_{\mathrm{prime}}\right),
\]
one has
\[
\frac{\DT_{\beta}(X|D;\alpha)_{\scr L,m}}{\DT_0(X|D)_{\scr L,m}}
=\PT_{\beta}(X|D;\alpha)_{\scr L,m}\in\operatorname{Frac}H^*_{\bb T\times\bb C_m^*}
(\mathrm{pt})((q)).
\]
\end{conj}
If $D=\emptyset$ and we choose $\alpha=1$, then 
this reduces to the absolute tautological DT/PT correspondence
in \cite[Conj.~0.13]{CKM22}.

\subsection{Family theory of Calabi-Yau snc degenerations}\label{sec-family-snc-dege}
First of all, we point out that the construction of the virtual classes 
of Calabi-Yau snc degenerations can be stated in Chow groups using 
virtual pullback theories \cite{park21,park24}. However, 
to be compatible with the relative theory, here we will use shifted 
Lagrangian classes.

\begin{defi}\label{defi-snc-degeneration}
Let \[\pi:\mcal X\to\bb A^1\] 
be a flat family  of quasi-projective varieties such that $\mcal X$ is a smooth quasi-projective variety.
We say it is a \textit{simple normal crossing degeneration} if the following conditions hold.
\begin{itemize}
	\item If $0\neq b\in\bb A^1$, then $\mcal X_b=\pi^{-1}(b)$ is smooth.
	\item The central fibre $\mcal X_0:=\pi^{-1}(0)$ is an snc divisor of $\mcal X$.
\end{itemize}
We say that such a degeneration is \textit{Calabi-Yau} if $\omega_{\mcal X/\bb A^1}\cong\scr O_{\mcal X}$.
\end{defi}
\begin{nota}\label{rmk-zariski-open-replace}
In this paper, we only consider the degeneration over $(\bb A^1,0)$.
However, all the theories in this paper hold after replacing $\bb A^1$ by $(U,0)$ for any 
Zariski open curve $U\subset\bb A^1$ containing $0$ since $U$ is also Calabi-Yau.
\end{nota}
In this case, let $\mcal D:=X_0:=\pi^{-1}(0)$. Then
$(\mcal X,\mcal D)$ is also a log Calabi-Yau snc pair since $X_t\sim0$ for any $t\in\bb A^1$.
We will consider logarithmic morphism $(\mcal X,\mcal D)\to(\bb A^1,0)$ which is log smooth.
Then this induces 
\[
\pi^{\mathrm{trop}}:\Sigma_{\mcal X}:=\Sigma_{\mcal X|\mcal D}\to\bb R_{\geq0}.
\]
Let $\Delta_{X_0}=(\pi^{\mathrm{trop}})^{-1}(1)$. Then 
\[
X_0=\bigcup_{\nu\in\mathrm{vertex}(\Delta_{X_0})}Y_{\nu}
\]
gives the decomposition of irreducible components.

Consider moduli space $T(\Sigma_{\mcal X})$ of embedded $1$-complexes in $\Sigma_{\mcal X}$
equipped with a suitable cone structure.
We restrict to its subspace $T(\Sigma_{\mcal X}/\bb R_{\geq0})$ of 
vertical $1$-complexes, that is, the $1$-complexes contracted by $\pi^{\mathrm{trop}}$,
with the cone structure induced by that of $T(\Sigma_{\mcal X})$.
See \cite[Lem.~6.1.1]{MR25} for details. Then we have a canonical map 
\[
T(\Sigma_{\mcal X}/\bb R_{\geq0})\to\bb R_{\geq0}.
\]
Then we have the \textit{stack of expansions of degeneration}:
\[
\mathsf{Exp}_{\beta_t,\chi_t}(\mcal X/\bb A^1)\to\bb A^1
\]
of relative dimension zero.

\begin{ass}\label{ass-compactifiable-families}
Let $\mathfrak X\to\Exp(\mcal X/\bb A^1)$ be the original universal vertical expansion.
On the finite-type expansion loci under consideration, we assume that the vertical expansion 
data are compactifiable: there exists a refined cone structure such that 
$\mathfrak X$ is an open substack of a full projective universal 
family $\mathfrak X^c\to\Exp(\mcal X/\bb A^1)$ whose cone map is combinatorially flat with 
reduced fibres, with smooth base. 
\end{ass}

The following local case satisfies the Assumption \ref{ass-compactifiable-families}.
\begin{lem}
\label{lem-compactifiable-local-families}
Let $\mcal Y\to\bb A^1$ be a projective snc degeneration,
and let $\mcal S$ be a horizontal snc divisor such that
$(\mcal Y,\mcal Y_0\cup\mcal S)\to(\bb A^1,0)$ is logarithmically smooth.
Let $\mcal E$ be a vector bundle on $\mcal Y$, and set
\[
\eta:\mcal X:=\Tot_{\mcal Y}(\mcal E)\to\mcal Y,\qquad
\mcal D':=\eta^{-1}(\mcal S),\qquad
\mcal D:=\mcal X_0\cup\mcal D'.
\]
Then the compatible refinements
of the base and universal cone and integral structures can be chosen
to satisfy Assumption \ref{ass-compactifiable-families}.
\end{lem}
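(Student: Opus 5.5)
The approach is to reduce to the projective family $\mcal Y\to\bb A^1$ and then pass to total spaces. Since $\eta:\mcal X=\Tot_{\mcal Y}(\mcal E)\to\mcal Y$ is smooth and strict for the divisorial log structures, we have $\mcal D=\eta^{-1}(\mcal Y_0\cup\mcal S)$. Hence $\Sigma_{\mcal X|\mcal D}=\Sigma_{\mcal Y|\mcal Y_0\cup\mcal S}$ compatibly with the maps to $\bb R_{\geq0}$. It follows that the moduli of vertical $1$-complexes, and hence $\Exp(\mcal X/\bb A^1)$ with any chosen cone structure, are literally the same as for $(\mcal Y,\mcal Y_0\cup\mcal S)$. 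Moreover, every expansion of $\mcal X$ is the pullback along $\eta$ of the corresponding expansion of $\mcal Y$, because logarithmic modifications commute with strict smooth base change. So the original universal family is $\mathfrak X=\Tot_{\mathfrak Y}(\mcal E|_{\mathfrak Y})$, where $\mathfrak Y$ is the rank-one universal expansion of $\mcal Y$.

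The next step is to compactify $\mathfrak Y$. Since $\mcal Y$ is projective over $\bb A^1$ and its boundary already gives the full fan, I would follow the construction before Lemma \ref{lem-derived-hilb}, in its family form, exactly as in \cite[Prop.~1.7.2]{MR25} and \cite[\S~6]{MR25}. Concretely, I would complete the universal vertical $1$-complex to a complete projective subdivision of $\Sigma_{\mcal Y}$ over $\bb R_{\geq0}$. Then, after further subdivision of base and total cone complexes and root constructions, I would obtain a combinatorially flat map with reduced fibres onto a smooth base. The associated logarithmic modification $\mathfrak Y^c\to\Exp(\mcal X/\bb A^1)$ is projective, because it is a projective subdivision pulled back from the projective $\mcal Y$, and it contains $\mathfrak Y$ as its rank-one open part.

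The step that needs care is compactifying the fibre direction. Here I would set $\mathfrak X^c:=\bb P_{\mathfrak Y^c}(\mcal E|_{\mathfrak Y^c}\oplus\scr O)$, pulling $\mcal E$ back via the contraction $\mathfrak Y^c\to\mcal Y$. This space is projective over $\mathfrak Y^c$, hence over the base, and it contains $\mathfrak X$ as an open substack. Its map to $\Exp$ is the composite of a smooth projective bundle with $\mathfrak Y^c\to\Exp$, so it is flat with reduced fibres. For the cone maps, the log structure pulled back from $\mathfrak Y^c$ makes $\mathfrak X^c\to\mathfrak Y^c$ strict, so the tropicalization of $\mathfrak X^c$ equals that of $\mathfrak Y^c$. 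Combinatorial flatness with reduced fibres is therefore inherited.

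The main obstacle is this last compatibility. One must make sure that adding the divisor at infinity of the projective bundle does not need to be recorded in the log structure used for the cone map. If it did, it would add a new ray and break the equality of fans. I would handle this by using only the pulled-back log structure, which suffices for Assumption \ref{ass-compactifiable-families}: that assumption only requires a projective flat family containing $\mathfrak X$ whose cone map, taken with respect to the expansion data, is combinatorially flat with reduced fibres.
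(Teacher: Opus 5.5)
Your proposal is correct and follows essentially the same route as the paper. You identify $\Sigma_{\mcal X|\mcal D}$ with $\Sigma_{\mcal Y|\mcal Y_0\cup\mcal S}$ via the strict smooth projection, build the projective, combinatorially flat full family $\mathfrak Y^c$ over the vertical expansion stack by projective completion and flattening as in \cite[Prop.~1.7.2]{MR25}, and take $\mathfrak X^c=\bb P_{\mathfrak Y^c}(\scr O\oplus p^*\mcal E)$ with the log structure pulled back from $\mathfrak Y^c$, so that the divisor at infinity adds no rays; the one step the paper spells out more explicitly, and which you leave at the level of ``in its family form,'' is that the vertical completion is performed in the equal-height locus of $\Sigma_{\mcal Y|\mcal Y_0\cup\mcal S}\times_{\bb R_{\geq0}}T(\Sigma_{\mcal X|\mcal D}/\bb R_{\geq0})$, refining by equality of the two height functions before restricting.
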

\begin{proof}
Consider the projective bundle parametrizing lines
\[
\overline{\mcal X}:=
\bb P_{\mcal Y}(\scr O_{\mcal Y}\oplus\mcal E)
\longrightarrow\mcal Y.
\]
Its standard affine chart is $\mcal X$, and it is projective over
$\bb A^1$. Equip it with the logarithmic boundary pulled back from
$\mcal Y_0\cup\mcal S$, the projections identify
\[
\Sigma_{\mcal X|\mcal D}
\cong\Sigma_{\mcal Y|\mcal Y_0\cup\mcal S}
\cong\Sigma_{\overline{\mcal X}|
\overline{\mcal X}_0\cup\overline{\mcal D'}},
\]
compatibly with their maps to $\bb R_{\geq0}$.

On a finite-type locus, the universal vertical complex is embedded in
\[
\Sigma_{\mcal Y|\mcal Y_0\cup\mcal S}\times_{\bb R_{\geq0}}T(\Sigma_{\mcal X|\mcal D}/\bb R_{\geq0}).
\]
This fibre product is the equal-height locus in the corresponding product of cone spaces.
Apply the projective completion construction of \cite[Rmk.~3.5.2]{MR24} in that product,
and refine by the equality of the two height functions. 
Restricting the resulting subdivision to the equal-height locus gives a complete projective 
subdivision extending the universal vertical complex. We then apply combinatorial 
flattening to its projection to the vertical expansion space. 
Combining \cite[Prop.~1.7.2]{MR25}, we have the full family
\[
\mathfrak Y^c\longrightarrow
\Exp((\mcal X|\mcal D)/\bb A^1)
\]
whose cone map is combinatorially flat with reduced fibres,
with smooth base cone space. The completion is chosen projective.
Since $\mcal Y\to\bb A^1$ is projective, this full family is
projective over the expansion stack.
Now define
\[
\mathfrak X^c:=
\bb P_{\mathfrak Y^c}(\scr O_{\mathfrak Y^c}\oplus p^*\mcal E)
\cong\mathfrak Y^c\times_{\mcal Y}\overline{\mcal X}.
\]
This is the full expansion of $\overline{\mcal X}$ obtained by
strict smooth base change. It is projective and flat with reduced
fibres over the expansion stack, and its cone map is the same as
that of $\mathfrak Y^c$.
\end{proof}

Following Assumption \ref{ass-compactifiable-families}, e.g.,
the case in Lemma \ref{lem-compactifiable-local-families},
we get the family version of logarithmic (derived-) Hilbert stacks 
\[
\Hilb_{\beta_t,\chi_t}(\mcal X/\bb A^1)\subset
\dHilb_{\beta_t,\chi_t}(\mcal X/\bb A^1)\to\mathsf{Exp}_{\beta_t,\chi_t}(\mcal X/\bb A^1)\to\bb A^1
\]
such that $\Hilb_{\beta_t,\chi_t}(\mcal X/\bb A^1)$ is separated and of finite type
and is proper if $\pi:\mcal X\to\bb A^1$ is proper; see also \cite[Thm.~6.3.1]{MR25}.

Similar to Lemma \ref{lem:log-crepant-MR-pair}, we have the following 
generalization of \cite[Prop.~2.11]{CZZ24}.

\begin{lem}\label{lem:crepant-MR-degeneration}
Let $\pi:\mathcal X\to\mathbb A^1$ be a simple normal crossing degeneration. 
Equip $\mathcal X$ with the divisorial logarithmic
structure induced only by the central fibre $\mathcal X_0$, and similarly for $(\bb A^1,0)$. 
Fix a cone structure $\lambda$ for the vertical expansion
problem such that $\Exp(\mathcal X/\mathbb A^1)_\lambda$ is smooth and the universal cone family
is combinatorially flat with reduced fibres. Consider
the corresponding universal vertical expansion
\[
        f:\mathfrak X\to\Exp(\mathcal X/\mathbb A^1)_\lambda
\]
with the contraction map $p:\mathfrak X\to
        Z:=\mathcal X\times_{\mathbb A^1}\Exp(\mathcal X/\mathbb A^1)_\lambda$.
Then there is a canonical isomorphism
\[
        \omega_{\mathfrak X/\Exp(\mathcal X/\mathbb A^1)_\lambda}
        \cong
        p^*\pr_{\mathcal X}^*
        \omega_{\mathcal X/\mathbb A^1}.
\]
In particular, any choice of trivialization of
$\omega_{\mathcal X/\mathbb A^1}$ induces an isomorphism
\[
        \omega_{\mathfrak X/\Exp(\mathcal X/\mathbb A^1)_\lambda}
        \cong
        \mathcal O_{\mathfrak X}.
\]
\end{lem}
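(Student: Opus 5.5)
The plan is to mirror the proof of Lemma~\ref{lem:log-crepant-MR-pair}, replacing the absolute log structure by the relative one over $(\bb A^1,0)$. I give $\mathfrak X$ the toroidal log structure $\partial\mathfrak X=E_{\mathfrak X}$, the union of boundary divisors lying over the toroidal boundary $E_{\Exp}$ of $\Exp(\mcal X/\bb A^1)_\lambda$. The first step is to check in smooth toroidal charts that $f$ is flat and Gorenstein. This holds because we work on the rank-one locus, where the local models are products of smooth factors with $\spec\bb C[x,y,t]/(xy-t)$-type nodes pulled back along monomial maps. Since no horizontal boundary is present, there is no $\mathfrak D$ term.

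The second step is the claim $f^*E_{\Exp}=E_{\mathfrak X}$, with multiplicity one. I argue exactly as before. At the generic point of a codimension-one boundary divisor of $\mathfrak X$ over $E_{\Exp}$, the map on characteristic monoids is $\bb N\to\bb N$, $1\mapsto m$. Combinatorial flatness with reduced fibres forces the image lattice map to be primitive, so $m=1$.

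The third step is the log-étaleness comparison. The contraction $p:\mathfrak X\to Z$ is the open restriction of the logarithmic modification attached to the cone subdivision of $\Sigma_{\mcal X}\times_{\bb R_{\geq0}}T(\Sigma_{\mcal X}/\bb R_{\geq0})$. Hence $p$ is log étale over $\Exp(\mcal X/\bb A^1)_\lambda$, which gives
\[
\omega^{\log}_{\mathfrak X/\Exp}\cong p^*\omega^{\log}_{Z/\Exp}.
\]
Here $Z=\mcal X\times_{\bb A^1}\Exp$ carries the fs fibre-product log structure. By log smoothness of $(\mcal X,\mcal X_0)\to(\bb A^1,0)$ and base change of log differentials, $\omega^{\log}_{Z/\Exp}\cong\pr_{\mcal X}^*\omega^{\log}_{\mcal X/\bb A^1}$. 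In turn,
\[
\omega^{\log}_{\mcal X/\bb A^1}=\omega_{\mcal X}(\mcal X_0)\otimes\pi^*\omega_{\bb A^1}(0)^{-1}\cong\omega_{\mcal X/\bb A^1},
\]
because $\pi^*[0]=\mcal X_0$ is reduced. One subtlety is that the fs fibre product may require saturation or normalization. I would handle this by noting that it is absorbed into the subdivision defining $\mathfrak X$, so the composite to $\mcal X\times_{\bb A^1}\Exp$ is still log étale after saturation.

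Finally, on $\mathfrak X$ I unwind the log canonical sheaf:
\[
\omega^{\log}_{\mathfrak X/\Exp}=\omega_{\mathfrak X}(E_{\mathfrak X})\otimes f^*\omega_{\Exp}(E_{\Exp})^{-1}\cong\omega_{\mathfrak X/\Exp}
\]
by the second step. Combining with the third step yields $\omega_{\mathfrak X/\Exp}\cong p^*\pr_{\mcal X}^*\omega_{\mcal X/\bb A^1}$, and a trivialization of $\omega_{\mcal X/\bb A^1}$ pulls back to one of $\omega_{\mathfrak X/\Exp}$.

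The main obstacle I expect is the third step: justifying that $Z$ with its fibre-product log structure is log smooth over $\Exp$ with the stated log canonical sheaf, and that $p$ is genuinely log étale rather than only so after normalization. I would first try to work chart-by-chart on toric models. There, $\mathfrak X$ is the toric variety of a subdivision of the fibre-product cone, and the log canonical sheaf is the trivial toric one, so the comparison becomes the standard fact that toric modifications preserve $\omega(\partial)$.
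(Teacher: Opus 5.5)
Your proposal follows the paper's proof essentially step for step: flat Gorenstein toroidal charts, $f^*E_{\Exp}=E_{\mathfrak X}$ via the rank-one monoid argument, log \'etaleness of the contraction $p$, base change of log differentials, and $\omega^{\log}_{\mathcal X/\mathbb A^1}\cong\omega_{\mathcal X/\mathbb A^1}$ from reducedness of $\mathcal X_0$.

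The paper settles the saturation issue you flag more directly than by absorbing it into the subdivision. Since $\mathcal X_0$ is reduced, the local monoid map is $\mathbb N\to\mathbb N^r$, $1\mapsto(1,\ldots,1)$, so $\Sigma_{\mathcal X}\to\mathbb R_{\geq0}$ is combinatorially flat with reduced fibres. Then \cite[Prop.~1.7.3]{MR25} identifies the underlying space of the fs log fibre product with the ordinary fibre product $Z$ itself.
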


\begin{proof}
The proof is similar to that of Lemma \ref{lem:log-crepant-MR-pair} with some 
modifications.

First we note that, in these logarithmic structures,
the resulting logarithmic morphism $(\mathcal X,\mathcal X_0)\longrightarrow(\mathbb A^1,0)$
is saturated and logarithmically smooth.
All expansion spaces are equipped with their
canonical logarithmic structures. By the construction, after passing
to smooth toroidal charts, the universal vertical expansion
$f:\mathfrak X\to\Exp(\mathcal X/\mathbb A^1)_\lambda$ is flat and Gorenstein.

The map of cone complexes $\Sigma_{\mathcal X}\to\mathbb R_{\ge0}$ is
combinatorially flat with reduced fibres: locally the characteristic monoid
map is $\mathbb N\to\mathbb N^r$, $1\mapsto(1,\ldots,1)$, and the induced
lattice map is primitive on every relevant rank-one face. Thus, by
\cite[Prop.~1.7.3]{MR25}, after pulling back to any smooth toroidal chart of
$\Exp(\mathcal X/\mathbb A^1)_\lambda$, the fs logarithmic fibre product agrees
with the ordinary fibre product:
\[
        \left(
        \mathcal X
        \times_{\mathbb A^1}^{\log}
        \Exp(\mathcal X/\mathbb A^1)_\lambda
        \right)_{\mathrm{und}}
        \cong
        \mathcal X\times_{\mathbb A^1}\Exp(\mathcal X/\mathbb A^1)_\lambda
        =:Z.
\]
Logarithmic differentials commute with this logarithmic base change, so
\[
        \omega^{\log}_{Z/\Exp(\mathcal X/\mathbb A^1)_\lambda}
        \cong
        \pr_{\mathcal X}^*
        \omega^{\log}_{\mathcal X/\mathbb A^1}
        \cong
        \pr_{\mathcal X}^*\omega_{\mathcal X/\mathbb A^1},
\]
where the last equation follows from 
$\omega^{\log}_{\mathcal X/\mathbb A^1}
        =
        \omega_{\mathcal X}(\mathcal X_0)
        \otimes
        \pi^*\omega_{\mathbb A^1}(\{0\})^{-1}\cong\omega_{\mathcal X/\mathbb A^1}$.

By the vertical expansion construction \cite[\S6.3]{MR25}, together with the
construction of universal expansions in \cite[\S3.7]{MR24}, the contraction
$p:\mathfrak X\to Z$ is the open restriction of a logarithmic modification over
$\Exp(\mathcal X/\mathbb A^1)_\lambda$. Hence $p$ is logarithmically étale over
$\Exp(\mathcal X/\mathbb A^1)_\lambda$, and therefore
\[
        \omega^{\log}_{\mathfrak X/\Exp(\mathcal X/\mathbb A^1)_\lambda}
        \cong
        p^*\omega^{\log}_{Z/\Exp(\mathcal X/\mathbb A^1)_\lambda}
        \cong
        p^*\pr_{\mathcal X}^*\omega_{\mathcal X/\mathbb A^1}.
\]

Moreover, let $E_{\Exp(\mathcal X/\mathbb A^1)_\lambda}$ be the
boundary of $\Exp(\mathcal X/\mathbb A^1)_\lambda$, and let
$E_{\mathfrak X}$ be the corresponding boundary of
$\mathfrak X$.
By the same reason as in Lemma \ref{lem:log-crepant-MR-pair},
we have $f^*E_{\Exp(\mathcal X/\mathbb A^1)_\lambda}=E_{\mathfrak X}$.
Hence we have
\begin{align*}
\omega^{\log}_{\mathfrak X/\Exp(\mathcal X/\mathbb A^1)_\lambda}
&=\omega_{\mathfrak X}(E_{\mathfrak X})\otimes
    f^*\omega_{\Exp(\mathcal X/\mathbb A^1)_\lambda}
    (E_{\Exp(\mathcal X/\mathbb A^1)_\lambda})^{-1}  \\
&\cong\omega_{\mathfrak X}\otimes
    f^*\omega_{\Exp(\mathcal X/\mathbb A^1)_\lambda}^{-1} =
    \omega_{\mathfrak X/\Exp(\mathcal X/\mathbb A^1)_\lambda}.
\end{align*}
Combining this with the logarithmic crepant identity above, we obtain
\[
        \omega_{\mathfrak X/\Exp(\mathcal X/\mathbb A^1)_\lambda}
        \cong
        p^*\pr_{\mathcal X}^*\omega_{\mathcal X/\mathbb A^1}.
\]
A trivialization of $\omega_{\mathcal X/\mathbb A^1}$ pulls back to a
trivialization of the left-hand side. This proves the lemma.
\end{proof}

The following is the main result in this section.

\begin{prp}\label{prp-family-shifted}
Let $\pi:\mcal X\to\bb A^1$ be a Calabi-Yau simple normal crossing degeneration
with $\dim\mcal X=n+1$.
We work with relative compactifiable expansion data as in 
Assumption \ref{ass-compactifiable-families}, e.g., Lemma \ref{lem-compactifiable-local-families}.
\begin{enumerate}
    \item Then \[\varpi_{\beta_t,\chi_t}:
\dHilb_{\beta_t,\chi_t}(\mcal X/\bb A^1)\to\mathsf{Exp}_{\beta_t,\chi_t}(\mcal X/\bb A^1)
\]
admits
a $(2-n)$-shifted symplectic structure.
Moreover, such a symplectic form is exact if $n=4$.
\item If $n=4$, and there exists an snc degeneration $\mcal Y\to\bb A^1$ of relative dimension $3$ such that $\mcal X=\mathrm{Tot}(\omega_{\mcal Y/\bb A^1})$, then 
    \[\varpi_{\beta_t,\chi_t}:
\dHilb_{\beta_t,\chi_t}(\mcal X/\bb A^1)\to\mathsf{Exp}_{\beta_t,\chi_t}(\mcal X/\bb A^1)
\]  admits a canonical orientation.
\end{enumerate}
\end{prp}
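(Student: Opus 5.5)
The plan is to follow the template of Theorem \ref{thm-curves-shifted-lag}, replacing Lemma \ref{lem:log-crepant-MR-pair} by Lemma \ref{lem:crepant-MR-degeneration}. The family case is simpler because there is no horizontal boundary, so no Lagrangian boundary term appears.

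\textbf{Step 1: derived enhancement.} Using Assumption \ref{ass-compactifiable-families}, I realize $\dHilb_{\beta_t,\chi_t}(\mcal X/\bb A^1)$ as an open substack of $\mathsf{dPerf}(\mathfrak X^c/\Exp_{\beta_t,\chi_t}(\mcal X/\bb A^1))_0$. The openness argument is verbatim that of Lemma \ref{lem-derived-hilb}.

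\textbf{Step 2: the shifted symplectic form.} As in the proof of Theorem \ref{thm-curves-shifted-lag}, the universal complex is trivialized away from the universal subscheme. Hence $\mathbf R\pi^c_*\mathbf R\scr Hom(\bb I^c,\bb I^c)_0\cong\mathbf R\pi_*\mathbf R\scr Hom(\bb I,\bb I)_0$, and all supported Chern character and trace constructions may be carried out on the original family $\mathfrak X$.

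Lemma \ref{lem:crepant-MR-degeneration} gives a trivialization $\mathrm{vol}:\scr O\cong\omega_{\mathfrak X/\Exp}$. The fibres of $f$ are Gorenstein of dimension $n$, and the universal subscheme is proper over the base. So the relative AKSZ construction applies: one takes
\[
\int_{\mathfrak X/\Exp}\mathrm{vol}\wedge\widetilde{\mathrm{ev}^*\Omega_{\mathsf{dPerf}}},
\]
with a compactly supported lift of the pulled-back $2$-shifted form. Integration along the relative dimension $n$ lowers the shift to $2-n$. This is the family version of \cite[Thm.~3.2]{CZZ24}, using the trace map of \cite[Eqn.~(3.4)]{CZZ24} and the compatibilities of \cite{Pre}.

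Non-degeneracy follows from supported relative Grothendieck--Serre duality for the flat Gorenstein morphism $f$, combined with the trivialization of $\omega_{\mathfrak X/\Exp}$. The tangent complex $\mathbf R\pi_*\mathbf R\scr Hom(\bb I,\bb I)_0[1]$ is thereby identified with its $(2-n)$-shifted dual.

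\textbf{Step 3: exactness for $n=4$.} I first try the weight argument used in Theorem \ref{thm-curves-shifted-lag}(3), but that needs a $\bb C^*$-action scaling the volume form, which exists only in the local case. In general I would instead use that a $(-2)$-shifted closed $2$-form is exact whenever its underlying $2$-form admits the standard splitting. Concretely, the de Rham class of the AKSZ form can be taken in $\mathrm{Fil}^2$ with zero higher Hodge components after subtracting $d_{\mathrm{dR}}$ of a $1$-form. This works because $\mathrm{vol}$ is a relative top form on fibres of dimension $4$, so there are no closed forms of degree $>4$ on the fibres.

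So I would follow the argument of \cite{CZZ24} and Park \cite{park24}: for negative shift $\le -2$ and the AKSZ form, exactness follows because the obstruction lives in negative-degree de Rham cohomology of the derived stack relative to the base, which vanishes. This is the step I expect to be the main obstacle. I would reduce it to \cite[Prop.~3.4]{CZZ24}, or \cite[Lem.~1.9]{PY26}, applied fibrewise over the smooth base $\Exp$.

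\textbf{Step 4: orientation.} For $\mcal X=\Tot(\omega_{\mcal Y/\bb A^1})$, the fibrewise $\bb C^*$-action lifts to $\mathfrak X$, with $\mathfrak X=\Tot(\omega_{\mathfrak Y/\Exp})$ by strict base change as in Lemma \ref{lem-compactifiable-local-families}. Pushing sheaves forward along $\mathfrak X\to\mathfrak Y$ gives $\pi_\dagger:\mathsf dM_{\mathrm{cpt}}(\mathfrak X/\Exp)\to\mathsf dM_{\mathrm{cpt}}(\mathfrak Y/\Exp)$. Its tangent complex sits in the triangle $\pi_\dagger^*\bb L[-2]\to\bb T\to\pi_\dagger^*\bb T\to$, which is self-dual as in the proof of Theorem \ref{thm-curves-shifted-lag}. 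This yields $\det\bb T\cong(\det\pi_\dagger^*\bb T_{\mathfrak Y})^{\otimes2}$.

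Choosing the positive sign $(-\sqrt{-1})^{\rank\bb V}$ as in \cite[Thm.~4.6,~Rmk.~4.10]{CZZ24}, and composing with the forgetful map from $\dHilb$ using the analogue of Eqn. (\ref{eqn-001}) without the boundary term, gives the canonical orientation.
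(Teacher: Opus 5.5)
Steps 1, 2 and 4 follow the paper's route. Step 3 (exactness for $n=4$) has a genuine gap.

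For a $(-2)$-shifted closed $2$-form, the class obstructing exactness lies in degree $n+2=0$ of the Hodge-completed de Rham complex of $\dHilb_{\beta_t,\chi_t}(\mcal X/\bb A^1)$ relative to $\Exp_{\beta_t,\chi_t}(\mcal X/\bb A^1)$. It does not lie in negative degree, as you assert. Degree-zero relative de Rham cohomology does not vanish: it contains at least the functions pulled back from the base. Passing to the derived stack does not make it vanish either.

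Your remark that there are no forms of degree $>4$ on the fibres of $\mathfrak X\to\Exp$ does not control this class. The results you propose to reduce to do not address exactness either. For example, \cite[Lem.~1.9]{PY26} is used in the paper only to pass from a relative Lagrangian to a shifted symplectic structure. So, as written, the exactness claim in (1) is not proved for general (non-local) Calabi--Yau degenerations. You are right that the weight argument of Theorem~\ref{thm-curves-shifted-lag}(3) is unavailable in that generality.

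The missing input is topological. The AKSZ form is built from the Chern character of the universal complex with supports on the universal subscheme. Because of this, the argument of \cite[Prop.~5.11,~App.~C]{CZZ24}, together with \cite[\S~6]{park24}, reduces exactness to the vanishing
\[
H^4_{\mathrm{sing}}(\mathfrak X_b,\mathfrak X_b\setminus Z)=0
\]
for every expanded fibre $\mathfrak X_b$ and every subscheme $Z\subset\mathfrak X_b$ of dimension at most one.

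On a smooth $4$-fold this is $H^{\BM}_4(Z)=0$, for dimension reasons. The expanded fibres, however, are singular. The paper proves the vanishing by induction on the number of components, using that all strata of codimension $\geq2$ have been removed. For instance, a Mayer--Vietoris sequence along the smooth double divisors $E$ reduces the statement to the smooth pieces, where $H^4_Z\cong H^{\BM}_4(Z)$ and $H^3_{Z\cap E}(E)\cong H^{\BM}_3(Z\cap E)$ both vanish. This vanishing is what your Step 3 needs in place of the de Rham degree argument.

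There is also a small slip in Step 4. Since $\det(E[-2])\cong\det E$, the self-dual triangle gives
\[
\det\bb T\cong\det(\pi_\dagger^*\bb L_{\mathfrak Y})\otimes\det(\pi_\dagger^*\bb T_{\mathfrak Y})\cong\scr O,
\]
which is a trivialization, not $(\det\pi_\dagger^*\bb T_{\mathfrak Y})^{\otimes2}$. The orientation comes from comparing this trivialization with the identification induced by the symplectic form, up to the sign $(-\sqrt{-1})^{\rank\bb V}$, exactly as in the paper.
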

\begin{proof}
As in the proof of Theorem \ref{thm-curves-shifted-lag}, excision identifies the trace-free 
deformation complexes and the supported trace constructions with those computed on the 
original universal family $\mathfrak X/\mathsf{Exp}(\mcal X/\bb A^1)$. 
The projective containing family is used only to construct the derived enhancement. 
Thus Lemma \ref{lem:crepant-MR-degeneration} applies to the original universal family 
throughout the proof.

For (1), the existence of a shifted symplectic structure follows from 
Lemma \ref{lem:crepant-MR-degeneration} and
a proof similar to that of \cite[Thm.~3.2]{CZZ24}, which is a family version of the AKSZ-type construction.
The shifted symplectic form is given by 
\[
\int_{\mathfrak X/\mathsf{Exp}(\mcal X/\bb A^1)}\mathrm{vol}_{\mathfrak X/\mathsf{Exp}(\mcal X/\bb A^1)}
\wedge\widetilde{\mathrm{ev}_X^*\Omega_{\mathsf{dPerf}}},
\]
where $\Omega_{\mathsf{dPerf}}$ is the canonical $2$-shifted form on $\mathsf{dPerf}$,
with the evaluation map
$\mathrm{ev}_X:\mathfrak X\times_{\mathsf{Exp}(\mcal X/\bb A^1)}\dHilb(\mcal X/\bb A^1)\to\mathsf{dPerf}\times\mathsf{Exp}(\mcal X/\bb A^1)$,
and $\widetilde{\mathrm{ev}_X^*\Omega_{\mathsf{dPerf}}}$ is a lift of $\mathrm{ev}_X^*\Omega_{\mathsf{dPerf}}$ 
to a compactly supported form on the universal substack $\mathfrak Z$.
Moreover, the trivialization $\mathrm{vol}_{\mathfrak X/\mathsf{Exp}(\mcal X/\bb A^1)}:\scr O\cong\omega_{\mathfrak X/\mathsf{Exp}(\mcal X/\bb A^1)}$ 
(induced from Lemma \ref{lem:crepant-MR-degeneration})
and the trace map described in \cite[Eqn.~(3.4)]{CZZ24} induce the integration
$\int_{\mathfrak X/\mathsf{Exp}(\mcal X/\bb A^1)}\mathrm{vol}_{\mathfrak X/\mathsf{Exp}(\mcal X/\bb A^1)}
\wedge(-)$.

Now we assume $n=4$. By the proof of \cite[Prop.~5.11,~Appendix~C]{CZZ24}
and the results in \cite[\S~6]{park24},
to prove the exactness, we only need to prove that for any expanded object $\mathfrak X_b$
and a subscheme $Z$, we have $H^4_{\mathrm{sing}}(\mathfrak X_b,\mathfrak X_b\backslash Z)=0$.
Indeed, this follows from the induction on the number of components and the construction of expanded pairs (in which 
we remove all the $\mathrm{codim}\geq2$ strata)\footnote{See Lemma C.1 in the
\href{https://arxiv.org/abs/2402.16103v1}{version 1} of \cite{CZZ24} for the proof
of Li-Wu's case. Our case is similar.}.

For (2), this follows directly from the proof of \cite[Thm.~4.3]{CZZ24}; see also \cite{CMT21}.
Here we give a sketch. Consider the forgetful map
\[
F:\Hilb(\mcal X/\bb A^1)\to M_{\mathrm{cpt}}(\mathfrak X/\mathsf{Exp}(\mcal X/\bb A^1)),\quad 
(\scr O\twoheadrightarrow\scr F)\mapsto \scr F,
\]
to the locus of properly supported perfect coherent sheaves normal
to the codimension-one strata on on $\mathfrak X$ over $\mathsf{Exp}(\mcal X/\bb A^1)$. 
Consider the following diagram of natural projections:
\[\begin{tikzcd}
	{M_{\mathrm{cpt}}(\mathfrak X/\mathsf{Exp}(\mcal X/\bb A^1))\times_{\mathsf{Exp}(\mcal X/\bb A^1)}\mathfrak X} & \\
	{M_{\mathrm{cpt}}(\mathfrak X/\mathsf{Exp}(\mcal X/\bb A^1))\times_{\mathsf{Exp}(\mcal X/\bb A^1)}\mathfrak Y} & {M_{\mathrm{cpt}}(\mathfrak X/\mathsf{Exp}(\mcal X/\bb A^1))}
	\arrow["\pi", from=1-1, to=2-1]
	\arrow["{\pi_{M_X}}", from=1-1, to=2-2]
	\arrow["{\pi_{M_Y}}", from=2-1, to=2-2]
\end{tikzcd}\]
Let $\bb F$ and $\bb I$ denote the universal objects. Then 
one can prove that 
\begin{align*}
\det\bb T_{\dHilb(\mcal X/\bb A^1)/\mathsf{Exp}(\mcal X/\bb A^1)}|_{\Hilb(\mcal X/\bb A^1)}
&\cong F^*\det(\pi_{M_X,*}\mathbf R\scr Hom(\bb F,\bb F)[1])\\ 
&\cong F^*\det\bb T_{\mathsf d M_{\mathrm{cpt}}(\mathfrak X/\mathsf{Exp}(\mcal X/\bb A^1))/\mathsf{Exp}(\mcal X/\bb A^1)}|_{M_{\mathrm{cpt}}(\mathfrak X/\mathsf{Exp}(\mcal X/\bb A^1))}
\end{align*}
for the natural derived enhancement $\mathsf d M_{\mathrm{cpt}}(\mathfrak X/\mathsf{Exp}(\mcal X/\bb A^1))$,
which is compatible with the trivialization of the squares.
Hence we only need to find the compatible trivialization of the latter one.
The spectral construction, or equivalently the identification 
$\mathsf d M_{\mathrm{cpt}}(\mathfrak X/\mathsf{Exp}(\mcal X/\bb A^1))
\cong\TT^*[-2](\mathsf d M_{\mathrm{cpt}}(\mathfrak Y/\mathsf{Exp}(\mcal X/\bb A^1))/\mathsf{Exp}(\mcal X/\bb A^1))$
with natural projection $p$ to $\mathsf d M_{\mathrm{cpt}}(\mathfrak Y/\mathsf{Exp}(\mcal X/\bb A^1))$
induces the distinguished triangle
\begin{align*}
	&p^*\bb L_{\mathsf d M_{\mathrm{cpt}}(\mathfrak Y/\mathsf{Exp}(\mcal X/\bb A^1))/\mathsf{Exp}(\mcal X/\bb A^1)}[-2]\to \bb T_{\mathsf d M_{\mathrm{cpt}}(\mathfrak X/\mathsf{Exp}(\mcal X/\bb A^1))/\mathsf{Exp}(\mcal X/\bb A^1)}\\
&\to p^*\bb T_{\mathsf d M_{\mathrm{cpt}}(\mathfrak Y/\mathsf{Exp}(\mcal X/\bb A^1))/\mathsf{Exp}(\mcal X/\bb A^1)}\to.
\end{align*}
Then restricting to the classical truncation and taking determinant, we deduce 
the existence of an orientation by Grothendieck-Serre duality.
Note that we always choose the canonical positive 
one as in \cite[Rmk.~4.4]{CZZ24}; that is, 
by spectral triangle as above, the symmetric obstruction complex on
$M_{\mathrm{cpt}}(\mathfrak X/\mathsf{Exp}(\mcal X/\bb A^1))$
is $\bb E=(\bb V\to\bb V^{\vee})$ and
the orientation given by 
$\pm(-\sqrt{-1})^{\rank\bb V}:\scr O\to\scr O$.
We always choose the \textit{plus sign} above as the canonical choice of orientation.
\end{proof}

\begin{nota}
Note that the exactness of $(-2)$-shifted symplectic fibration 
implies the isotropy condition described in \cite[Def.~1.10]{park21};
see \cite[Rmk.~5.2.5]{park24} for details.
\end{nota}

Now assume $n=4$ and that an orientation exists; for example, assume that there exists an snc degeneration $\mcal Y\to\bb A^1$ of relative dimension $3$
with form as in Lemma \ref{lem-compactifiable-local-families} such that $\mcal X=\mathrm{Tot}(\omega_{\mcal Y/\bb A^1})$.
Then by \cite[Cor.~3.1.3]{park24}, we have an induced Lagrangian
\[
\varpi_{\beta_t,\chi_t}:\dHilb_{\beta_t,\chi_t}(\mcal X/\bb A^1)\to\TT^*[-1]\mathsf{Exp}_{\beta_t,\chi_t}(\mcal X/\bb A^1)
\]
which induces the following morphism using shifted Lagrangian classes:
\begin{align*}
&\varpi_{\beta_t,\chi_t}^*\varphi_{\TT^*[-1]\mathsf{Exp}_{\beta_t,\chi_t}(\mcal X/\bb A^1)}[\vdim\dHilb_{\beta_t,\chi_t}(\mcal X/\bb A^1)]\\
&\cong\bb Q_{\dHilb_{\beta_t,\chi_t}(\mcal X/\bb A^1)}[\vdim\dHilb_{\beta_t,\chi_t}(\mcal X/\bb A^1)+1]
\to\bb D\bb Q_{\dHilb_{\beta_t,\chi_t}(\mcal X/\bb A^1)}.
\end{align*}

\begin{defi}
In this case, the \textit{family virtual Lagrangian class} 
is defined to be the following class induced by the previous morphism
via Remark~\ref{rmk-constructions-BM}(1):
\[
[\dHilb_{\beta_t,\chi_t}(\mcal X/\bb A^1)]^{\vir}\in H^{\BM}_{\vdim\dHilb_{\beta_t,\chi_t}(\mcal X/\bb A^1)+1}(\dHilb_{\beta_t,\chi_t}(\mcal X/\bb A^1),\bb Q).
\]
\end{defi}

If there exists a Calabi-Yau torus $\bb T$ acting on the space,
then the virtual class is $\bb T$-equivariant.
Note that the virtual classes of the fibres of family theory 
are independent of the cone structures; see Remark \ref{rmk-cone-independence}.

\section{Degeneration formula for logarithmic \texorpdfstring{$\mathsf{DT}_4$}{DT4}-invariants}
\label{sec-general-degeneration}
Using shifted Lagrangian classes, we prove a $\mathsf{DT}_4$ degeneration formula for 
local Calabi-Yau snc degenerations. In the smooth-pair case, Li-Wu's construction yields a 
Borel-Moore-homological formula that does not require the target to admit a critical-locus 
presentation, in contrast to the Chow-valued formula of \cite{CZZ24}.

\subsection{Decomposition of the central fibre}\label{sec-cutting-maps}
Fix a Calabi-Yau snc degeneration $\pi:\mcal X\to\bb A^1$
with $4$-dimensional fibres satisfying
Assumption~\ref{ass-compactifiable-families}.

Choose a compactifiable whole-family cone structure. After a barycentric
subdivision and a smooth refinement of the expansion base, we may assume
that its boundary is simple normal crossings.
Pull back the original and full universal families along this refinement.
Thus we have the original universal family and flat projective containing family
\[
\mathfrak X\hookrightarrow\mathfrak X^c
\longrightarrow\mathsf{Exp}_{\beta_t,\chi_t}(\mcal X/\bb A^1)
\]
with reduced fibres and smooth expansion base with simple normal crossings boundary.
Write
\[
\varpi_{\beta_t,\chi_t}:
\dHilb_{\beta_t,\chi_t}(\mcal X/\bb A^1)
\longrightarrow
\mathsf{Exp}_{\beta_t,\chi_t}(\mcal X/\bb A^1)
\]
for the structure morphism. The map from the stack of expansion to
$\bb A^1$ need not have reduced fibres.

Let $\mathsf{Exp}_\gamma(X_0)$ be the smooth reduced central components,
indexed by rigid Hilbert $1$-complexes $\gamma$. Set
\[
\mathfrak X_\gamma
:=\left.\mathfrak X\right|_{\mathsf{Exp}_\gamma(X_0)},
\qquad
\mathfrak X_\gamma^c
:=\left.\mathfrak X^c\right|_{\mathsf{Exp}_\gamma(X_0)}.
\]
Define $m_\gamma:=\operatorname{ord}_{\mathsf{Exp}_\gamma(X_0)}(t)$
where $t$ is the fixed coordinate on $\bb A^1$, and $m_\gamma$ is
the height of the primitive generator of the corresponding vertical
ray, with its chosen integral structure.
Now we know that
$\dHilb_\gamma(X_0)$ is the derived Hilbert stack of
$\mathfrak X_\gamma/\mathsf{Exp}_\gamma(X_0)$ with the inherited
log Hilbert conditions, constructed using $\mathfrak X_\gamma^c$.
That is, we have the Cartesian diagram
\[
\begin{tikzcd}
\bigsqcup_\gamma\dHilb_\gamma(X_0)
\arrow[r,"n"]\arrow[d,"\bigsqcup_\gamma\varpi_\gamma"']
\arrow[dr,phantom,"\square"]&
\dHilb_{\beta_t,\chi_t}(\mcal X/\bb A^1)_0\arrow[d,"\varpi_0"]\\
\bigsqcup_\gamma\mathsf{Exp}_\gamma(X_0)\arrow[r,"n"]&
\mathsf{Exp}_{\beta_t,\chi_t}(\mcal X/\bb A^1)_0
\end{tikzcd}
\]
with
\[
\left[\mathsf{Exp}_{\beta_t,\chi_t}(\mcal X/\bb A^1)_0\right]
=n_*\sum_\gamma m_\gamma[\mathsf{Exp}_\gamma(X_0)].
\]
Therefore, Conjecture \ref{conj-Joyce-conj}(3)(4) and
Remark \ref{rmk-KKPS-lagrangian-classes} imply
\begin{equation}\label{eqn-central-virtual-multiplicities}
	i_0^![\dHilb_{\beta_t,\chi_t}(\mcal X/\bb A^1)]^{\vir}
=n_*\sum_\gamma m_\gamma[\dHilb_\gamma(X_0)]^{\vir}.
\end{equation}

Now fix $\gamma$ and we will consider the decomposition of $\dHilb_\gamma(X_0)$ over
$\mathsf{Exp}_\gamma(X_0)$.
Let $\nu,e$ denote the vertices and edges of the universal expansion
over the generic stratum indexed by $\gamma$, including all tube
vertices. Write $\gamma(\nu)$ for the induced numerical decorations
and $n_e$ for the boundary lengths. Partially normalize $\mathfrak X_\gamma$ along the 
distinguished
double loci indexed by the edges. 
The two flags of each edge are copies of the same double locus, so
\[
\mathfrak D_{u,e}\cong\mathfrak D_e\cong\mathfrak D_{\nu,e}
\quad\text{if $e$ joins $u$ and $\nu$}
\]
for $\mathfrak D_e\subset\mathfrak X_\gamma$
and $\mathfrak D_{u/\nu,e}\subset\mathfrak Y_{u/\nu}$.
Hence we have 
\begin{equation}\label{eqn-gluing-diagram}
\mathfrak X_\gamma\cong
\varinjlim\left(
\begin{tikzcd}
&\mathfrak D_e\arrow[dl]\arrow[dr]&\\
\mathfrak Y_u&&\mathfrak Y_\nu
\end{tikzcd},
\quad\text{if $e$ joins $u$ and $\nu$}
\right).
\end{equation}

Taking the normalizations of the corresponding stratum closures
in $\mathfrak X_\gamma^c$ gives containing families
$\mathfrak Y_\nu\subset\mathfrak Y_\nu^c$ and $\mathfrak D_e\subset\mathfrak D_e^c$,
which are projective and flat with reduced fibre.

Let $Y_\nu$ label the component at $\nu$, and set $D_\nu=\partial Y_\nu$.
Then the original families satisfy $\mathfrak D_\nu=\coprod_{e\in E(\nu)}\mathfrak D_{\nu,e}$.
We consider open substack by the similar arguments in Lemma\ref{lem-derived-hilb}:
\[
\dHilb_{\Delta,\gamma(\nu)}(Y_\nu|\partial Y_\nu)
\hookrightarrow
\mathsf{dPerf}(\mathfrak Y_\nu^c/\mathsf{Exp}_\gamma(X_0))_0
\]
which is the derived Hilbert stack of the family
$\mathfrak Y_\nu/\mathsf{Exp}_\gamma(X_0)$ parameterizes the algebraically transverse, 
``DT''-stable subschemes (inherited from the ambient space) with support proper over the base and contained in
$\mathfrak Y_\nu$.
Similarly we consider the boundary Hilbert stacks parameterize points with the prescribed
lengths $n_e$, using one containing family $\mathfrak D_e^c$. Thus
\[
\begin{aligned}
\dHilb_{\Delta,\gamma(\nu)}(D_\nu|\partial D_\nu)
&\cong{\prod_{e\in E(\nu)}}_{/\mathsf{Exp}_\gamma(X_0)}
\dHilb_{\Delta,\gamma(\nu)}(D_{\nu,e}|\partial D_{\nu,e}),\\
\dHilb_{\Delta,\gamma(u)}(D_{u,e}|\partial D_{u,e})
&\cong
\dHilb_{\Delta,\gamma(\nu)}(D_{\nu,e}|\partial D_{\nu,e}).
\end{aligned}
\]
This holds since we have Remark \ref{rmk-intrinsic-derived-hilb} and
\[
\mathfrak D_\nu=\coprod_{e\in E(\nu)}\mathfrak D_{\nu,e}
\subset\coprod_{e\in E(\nu)}\mathfrak D_{\nu,e}^c,
\]
although $\mathfrak D_{\nu,e}^c$ will intersect to each other in the original ambient space.

Restriction gives
\[
\mathsf r_{Y_\nu\to D_\nu}:
\dHilb_{\Delta,\gamma(\nu)}(Y_\nu|\partial Y_\nu)
\longrightarrow
\dHilb_{\Delta,\gamma(\nu)}(D_\nu|\partial D_\nu).
\]
Taking the relative diagonal for each edge defines
\[
\Delta_\gamma:=
{\prod_e}_{/\mathsf{Exp}_\gamma(X_0)}
\dHilb_{\Delta,\gamma(\nu)}(D_{\nu,e}|\partial D_{\nu,e}),
\qquad
\delta_\gamma:
\Delta_\gamma\longrightarrow
{\prod_\nu}_{/\mathsf{Exp}_\gamma(X_0)}
\dHilb_{\Delta,\gamma(\nu)}(D_\nu|\partial D_\nu).
\]
In the first product either flag of $e$ gives the same factor.

\begin{prp}\label{prp-fiberprod-gluing-general}
We have a homotopical pullback diagram of derived stacks
\[
\begin{tikzcd}
{\dHilb_\gamma(X_0)}&&{\Delta_\gamma}\\
{\displaystyle{\prod_\nu}_{/\mathsf{Exp}_\gamma(X_0)}
\dHilb_{\Delta,\gamma(\nu)}(Y_\nu|\partial Y_\nu)}
&&
{\displaystyle{\prod_\nu}_{/\mathsf{Exp}_\gamma(X_0)}
\dHilb_{\Delta,\gamma(\nu)}(D_\nu|\partial D_\nu).}
\arrow[from=1-1,to=1-3]
\arrow["{i_{\Delta,\gamma}}",from=1-1,to=2-1]
\arrow["\square"{description},draw=none,from=1-1,to=2-3]
\arrow[from=1-3,to=2-3]
\arrow["{{\prod_\nu}_{/\mathsf{Exp}_\gamma(X_0)}\mathsf r_{Y_\nu\to D_\nu}}",
from=2-1,to=2-3]
\end{tikzcd}
\]
The diagram is a shifted
Lagrangian intersection over $\mathsf{Exp}_\gamma(X_0)$.
In the total-space case, the restriction maps and diagonals are
exact, with the compatible total-space orientations.
\end{prp}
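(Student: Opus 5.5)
The plan is to prove the Cartesian square first at the level of the derived mapping stacks of perfect complexes, and then cut down to the Hilbert open substacks. By Remark \ref{rmk-intrinsic-derived-hilb}, an $S$-point of $\dHilb_\gamma(X_0)$, for $S$ an affine derived scheme over $\mathsf{Exp}_\gamma(X_0)$, is a perfect complex on $\mathfrak X_{\gamma,S}$ with trivialized determinant. Its classical restriction must be the ideal sheaf of a family in the classical stack. The analogous description holds for the components $\dHilb_{\Delta,\gamma(\nu)}(Y_\nu|\partial Y_\nu)$ and for the boundary stacks.

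The gluing presentation \eqref{eqn-gluing-diagram} exhibits $\mathfrak X_\gamma$ as a colimit of the $\mathfrak Y_\nu$ along the closed immersions $\mathfrak D_e\hookrightarrow\mathfrak Y_u,\mathfrak Y_\nu$. This is a pushout along closed immersions with flat base change over $S$. Locally it is the Li--Wu model $\spec A[x,y]/(xy-\tau)$ with $\tau$ pulled back from the smooth snc expansion base, restricted to $\tau=0$. Hence Milnor-square descent for perfect complexes applies (e.g.\ Lurie's gluing for pushouts along closed immersions). It gives
\[
\mathsf{Perf}(\mathfrak X_{\gamma,S})\simeq\prod_\nu\mathsf{Perf}(\mathfrak Y_{\nu,S})\times_{\prod_\nu\mathsf{Perf}(\mathfrak D_{\nu,S})}\prod_e\mathsf{Perf}(\mathfrak D_{e,S}),
\]
naturally in $S$, and compatibly with $\det$, since $\det$ is symmetric monoidal and also satisfies this descent. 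This yields a homotopy pullback of the relevant open pieces of $\mathsf{dPerf}(-/\mathsf{Exp}_\gamma(X_0))_0$.

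Next I will check that the open conditions match on classical points. A perfect complex on $\mathfrak X_\gamma$ is the ideal sheaf of an algebraically transverse, DT-stable subscheme exactly when each restriction to $\mathfrak Y_\nu$ is such an ideal sheaf. Algebraic transversality gives the Tor-vanishing, so the derived restriction $I_Z\otimes^{\mathbf L}\mathcal O_{\mathfrak Y_\nu}$ is the classical ideal $I_{Z\cap\mathfrak Y_\nu}$. The restrictions to $\mathfrak D_e$ are then ideal sheaves of points of length $n_e$, which agree from both flags. DT-stability and properness of support are checked componentwise.

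The main technical obstacle is that the component stacks are defined through the containing families $\mathfrak Y_\nu^c$, $\mathfrak D_e^c$, which intersect each other in the ambient space. Descent, however, is cleanest on the original open families. I would handle this exactly as in the independence part of Lemma \ref{lem-derived-hilb}: all complexes are canonically trivial away from their proper classical support, so extension by $\mathcal O$ and restriction identify the stacks built from any containing family with those built from the open family. The descent statement is therefore only ever needed on $\mathfrak X_\gamma$ and its pieces.

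For the Lagrangian statement, I will proceed in three steps.
\begin{enumerate}
\item Apply the argument of Theorem \ref{thm-curves-shifted-lag}(1)(2) over the base $\mathsf{Exp}_\gamma(X_0)$. Here the relative Calabi--Yau structure on each $(\mathfrak Y_\nu,\mathfrak D_\nu)$ comes from restricting Lemma \ref{lem:crepant-MR-degeneration} and taking residues. This makes each $\mathsf r_{Y_\nu\to D_\nu}$ a $(-1)$-shifted Lagrangian, so their fibre product over $\mathsf{Exp}_\gamma(X_0)$ is Lagrangian in the product.
\item Show that $\delta_\gamma$ is Lagrangian. The two flags of each edge induce residue volume forms on $\mathfrak D_e$ of opposite sign, because the residues of a log form along the two branches of a normal crossing cancel. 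Hence $\delta_\gamma$ is the graph of the identity, i.e.\ the diagonal Lagrangian in $\mathcal M_e\times\overline{\mathcal M_e}$.
\item Conclude that the fibre product is a Lagrangian intersection. Its induced $(-2)$-shifted symplectic form agrees with the AKSZ form of Proposition \ref{prp-family-shifted} restricted to $\mathsf{Exp}_\gamma(X_0)$, because integration over $\mathfrak X_\gamma$ splits as a sum of integrations over the $\mathfrak Y_\nu$ glued along $\mathfrak D_e$, using the same compatibility from \cite{Pre}.
\end{enumerate}

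In the total-space case, exactness follows from the weight-one $\bb C^*$ argument in Theorem \ref{thm-curves-shifted-lag}(3). The fibrewise $\bb C^*$ acts compatibly on all pieces, and the diagonal is $\bb C^*$-equivariant, so every form and isotropic homotopy has weight one and $\iota_E$ yields compatible exact lifts. For orientations, I will compare the determinant identities \eqref{eqn-001} for each $\nu$ with the square roots $K^{-1/2}$ on the boundary. Each edge contributes $K^{-1/2}_e$ twice, and these cancel against the determinant of $\Delta_\gamma$. I expect this bookkeeping of the signs of the positive choice $(-\sqrt{-1})^{\rank\bb V}$ to be the second delicate point, together with the residue sign in step 2.
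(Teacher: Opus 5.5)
Your proposal is correct and follows essentially the same route as the paper. The paper also glues the relative mapping stacks to $\mathsf{dPerf}$, citing \cite[Thm.~1.3, Rmk.~1.7]{Cho25} and descent for perfect complexes, which is the Milnor-patching statement you invoke; it repeats this for $\mathsf{dPic}$ to pass to trivialized determinants and cuts down to the Hilbert open substacks via Remark~\ref{rmk-intrinsic-derived-hilb}, while your matching of open conditions (Tor-vanishing from transversality) and your Lagrangian, exactness and orientation steps fill in details the paper only calls ``easy to see''.
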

\begin{proof}
All constructions below are relative to $\mathsf{Exp}_\gamma(X_0)$,
with the expansion data fixed above.
By the decomposition of the original boundary families, each edge
contributes two copies of the same boundary Hilbert stack;
their relative diagonals define $\Delta_\gamma$.

Testing on derived affine schemes and checking \'etale locally,
\cite[Thm.~1.3, Rmk.~1.7]{Cho25} and descent for perfect complexes
give the following homotopical fibre product for
Fig.~(\ref{eqn-gluing-diagram}):
\begin{footnotesize}
\[
\begin{tikzcd}
{\dMap_{\mathsf{Exp}_\gamma(X_0)}
(\mathfrak X_\gamma,\mathsf{dPerf}\times\mathsf{Exp}_\gamma(X_0))}
&&{\Delta_\gamma^{\mathsf{dMap}}}\\
{\displaystyle{\prod_\nu}_{/\mathsf{Exp}_\gamma(X_0)}
\dMap_{\mathsf{Exp}_\gamma(X_0)}
(\mathfrak Y_\nu,\mathsf{dPerf}\times\mathsf{Exp}_\gamma(X_0))}
&&
{\displaystyle{\prod_\nu}_{/\mathsf{Exp}_\gamma(X_0)}
{\prod_{e\in E(\nu)}}_{/\mathsf{Exp}_\gamma(X_0)}\dMap_{\mathsf{Exp}_\gamma(X_0)}
(\mathfrak D_{\nu,e},\mathsf{dPerf}\times\mathsf{Exp}_\gamma(X_0))}
\arrow[from=1-1,to=1-3]\arrow[from=1-1,to=2-1]
\arrow["\square"{description},draw=none,from=1-1,to=2-3]
\arrow[from=1-3,to=2-3]
\arrow["{{\prod_\nu}_{/\mathsf{Exp}_\gamma(X_0)}\mathsf r_{Y_\nu\to D_\nu}}",from=2-1,to=2-3]
\end{tikzcd}
\]
\end{footnotesize}
Here $\Delta_\gamma^{\mathsf{dMap}}$ is the product of the relative
diagonals for the two mapping-stack factors at each edge.
The same gluing statement holds for $\mathsf{dPic}$, and the
determinant morphisms commute with restriction. Taking the fibres
over the structure sheaves, with their trivializations, therefore
gives the analogous pullback diagram for perfect complexes with
trivialized determinant.
Then our pullback diagram follows from Remark \ref{rmk-intrinsic-derived-hilb},
testing via derived affine schemes and the geometric meaning of 
the fiber products. The other statements are easy to see.
\end{proof}

\subsection{General degeneration formula}
Define the \textit{DT-perverse sheaf} by
\[\varphi_{\dHilb_{\gamma}(D)}:=\pi_{\gamma,D}^{\varphi}(\bb Q_{\mathsf{Exp}_\gamma(X_0)}[0])\] 
for the structure morphism 
\[\pi_{\gamma,D}:{\prod_{\nu}}_{/\mathsf{Exp}_\gamma(X_0)}\dHilb_{\Delta,\gamma(\nu)}(D_{\nu}|\partial D_{\nu})
\to\mathsf{Exp}_\gamma(X_0).\] Define its
critical cohomology as
\[H^*_{\mathrm{crit}}\left({\prod_{\nu}}_{/\mathsf{Exp}_\gamma(X_0)}\dHilb_{\Delta,\gamma(\nu)}(D_{\nu}|\partial D_{\nu})\right)
=H^*\left(\varphi_{\dHilb_{\gamma}(D)}\right).\]

\begin{thm}[General degeneration formula for cycles]\label{thm-degeneration-general-curves}
Let \[\pi:\mcal X\to\bb A^1\] 
be a Calabi-Yau simple normal crossing degeneration of $4$-folds
satisfying Assumption \ref{ass-compactifiable-families}, e.g.,
Lemma \ref{lem-compactifiable-local-families}.
Assume there exists an snc degeneration $\mcal Y\to\bb A^1$ such that
$\mcal X=\mathrm{Tot}(\omega_{\mcal Y/\bb A^1})$ as described in Proposition
\ref{prp-family-shifted}.
Then we have the following results.
\begin{enumerate}
	\item If $0\neq b\in\bb A^1$, then 
	\[
	i_b^![\dHilb_{\beta_t,\chi_t}(\mcal X/\bb A^1)]^{\vir}=[\dHilb_{\beta_b,\chi_b}(\mcal X_b)]^{\vir}.
	\]
	\item If $b=0\in\bb A^1$, then
	\[
	i_0^![\dHilb_{\beta_t,\chi_t}(\mcal X/\bb A^1)]^{\vir}=n_*\sum_{\gamma}m_\gamma[\dHilb_{\gamma}(X_0)]^{\vir}.
	\]
	Moreover, we have
	\[
	i_{\Delta,\gamma,*}[\dHilb_{\gamma}(X_0)]^{\vir}=
	\left[{\prod_{\nu}}_{/\mathsf{Exp}_\gamma(X_0)}\mathsf r_{Y_{\nu}\to D_{\nu}}\right]^{\mathrm{Lag}}\left(\Delta_{\gamma}^{\mathrm{Lag}}\right),
	\]
Here $m_\gamma$ and the component Hilbert stacks are as in
\S~\ref{sec-cutting-maps}, over
$\mathsf{Exp}_\gamma(X_0)$.
The operator in the second identity is the Lagrangian pullback
of the fibre product of restriction maps
	\[
	\mathsf r_{Y_{\nu}\to D_{\nu}}:\dHilb_{\Delta,\gamma(\nu)}(Y_{\nu}|\partial Y_{\nu})\to\dHilb_{\Delta,\gamma(\nu)}(D_{\nu}|\partial D_{\nu})
	\]
	and $\Delta_{\gamma}^{\mathrm{Lag}}\in H^*_{\mathrm{crit}}\left({\prod_{\nu}}_{/\mathsf{Exp}_\gamma(X_0)}\dHilb_{\Delta,\gamma(\nu)}(D_{\nu}|\partial D_{\nu})\right)$ is the Lagrangian diagonal class
	determined by 
	\[
	\bb Q_{{\prod_{\nu}}_{/\mathsf{Exp}_\gamma(X_0)}\dHilb_{\Delta,\gamma(\nu)}(D_{\nu}|\partial D_{\nu})}
	\to\delta_{\gamma,*}\bb Q_{\Delta_{\gamma}}\xrightarrow{[\delta_{\gamma}]^{\mathrm{Lag}}}
	\delta_{\gamma,*}\delta_{\gamma}^!\varphi_{\dHilb_{\gamma}(D)}\to\varphi_{\dHilb_{\gamma}(D)}.
	\]
\end{enumerate}
\end{thm}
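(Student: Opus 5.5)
The plan is to derive all three identities from the properties of shifted Lagrangian classes in Conjecture \ref{conj-Joyce-conj}, in the cases covered by Remark \ref{rmk-KKPS-lagrangian-classes}. The geometric input is the Cartesian diagrams already constructed: the fibre squares over $b\in\bb A^1$, the normalization square in \S~\ref{sec-cutting-maps}, and the Lagrangian intersection of Proposition \ref{prp-fiberprod-gluing-general}.

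For (1) and the first identity in (2), I would proceed as follows.
\begin{enumerate}
\item Recall that the family virtual class comes from the oriented $(-1)$-shifted exact Lagrangian $\varpi_{\beta_t,\chi_t}:\dHilb_{\beta_t,\chi_t}(\mcal X/\bb A^1)\to\TT^*[-1]\mathsf{Exp}_{\beta_t,\chi_t}(\mcal X/\bb A^1)$ of Proposition \ref{prp-family-shifted}.
\item Express $i_b^!$ as the refined Gysin pullback along the regular embedding $\{b\}\hookrightarrow\bb A^1$. Pull it back to $\mathsf{Exp}_{\beta_t,\chi_t}(\mcal X/\bb A^1)_b$. This is a finite morphism, being a closed immersion composed with the normalization $n$ at $b=0$.
\item Apply the finite base-change property, Conjecture \ref{conj-Joyce-conj}(3). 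It identifies the restricted Lagrangian class with the Lagrangian class of the derived base change, whose symplectic form and orientation are the restricted ones.
\item For $b\neq0$, the expansion stack over $b$ is a point, since vertical expansions are trivial away from $0$. The base change is then $\dHilb_{\beta_b,\chi_b}(\mcal X_b)$ with its absolute $(-2)$-shifted structure, and (1) follows.
\item For $b=0$, use the cycle identity $[\mathsf{Exp}_{\beta_t,\chi_t}(\mcal X/\bb A^1)_0]=n_*\sum_\gamma m_\gamma[\mathsf{Exp}_\gamma(X_0)]$. Combine it with the comparison property (4), which lets us compute in terms of $\sqrt{\varpi^!}$ applied to fundamental classes, and with base change along $n$. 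This gives \eqref{eqn-central-virtual-multiplicities}, i.e. the first identity of (2).
\end{enumerate}
Each $m_\gamma$ enters only through the fundamental cycle of the central fibre of the smooth expansion base. Because the base has snc boundary after the barycentric refinement, $m_\gamma$ is indeed $\operatorname{ord}_{\mathsf{Exp}_\gamma(X_0)}(t)$.

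For the gluing identity in (2), I would use Proposition \ref{prp-fiberprod-gluing-general}. It presents $\dHilb_\gamma(X_0)$ as the Lagrangian intersection over $\mathsf{Exp}_\gamma(X_0)$ of two Lagrangians in $\prod_\nu\dHilb_{\Delta,\gamma(\nu)}(D_\nu|\partial D_\nu)$:
\begin{enumerate}
\item $L_1=\prod_\nu\dHilb_{\Delta,\gamma(\nu)}(Y_\nu|\partial Y_\nu)$ via $\prod_\nu\mathsf r_{Y_\nu\to D_\nu}$, which is exact and oriented by Theorem \ref{thm-curves-shifted-lag}(3) applied fibrewise over $\mathsf{Exp}_\gamma(X_0)$;
\item $L_2=\Delta_\gamma$ via the diagonal $\delta_\gamma$, which is Lagrangian because each edge contributes two copies of the same $(-1)$-shifted symplectic boundary stack with opposite forms.
\end{enumerate}
Regard both as correspondences between $\mathrm{pt}_B$ and $\prod_\nu\dHilb(D_\nu)$ over $B=\mathsf{Exp}_\gamma(X_0)$, and apply associativity, Conjecture \ref{conj-Joyce-conj}(2). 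The composite correspondence is $\dHilb_\gamma(X_0)$, viewed as a $(-2)$-shifted symplectic fibration over $B$, i.e. a Lagrangian in $\TT^*[-1]B$ with $B$ smooth. Its Lagrangian class is therefore $[\dHilb_\gamma(X_0)]^{\vir}$, which matches the class obtained in the previous paragraph by property (4) and compatibility of the induced symplectic structure.

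Evaluating the associativity square on $\bb Q_B[0]$ then does two things:
\begin{itemize}
\item the leg through $\delta_\gamma$ produces exactly the class $\Delta_\gamma^{\mathrm{Lag}}$ defined in the statement, via $\bb Q\to\delta_{\gamma,*}\bb Q_{\Delta_\gamma}\to\delta_{\gamma,*}\delta_\gamma^!\varphi\to\varphi$;
\item the leg through $\prod_\nu\mathsf r_{Y_\nu\to D_\nu}$ applies $[\prod\mathsf r]^{\mathrm{Lag}}$ to that class.
\end{itemize}
The remaining step is base change along the proper map $i_{\Delta,\gamma}$, using the exchange isomorphism $\mathrm{Ex}_*^!$ of Theorem \ref{thm-six-functor}(3). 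This converts the $b^!$-leg of the associativity diagram into $i_{\Delta,\gamma,*}$ in Borel--Moore homology.

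I expect the main obstacle to be the orientation and symplectic-form compatibility in this last step. The orientation that $\dHilb_\gamma(X_0)$ inherits as a Lagrangian intersection must agree with the one restricted from the family orientation of Proposition \ref{prp-family-shifted}(2). Likewise, the sign conventions of the diagonal (one flag of each edge taken with the opposite form) must agree with the AKSZ forms. I would check this first, by comparing the determinant-line formulas \eqref{eqn-001} for the components and boundaries with the family formula. The $K^{-1/2}$ factors from the two flags of each edge should cancel, and the positive-sign choice $(-\sqrt{-1})^{\rank\bb V}$ should be multiplicative under gluing by the spectral triangles. I would also confirm that tube components contribute trivially to this comparison.
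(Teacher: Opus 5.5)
Your proposal is correct and follows essentially the same route as the paper. Part (1) and the first identity of (2) come from finite base change and the Oh--Thomas comparison in Conjecture~\ref{conj-Joyce-conj}(3)(4), together with the multiplicity decomposition of the central fibre of the expansion base; the gluing identity comes from associativity for the composite of $\delta_\gamma$ with $\prod_\nu\mathsf r_{Y_\nu\to D_\nu}$, matched to the definition of $\Delta_\gamma^{\mathrm{Lag}}$ via exchange isomorphisms for the closed immersions $\delta_\gamma$ and $i_{\Delta,\gamma}$, and the orientation and form compatibilities you flag are only asserted in Proposition~\ref{prp-fiberprod-gluing-general} (exactness and orientations of the component restriction maps there come from rerunning the proof of Theorem~\ref{thm-curves-shifted-lag}(3) for the inherited families over $\mathsf{Exp}_\gamma(X_0)$, not from applying the theorem itself fibrewise), so your planned check makes the argument more complete.
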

\begin{proof}
Statement (1) follows from base change for the family virtual class.
The first identity in (2) is Eqn. \eqref{eqn-central-virtual-multiplicities}.
For the second, we have two types of oriented exact $(-1)$-shifted Lagrangians related over $\mathsf{Exp}_\gamma(X_0)$:
\begin{align*}
	\mathsf r_{Y_{\nu}\to D_{\nu}}&:\dHilb_{\Delta,\gamma(\nu)}(Y_{\nu}|\partial Y_{\nu})\to\dHilb_{\Delta,\gamma(\nu)}(D_{\nu}|\partial D_{\nu}),\\ 
	\delta_{\gamma,e}&:\dHilb_{\Delta,\gamma(\nu)}\left(D_{\nu,e}|\partial D_{\nu,e}\right)
	\to\dHilb_{\Delta,\gamma(\nu)}\left(D_{\nu,e}|\partial D_{\nu,e}\right)\times_{\mathsf{Exp}_\gamma(X_0)}\dHilb_{\Delta,\gamma(\nu)}\left(D_{\nu,e}|\partial D_{\nu,e}\right).
\end{align*}
For the diagonal, the product shifted symplectic structure on the two boundary factors 
adjacent to an edge is taken with the opposite signs.
Taking products gives oriented exact $(-1)$-shifted Lagrangians
relative to $\mathsf{Exp}_\gamma(X_0)$:
\begin{align*}
	{\prod_{\nu}}_{/\mathsf{Exp}_\gamma(X_0)}\mathsf r_{Y_{\nu}\to D_{\nu}}&:
	{\prod_{\nu}}_{/\mathsf{Exp}_\gamma(X_0)}\dHilb_{\Delta,\gamma(\nu)}(Y_{\nu}|\partial Y_{\nu})\to{\prod_{\nu}}_{/\mathsf{Exp}_\gamma(X_0)}\dHilb_{\Delta,\gamma(\nu)}(D_{\nu}|\partial D_{\nu}),\\ 
	\delta_{\gamma}&:\Delta_{\gamma}\to\displaystyle{\prod_{\nu}}_{/\mathsf{Exp}_\gamma(X_0)}\dHilb_{\Delta,\gamma(\nu)}(D_{\nu}|\partial D_{\nu}).
\end{align*}
By Proposition \ref{prp-fiberprod-gluing-general}, we have the following pullback diagram 
of derived stacks in $\mathsf{dSt}_{\mathsf{Exp}_\gamma(X_0)}$:
\[\begin{tikzcd}
	{\dHilb_{\gamma}(X_0)} && {\Delta_{\gamma}} \\
	{\displaystyle{\prod_{\nu}}_{/\mathsf{Exp}_\gamma(X_0)}\dHilb_{\Delta,\gamma(\nu)}(Y_{\nu}|\partial Y_{\nu})} && {\displaystyle{\prod_{\nu}}_{/\mathsf{Exp}_\gamma(X_0)}\dHilb_{\Delta,\gamma(\nu)}(D_{\nu}|\partial D_{\nu}).}
	\arrow[from=1-1, to=1-3]
	\arrow[from=1-1, to=2-1]
	\arrow["\square"{description}, draw=none, from=1-1, to=2-3]
	\arrow[from=1-3, to=2-3]
	\arrow["{\prod_{\nu}}_{/\mathsf{Exp}_\gamma(X_0)}\mathsf r_{Y_{\nu}\to D_{\nu}}", from=2-1, to=2-3]
\end{tikzcd}\]
This forms a composition of Lagrangian correspondences in $\mathsf{dSt}_{\mathsf{Exp}_\gamma(X_0)}$:
\begin{scriptsize}
\[\begin{tikzcd}
	&& {\dHilb_{\gamma}(X_0)} && \\
	& {\Delta_{\gamma}} && {\displaystyle{\prod_{\nu}}_{/\mathsf{Exp}_\gamma(X_0)}\dHilb_{\Delta,\gamma(\nu)}(Y_{\nu}|\partial Y_{\nu})} \\
	{\mathsf{Exp}_\gamma(X_0)} && {\displaystyle{\prod_{\nu}}_{/\mathsf{Exp}_\gamma(X_0)}\dHilb_{\Delta,\gamma(\nu)}(D_{\nu}|\partial D_{\nu})} && {\mathsf{Exp}_\gamma(X_0)}
	\arrow["b"', from=1-3, to=2-2]
	\arrow["{i_{\Delta,\gamma}}", from=1-3, to=2-4]
	\arrow["t", from=2-2, to=3-1]
	\arrow["{\delta_{\gamma}}", from=2-2, to=3-3]
	\arrow["{{\prod_{\nu}}_{/\mathsf{Exp}_\gamma(X_0)}\mathsf r_{Y_{\nu}\to D_{\nu}}}", from=2-4, to=3-3]
	\arrow["s", from=2-4, to=3-5]
\end{tikzcd}\]
\end{scriptsize}
Then by the associativity of shifted Lagrangian classes in Conjecture \ref{conj-Joyce-conj}(2)
and Remark \ref{rmk-KKPS-lagrangian-classes}, the class
\[
i_{\Delta,\gamma,*}[\dHilb_{\gamma}(X_0)]^{\vir}\in H^{\BM}_*\left({\prod_{\nu}}_{/\mathsf{Exp}_\gamma(X_0)}\dHilb_{\Delta,\gamma(\nu)}(Y_{\nu}|\partial Y_{\nu}),\bb Q\right)
\]
is induced by 
\begin{equation}\label{eqn-Lag-01}
	\begin{aligned}
		&\bb Q_{{\prod_{\nu}}_{/\mathsf{Exp}_\gamma(X_0)}\dHilb_{\Delta,\gamma(\nu)}(Y_{\nu}|\partial Y_{\nu})}[\vdim]
	\to i_{\Delta,\gamma,*}\bb Q_{\dHilb_{\gamma}(X_0)}[\vdim]=i_{\Delta,\gamma,*}b^*\bb Q_{\Delta_{\gamma}}[\vdim]\\
	&\xrightarrow{[\delta_{\gamma}]^{\mathrm{Lag}}}i_{\Delta,\gamma,*}b^*\delta_{\gamma}^!\varphi_{\dHilb_{\gamma}(D)}[\vdim]
	\to i_{\Delta,\gamma,*}i_{\Delta,\gamma}^!\left({\prod_{\nu}}_{/\mathsf{Exp}_\gamma(X_0)}\mathsf r_{Y_{\nu}\to D_{\nu}}\right)^*\varphi_{\dHilb_{\gamma}(D)}[\vdim]\\ 
	&\to\left({\prod_{\nu}}_{/\mathsf{Exp}_\gamma(X_0)}\mathsf r_{Y_{\nu}\to D_{\nu}}\right)^*\varphi_{\dHilb_{\gamma}(D)}[\vdim]\xrightarrow{\left[{\prod_{\nu}}_{/\mathsf{Exp}_\gamma(X_0)}\mathsf r_{Y_{\nu}\to D_{\nu}}\right]^{\mathrm{Lag}}}
	\bb D\bb Q_{{\prod_{\nu}}_{/\mathsf{Exp}_\gamma(X_0)}\dHilb_{\Delta,\gamma(\nu)}(Y_{\nu}|\partial Y_{\nu})}.
	\end{aligned}
\end{equation}
The boundary Hilbert stacks are separated over the common base, so
$\delta_\gamma$ and its base change $i_{\Delta,\gamma}$ are closed
immersions; see \cite[\href{https://stacks.math.columbia.edu/tag/0DM7}{Tag 0DM7}]{SP26}.
Hence Theorem~\ref{thm-six-functor}(4) gives
\[
\delta_{\gamma,!}\cong\delta_{\gamma,*},\qquad
i_{\Delta,\gamma,!}\cong i_{\Delta,\gamma,*}.
\]

Moreover, the class
\[
\left[{\prod_{\nu}}_{/\mathsf{Exp}_\gamma(X_0)}\mathsf r_{Y_{\nu}\to D_{\nu}}\right]^{\mathrm{Lag}}\left(\Delta_{\gamma}^{\mathrm{Lag}}\right)
\in H^{\BM}_*\left({\prod_{\nu}}_{/\mathsf{Exp}_\gamma(X_0)}\dHilb_{\Delta,\gamma(\nu)}(Y_{\nu}|\partial Y_{\nu}),\bb Q\right)
\]
is induced by 
\begin{equation}\label{eqn-Lag-02}
	\begin{aligned}
		&\bb Q_{{\prod_{\nu}}_{/\mathsf{Exp}_\gamma(X_0)}\dHilb_{\Delta,\gamma(\nu)}(Y_{\nu}|\partial Y_{\nu})}[\vdim]
	=\left({\prod_{\nu}}_{/\mathsf{Exp}_\gamma(X_0)}\mathsf r_{Y_{\nu}\to D_{\nu}}\right)^*\bb Q_{{\prod_{\nu}}_{/\mathsf{Exp}_\gamma(X_0)}\dHilb_{\Delta,\gamma(\nu)}(D_{\nu}|\partial D_{\nu})}[\vdim]\\
	&\to\left({\prod_{\nu}}_{/\mathsf{Exp}_\gamma(X_0)}\mathsf r_{Y_{\nu}\to D_{\nu}}\right)^*\delta_{\gamma,*}\bb Q_{\Delta_{\gamma}}[\vdim]
	\xrightarrow{[\delta_{\gamma}]^{\mathrm{Lag}}}\left({\prod_{\nu}}_{/\mathsf{Exp}_\gamma(X_0)}\mathsf r_{Y_{\nu}\to D_{\nu}}\right)^*\delta_{\gamma,*}\delta_{\gamma}^!\varphi_{\dHilb_{\gamma}(D)}[\vdim]\\
	&\to\left({\prod_{\nu}}_{/\mathsf{Exp}_\gamma(X_0)}\mathsf r_{Y_{\nu}\to D_{\nu}}\right)^*\varphi_{\dHilb_{\gamma}(D)}[\vdim]
	\xrightarrow{\left[{\prod_{\nu}}_{/\mathsf{Exp}_\gamma(X_0)}\mathsf r_{Y_{\nu}\to D_{\nu}}\right]^{\mathrm{Lag}}}\bb D\bb Q_{{\prod_{\nu}}_{/\mathsf{Exp}_\gamma(X_0)}\dHilb_{\Delta,\gamma(\nu)}(Y_{\nu}|\partial Y_{\nu})}.
	\end{aligned}
\end{equation}
Here we have used the constructions in Remark \ref{rmk-constructions-BM}.

To compare Eqn. \eqref{eqn-Lag-01} and Eqn. \eqref{eqn-Lag-02}, use the
exchange isomorphisms of Theorem~\ref{thm-six-functor}(3).

The first diagram:
\[\begin{tikzcd}
	{\bb Q_{{\prod_{\nu}}_{/\mathsf{Exp}_\gamma(X_0)}\dHilb_{\Delta,\gamma(\nu)}(Y_{\nu}|\partial Y_{\nu})}[\vdim]} & {\left({\prod_{\nu}}_{/\mathsf{Exp}_\gamma(X_0)}\mathsf r_{Y_{\nu}\to D_{\nu}}\right)^*\delta_{\gamma,*=!}\bb Q_{\Delta_{\gamma}}[\vdim]} \\
	{i_{\Delta,\gamma,*=!}b^*\bb Q_{\Delta_{\gamma}}[\vdim]}
	\arrow[from=1-1, to=1-2]
	\arrow[from=1-1, to=2-1]
	\arrow["{\cong,\mathrm{Ex}_!^*}"', from=2-1, to=1-2]
\end{tikzcd}\]
The horizontal arrow and the composite of the other two
have the same adjoint $\mathrm{id}_{\bb Q_{\dHilb_{\gamma}(X_0)}[\vdim]}$.
Hence this diagram is commutative up to isomorphisms; see also \cite[Exam.~3.2.1]{FYZ23}.

The second diagram:
\[\begin{tikzcd}[column sep = small]
	{\left({\prod_{\nu}}_{/\mathsf{Exp}_\gamma(X_0)}\mathsf r_{Y_{\nu}\to D_{\nu}}\right)^*\delta_{\gamma,*=!}\bb Q_{\Delta_{\gamma}}[\vdim]} & {\left({\prod_{\nu}}_{/\mathsf{Exp}_\gamma(X_0)}\mathsf r_{Y_{\nu}\to D_{\nu}}\right)^*\delta_{\gamma,*=!}\delta_{\gamma}^!\varphi_{\dHilb_{\gamma}(D)}[\vdim]} \\
	{i_{\Delta,\gamma,*=!}b^*\bb Q_{\Delta_{\gamma}}[\vdim]} & {i_{\Delta,\gamma,*=!}b^*\delta_{\gamma}^!\varphi_{\dHilb_{\gamma}(D)}[\vdim]}
	\arrow["{[\delta_{\gamma}]^{\mathrm{Lag}}}", from=1-1, to=1-2]
	\arrow["{\cong,\mathrm{Ex}_!^*}", from=2-1, to=1-1]
	\arrow["{[\delta_{\gamma}]^{\mathrm{Lag}}}", from=2-1, to=2-2]
	\arrow["{\cong,\mathrm{Ex}_!^*}", from=2-2, to=1-2]
\end{tikzcd}\]
which is commutative by the naturality of $\mathrm{Ex}_!^*$.

The third diagram:
\begin{small}
\[\begin{tikzcd}[column sep=small]
	{\left({\prod_{\nu}}_{/\mathsf{Exp}_\gamma(X_0)}\mathsf r_{Y_{\nu}\to D_{\nu}}\right)^*\delta_{\gamma,*=!}\delta_{\gamma}^!\varphi_{\dHilb_{\gamma}(D)}[\vdim]} & {\left({\prod_{\nu}}_{/\mathsf{Exp}_\gamma(X_0)}\mathsf r_{Y_{\nu}\to D_{\nu}}\right)^*\varphi_{\dHilb_{\gamma}(D)}[\vdim]} \\
	{i_{\Delta,\gamma,*=!}b^*\delta_{\gamma}^!\varphi_{\dHilb_{\gamma}(D)}[\vdim]} & {i_{\Delta,\gamma,*}i_{\Delta,\gamma}^!\left({\prod_{\nu}}_{/\mathsf{Exp}_\gamma(X_0)}\mathsf r_{Y_{\nu}\to D_{\nu}}\right)^*\varphi_{\dHilb_{\gamma}(D)}[\vdim]}
	\arrow[from=1-1, to=1-2]
	\arrow[from=2-1, to=1-1]
	\arrow[from=2-1, to=2-2]
	\arrow[from=2-2, to=1-2]
\end{tikzcd}\]
\end{small}
Both compositions are left adjoint to
$b^*\delta_{\gamma}^!\longrightarrow
i_{\Delta,\gamma}^!
\left({\prod_{\nu}}_{/\mathsf{Exp}_\gamma(X_0)}
\mathsf r_{Y_{\nu}\to D_{\nu}}\right)^*$,
so the diagram commutes;
see \cite[\S~2]{KKPS1} for more details.

These diagrams identify \eqref{eqn-Lag-01} and \eqref{eqn-Lag-02}.
They therefore determine the same Borel-Moore homology class: 
\[
	i_{\Delta,\gamma,*}[\dHilb_{\gamma}(X_0)]^{\vir}=
	\left[{\prod_{\nu}}_{/\mathsf{Exp}_\gamma(X_0)}\mathsf r_{Y_{\nu}\to D_{\nu}}\right]^{\mathrm{Lag}}\left(\Delta_{\gamma}^{\mathrm{Lag}}\right).
\]
This proves the Theorem.
\end{proof}

\begin{nota}
The formula holds for stable pairs.
\end{nota}

\begin{nota}
The component stacks
$\dHilb_{\Delta,\gamma(\nu)}(Y_\nu|\partial Y_\nu)$
are defined over $\mathsf{Exp}_\gamma(X_0)$ with the inherited
expansion data. Their classical truncations need not be
Deligne-Mumford: automorphisms of the ambient expansion may
preserve the subscheme on one component without preserving
the subschemes on the other components.
This does not obstruct Theorem~\ref{thm-degeneration-general-curves},
whose Lagrangian correspondences are taken in derived Artin stacks.
However, these component stacks are not identified here with
the relative logarithmic Hilbert stacks defined independently
for the component pairs. See also Question \ref{Q-dege-log-rel}.
\end{nota}

\subsection{Degeneration formula for smooth pairs via Li-Wu's construction}\label{sec-Li-Wu-deg}
We now specialize to simple degenerations and use Li-Wu's construction.

\begin{nota}
In the setting of Li-Wu, we don't need the modification/root construction of cone structures
and the Li-Wu's stack of expanded pairs are compactifiable automatically.
\end{nota}

Let $X\to\bb A^1$ be a simple degeneration of local Calabi-Yau $4$-folds 
satisfies Assumption \ref{ass-compactifiable-families}, e.g.,
Lemma \ref{lem-compactifiable-local-families}, and $(\beta_t,\chi_t)\in K_{c,\leq1}^{\text{num}}(X_t)$.
Assume there exists a simple degeneration $U\to\bb A^1$ of $3$-folds such that 
\[X=\Tot(\omega_{U/\bb A^1}).\]
Then $U_0=U_+\cup_SU_-$ and $X_0=Y_+\cup_D Y_-$
such that $Y_{\pm}=\Tot(\omega_{U_{\pm}}(S))$ and
$D=\Tot(\omega_S)$. For a splitting datum 
$(P_-,P_+)\in\Lambda_{\text{spl}}^{\beta_0,\chi_0}$, we write $P_D=P_-|_D=P_+|_D$.
Then we have restriction maps
$r_{\pm}^{P_{\pm}}:\dHilb^{P_{\pm}}(Y_{\pm},D)\to\dHilb^{P_D}(D)\times\mcal A^{P_{\pm}}$,
and the following diagram as a composition of Lagrangian correspondences
related to $\mcal C_0^{\dagger ,(P_-,P_+)}$:
\begin{footnotesize}
\[\begin{tikzcd}
	{\dHilb^{(P_-,P_+)}(X/\mathbb A^1)_0^{\dagger}\cong\dHilb^{P_-}(Y_-,D)\times_{\dHilb^{P_D}(D)}\dHilb^{P_+}(Y_+,D)} & {\dHilb^{P_D}(D)\times\TT^*[-1]\mcal C_0^{\dagger ,(P_-,P_+)}} \\
	{\dHilb^{P_-}(Y_-,D)\times\dHilb^{P_+}(Y_+,D)} & {\dHilb^{P_D}(D)\times\dHilb^{P_D}(D)\times \TT^*[-1]\mcal C_0^{\dagger ,(P_-,P_+)}}
	\arrow["{r_{\Delta}^{(P_-,P_+)}}", from=1-1, to=1-2]
	\arrow["{i_{\Delta}^{(P_-,P_+)}}", from=1-1, to=2-1]
	\arrow["\square"{description}, draw=none, from=1-1, to=2-2]
	\arrow["{\Delta\times\text{id}}", from=1-2, to=2-2]
	\arrow["{r_{-}^{P_-}\times r_{+}^{P_+}}", from=2-1, to=2-2]
\end{tikzcd}\]
\end{footnotesize}
where we use $\mcal C_0^{\dagger ,(P_-,P_+)}\cong\mcal A^{P_-}\times\mcal A^{P_+}$.
The same associativity argument as in
Theorem \ref{thm-degeneration-general-curves} gives the following corollary.

\begin{coro}[Degeneration formula for smooth pairs]\label{coro-degeneration-formula-smooth-pair}
In this situation, let $D$ be a smooth divisor and consider the Li-Wu stack of 
expanded pairs. Then we have
\begin{itemize}
	\item If $b\neq0$, then 
	\[
	i_b^![\dHilb_{\beta_t,\chi_t}(X/\bb A^1)]^{\vir}
	=[\dHilb_{\beta_b,\chi_b}(X_b)]^{\vir}.
	\]
	\item If $b=0$, then
	\[
		i_0^![\dHilb_{\beta_t,\chi_t}(X/\bb A^1)]^{\vir}
		=n_*\sum_{(P_-,P_+)\in\Lambda_{\text{spl}}^{\beta_0,\chi_0}}[\dHilb^{(P_-,P_+)}(X/\bb A^1)_0^{\dagger}]^{\vir}.
	\]
    Moreover, we have
    \begin{align*}
    &i_{\Delta,*}^{(P_-,P_+)}[\dHilb^{(P_-,P_+)}(X/\bb A^1)_0^{\dagger}]^{\vir}=[r_{-}^{P_-}\times r_{+}^{P_+}]^{\text{Lag}}_{\text{coh}}
    \left(\left[\Delta^{(P_-,P_+)}_{\text{Lag}}\right]\right),
    \end{align*}
    for the product of relative $\textsf{DT}_4$-theories $[r_{-}^{P_-}\times r_{+}^{P_+}]^{\text{Lag}}_{\text{coh}}=[r_{-}^{P_-}]^{\text{Lag}}_{\text{coh}}\boxtimes[r_{+}^{P_+}]^{\text{Lag}}_{\text{coh}}$  
	of pairs $(Y_{\pm},D)$ and Lagrangian diagonal 
	$\left[\Delta^{(P_-,P_+)}_{\text{Lag}}\right]\in H_{\mathrm{crit}}\left(\dHilb^{P_D}(D)^{\times 2}\times\TT^*[-1]\mcal C_0^{\dagger ,(P_-,P_+)}\right)$.
\end{itemize}
\end{coro}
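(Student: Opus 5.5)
The plan is to specialize the proof of Theorem \ref{thm-degeneration-general-curves} to Li-Wu's stack of expanded pairs, where the combinatorics is simpler. There is a single double divisor $D=\Tot(\omega_S)$, and the central components are indexed by splitting data $(P_-,P_+)\in\Lambda_{\mathrm{spl}}^{\beta_0,\chi_0}$.

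For $b\neq0$ we argue exactly as in Theorem \ref{thm-degeneration-general-curves}(1). The Li-Wu stack $\mathcal A$ restricted to $b\neq0$ is a point, so $\dHilb_{\beta_t,\chi_t}(X/\bb A^1)_b\cong\dHilb_{\beta_b,\chi_b}(X_b)$ with the restricted $(-2)$-shifted symplectic structure and orientation. Conjecture \ref{conj-Joyce-conj}(3)(4), in the cases granted by Remark \ref{rmk-KKPS-lagrangian-classes}, then gives $i_b^!$ compatibility.

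For $b=0$ we first record the Li-Wu description of the central fibre of the expansion stack. Following Li-Wu and \cite{CZZ24}, the central fibre of the family expansion stack decomposes into the divisors $\mathcal C_0^{\dagger,(P_-,P_+)}\cong\mathcal A^{P_-}\times\mathcal A^{P_+}$. Each has multiplicity one, since Li-Wu's degeneration is reduced in the simple case: $t$ pulls back to a product of coordinates with exponent one. The finite base change property, Conjecture \ref{conj-Joyce-conj}(3), applied to $n:\bigsqcup\mathcal C_0^{\dagger,(P_-,P_+)}\to\mathcal C_0$, gives the first identity, exactly as in Eqn.~\eqref{eqn-central-virtual-multiplicities} with $m_\gamma=1$.

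Next comes the gluing. As in Proposition \ref{prp-fiberprod-gluing-general}, gluing perfect complexes along $D$ via \cite{Cho25} yields the displayed homotopy pullback square, with $\dHilb^{(P_-,P_+)}(X/\bb A^1)_0^\dagger$ the fibre product over $\dHilb^{P_D}(D)$. By Theorem \ref{thm-curves-shifted-lag}(3) in Li-Wu's setting (cf.\ \cite{CZZ24}), each $r_\pm^{P_\pm}$ is an oriented exact $(-1)$-shifted Lagrangian. We rewrite it over $\mathcal A^{P_\pm}$ as a Lagrangian into $\dHilb^{P_D}(D)\times\TT^*[-1]\mathcal A^{P_\pm}$ using \cite[Cor.~3.1.3]{park24}. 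The diagonal $\Delta\times\mathrm{id}$, taken with opposite signs on the two factors, is oriented exact Lagrangian. We then form the composition of Lagrangian correspondences from $\mathrm{pt}$ through $\Delta$ and through $r_-\times r_+$ to $\mathrm{pt}$. Associativity, Conjecture \ref{conj-Joyce-conj}(2), identifies the composite class with $i_{\Delta,*}^{(P_-,P_+)}$ of the virtual class. The three exchange diagrams from the proof of Theorem \ref{thm-degeneration-general-curves} then identify this with $[r_-\times r_+]^{\mathrm{Lag}}(\Delta_{\mathrm{Lag}})$.

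Finally, the product class splits as $[r_-]^{\mathrm{Lag}}\boxtimes[r_+]^{\mathrm{Lag}}$. This uses Thom–Sebastiani compatibility of perverse pullbacks (Theorem \ref{thm-perverse-pullback}) together with the fact that, for the product base $\mathcal A^{P_-}\times\mathcal A^{P_+}$, the Lagrangian class of a product is the exterior product. The last point follows from associativity applied to $r_-\times\mathrm{id}$ composed with $\mathrm{id}\times r_+$, plus unitality.

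I expect the main obstacle to be this product step, together with checking that the orientation on the diagonal and the total-space orientations of $r_\pm$ match the orientation of the family class restricted to the central component. I would handle it first by comparing determinant lines through the identity \eqref{eqn-001} and the positive sign choice of \cite[Rmk.~4.10]{CZZ24}, as in the proof of Theorem \ref{thm-curves-shifted-lag}.
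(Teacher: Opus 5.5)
Your proposal follows essentially the same route as the paper. The paper's proof is just the remark that the associativity argument of Theorem~\ref{thm-degeneration-general-curves} applies to the displayed composition of Lagrangian correspondences through $\dHilb^{P_D}(D)^{\times 2}\times\TT^*[-1]\mcal C_0^{\dagger,(P_-,P_+)}$, with $\mcal C_0^{\dagger,(P_-,P_+)}\cong\mcal A^{P_-}\times\mcal A^{P_+}$ entering with multiplicity one.

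One caveat concerns the splitting $[r_-^{P_-}\times r_+^{P_+}]^{\mathrm{Lag}}=[r_-^{P_-}]^{\mathrm{Lag}}\boxtimes[r_+^{P_+}]^{\mathrm{Lag}}$: it does not follow from associativity and unitality alone. Your decomposition would still require $[r_-^{P_-}\times\mathrm{id}]^{\mathrm{Lag}}=[r_-^{P_-}]^{\mathrm{Lag}}\boxtimes\mathrm{id}$, which is a product compatibility of Lagrangian classes not among the properties listed in Conjecture~\ref{conj-Joyce-conj}. The paper does not prove this identity either; it builds it into the statement as the meaning of the product theory.
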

\begin{nota}
Note that $\left[\Delta^{(P_-,P_+)}_{\text{Lag}}\right]$
is given by
    \begin{align*}
    \bb Q\to\Delta_{*}\bb Q &\xrightarrow{\text{pairing}}\Delta_*\Delta^!\left(\varphi_{\dHilb^{P_D}(D)}^{\boxtimes2}\boxtimes\varphi_{\TT^*[-1]\mcal C_0^{\dagger ,(P_-,P_+)}}\right)\\ 
        &\xrightarrow{\text{can}}
        \varphi_{\dHilb^{P_D}(D)}^{\boxtimes2}\boxtimes\varphi_{\TT^*[-1]\mcal C_0^{\dagger ,(P_-,P_+)}}.
    \end{align*}
Here, the pairing map $\bb Q\to\Delta^!\left(\varphi_{\dHilb^{P_D}(D)}^{\boxtimes2}\right)$ is induced by 
the pairing map
\begin{equation*}
	\begin{aligned}
&\varphi_{\dHilb^{P_D}(D)}\otimes\varphi_{\dHilb^{P_D}(D)}\cong\varphi_{\dHilb^{P_D}(D)}\otimes\bb D\varphi_{\dHilb^{P_D}(D)}\\ 
&\cong\varphi_{\dHilb^{P_D}(D)}\otimes\scr Hom(\varphi_{\dHilb^{P_D}(D)},\bb D\bb Q_{\dHilb^{P_D}(D)})\to\bb D\bb Q_{\dHilb^{P_D}(D)},
\end{aligned}
\end{equation*}
under the identification
\begin{align*}
&\Delta^!\left(\varphi_{\dHilb^{P_D}(D)}^{\boxtimes2}\right)
\cong\bb D\Delta^*(\varphi_{\dHilb^{P_D}(D)}\boxtimes\bb D\varphi_{\dHilb^{P_D}(D)})\\ 
&\cong\bb D(\varphi_{\dHilb^{P_D}(D)}\otimes\bb D\varphi_{\dHilb^{P_D}(D)})
\cong\scr Hom(\varphi_{\dHilb^{P_D}(D)}\otimes\bb D\varphi_{\dHilb^{P_D}(D)},\bb D\bb Q).
\end{align*}
Here we use the properties of Verdier dual $\bb D$ and the canonical isomorphism $\varphi_{-}\cong\bb D\varphi_{-}$.
\end{nota}

\section{Zero-dimensional logarithmic \texorpdfstring{$\mathsf{DT}_4$}{DT4}-theory 
and degeneration formula}\label{sec-zero-dim-DT4}
This section develops the zero-dimensional logarithmic $\mathsf{DT}_4$-theory and 
proves its degeneration formula using the stack $\mathsf{Exp}_0(X|D)$ of $0$-complexes. 
Unlike the curve-counting theory, the construction is carried out entirely in Chow groups 
and does not use shifted Lagrangian classes.

\subsection{Construction of the virtual cycles}
Fix compactifiable rank-zero expansion $\mathsf{Exp}_{0,R}(X|D)$ and write
\[
\mathfrak X\hookrightarrow\mathfrak X^c
\longrightarrow\mathsf{Exp}_{0,R}(X|D)
\]
for the original universal family and its flat projective
family. By Lemma~\ref{lem-derived-hilb} and
Remark~\ref{rmk-intrinsic-derived-hilb}, applied to $0$-complexes,
we have an intrinsic derived enhancement
\[
\dHilb^R(X|D)\hookrightarrow
\mathsf{dPerf}
\bigl(\mathfrak X^c/\mathsf{Exp}_{0,R}(X|D)\bigr)_0,
\]
where $R\in K^{\mathrm{num}}_{\leq0,\mathrm{cpt}}(X)$.

\begin{prp}\label{prp-points-exact-sympl}
Let $(X,D)$ be a log Calabi-Yau pair with reduced simple 
normal crossing divisor $D$ and $\dim X=n$ and $R\in K^{\mathrm{num}}_{\leq0,\mathrm{cpt}}(X)$.
\begin{enumerate}
    \item The canonical morphism 
    \[r_R:\dHilb^R(X|D)\to\mathsf{Exp}_{0,R}(X|D)\]
    admits a $(2-n)$-shifted symplectic structure. It is exact if $n=4$.
	    \item If $n=4$ and there exists an snc pair $(Y,S)$ with $\dim Y=3$ such that $X=\mathrm{Tot}(\omega_Y(S))$ and $D=\mathrm{Tot}(\omega_S)$, then 
    \[r_R:\dHilb^R(X|D)\to\mathsf{Exp}_{0,R}(X|D)\]
    admits a canonical orientation.
\end{enumerate}
\end{prp}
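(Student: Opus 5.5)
The plan is to copy the proof of Proposition \ref{prp-family-shifted}, replacing the Calabi-Yau degeneration by the log Calabi-Yau pair. The input is the log-crepant identity of Lemma \ref{lem:log-crepant-MR-pair}, now applied to the rank-zero expansion family $\mathfrak X\to\mathsf{Exp}_{0,R}(X|D)$.

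The key observation is that for $0$-complexes all strata of codimension $\geq1$ are removed. The original universal family $\mathfrak X$ therefore has empty horizontal boundary: $\mathfrak D=\emptyset$, and each fibre is a disjoint union of $X\setminus D$ and the $\bb C^*$-bundle pieces over strata. Lemma \ref{lem:log-crepant-MR-pair} then gives $\omega_{\mathfrak X/\mathsf{Exp}_{0,R}(X|D)}\cong p^*\pr_X^*\omega_X(D)\cong\mcal O_{\mathfrak X}$. I will check that its proof applies verbatim on the rank-zero locus: flatness, Gorenstein fibres, $f^*E=E_{\mathfrak X}$ from reduced fibres, and log étaleness of the contraction.

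\emph{Step 1 (symplectic form).} As in the proof of Theorem \ref{thm-curves-shifted-lag}, excision identifies the trace-free deformation complex on $\mathfrak X^c$ with the one on $\mathfrak X$, because $\bb I^c$ is trivialized away from the universal substack. The relative AKSZ construction of \cite[Thm.~3.2]{CZZ24} then produces the form
\[
\int_{\mathfrak X/\mathsf{Exp}_{0,R}(X|D)}\mathrm{vol}\wedge\widetilde{\mathrm{ev}_X^*\Omega_{\mathsf{dPerf}}}.
\]
Here the fibres are relative Calabi-Yau of dimension $n$ with compactly supported universal substacks, so the form is $(2-n)$-shifted. Non-degeneracy follows from supported relative Serre duality.

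\emph{Step 2 (exactness for $n=4$).} As in Proposition \ref{prp-family-shifted}, via \cite[Prop.~5.11, App.~C]{CZZ24} and \cite[\S~6]{park24}, it suffices to show $H^4_{\mathrm{sing}}(\mathfrak X_b,\mathfrak X_b\setminus Z)=0$ for every fibre $\mathfrak X_b$ and every zero-dimensional $Z$.
\begin{itemize}
\item Since $Z$ is finite, excision localizes the group to small balls in a smooth $4$-fold, because each fibre component is smooth.
\item Locally $H^4(\bb C^4,\bb C^4\setminus\mathrm{pt})\cong H^3(S^7)=0$.
\end{itemize}
This step is easier than in the curve case, since no induction over components is needed.

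\emph{Step 3 (orientation).} In the local case, fibrewise $\mathfrak X\cong\Tot(\omega_{\mathfrak Y/\mathsf{Exp}})$, where $\mathfrak Y$ is the induced rank-zero expansion of $(Y,S)$; $\omega_{\mathfrak Y}$ restricts from $\omega_Y(S)$ by the same log-crepant argument. I will use the forgetful map $F$ to $M_{\mathrm{cpt}}(\mathfrak X/\mathsf{Exp})$ and compare $\det\bb T$ with $F^*\det(\pi_*\mathbf R\scr Hom(\bb F,\bb F)[1])$. The spectral identification $\mathsf dM_{\mathrm{cpt}}(\mathfrak X)\cong\TT^*[-2]\mathsf dM_{\mathrm{cpt}}(\mathfrak Y)$ gives the triangle $p^*\bb L[-2]\to\bb T\to p^*\bb T$, hence $\bb E=(\bb V\to\bb V^\vee)$. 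The canonical orientation is then $(-\sqrt{-1})^{\rank\bb V}$ with the positive sign.

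The main obstacle is the first part of Step 1: justifying Lemma \ref{lem:log-crepant-MR-pair} on the rank-zero locus, in particular the reduced-fibre claim $f^*E=E_{\mathfrak X}$ for the compactifiable rank-zero cone structure. I would first check this directly in the toric charts $\mathbb N\to\mathbb N$.
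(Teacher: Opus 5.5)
Your proposal is correct and follows essentially the paper's argument. On the rank-zero locus $\mathfrak D=\emptyset$, so Lemma~\ref{lem:log-crepant-MR-pair} makes $\mathfrak X/\mathsf{Exp}_{0,R}(X|D)$ relative Calabi--Yau, and the AKSZ form then gives the $(2-n)$-shifted structure; the paper phrases this as a $(3-n)$-shifted Lagrangian structure on $r_R$ over the empty boundary, which is equivalent to your direct family AKSZ. Exactness reduces to the vanishing of $H^4_{\mathrm{sing}}(\mathfrak X_b,\mathfrak X_b\setminus Z)$ as in Proposition~\ref{prp-family-shifted}, and the orientation comes from the same spectral $\TT^*[-2]$ argument. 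Your explicit local computation $H^4(\bb C^4,\bb C^4\setminus\mathrm{pt})\cong\widetilde H^3(S^7)=0$, which uses that the rank-zero fibres are disjoint unions of smooth pieces, is a clean way to fill in the paper's appeal to ``similar arguments''.
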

\begin{proof}
Similar as those in Theorem \ref{thm-curves-shifted-lag},
we can working over $\mathfrak X,\mathfrak D$ equipped with Calabi-Yau structures.

For (1), by the construction of $\mathsf{Exp}_{0,R}(X|D)$, we have 
$\mathfrak D=\emptyset$ and hence by Lemma \ref{lem:log-crepant-MR-pair},
we have $\omega_{\mathfrak X/\Exp(X|D)}\cong\scr O$. 
By a proof similar to that of Theorem \ref{thm-curves-shifted-lag}(2),
we obtain a $(3-n)$-shifted Lagrangian structure on the restriction map
\[
\mathsf{r}_{X\to D}^R=r_R:\dHilb^R(X|D)\to\mathsf{Exp}_{0,R}(X|D)
\]
relative to 
$\mathsf{Exp}_{0,R}(X|D)$. Then this induces a 
$(2-n)$-shifted symplectic structure on 
\[
r_R:\dHilb^R(X|D)\to\mathsf{Exp}_{0,R}(X|D)
\]
as in Theorem \ref{thm-curves-shifted-lag}(1).
For $n=4$, the exactness follows from similar arguments in Proposition 
\ref{prp-family-shifted}(1).

For (2), this follows from Lemma \ref{lem:log-crepant-MR-pair} and arguments similar to those in \cite[Thm.~4.6]{CZZ24};
see also \cite[Rmk.~4.10,~Eqn.~(4.30)]{CZZ24} and the sketch of Theorem \ref{thm-curves-shifted-lag}(3).
\end{proof}

Let $\dim X=4$.
Using Proposition \ref{prp-points-exact-sympl}(1) together with \cite[Thm.~5.2.2]{park24} or classical virtual pullback theory described in \cite{park21},
we obtain the \textit{virtual class}, assuming the existence of an orientation $o$:
    \[
    [\Hilb^R(X|D)]^{\vir}_{\bb T,o}:=\sqrt{r_R^!}[\mathsf{Exp}_{0,R}(X|D)]\in \CH_*^{\bb T}(\Hilb^R(X|D)),
    \]
with the obstruction theory given by $\mathbf R\scr Hom_{\pi}(\scr I,\scr I)_0[3]$. 
It is $\bb T$-equivariant if $\bb T$ is any torus preserving the Calabi-Yau form; we call it a Calabi-Yau torus.
Finally, using this virtual class, we define the following 
zero-dimensional logarithmic $\mathsf{DT}_4$-invariant with tautological insertion.

\begin{defi}\label{def-0dim-log-DT4}
Let $(X,D)$ be a log Calabi-Yau pair with reduced simple 
normal crossing divisor $D$ and $\dim X=4$, and suppose there exists an orientation $o$
and a Calabi-Yau torus $\bb T$-action such that $\Hilb^n(X|D)^{\bb T}$
is proper for all $n$. Let $\scr L$ be a $\bb T$-equivariant
line bundle.
The tautological bundle associated to $\scr L$
is defined as
\[
\scr L^{[n]}:=\bb R q_{X|D,*}(p_{X|D}^*\scr L\otimes \scr O_{\mathfrak Z})
\]
where $q_{X|D}:\mathfrak X\times_{\mathsf{Exp}_{0,n}(X|D)}\Hilb^n(X|D)\to\Hilb^n(X|D)$
and $p_{X|D}:\mathfrak X\times_{\mathsf{Exp}_{0,n}(X|D)}\Hilb^n(X|D)\to\mathfrak X\to X$
are the natural morphisms, and $\mathfrak Z\subset\mathfrak X\times_{\mathsf{Exp}_{0,n}(X|D)}\Hilb^n(X|D)$ is the universal subscheme.

We then define the \textit{zero-dimensional logarithmic $\mathsf{DT}_4$-invariant $\DT(X|D)$ with tautological insertion}
by
\[
\DT_0(X|D)_{\scr L,m}:=1+\sum_{n>0}q^n\int_{[\Hilb^n(X|D)]^{\vir}_{\bb T}}
e^{\bb T\times\bb C^*_m}\left((\scr L^{[n]})^{\vee}\otimes e^m\right),
\]
where $\bb C^*_m$ is a trivial torus with weight $m$.
Here we identify $R=n[\scr O_{\mathrm{pt}}]\in K^{\mathrm{num}}_{\leq0,\mathrm{cpt}}(X)$.
\end{defi}

By Proposition \ref{prp-points-exact-sympl}(2), we can always define the zero-dimensional logarithmic $\mathsf{DT}_4$-invariant 
$\DT_0(X|D)_{\scr L,m}$
for any reduced snc pair $(X,D)$ such that there exists
a reduced snc pair $(Y,S)$ of $\dim Y=3$
such that $X=\mathrm{Tot}(\omega_Y(S))$ and $D=\mathrm{Tot}(\omega_S)$
with proper Calabi-Yau-torus fixed loci.

\subsection{Independence of the choice of cone structures}
Next, we will prove that the zero-dimensional logarithmic $\mathsf{DT}_4$-invariants 
in the setting of Proposition \ref{prp-points-exact-sympl}(2)
are independent of the choice of compactifiable cone structures of $P(\Sigma_{X|D})$ and its universal family.

\begin{thm}\label{thm-independent-cone-models}
Consider the spaces in Proposition \ref{prp-points-exact-sympl}(2).
For any compactifiable choices $\lambda,\mu\in\Lambda$ of the cone
structures, 
there exists $\varphi\in\Lambda$
and logarithmic modifications 
\[\begin{tikzcd}
	& {\Hilb^R(X|D)_{\varphi}} & \\
	{\Hilb^R(X|D)_{\lambda}} && {\Hilb^R(X|D)_{\mu}}
	\arrow["{\pi^{\lambda}}"', from=1-2, to=2-1]
	\arrow["{\pi^{\mu}}", from=1-2, to=2-3]
\end{tikzcd}\]
such that 
\[
\pi_*^{\lambda}[\Hilb^R(X|D)_{\varphi}]^{\vir}=[\Hilb^R(X|D)_{\lambda}]^{\vir},\quad 
\pi_*^{\mu}[\Hilb^R(X|D)_{\varphi}]^{\vir}=[\Hilb^R(X|D)_{\mu}]^{\vir}.
\]
We will denote this relation by $[\Hilb^R(X|D)_{\lambda}]^{\vir}\leftrightsquigarrow [\Hilb^R(X|D)_{\mu}]^{\vir}$.
\end{thm}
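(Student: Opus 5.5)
The plan is to reduce to a single compactifiable refinement and then use functoriality of square-root virtual pullbacks. By \cite[Prop.~3.6.1]{MR24} and the common-refinement argument in the proof of Lemma~\ref{lem-derived-hilb}, there is a compactifiable $\varphi\in\Lambda$ refining both $\lambda$ and $\mu$. By symmetry it then suffices to treat one refinement $\varphi\to\lambda$, with induced morphism $\rho:\mathsf{Exp}_{0,R}(X|D)_\varphi\to\mathsf{Exp}_{0,R}(X|D)_\lambda$. This $\rho$ is a logarithmic modification of smooth Artin stacks coming from a subdivision of cone spaces, so it is proper, birational and representable, and satisfies $\rho_*[\mathsf{Exp}_{0,R}(X|D)_\varphi]=[\mathsf{Exp}_{0,R}(X|D)_\lambda]$.

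Next we compare the Hilbert stacks, following the pattern used just before Conjecture~\ref{conj-cone-structure-relative-log}. Set
\[
\overline{\dHilb}{}^R_{\varphi;\lambda}:=\dHilb^R(X|D)_\lambda\times_{\mathsf{Exp}_{0,R}(X|D)_\lambda}\mathsf{Exp}_{0,R}(X|D)_\varphi .
\]
By the proof of \cite[Thm.~5.2.1]{MR24}, the universal family $\mathfrak X^\varphi$ maps to the pullback of $\mathfrak X^\lambda$ by contracting the extra tube components. DT-stability forces the subschemes on these components to be tube subschemes, and for $0$-complexes this means they are empty. Hence there is a canonical isomorphism of classical truncations $p:\Hilb^R(X|D)_\varphi\cong\overline{\Hilb}{}^R_{\varphi;\lambda}$. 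By Remark~\ref{rmk-intrinsic-derived-hilb}, applied to the base-changed compactification, I would upgrade this to an equivalence of derived stacks over $\mathsf{Exp}_{0,R}(X|D)_\varphi$. I would then check that this equivalence preserves the $(-2)$-shifted symplectic form and the canonical orientation of Proposition~\ref{prp-points-exact-sympl}. The form is the AKSZ integral over the original family, so this reduces to Lemma~\ref{lem:log-crepant-MR-pair}: the contraction is log étale and pulls back $\omega_{\mathfrak X/\mathsf{Exp}}$ compatibly. The orientation is built from determinant lines of $\mathbf R\mathscr Hom_\pi$ on the original family together with the fixed positive sign choice, so it is compatible as well. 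In the same way, the symplectic fibration $\overline r$ is the base change of $r^\lambda_R$ along $\rho$, with the pulled-back structure and orientation.

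The conclusion then follows from two standard properties of square-root virtual pullbacks (\cite{park21}, \cite[Thm.~5.2.2]{park24}). The first is base change: $\sqrt{(r^\lambda_R)^!}$ commutes with proper pushforward along the Cartesian square defining $\overline{\dHilb}$, since the symplectic fibration is pulled back. The second is that equivalent oriented symplectic fibrations give equal pullbacks. Together they give
\[
\pi^\lambda_*[\Hilb^R(X|D)_\varphi]^{\vir}=\pi^\lambda_*\sqrt{\overline r^!}[\mathsf{Exp}_\varphi]=\sqrt{(r_R^\lambda)^!}\rho_*[\mathsf{Exp}_\varphi]=[\Hilb^R(X|D)_\lambda]^{\vir},
\]
where $\pi^\lambda$ is $p$ followed by the projection. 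This projection is proper because $\rho$ is, and it is a logarithmic modification. The $\bb T$-equivariant version is proved the same way.

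I expect the main obstacle to be the derived-level identification $\dHilb^R_\varphi\simeq\overline{\dHilb}{}^R_{\varphi;\lambda}$ respecting the symplectic forms. The classical isomorphism does not automatically lift. My first attempt would be to compare deformation–obstruction complexes $\mathbf R\mathscr Hom_\pi(\mathscr I,\mathscr I)_0$ via pushforward along the contraction, which has $\mathbf R c_*\mathcal O=\mathcal O$ and is an isomorphism away from the tubes. Since the points avoid the tubes, excision should show that these complexes agree.
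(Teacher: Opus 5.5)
Your proposal is correct and is essentially the paper's argument. Both pass to a compactifiable common refinement, identify $\Hilb^R(X|D)_\varphi$ with the base change of $\Hilb^R(X|D)_\lambda$ along the proper degree-one map of expansion stacks (with the pulled-back oriented symmetric obstruction theory), and push the square-root virtual class forward; the only difference is that the paper verifies the pushforward identity by hand, through the intrinsic and quadratic normal cones and the degree-one cone comparison of \cite{HW23}, whereas you quote bivariance of $\sqrt{r_R^!}$ directly. Two small corrections: (i) because the refinement may involve root constructions, $\rho$ is proper of DM type but in general not representable, so the proper-pushforward compatibility you invoke is needed in that generality, which is what the paper's \cite{HW23} set-up (with $g$ only DM) and its Chow-lemma footnote supply; (ii) the derived identification you flag as the main obstacle is in fact immediate, since for $0$-complexes a change of cone structure adds no tube vertices, so the $\varphi$-family is the pullback of the $\lambda$-family, and Remark~\ref{rmk-intrinsic-derived-hilb} together with base change of derived mapping stacks gives $\dHilb^R(X|D)_\varphi\simeq\dHilb^R(X|D)_\lambda\times_{\mathsf{Exp}_{0,R}(X|D)_\lambda}\mathsf{Exp}_{0,R}(X|D)_\varphi$ compatibly with forms and orientations.
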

\begin{proof}
By \cite[Prop.~3.6.1]{MR24}, we choose a common refinement of the
base and universal cone complexes, allowing the necessary
finite-index changes of integral structures.
Using \cite[Rmk.~3.5.2]{MR24} and \cite[Prop.~1.7.2]{MR25},
we take this common model to be compactifiable, with smooth expansion
base.

The induced Hilbert comparison morphisms are proper, as in
\cite[Thm.~5.2.1]{MR24}. These maps between stacks of expansions have pure degree
one: they are proper modifications, including generalized root
constructions, and are isomorphisms on the common dense unexpanded
locus. The orientations are the compatible total-space orientations.
Consider 
\[\begin{tikzcd}
	{X'} & X \\
	{Y'} & Y
	\arrow["f", from=1-1, to=1-2]
	\arrow["{p'}", from=1-1, to=2-1]
	\arrow["\square"{description}, draw=none, from=1-1, to=2-2]
	\arrow["p", from=1-2, to=2-2]
	\arrow["g", from=2-1, to=2-2]
\end{tikzcd}\]
under the following assumptions as in \cite{HW23}:
\begin{itemize}
	\item $X,X'$ are separated Deligne-Mumford stacks of finite type,
and $Y,Y'$ are Artin stacks of the same pure dimension such that $g$ is Deligne-Mumford.
	\item $g$ is proper and birational, hence has pure degree one
along $p$ in the sense of \cite[Def.~2.3]{HW23}, and
$p$ and $p'$ have compatible oriented symmetric obstruction theories
$\bb E,\bb E'=f^*\bb E$ with orientations and isotropy conditions;
see \cite{park21} for the notation and definitions.
\end{itemize}
Then we claim that 
\[f_*\circ\sqrt{(p')^!}[Y']=\sqrt{p^!}[Y].\]
Indeed, 
consider the diagram
\[\begin{tikzcd}
	{\mathfrak C_{X'/Y'}} & {\mathfrak C_{X/Y}} \\
	{\mathfrak Q(\bb E')} & {\mathfrak Q(\bb E)} \\
	{X'} & X,
	\arrow["{h_{\mathfrak C}}", from=1-1, to=1-2]
	\arrow["{a'}", hook, from=1-1, to=2-1]
	\arrow["a", hook, from=1-2, to=2-2]
	\arrow["h", from=2-1, to=2-2]
	\arrow["\square"{description}, draw=none, from=2-1, to=3-2]
	\arrow["{0_{\mathfrak Q(\bb E')}}", from=3-1, to=2-1]
	\arrow["f", from=3-1, to=3-2]
	\arrow["{0_{\mathfrak Q(\bb E)}}"', from=3-2, to=2-2]
\end{tikzcd}\]
where $\mathfrak Q(\bb E)=\mathfrak q_{\bb E}^{-1}(0)\subset\mathfrak C_{\bb E}$,
the existence of $a,a'$ follows from the isotropy condition, and
the fact that the lower square is cartesian follows from \cite[Prop.~1.7(2)]{park21}.
Note that the square-root virtual pullback of \cite[App.~A]{park21} commutes with the proper pushforward
in this diagram\footnote{Choose a projective surjection $a_P:P\to X'$ with $P$ a
quasi-projective scheme by Chow's lemma for separated DM stacks.
Since $X$ is separated and $fa_P$ is proper, $fa_P$ is projective.
The induced projective surjection of quadratic cone stacks gives
a surjection on Chow groups by \cite[Lem.~A.1]{park21}.
Lift a cycle on $\mathfrak Q(\bb E')$ to the quadratic cone over $P$.
The projective-pushforward compatibility of the square-root Gysin map
for $a_P$ and for $fa_P$ then proves the required compatibility for $f$,
independently of the lift.}.
Therefore, we have 
\begin{align*}
    f_*\circ\sqrt{(p')^!}[Y']&=f_*\circ\sqrt{0_{\mathfrak Q(\bb E')}^!}\circ a'_*[\mathfrak C_{X'/Y'}]\\ 
    &=\sqrt{0_{\mathfrak Q(\bb E)}^!}\circ h_*\circ a'_*[\mathfrak C_{X'/Y'}]\\
    &=\sqrt{0_{\mathfrak Q(\bb E)}^!}\circ a_*\circ h_{\mathfrak C,*}[\mathfrak C_{X'/Y'}]\\ 
    &=\sqrt{0_{\mathfrak Q(\bb E)}^!}\circ a_*[\mathfrak C_{X/Y}]=\sqrt{p^!}[Y].
\end{align*}
Here we use the fact that the map of relative intrinsic normal cones
$\mathfrak C_{X'/Y'}\to\mathfrak C_{X/Y}$ is of degree one over generic points; see \cite[Prop.~2.5]{HW23}
for the proof.
\end{proof}
\begin{nota}\label{rmk-cone-independence}
The same proper degree-one pushforward argument, combined with
Conjecture \ref{conj-Joyce-conj}(4), applies to the
non-relative curve Hilbert stacks in the family theory.
Here properness of the comparison morphisms is relative over the
coarser Hilbert stack.
\end{nota}

\begin{coro}\label{coro-0dimlogDT4-independence-cone-models}
The zero-dimensional logarithmic $\mathsf{DT}_4$-invariant $\DT(X|D)$ with tautological insertion
is independent of the choice of compactifiable cone structures of $P(\Sigma_{X|D})$ and its universal family.
\end{coro}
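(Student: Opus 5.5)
The plan is to deduce the corollary from Theorem \ref{thm-independent-cone-models} by the projection formula, showing that each coefficient of $q^n$ in $\DT_0(X|D)_{\scr L,m}$ is unchanged when the cone structure is refined.

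Given compactifiable choices $\lambda,\mu$, first take the common refinement $\varphi$ and the comparison morphisms $\pi^\lambda,\pi^\mu$ supplied by Theorem \ref{thm-independent-cone-models}. It then suffices to compare the $\lambda$-model with the $\varphi$-model (and symmetrically $\mu$ with $\varphi$). For each $n$, set $R=n[\scr O_{\mathrm{pt}}]$ and fix a Calabi-Yau torus $\bb T$. Since the comparison maps are $\bb T$-equivariant, the theorem holds in $\bb T$-equivariant Chow groups; the underlying cone and Gysin constructions are equivariant.

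The key step is to identify the tautological complexes:
\[
(\pi^\lambda)^*\scr L^{[n]}_\lambda\cong\scr L^{[n]}_\varphi .
\]
The comparison map $\pi^\lambda$ is induced by the contraction $\mathfrak X^\varphi\to\mathfrak X^\lambda\times_{\mathsf{Exp}_\lambda}\mathsf{Exp}_\varphi$. This contraction is compatible with the maps $p_{X|D}$ to $X$. Under the comparison it sends the universal subscheme $\mathfrak Z_\varphi$ isomorphically onto the pullback of $\mathfrak Z_\lambda$: the extra tube components carry only tube subschemes, which for $0$-complexes are excluded, so the supports agree. I would expect this to be the main point to check carefully. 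If the universal subschemes are identified, then $\mathfrak Z\to\Hilb$ is finite and flat and base change for $\RR q_*$ gives the isomorphism above.

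Next, the equivariant Euler class is natural under pullback, so
\[
e^{\bb T\times\bb C^*_m}\bigl((\scr L^{[n]}_\varphi)^\vee\otimes e^m\bigr)=(\pi^\lambda)^*e^{\bb T\times\bb C^*_m}\bigl((\scr L^{[n]}_\lambda)^\vee\otimes e^m\bigr).
\]
Apply the projection formula together with $\pi^\lambda_*[\Hilb^R(X|D)_\varphi]^{\vir}=[\Hilb^R(X|D)_\lambda]^{\vir}$. The equivariant integrals are defined through localization to the proper fixed loci, and $\pi^\lambda$ is proper and equivariant, so they are compatible with proper pushforward. Hence the coefficients of $q^n$ agree for the $\lambda$- and $\varphi$-models, and by symmetry for the $\mu$-model.

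Finally, the orientations used throughout are the canonical total-space orientations of Proposition \ref{prp-points-exact-sympl}(2), which Theorem \ref{thm-independent-cone-models} already takes to be compatible. Summing over $n$ then gives independence of the whole series $\DT_0(X|D)_{\scr L,m}$.
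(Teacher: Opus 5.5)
Your proposal is correct and follows the paper's proof. The paper also uses the Cartesian square of universal substacks $\mathfrak Z^\varphi\to\mathfrak Z^\lambda$ over $\Hilb^R(X|D)_\varphi\to\Hilb^R(X|D)_\lambda$ with flat vertical maps, base change to get $(\pi^\lambda)^*\scr L^{[n]}_\lambda\cong\scr L^{[n]}_\varphi$ and hence compatible Euler classes, and then the projection formula with Theorem \ref{thm-independent-cone-models}. One step needs a cleaner justification: for $0$-complexes there are no tube components at all, since there are no edges and hence no $2$-valent vertices (as used in the proof of Proposition \ref{prp-decompo-derived}), so the universal family and universal subscheme simply pull back along $\mathsf{Exp}_\varphi\to\mathsf{Exp}_\lambda$. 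Your argument about tube subschemes being excluded is not needed for this.
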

\begin{proof}
In this case, we have the pullback diagram of universal substacks:
\[\begin{tikzcd}
	{\mathfrak Z^{\varphi}} & {\mathfrak Z^{\lambda}} \\
	{\Hilb^R(X|D)_{\varphi}} & {\Hilb^R(X|D)_{\lambda},}
	\arrow[from=1-1, to=1-2]
	\arrow[from=1-1, to=2-1]
	\arrow["\square"{description}, draw=none, from=1-1, to=2-2]
	\arrow[from=1-2, to=2-2]
	\arrow[from=2-1, to=2-2]
\end{tikzcd}\]
where the vertical morphisms are flat, and the same holds for $\mu$. 
Hence, by
Tor-independence, the base-change theorem and the pullback diagram above,
the Euler classes
of tautological bundles are compatible with respect to pullbacks.
Combining these with Theorem \ref{thm-independent-cone-models} shows that the zero-dimensional logarithmic $\mathsf{DT}_4$-invariants 
with tautological insertions
are independent of the choice of cone structures of $P(\Sigma_{X|D})$.
\end{proof}

\subsection{Degeneration formula of zero-dimensional logarithmic \texorpdfstring{$\mathsf{DT}_4$}{DT4}-invariants}\label{sec-zero-dim-deg}
Unlike the general case, for the logarithmic Hilbert stacks of points, we consider 
a simple normal crossing degeneration of \textit{log Calabi-Yau pairs}, rather than Calabi-Yau 
varieties. In the point-counting theory, we only need to consider 
the $0$-complexes.
\subsubsection{Logarithmic Hilbert stacks for log Calabi-Yau families}
Let  \[\pi:\mcal X\to\bb A^1\] 
be a simple normal crossing degeneration described in 
Definition \ref{defi-snc-degeneration}.
Let $\mcal D'\subset\mcal X$ be an anti-canonical snc divisor (that is, $K_{\mcal X}+\mcal D'\sim0$) such that $\pi|_{\mcal D'}$ is flat, $\pi^{-1}(t)\cap \mcal D'$ is again snc
for any $0\neq t\in\bb A^1$ and $\mcal D'+\pi^{-1}(0)\subset\mcal X$ is also snc.

Let $\mcal D:=X_0\cup\mcal D'$, where $X_0:=\pi^{-1}(0)$. Then $(\mathcal X,\mathcal D)$ is again
a log Calabi-Yau snc pair since $X_t\sim0$ for any $t\in\bb A^1$.
Then this induces 
\[
\pi^{\mathrm{trop}}:\Sigma_{\mcal X|\mcal D}\to\bb R_{\geq0}.
\]
Let $\Delta_{X_0}=(\pi^{\mathrm{trop}})^{-1}(1)$. Then 
\[
X_0=\bigcup_{\nu\in\mathrm{vertex}(\Delta_{X_0})}X_{0,\nu}
\]
gives the decomposition of irreducible components.

Assuming Assumption \ref{ass-compactifiable-families}, e.g., Lemma \ref{lem-compactifiable-local-families}, 
for the vertical
$0$-complexes, that is, the $0$-complexes contracted by $\pi^{\mathrm{trop}}$,
including the horizontal boundary.
We choose a compactifiable whole-family cone structure of
$P(\Sigma_{\mcal X|\mcal D}/\bb R_{\geq0})$ with smooth expansion
base and simple normal crossings boundary.
Then we have a canonical map 
\[
P(\Sigma_{\mcal X|\mcal D}/\bb R_{\geq0})\to\bb R_{\geq0}.
\]
Fix a topological datum $R_t\in K^{\mathrm{num}}_{\leq0,\mathrm{cpt}}(X_t)$. Then
we have the \textit{stack of expansions of degeneration}:
\[
\mathsf{Exp}_{0,R_t}((\mcal X|\mcal D)/\bb A^1)\to\bb A^1
\]
of relative dimension zero, and the family version of logarithmic (derived-) Hilbert stacks 
\[
\Hilb^{R_t}((\mcal X|\mcal D)/\bb A^1)\subset
\dHilb^{R_t}((\mcal X|\mcal D)/\bb A^1)\to\mathsf{Exp}_{0,R_t}((\mcal X|\mcal D)/\bb A^1)\to\bb A^1
\]
such that $\Hilb^{R_t}((\mcal X|\mcal D)/\bb A^1)$ is separated of finite type
and is proper if $\pi:\mcal X\to\bb A^1$ is proper.

The arguments in Proposition \ref{prp-family-shifted} give the following results.

\begin{prp}\label{prp-0-dim-shifted-main}
Let $\dim\mcal X=n+1$ and assume Assumption \ref{ass-compactifiable-families}, e.g.,
Lemma \ref{lem-compactifiable-local-families}.
\begin{enumerate}
    \item Then \[\varpi^{R_t}:
\dHilb^{R_t}((\mcal X|\mcal D)/\bb A^1)\to\mathsf{Exp}_{0,R_t}((\mcal X|\mcal D)/\bb A^1)
\]
admits
a $(2-n)$-shifted symplectic structure.
Moreover, it is exact if $n=4$.
\item If $n=4$, and there exists an snc degeneration $(\mcal Y,\mcal S)\to\bb A^1$ of relative dimension $3$ such that $\mcal X=\mathrm{Tot}(\omega_{\mcal Y/\bb A^1}(\mcal S))$ and $\mcal D'=\mathrm{Tot}(\omega_{\mcal S/\bb A^1})$, then 
    \[\varpi^{R_t}:
\dHilb^{R_t}((\mcal X|\mcal D)/\bb A^1)\to\mathsf{Exp}_{0,R_t}((\mcal X|\mcal D)/\bb A^1)
\]
    admits a canonical orientation.
\end{enumerate}
\end{prp}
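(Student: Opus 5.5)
We prove Proposition \ref{prp-0-dim-shifted-main} by combining the arguments of Proposition \ref{prp-family-shifted} and Proposition \ref{prp-points-exact-sympl}. We work throughout with the original universal family $\mathfrak X\to\mathsf{Exp}_{0,R_t}((\mcal X|\mcal D)/\bb A^1)$. The projective containing family from Assumption \ref{ass-compactifiable-families} is used only for the derived enhancement, via Lemma \ref{lem-derived-hilb} and Remark \ref{rmk-intrinsic-derived-hilb}. As in the proof of Theorem \ref{thm-curves-shifted-lag}, excision identifies the trace-free deformation complexes and the supported trace constructions with those computed on $\mathfrak X$.

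\textbf{Step 1: a relative Calabi-Yau form.} The first step is to show that $\omega_{\mathfrak X/\mathsf{Exp}_{0,R_t}((\mcal X|\mcal D)/\bb A^1)}\cong\scr O_{\mathfrak X}$. We mix the proofs of Lemma \ref{lem:log-crepant-MR-pair} and Lemma \ref{lem:crepant-MR-degeneration}. Equip $\mcal X$ with the log structure from $\mcal D=X_0\cup\mcal D'$, and $\bb A^1$ with the log structure from $0$. Then $(\mcal X,\mcal D)\to(\bb A^1,0)$ is log smooth, and
\[
\omega^{\log}_{\mcal X/\bb A^1}=\omega_{\mcal X}(X_0+\mcal D')\otimes\pi^*\omega_{\bb A^1}(0)^{-1}\cong\omega_{\mcal X/\bb A^1}(\mcal D')\cong\omega_{\mcal X}(\mcal D')\cong\scr O.
\]
The contraction $p:\mathfrak X\to\mcal X\times_{\bb A^1}\mathsf{Exp}$ is an open restriction of a log modification, and the fs fibre product equals the ordinary one by combinatorial flatness with reduced fibres. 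Hence the log relative dualizing sheaf of $\mathfrak X$ is pulled back from $\omega^{\log}_{\mcal X/\bb A^1}$. Since we consider only $0$-complexes, all codimension $\ge1$ strata are removed, so the horizontal universal boundary is empty, i.e.\ $\mathfrak D=\emptyset$, as in Proposition \ref{prp-points-exact-sympl}. The reduced-fibre argument gives $f^*E_{\mathsf{Exp}}=E_{\mathfrak X}$, and we conclude $\omega_{\mathfrak X/\mathsf{Exp}}\cong\scr O_{\mathfrak X}$.

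\textbf{Step 2: the shifted symplectic structure.} With the trivialization from Step 1, the family AKSZ construction used in Proposition \ref{prp-family-shifted}(1), i.e.\ the relative version of \cite[Thm.~3.2]{CZZ24}, applies verbatim. It gives the $(2-n)$-shifted symplectic form
\[
\int_{\mathfrak X/\mathsf{Exp}}\mathrm{vol}\wedge\widetilde{\mathrm{ev}^*\Omega_{\mathsf{dPerf}}}.
\]
Non-degeneracy follows from relative Serre duality for the properly supported universal subscheme.

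\textbf{Step 3: exactness for $n=4$.} Following \cite[Prop.~5.11, App.~C]{CZZ24} and \cite[\S6]{park24}, it suffices to check that
\[
H^4_{\mathrm{sing}}(\mathfrak X_b,\mathfrak X_b\setminus Z)=0
\]
for each expanded fibre $\mathfrak X_b$ and each zero-dimensional subscheme $Z$. Here $\mathfrak X_b$ is a disjoint union of smooth open pieces: the complement of the boundary in the components of $X_b$ or $X_0$, together with bubble components with their boundaries removed. The local cohomology at finitely many points of a smooth complex $4$-fold is concentrated in degree $8$, so the vanishing holds.

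\textbf{Step 4: orientation.} We run the sketch in Proposition \ref{prp-family-shifted}(2) and Theorem \ref{thm-curves-shifted-lag}(3). Over $\mathsf{Exp}$, relative to $\mathfrak Y$, we have
\[
\mathfrak X\cong\Tot_{\mathfrak Y}\bigl(\omega_{\mathfrak Y/\mathsf{Exp}}\bigr),
\]
where $\mathfrak Y$ is the universal expansion of $(\mcal Y,\mcal Y_0\cup\mcal S)$. This uses Lemma \ref{lem-compactifiable-local-families}, the fact that $\mcal S$ is removed for $0$-complexes, and the identity $\omega_{\mcal Y/\bb A^1}(\mcal S)|_{\mathfrak Y}\cong\omega^{\log}$. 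Then
\[
\mathsf dM_{\mathrm{cpt}}(\mathfrak X/\mathsf{Exp})\cong\TT^*[-2]\bigl(\mathsf dM_{\mathrm{cpt}}(\mathfrak Y/\mathsf{Exp})\bigr),
\]
and the spectral triangle writes the obstruction complex as $\bb V\to\bb V^\vee$. Taking the sign $+(-\sqrt{-1})^{\rank\bb V}$ gives the canonical orientation.

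\textbf{Main obstacle.} The delicate step is the identification $\mathfrak X\cong\Tot(\omega_{\mathfrak Y/\mathsf{Exp}})$ compatibly with the expansions. It requires the cone structures and root constructions of $\mathfrak X$ to be pulled back from those of $\mathfrak Y$, which is exactly what Lemma \ref{lem-compactifiable-local-families} provides.
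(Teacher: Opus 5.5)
Your proposal is correct and follows the same route as the paper, whose proof of this statement consists of rerunning the argument of Proposition \ref{prp-family-shifted}: the family AKSZ form, the $H^4_{\mathrm{sing}}$ vanishing criterion for exactness, and the $\TT^*[-2]$ spectral triangle for the orientation. Your added details are exactly what the paper leaves implicit: combining Lemmas \ref{lem:log-crepant-MR-pair} and \ref{lem:crepant-MR-degeneration} with $\mathfrak D=\emptyset$ to make the log Calabi-Yau family genuinely relative Calabi-Yau, and noting that rank-zero expanded fibres are smooth, so local cohomology at finitely many points is concentrated in degree $8$.
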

In the case of Proposition \ref{prp-0-dim-shifted-main},
if $\mcal Y_0=\bigcup_{\nu}Y_{0,\nu}$ and
$S_{0,\nu}:=\mcal S|_{Y_{0,\nu}}$, then the adjunction formula gives
\[
X_{0,\nu}\cong \mathrm{Tot}\bigl(\omega_{Y_{0,\nu}}(S_{0,\nu}+\partial Y_{0,\nu})\bigr),
\qquad
\partial X_{0,\nu}\cong
\mathrm{Tot}\bigl(\omega_{S_{0,\nu}\cup\partial Y_{0,\nu}}\bigr)
\]
componentwise. Hence the central fibre also satisfies the total-space
orientation hypothesis. Moreover, we have the square-root virtual pullback
\[
\sqrt{\varpi^{R_t,!}}:\CH_*(\mathsf{Exp}_{0,R_t}((\mcal X|\mcal D)/\bb A^1))
\to \CH_*\left(\dHilb^{R_t}((\mcal X|\mcal D)/\bb A^1)\right),
\]
which induce the virtual class
\[
[\Hilb^{R_t}((\mcal X|\mcal D)/\bb A^1)]^{\vir}:=
\sqrt{\varpi^{R_t,!}}\left[\mathsf{Exp}_{0,R_t}((\mcal X|\mcal D)/\bb A^1)\right].
\]
If there exists a Calabi-Yau torus $\bb T$ acting on the space,
then the virtual class is $\bb T$-equivariant.

\subsubsection{Zero-dimensional logarithmic degeneration formula for \texorpdfstring{$\mathsf{DT}_4$}{DT4}-invariants}
We continue with Assumption \ref{ass-compactifiable-families},
$\dim X_t=4$, and compatible orientations as above.
For any cone structure $\alpha$, we will denote
\[
\mathsf{Exp}_\alpha:=
\mathsf{Exp}_{0,R_t}((\mcal X|\mcal D)/\bb A^1)_\alpha.
\]
For any smooth compactifiable choice $\varphi$ with snc boundary, we have
\[
\dHilb^{R_t}((\mcal X|\mcal D)/\bb A^1)_\varphi
\xrightarrow{\varpi^{R_t}_\varphi}
\mathsf{Exp}_\varphi\longrightarrow\bb A^1.
\]
Let $\mathsf{Exp}_{\varphi,0}$ be the scheme-theoretic central fibre.
Taking the derived fibre at $0$ and pulling back to its reduced
components gives the Cartesian diagram
\begin{equation}\label{Fig-normalization-Hilb}
\begin{tikzcd}
	{\coprod_\delta\dHilb^{R_t}((\mcal X|\mcal D)/\bb A^1)_{\delta,\varphi}} & {\coprod_\delta\mathsf{Exp}_{\delta,\varphi}} \\
	{\dHilb^{R_t}((\mcal X|\mcal D)/\bb A^1)_{0,\varphi}^{\dagger}} & {\mathsf{Exp}_{\varphi,0}^{\dagger}} \\
	{\dHilb^{R_t}((\mcal X|\mcal D)/\bb A^1)_{0,\varphi}} & {\mathsf{Exp}_{\varphi,0}}
	\arrow[from=1-1, to=1-2]
	\arrow["{=}", from=1-1, to=2-1]
	\arrow["\square"{description}, draw=none, from=1-1, to=2-2]
	\arrow["{=}", from=1-2, to=2-2]
	\arrow[from=2-1, to=2-2]
	\arrow["{n_\varphi}", from=2-1, to=3-1]
	\arrow["\square"{description}, draw=none, from=2-1, to=3-2]
	\arrow["{n_\varphi}", from=2-2, to=3-2]
	\arrow[from=3-1, to=3-2]
\end{tikzcd}
\end{equation}
Here $\delta$ ranges over the rigid Hilbert $0$-complexes, and
$\mathsf{Exp}_{\delta,\varphi}$ denotes the corresponding
smooth \textit{reduced} central component.
For the fixed degeneration parameter $t$, put
$m_{\delta,\varphi}:=\operatorname{ord}_{\mathsf{Exp}_{\delta,\varphi}}(t)$ again,
we have
\[
[\mathsf{Exp}_{\varphi,0}]
=n_{\varphi,*}\sum_\delta
m_{\delta,\varphi}[\mathsf{Exp}_{\delta,\varphi}].
\]
To state our degeneration formula, we need some
comparison lemma.

\begin{lem}\label{lem-zero-dimensional-starting-comparison}
There are smooth compatible cone structures $\lambda,\lambda_\nu$
such that
$\mathsf{Exp}_{\lambda,0}=\bigcup_\gamma\mathsf{Exp}_{\gamma,\lambda}$,
$\operatorname{div}_{\mathsf{Exp}_\lambda}(t)
=\sum_\gamma\mathsf{Exp}_{\gamma,\lambda}$
and projective logarithmic modifications of pure degree one
\[
c_\gamma:\mathsf{Exp}_{\gamma,\lambda}\longrightarrow
\prod_{\nu\text{ from }\gamma}
\mathsf{Exp}_{\gamma(\nu)}
(X_{0,\nu}|\partial X_{0,\nu})_{\lambda_\nu}.
\]
Here $\gamma$ distributes $R_t$ among the original components.
\end{lem}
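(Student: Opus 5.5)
The argument is purely tropical/toric. We construct the cone structures combinatorially and then read off the stack-theoretic statements from the dictionary between cone spaces and Artin fans, \cite[Thm.~3]{MR24}.

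\emph{Reduced central divisor.} Start from a smooth compactifiable whole-family structure $\varphi$ on $P(\Sigma_{\mcal X|\mcal D}/\bb R_{\geq0})$ with snc boundary. The central fibre of $\mathsf{Exp}_\varphi\to\bb A^1$ corresponds to the rays of the cone space mapping nontrivially to $\bb R_{\geq0}$. Its multiplicity along the component attached to a ray is the height of the primitive generator. To make $\operatorname{div}(t)$ reduced, we refine the integral structure, i.e.\ pass to a root/Kummer-type base change in the sense of \cite[Prop.~3.6.1]{MR24}. Equivalently, we choose a subdivision and lattice so that every vertical ray has primitive generator at height one; this is the standard semistable-reduction step for cone complexes over $\bb R_{\geq0}$. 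We then take a further smooth subdivision, using \cite[Rmk.~3.5.2]{MR24} and \cite[Prop.~1.7.2]{MR25} to keep compactifiability and combinatorial flatness with reduced fibres. After this, $\mathsf{Exp}_{\lambda,0}$ is reduced and equals $\bigcup_\gamma\mathsf{Exp}_{\gamma,\lambda}$ with all $m_\gamma=1$.

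\emph{Splitting of the central strata.} A height-one vertical $0$-complex is a finite set of points in $\Delta_{X_0}=(\pi^{\mathrm{trop}})^{-1}(1)$. Rank-zero expansions only see open cells, since all codimension $\geq1$ loci are removed. So each point lies in the relative interior of a cell of $\Delta_{X_0}$. After fixing which component each point is attached to (the datum $\gamma$, which also distributes $R_t$), the star of the component stratum factors as the product over $\nu$ of the moduli spaces $P(\Sigma_{X_{0,\nu}|\partial X_{0,\nu}})$ of $0$-complexes in the star fans of the vertices $\nu$. Hence the cone space of $\mathsf{Exp}_{\gamma,\lambda}$ maps bijectively and continuously onto $\prod_\nu P(\Sigma_{X_{0,\nu}|\partial X_{0,\nu}})$, linearly on cones, as a refinement of a product structure. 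Choose $\lambda_\nu$ to be smooth compactifiable structures such that $\lambda|_\gamma$ subdivides $\prod_\nu\lambda_\nu$; if necessary, first refine $\lambda$ further, which preserves the first step because subdivision keeps height-one rays at height one. A subdivision of cone spaces gives a proper birational morphism of Artin fans, namely a projective logarithmic modification $c_\gamma$. It is an isomorphism over the open dense torus, hence of pure degree one. Projectivity comes from choosing the refinement to be the subdivision cut out by a piecewise-linear convex function.

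\emph{Main obstacle.} The hardest step is choosing $\lambda$ simultaneously for all $\gamma$, reduced central fibre and compatible with the products of the $\lambda_\nu$. The difficulty is that different $\gamma$ share faces where points move between cells. I would first fix the $\lambda_\nu$, pull back their product to each central stratum, and take a common projective refinement of $\varphi$ and all these pullbacks on the whole cone space. Then I would apply the height-one integral refinement and smoothing last, and check that these operations only subdivide further and do not destroy the product dominance.
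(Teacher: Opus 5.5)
Your sketch of the second half agrees with the paper: the star of a central ray is set-theoretically a product of moduli of $0$-complexes in the stars of the vertices $\nu$, a subdivision gives a degree-one logarithmic modification, and projectivity comes from a convex piecewise-linear function. The construction of $\lambda$ itself, however, has a genuine gap. There are three problems.

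First, the tool you propose for making $\operatorname{div}_{\mathsf{Exp}_\lambda}(t)$ reduced goes the wrong way. Root constructions in the sense of \cite[Prop.~3.6.1]{MR24} and \cite[Prop.~1.7.2]{MR25} replace the base lattice by a sublattice. Primitive ray generators can then only be multiplied, so $\operatorname{ord}(t)$ along central components can only grow. Passing instead to a finer lattice to halve a height-two generator would in general make the universal vertex maps non-integral (e.g.\ a vertex at $e_1+e_2$). Semistable reduction in general needs a base change $t=s^e$, which is excluded because the lemma concerns the fixed coordinate $t$. So if your starting model $\varphi$ has a vertical ray of height $\geq2$, every refinement of $\varphi$ keeps that ray with the same multiplicity, and no refinement of $\varphi$ has reduced central fibre.

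Second, your order of operations cannot work. ``Subdivision keeps height-one rays at height one'' holds only for the old rays. The refinements you need add new rays: the common refinement with the pulled-back products $\prod_\nu\lambda_\nu$, the extension of their walls to the whole cone space, smoothing, and combinatorial flattening. For instance, the star subdivision at $v_1+v_2$ of a cone spanned by height-one generators creates a central component of multiplicity $2$. No later ``height-one refinement'' repairs this. That is exactly why the paper, when it passes to a common refinement in Lemma~\ref{lem-zero-dimensional-Cartier-comparison}, keeps the multiplicities in the possibly non-reduced $W_\gamma$ rather than removing them.

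Third, height one is not sufficient. You need every positive-height ray to be a $0$-complex all of whose vertices are original vertical vertices $e_\nu$, with vanishing horizontal coordinates. A ray through $e_\nu+f_j$, with $f_j$ a horizontal ray, is primitive of height one. But its central component parametrizes expansions with an extra component over $X_{0,\nu}\cap\mcal D'_j$. It does not distribute $R_t$ among the original components, and its star is not a product of the component moduli. Your factorization of the star tacitly assumes this property.

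What is missing is one explicit model meeting all requirements at once; the paper builds it from scratch rather than by refining a given $\varphi$. It orders all partial sums $b_{i,a}=\sum_{d\leq a}x_{i,d}$ ($a<r$) together with $0$ and $h$, and, for each horizontal index $j$, orders the $z_{i,j}$. This model has the following properties.
\begin{enumerate}
\item In successive-difference coordinates every cone is a unimodular orthant, so the base is smooth without any roots.
\item Every positive-height ray has $h=1$, exactly one $x_{i,a}=1$, and all $z_{i,j}=0$.
\item The subdivision is compatible with merging coincident labels.
\item The universal $0$-complex is a union of graphs mapping integrally isomorphically to the base cones, hence combinatorially flat with reduced fibres.
\item The invariants $\sum_\nu\nu n_\nu$ and $\sum_iw_iz_{i,j}$ show that distinct rays of a cone are not identified under relabelling, so each $\mathsf{Exp}_{\gamma,\lambda}$ is smooth.
\end{enumerate}
At a $\gamma$-ray, the star keeps the orders of the $u$-functions, the reversed $v$-functions and the $\overline z$-functions across \emph{all} labels. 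By contrast, $\lambda_\nu$ is defined by the same orders restricted to $I_\nu$. Hence the star refines $\prod_\nu\lambda_\nu$ on the nose. Projectivity of $c_\gamma$ then comes from the explicit relabelling-invariant support function $\psi_\gamma$, built from the cross-$\nu$ differences. Your proposal needs this construction, or an equivalent existence argument, to go through.
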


\begin{proof}
Let $n$ be the length of $R_t$. If $n=0$, take the expansion base
$\bb A^1$ and the component expansion bases to be points.
From now on we assume $n>0$.

\smallskip\noindent\textit{Step 1. Construction of $\lambda$.}
We order the original central components as $X_{0,1},\ldots,X_{0,r}$
and label the horizontal boundary components by $j$.
A Hilbert $0$-complex of total length $n$ consists of finitely many
vertices $p_1,\ldots,p_k$ with positive integral decorations
$w_1,\ldots,w_k$ and $\sum_iw_i=n$.

For each $p_i$, choose a target cone $\sigma_i$ of
$\Sigma_{\mcal X}$ containing it. We first describe the tuples
$(p_1,\ldots,p_k)\in\sigma_1\times\cdots\times\sigma_k$
whose vertices have the same height $h$ as we only consider vertical one. Let
$x_{i,a}\geq0$ and $z_{i,j}\geq0$ be
the coordinates of $p_i$ along the rays corresponding to
$X_{0,a}$ and the $j$-th horizontal divisor, respectively,
then $\sum_{a=1}^r x_{i,a}=h$.

In order to give a standard refinement, we consider 
the partial sum $b_{i,a}:=\sum_{d\leq a}x_{i,d}$
for any $1\leq a<r$. After collect them for any $i,a$ and take an order, we get 
$0\leq b_1\leq\cdots\leq b_N\leq h$. This forms a subdivision and 
each resulting parameter cone is a face of
\[
\{0\leq b_1\leq\cdots\leq b_N\leq h\}
\times\prod_j\{0\leq z_{1,j}\leq\cdots\leq z_{k,j}\}.
\]
This is a integral free-orthant coordinates by taking successive differences.
The cones are therefore unimodular and
every primitive ray of positive height has $h=1$, $b_{i,a}\in\{0,1\}$ and $z_{i,j}=0$.
Equivalently, for each $i$, precisely one $x_{i,a}$ is $1$.
Thus every vertex lies at an original vertical vertex on such a ray.
Note that the structure we constructed 
is not destroyed when forgetting the labels $p_i$,
since the loci of type $p_i=p_l$ is given by equality of their corresponding
coordinates on an actual common target face and these loci are unions
of faces of our subdivision. Hence we merge coincident vertices and add their decorations,
and identify the resulting cones under relabelling.

For universal cone map, pick each parameter cone $\tau$, the vertices $p_i$ define integral
linear maps to their target cones. Taking graphs give the universal
$0$-complex over $\tau$. Each graph maps integrally isomorphically
to $\tau$, and two graphs meet exactly on the faces where their
vertices coincide. Thus the universal cone map is combinatorially
flat with reduced fibres.

Now we give the smooth stack $\mathsf{Exp}_\lambda$, after base-change along
$\bb A^1\to[\bb A^1/\bb G_m]$, with reduced central fibre.
Its boundary is simple normal crossings globally as well:
each rigid Hilbert $0$-complexes $\gamma$
is supported at the original
vertices $\nu$; its decoration $\gamma(\nu)$ has length
$n_\nu=\sum_{i:\,p_i\text{ specializes to }\nu}w_i$.

On a positive-height primitive ray, write $p_i=e_{\nu(i)}$. Then
\[
\sum_{\nu=1}^r\nu n_\nu
=\sum_iw_i\nu(i)
=rn-\sum_iw_i\sum_{a=1}^{r-1}b_{i,a}.
\]
Along the ordered positive-height rays, the last sum changes strictly, since all $w_i>0$. 
Thus these rays cannot be identified under relabelling.
For a fixed horizontal component $j$, the primitive horizontal rays
have $z_{i,j}\in\{0,1\}$ and distinct values of $\sum_iw_i z_{i,j}$.
Hence distinct rays of one cone are not identified in the quotient
and each $\mathsf{Exp}_{\gamma,\lambda}$ is smooth.

\smallskip\noindent\textit{Step 2. Construction of $c_\gamma$.}
Fix $\gamma$. For a labelled cone $\tau$ of Step 1 containing
a ray $\rho$ representing $\gamma$, let $I_\nu$ be the labels
specializing to $\nu$ and consider quotient $\pi:\tau\to\overline\tau=\tau/\mathbb R\rho$.
Then the original coordinates $x_{i,a}$ ($a\ne\nu$)
and $z_{i,j}$ vanish on $\rho$ and descend to
$\overline x_{i,a},\overline z_{i,j}$ on $\overline\tau$ if $i\in I_\nu$.
Consider the descending of $b$-coordinates:
\[
\begin{aligned}
u_{i,a}&:=\sum_{d\leq a}\overline x_{i,d},
& b_{i,a}&=\pi^*u_{i,a} &&(a<\nu),\\
v_{i,a}&:=\sum_{d>a}\overline x_{i,d},
& b_{i,a}&=h-\pi^*v_{i,a} &&(\nu\leq a<r).
\end{aligned}
\]
Thus the star retains separate $u$-orders, reversed $v$-orders,
and horizontal orders for each $j$; the remaining relations
$\pi^*(u_{i,a}+v_{l,b})\leq h$ disappear for $h\gg0$.
Ordering the $u$-functions, the $v$-functions and, for each $j$,
the $\overline z_{i,j}$ with labels $i\in I_\nu$ defines
$\lambda_\nu$, which is the smooth star in Step 1 for length
$n_\nu$ where all vertices specialize to $\nu$.

Now one can construct 
\[
c_\gamma:\mathsf{Exp}_{\gamma,\lambda}\longrightarrow
\prod_{\nu\text{ from }\gamma}
\mathsf{Exp}_{\gamma(\nu)}
(X_{0,\nu}|\partial X_{0,\nu})_{\lambda_\nu}.
\]
Indeed, for $i\in I_\nu$, write $\sigma_i$ for the target cone of $p_i$ and $e_\nu$
for the primitive generator of its vertical ray $\nu$.
The induced vertex maps $\overline p_i$ define
\[
\overline c_{\gamma,\tau}=(\overline p_i)_i:
\overline\tau\longrightarrow
\prod_\nu\prod_{i\in I_\nu}(\sigma_i/\mathbb R e_\nu).
\]
Using the subdivision $\lambda_\nu$, this image
lies in one product cone.
When $\tau$ varies, every point of this product lifts via
$x_{i,\nu}=h-\sum_{a\ne\nu}\overline x_{i,a}$ for common $h\gg0$,
uniquely modulo $\rho$ by trivial reason.
This gives an integral subdivision along the different $u,v$ and $\overline{z}$.
Then the morphism $c_{\gamma}$ via Artin-fan construction.

Finally, the complete subdivision above preserves lattices
and induces proper birational toric maps $c_\gamma$.
Moreover it is projective since we have the following function
\[
\psi_\gamma
=\sum_{\nu<\nu'}\sum_{\substack{i\in I_\nu\\l\in I_{\nu'}}}w_iw_l
\left(
\sum_{\substack{1\leq a<\nu\\1\leq b<\nu'}}|u_{i,a}-u_{l,b}|
+\sum_{\substack{\nu\leq a<r\\\nu'\leq b<r}}|v_{i,a}-v_{l,b}|
+\sum_j|\overline z_{i,j}-\overline z_{l,j}|
\right).
\]
Easy to see that $\psi_\gamma$ is a relatively strictly convex integral support function
and is invariant under relabelling.
On a collision face within $I_\nu$, the corresponding coordinates agree, and
$w_iw_l+w_{i'}w_l=(w_i+w_{i'})w_l$.
Thus its restrictions agree with the function after merging the coincident vertices. 
Consequently, $\psi_\gamma$ descends to the unlabelled cone space and proves projectivity.
This proves the lemma.
\end{proof}

\begin{nota}
By Step 2, the original rank-zero universal family over
$\mathsf{Exp}_{\gamma,\lambda}$ is isomorphic to the disjoint union
of the pullbacks of the original universal families over
$\mathsf{Exp}_{\gamma(\nu)}
(X_{0,\nu}|\partial X_{0,\nu})_{\lambda_\nu}$ along $c_\gamma$
and natural projections.
\end{nota}

By Lemma \ref{lem-snc-compactification}, we can choose smooth compactifiable
refinements $\mu_\nu$ of the component models. Put
\[
\mathsf{Exp}_{\gamma,\mu}:=
\prod_{\nu\text{ from }\gamma}
\mathsf{Exp}_{\gamma(\nu)}
(X_{0,\nu}|\partial X_{0,\nu})_{\mu_\nu}.
\]

\begin{lem}\label{lem-zero-dimensional-Cartier-comparison}
There is a smooth compactifiable common refinement $\varphi$ of
$\lambda$ and any previously fixed family cone structure
and proper morphisms $j_\gamma:W_\gamma\hookrightarrow\mathsf{Exp}_{\varphi,0}$
and $\rho_\gamma:W_\gamma\longrightarrow\mathsf{Exp}_{\gamma,\mu}$ such that
\begin{equation}\label{eqn-Cartier-product-cycle-comparison}
\rho_{\gamma,*}[W_\gamma]=[\mathsf{Exp}_{\gamma,\mu}],
\qquad
\sum_\gamma j_{\gamma,*}[W_\gamma]=[\mathsf{Exp}_{\varphi,0}].
\end{equation}
Here $W_\gamma:=
\mathsf{Exp}_\varphi
\times_{\mathsf{Exp}_\lambda}\mathsf{Exp}_{\gamma,\lambda}$.
\end{lem}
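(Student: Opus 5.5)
We will construct $\varphi$ by refining twice. First, by \cite[Prop.~3.6.1]{MR24}, take a common refinement of $\lambda$ and the previously fixed family cone structure. Then, using \cite[Rmk.~3.5.2]{MR24} and \cite[Prop.~1.7.2]{MR25} as in Lemma~\ref{lem-compactifiable-local-families}, further subdivide so that the model is compactifiable, has smooth base and snc boundary, and has a combinatorially flat universal cone map with reduced fibres. All of these are subdivisions (together with root constructions) of the cone space $P(\Sigma_{\mcal X|\mcal D}/\bb R_{\geq0})$. Hence $g:\mathsf{Exp}_\varphi\to\mathsf{Exp}_\lambda$ is a proper logarithmic modification of pure degree one, and it is an isomorphism over the open unexpanded locus.

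\textbf{Second identity.} Since $\mathsf{Exp}_\lambda$ is smooth and $\operatorname{div}_{\mathsf{Exp}_\lambda}(t)=\sum_\gamma\mathsf{Exp}_{\gamma,\lambda}$ by Lemma~\ref{lem-zero-dimensional-starting-comparison}, the fibre $\mathsf{Exp}_{\lambda,0}$ is the Cartier divisor $\sum_\gamma\mathsf{Exp}_{\gamma,\lambda}$. Its components meet only in codimension one, so as cycles $[\mathsf{Exp}_{\varphi,0}]=g^*[\mathsf{Exp}_{\lambda,0}]$. More precisely, $[\mathsf{Exp}_{\varphi,0}]=\operatorname{div}(g^*t)$ and $g^*t=t$. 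The Cartier pullback of each effective Cartier divisor $\mathsf{Exp}_{\gamma,\lambda}$ is represented by the closed substack $W_\gamma=\mathsf{Exp}_\varphi\times_{\mathsf{Exp}_\lambda}\mathsf{Exp}_{\gamma,\lambda}$, which is a Cartier divisor on $\mathsf{Exp}_\varphi$. Summing over $\gamma$ gives $\sum_\gamma j_{\gamma,*}[W_\gamma]=[\mathsf{Exp}_{\varphi,0}]$, where $[W_\gamma]$ denotes the fundamental class of this Cartier divisor, counted with its multiplicities. This equality can be checked torically on smooth charts, where it is just additivity of $\operatorname{div}$ of the monomial $t$.

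\textbf{First identity.} Define $\rho_\gamma$ as a composite. The first map is $W_\gamma\to\mathsf{Exp}_{\gamma,\lambda}$, which is proper. The second is the projective modification $c_\gamma$ of Lemma~\ref{lem-zero-dimensional-starting-comparison}. This lands in the product over $\nu$ of the $\lambda_\nu$-models, whereas the target is $\mathsf{Exp}_{\gamma,\mu}$. To land there, $\varphi$ must also be chosen so that its star at each ray $\rho$ representing $\gamma$ refines the product of the $\mu_\nu$ cone structures via the quotient map $\overline c_{\gamma,\tau}$ of Step~2. This is possible by another common refinement, because $\mu_\nu$ refines $\lambda_\nu$ and stars of a subdivision of $\tau$ at $\rho$ are subdivisions of $\tau/\bb R\rho$. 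With this choice $\rho_\gamma$ is proper, and the second identity is unaffected since it holds for any refinement. For the pushforward, $\rho_{\gamma,*}[W_\gamma]$ is computed by checking degrees at generic points. The reduced components of $W_\gamma$ that dominate $\mathsf{Exp}_{\gamma,\lambda}$ come from rays of $\varphi$ in the relative interior of the image of $\rho$, that is, from $\rho$ itself, since $\rho$ is primitive of height one. Such a component maps birationally onto $\mathsf{Exp}_{\gamma,\lambda}$ with multiplicity one; here we use the reduced central fibre of $\mathsf{Exp}_\lambda$ and the unimodular structure from Step~1. The remaining components of $W_\gamma$ map to proper closed substacks of $\mathsf{Exp}_{\gamma,\lambda}$, of positive-dimensional fibre type, and so push forward to zero. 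Combined with the pure degree one of $c_\gamma$ and of $\mathsf{Exp}_{\gamma,\mu}\to\prod_\nu\mathsf{Exp}_{\gamma(\nu),\lambda_\nu}$, this gives $\rho_{\gamma,*}[W_\gamma]=[\mathsf{Exp}_{\gamma,\mu}]$.

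\textbf{Main obstacle.} The hardest step is the multiplicity bookkeeping in $W_\gamma$: one must check that the non-dominant exceptional components of $W_\gamma$, which can carry multiplicities larger than one after root constructions, genuinely contract under $\rho_\gamma$, and that the dominant component has multiplicity one. I would handle this torically in each cone $\tau$ of $\varphi$ containing $\rho$. The multiplicity of a ray $\rho'$ of $\varphi$ in $g^*\mathsf{Exp}_{\gamma,\lambda}$ is the $\rho$-coordinate of the primitive generator of $\rho'$. Only $\rho'=\rho$ spans a ray that maps isomorphically under projection to the image of $\rho$; every other ray maps to a cone of positive dimension in the quotient, which is exactly the contraction needed.
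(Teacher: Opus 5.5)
Your argument for the second identity is fine and is the paper's: $\operatorname{div}(t)=\sum_\gamma\mathsf{Exp}_{\gamma,\lambda}$ pulls back as Cartier divisors to $\sum_\gamma W_\gamma$.

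\textbf{The gap is the existence of $\rho_\gamma$.} You need a morphism $W_\gamma\to\mathsf{Exp}_{\gamma,\mu}$ lifting $c_\gamma\circ(W_\gamma\to\mathsf{Exp}_{\gamma,\lambda})$; this compatibility is what Proposition~\ref{prp-decompo-derived} uses to pull back universal families. Such a map must be defined on the whole scheme-theoretic preimage, including the components attached to rays $\rho'\neq\rho$ of $\varphi$ with positive $\rho$-coordinate. Your condition, that the star of $\varphi$ at $\rho$ refine the $\mu$-structure, only constrains cones containing $\rho$, and it does not suffice.

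Here is a local toric counterexample. Take $\mathsf{Exp}_\lambda=\mathbb A^3_{x,y_1,y_2}$, $\mathsf{Exp}_{\gamma,\lambda}=\{x=0\}$, $c_\gamma=\mathrm{id}$ and $\mathsf{Exp}_{\gamma,\mu}=\mathrm{Bl}_0\mathbb A^2_{y_1,y_2}$. Let $\varphi=\mathrm{Bl}_0\mathbb A^3$, the star subdivision at $(1,1,1)$.
\begin{itemize}
\item Its star at $\rho=e_x$ is exactly the fan of $\mathrm{Bl}_0\mathbb A^2$, so your condition holds.
\item Yet $W_\gamma$ is the strict transform of $\{x=0\}$ together with the exceptional $\mathbb P^2$, meeting along a line.
\item A morphism $W_\gamma\to\mathrm{Bl}_0\mathbb A^2$ over $\mathbb A^2$ must be the canonical isomorphism on the strict transform. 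Hence it maps that line isomorphically onto the exceptional $\mathbb P^1$.
\item On $\mathbb P^2$ it lands in that $\mathbb P^1$, so it is constant. This is a contradiction.
\end{itemize}
You cannot simply avoid such rays. $\varphi$ must refine an arbitrary previously fixed family structure, and if that structure contains the ray through $(1,1,1)$, so does every common refinement. Relatedly, your claim of ``multiplicity one'' for the dominant component fails once roots along $\mathsf{Exp}_{\gamma,\lambda}$ occur: you get multiplicity $M$ against a gerbe of degree $1/M$.

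\textbf{The missing idea is the paper's intermediate model $\mathsf{Exp}^{(\gamma)}_\lambda$.} Remove $\mathsf{Exp}_{\gamma,\lambda}$ from the boundary. Then take roots and subdivisions only along the remaining boundary divisors $E_1,\dots,E_s$ (hyperplane subdivision in $\mathbb R^s$, $M$-th roots, smooth refinement). Choose these so that over $\mathsf{Exp}_{\gamma,\lambda}$ the result dominates a smooth refinement $\widetilde{\mathsf{Exp}}_{\gamma,\lambda}$ of the log fibre product with $\mathsf{Exp}_{\gamma,\mu}$.

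Torically, this replaces each cone $\mathbb R_{\geq0}\rho\times\tau'$ by $\mathbb R_{\geq0}\rho$ times a refinement of $\widetilde{\tau'}$. So it constrains \emph{every} cone lying inside a cone that contains $\rho$, not only the cones containing $\rho$. Since the defining coordinate of $\mathsf{Exp}_{\gamma,\lambda}$ is untouched, its full preimage $\mathsf{Exp}^{(\gamma)}_{\gamma,\lambda}$ is a smooth reduced Cartier divisor. It maps to $\mathsf{Exp}_{\gamma,\mu}$ via $b_\gamma\circ q_\gamma$.

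With $\varphi$ refining every $\mathsf{Exp}^{(\gamma)}_\lambda$, one has $W_\gamma=\mathsf{Exp}_\varphi\times_{\mathsf{Exp}^{(\gamma)}_\lambda}\mathsf{Exp}^{(\gamma)}_{\gamma,\lambda}$. So $\rho_\gamma$ exists by base change, however non-reduced $W_\gamma$ is. The identity $\rho_{\gamma,*}[W_\gamma]=[\mathsf{Exp}_{\gamma,\mu}]$ then follows from $(\pi^{(\gamma)}|_{W_\gamma})_*i_\gamma^!=i_\gamma^!\pi^{(\gamma)}_*$ and the degree one of $\pi^{(\gamma)}$ and $b_\gamma\circ q_\gamma$. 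No component-by-component multiplicity count is needed. This construction also supplies what you leave as ``another common refinement'': a global refinement of the base that is consistent across all cones containing $\rho$.
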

\begin{proof}
Fix $\gamma$. Choose a smooth refinement
\[\begin{tikzcd}
	{\widetilde{\mathsf{Exp}}_{\gamma,\lambda}} && \\
	& {\mathsf{Exp}_{\gamma,\lambda}\times^{\mathrm{log}}_{ \prod_{\nu\text{ from }\gamma} \mathsf{Exp}_{\gamma(\nu)} (X_{0,\nu}|\partial X_{0,\nu})_{\lambda_\nu}} \mathsf{Exp}_{\gamma,\mu}} & {\mathsf{Exp}_{\gamma,\mu}} \\
	& {\mathsf{Exp}_{\gamma,\lambda}} & {\prod_{\nu\text{ from }\gamma} \mathsf{Exp}_{\gamma(\nu)} (X_{0,\nu}|\partial X_{0,\nu})_{\lambda_\nu}}
	\arrow[from=1-1, to=2-2]
	\arrow["{b_{\gamma}}"{description}, bend left = 10, from=1-1, to=2-3]
	\arrow["{a_{\gamma}}"{description}, bend right = 19, from=1-1, to=3-2]
	\arrow[from=2-2, to=2-3]
	\arrow[from=2-2, to=3-2]
	\arrow["{\square^{\mathrm{log}}}"{description}, draw=none, from=2-2, to=3-3]
	\arrow[from=2-3, to=3-3]
	\arrow["{c_{\gamma}}", from=3-2, to=3-3]
\end{tikzcd}\]
Both $a_{\gamma},b_{\gamma}$ are proper of degree one. We will modify the whole expansion
base $\mathsf{Exp}_\lambda$ so that the full inverse image of
$\mathsf{Exp}_{\gamma,\lambda}$ is smooth, reduced, and dominates
$\widetilde{\mathsf{Exp}}_{\gamma,\lambda}$ over
$\mathsf{Exp}_{\gamma,\lambda}$.

\smallskip\noindent\textit{Step 1. Extension to the whole expansion base.}
Let $E_1,\ldots,E_s$ be all boundary divisors of
$\mathsf{Exp}_\lambda$ other than $\mathsf{Exp}_{\gamma,\lambda}$,
on the finite-type locus where we consider.
For this step, equip $\mathsf{Exp}_\lambda$ with boundary
$\sum_jE_j$, and $\mathsf{Exp}_{\gamma,\lambda}$ with its restriction.
Then each cone of both stacks can embedded in $\bb R_{\geq0}^s$.
For each such cone $\tau$ on $\mathsf{Exp}_{\gamma,\lambda}$, write
$\widetilde\tau\longrightarrow\tau$ for the 
subdivision induced by $a_\gamma$ and $N_\tau=\bb Z^s\cap\operatorname{span}_{\bb R}\tau$
for the the original lattice.
Note that different strata with the same coordinate face are retained separately.

Extend the equations of all codimension-one walls in these
subdivisions $\widetilde\tau$ to integral linear equations
$\ell_1=0,\ldots,\ell_N=0$ on $\bb R^s$.
Then all these hyperplanes cut the orthant give the subdivision
defined by $\sum_{j=1}^N|\ell_j|$.
Thus, over each coordinate face, it refines every subdivision
$\widetilde\tau$, which is compatible to the restrictions to smaller faces.
This extends at the level of cones.

For integral structures, given any cone $\kappa\subset\widetilde\tau$, let
$\widetilde N_\kappa$ be its prescribed lattice, viewed in $N_\tau$
via $a_\gamma$. Choose one integer $M>0$ satisfying
\[
M\bigl(N_\tau\cap\operatorname{span}_{\bb R}\kappa\bigr)
\subset\widetilde N_\kappa
\quad\text{for every }\tau\text{ and }\kappa\in\widetilde\tau.
\]
Let $\Sigma^{(\gamma)}$ be a smooth refinement of the hyperplane
subdivision obtained by a sequence of star subdivisions in the
lattice $M\bb Z^s$. It is projective over the hyperplane subdivision.
We then have maps of integral cone complexes
\[
\Sigma^{(\gamma)}|_\tau\longrightarrow
\widetilde\tau\longrightarrow\tau
\quad\text{for every remaining-boundary cone }\tau
\text{ of }\mathsf{Exp}_{\gamma,\lambda}.
\]

Define $\mathsf{Exp}_\lambda^{(\gamma)}$ by taking $M$-th roots
along $E_1,\ldots,E_s$ in $\mathsf{Exp}_\lambda$, followed by the
toroidal modification given by $\Sigma^{(\gamma)}$.
By the previous analysis, this give an extension of whole base
for the fixed $\gamma$, which may not be compactifiable for now.
Set
\[
\mathsf{Exp}_{\gamma,\lambda}^{(\gamma)}
:=
\mathsf{Exp}_\lambda^{(\gamma)}
\times_{\mathsf{Exp}_\lambda}\mathsf{Exp}_{\gamma,\lambda}.
\]
The cone maps above induce
\[
\mathsf{Exp}_{\gamma,\lambda}^{(\gamma)}
\xrightarrow{q_\gamma}
\widetilde{\mathsf{Exp}}_{\gamma,\lambda}
\xrightarrow{b_\gamma}
\mathsf{Exp}_{\gamma,\mu}.
\]
Both maps are proper of degree one.
In local snc coordinates, the roots and subdivisions affect only the 
remaining-boundary directions, leaving a defining coordinate of 
$\mathsf{Exp}_{\gamma,\lambda}$ unchanged, so the smooth refinement yields a smooth ambient 
stack $\mathsf{Exp}_\lambda^{(\gamma)}$  in which the full inverse image 
$\mathsf{Exp}_{\gamma,\lambda}^{(\gamma)}$ remains a smooth reduced Cartier divisor.

\smallskip\noindent\textit{Step 2. Compactifiable refinement.}
Using Step 1 for each $\gamma$, one can take 
a smooth common refinement
$\varphi$ of all $\mathsf{Exp}_\lambda^{(\gamma)}$
and a fixed compactifiable cone structure we have chosen at the beginning
by \cite[Prop.~3.6.1]{MR24}.
Denote the proper morphism
\[
\pi^{(\gamma)}:\mathsf{Exp}_\varphi
\longrightarrow\mathsf{Exp}_\lambda^{(\gamma)},
\]
which is degree-one. Pullback the original and full universal
families of the previous model to obtain
$(\mathfrak X_\varphi,\mathfrak X_\varphi^c)$,
where $\mathfrak X_\varphi$ is the required rank-zero family.
Hence $\varphi$ is compactifiable.

\smallskip\noindent\textit{Step 3. Finish the proof.}
By definition, $W_\gamma$ fits into the Cartesian diagram
\[\begin{tikzcd}
	&& {\mathsf{Exp}_{\gamma,\mu}} & {\widetilde{\mathsf{Exp}}_{\gamma,\lambda}} \\
	{W_\gamma} && {\mathsf{Exp}_{\gamma,\lambda}^{(\gamma)}} & {\mathsf{Exp}_{\gamma,\lambda}} \\
	{\mathsf{Exp}_\varphi} && {\mathsf{Exp}_\lambda^{(\gamma)}} & {\mathsf{Exp}_\lambda}
	\arrow["{b_{\gamma}}"', from=1-4, to=1-3]
	\arrow["{a_{\gamma}}", from=1-4, to=2-4]
	\arrow["{\rho_\gamma}"{description}, dashed, from=2-1, to=1-3]
	\arrow["{\pi^{(\gamma)}|_{W_{\gamma}}}"{description}, from=2-1, to=2-3]
	\arrow[hook, from=2-1, to=3-1]
	\arrow["\square"{description}, draw=none, from=2-1, to=3-3]
	\arrow["{q_{\gamma}}"{description}, from=2-3, to=1-4]
	\arrow[from=2-3, to=2-4]
	\arrow["{i_{\gamma}}", hook, from=2-3, to=3-3]
	\arrow["\square"{description}, draw=none, from=2-3, to=3-4]
	\arrow[hook, from=2-4, to=3-4]
	\arrow["{\pi^{(\gamma)}}"{description}, from=3-1, to=3-3]
	\arrow[from=3-3, to=3-4]
\end{tikzcd}\]
where the proper map $\rho_\gamma$ is defined by composition.
Here we note that $W_\gamma$ need not be reduced.

Now we have
\begin{align*}
	(\pi^{(\gamma)}|_{W_\gamma})_*[W_\gamma]&=(\pi^{(\gamma)}|_{W_\gamma})_*i_\gamma^!
[\mathsf{Exp}_\varphi]=i_\gamma^!\pi^{(\gamma)}_*[\mathsf{Exp}_\varphi]\\
&=i_\gamma^![\mathsf{Exp}_\lambda^{(\gamma)}]=[\mathsf{Exp}_{\gamma,\lambda}^{(\gamma)}].
\end{align*}
Here the pullback of a local equation of
$\mathsf{Exp}_{\gamma,\lambda}^{(\gamma)}$ is a non-zero-divisor:
$\mathsf{Exp}_\varphi$ is smooth, and no irreducible component is mapped
into this divisor by the modification $\pi^{(\gamma)}$.
Thus the square is Tor-independent and
$i_\gamma^![\mathsf{Exp}_\varphi]=[W_\gamma]$, giving the first equality.
The third equality uses the degree one of $\pi^{(\gamma)}$.
By Step 1, $b_\gamma\circ q_\gamma$ also has degree one, so
\[
\rho_{\gamma,*}[W_\gamma]
=(b_\gamma\circ q_\gamma)_*
[\mathsf{Exp}_{\gamma,\lambda}^{(\gamma)}]
=[\mathsf{Exp}_{\gamma,\mu}].
\]
Finally, Lemma \ref{lem-zero-dimensional-starting-comparison} gives
an equality of Cartier divisors
\[
\sum_\gamma W_\gamma
=(\mathsf{Exp}_{\varphi}\to\mathsf{Exp}_{\lambda})^*
\sum_\gamma\mathsf{Exp}_{\gamma,\lambda}
=\operatorname{div}_{\mathsf{Exp}_\varphi}(t).
\]
Taking cycles gives
$\sum_\gamma j_{\gamma,*}[W_\gamma]=[\mathsf{Exp}_{\varphi,0}]$.
All inverse images here are scheme-theoretic, so their
multiplicities are retained.
\end{proof}

From now on, choose $\varphi$ as in
Lemma \ref{lem-zero-dimensional-Cartier-comparison}.

\begin{prp}\label{prp-decompo-derived}
For the choices in Lemma \ref{lem-zero-dimensional-Cartier-comparison},
define $\dHilb_{W_\gamma}:=\dHilb^{R_t}((\mcal X|\mcal D)/\bb A^1)_\varphi \times_{\mathsf{Exp}_\varphi}W_\gamma$.
Then we have the derived pullback diagram:
\[\begin{tikzcd}
	{\dHilb_{W_\gamma}} & {\prod_{\nu\text{ from }\gamma} \dHilb^{\gamma(\nu)}(X_{0,\nu}|\partial X_{0,\nu})_{\mu_\nu}} \\
	{W_\gamma} & {\mathsf{Exp}_{\gamma,\mu}}
	\arrow["{\rho_{\gamma,H}}", from=1-1, to=1-2]
	\arrow["{\varpi^{R_t}_{W_\gamma}}", from=1-1, to=2-1]
	\arrow["\square"{description}, draw=none, from=1-1, to=2-2]
	\arrow["{\varpi_{\gamma,\mu}}", from=1-2, to=2-2]
	\arrow["{\rho_\gamma}", from=2-1, to=2-2]
\end{tikzcd}\]
It identifies the relative shifted symplectic structures and their
symmetric obstruction theories.
\end{prp}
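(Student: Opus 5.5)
The plan is to build the square at the level of universal families and then pass to derived mapping stacks. The key input is the Remark after Lemma \ref{lem-zero-dimensional-starting-comparison}. Over $\mathsf{Exp}_{\gamma,\lambda}$, the original rank-zero universal family is the disjoint union
\[
\mathfrak X_\gamma\cong\coprod_{\nu\text{ from }\gamma}c_\gamma^*\pr_\nu^*\mathfrak X_\nu ,
\]
where $\mathfrak X_\nu$ is the original universal family of $(X_{0,\nu}|\partial X_{0,\nu})$ over the $\lambda_\nu$-model. I will pull this identification back along $W_\gamma\to\mathsf{Exp}_{\gamma,\lambda}$. The family $\mathfrak X_\varphi|_{W_\gamma}$ is the pullback of the $\lambda$-family, up to the extra tube components inserted by the refinement. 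As in the proof of Theorem~\ref{thm-independent-cone-models} and \cite[Thm.~5.2.1]{MR24}, these components carry no points in the rank-zero theory: all codimension $\geq1$ loci are removed, so the original family is just the disjoint union of open components. This gives a canonical isomorphism
\[
\mathfrak X_\varphi|_{W_\gamma}\cong\coprod_\nu\rho_\gamma^*\pr_\nu^*\mathfrak X_{\nu,\mu_\nu}
\]
of original universal families over $W_\gamma$. Here the passage from $\lambda_\nu$ to the compactifiable $\mu_\nu$ uses the same contraction argument on the component side.

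Next I use the intrinsic description of Remark~\ref{rmk-intrinsic-derived-hilb}. For a derived affine $S\to W_\gamma$, the $S$-points of $\dHilb_{W_\gamma}$ are perfect complexes on the original family over $S$ with trivialized determinant, whose classical restriction is an ideal sheaf of a DT-stable, transverse, properly supported $0$-dimensional family. These points do not depend on the chosen compactification. Since the family is a disjoint union, such a complex is the same as a tuple of complexes $(\mathcal I_\nu)_\nu$, one on each component. Their determinant trivializations split accordingly, because $\mathsf{dPic}$ of a disjoint union is a product and the trivialization is canonical away from the support. The classical conditions also split componentwise, and the numerical class $R_t$ splits into the decorations $\gamma(\nu)$ recorded by $\gamma$.

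Derived mapping stacks commute with base change in the base (cf.\ \cite[Cor.~2.2.2.9]{TV-HAGII}), so the $S$-points of the right-hand side pulled back along $\rho_\gamma$ are exactly such tuples. This gives the homotopy Cartesian square. A possible nuisance is that the component stacks come from compactifications independent of the family's compactification. The independence part of Lemma~\ref{lem-derived-hilb} removes it: it lets us compare on the common original family, extending by $\mathcal O$ off the support.

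For the symplectic structures, I will compare them through the AKSZ formula from the proof of Proposition~\ref{prp-family-shifted}, namely $\int_{\mathfrak X/\mathsf{Exp}}\mathrm{vol}\wedge\widetilde{\mathrm{ev}^*\Omega_{\mathsf{dPerf}}}$. On a disjoint union, fibre integration is the sum of the integrals over the components. The relative volume forms from Lemma~\ref{lem:log-crepant-MR-pair} and Lemma~\ref{lem:crepant-MR-degeneration} match, because of the adjunction identification $X_{0,\nu}\cong\mathrm{Tot}(\omega_{Y_{0,\nu}}(S_{0,\nu}+\partial Y_{0,\nu}))$ and the fact that a log volume form restricts to a log volume form under residue. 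Hence the pulled-back form is the sum of the product forms. The obstruction theories $\mathbf R\scr Hom_\pi(\scr I,\scr I)_0[3]$ likewise decompose as direct sums, and the orientations match since both are the canonical positive choice built from the same spectral triangles.

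I expect the main obstacle to be matching the trivializations. The family volume form restricted to the central components must agree, with the correct sign, with the product of the component log Calabi--Yau volume forms. I would settle this locally by computing residues along $t=0$ in toroidal coordinates.
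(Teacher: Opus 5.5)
Your proposal is correct and follows essentially the paper's own argument. You identify $\mathfrak X_\varphi\times_{\mathsf{Exp}_\varphi}W_\gamma$ scheme-theoretically with $\coprod_\nu\mathfrak X_{\nu,W_\gamma}$ via the remark after Lemma~\ref{lem-zero-dimensional-starting-comparison}, use Remark~\ref{rmk-intrinsic-derived-hilb} with a disjoint-union containing family to split the derived Hilbert stack as in the gluing-free case of Proposition~\ref{prp-fiberprod-gluing-general}, and split the supported traces componentwise. One small correction: the refinement does not insert empty tube components; $0$-complexes have no linear $2$-valent vertices, so no tube vertices arise at all, and that is why the universal families are honestly isomorphic.
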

\begin{proof}
The right vertical map is the product of the component structure morphisms.
Let $\mathfrak X_{\nu,W_\gamma}\subset\mathfrak X_{\nu,W_\gamma}^c$
be the pullbacks of the original and full component families to $W_\gamma$.
By \cite[Thm.~5.2.1, proof]{MR24}, a change of cone structures
only adds tube vertices to the same complex, which do not exist for $0$-complexes.
Thus Lemma \ref{lem-zero-dimensional-starting-comparison}
and Lemma \ref{lem-zero-dimensional-Cartier-comparison}
gives
\[
\mathfrak X_\varphi\times_{\mathsf{Exp}_\varphi}W_\gamma
\cong\mathfrak X_\lambda\times_{\mathsf{Exp}_\lambda}W_\gamma
\cong\coprod_{\nu\text{ from }\gamma}\mathfrak X_{\nu,W_\gamma}.
\]
They therefore hold scheme-theoretically, including when $W_\gamma$
is not reduced.

By Remark \ref{rmk-intrinsic-derived-hilb}, we can consider
the flat projective containing family
$\coprod_\nu\mathfrak X_{\nu,W_\gamma}^c$. The desired 
derived pullback follows from the similar proof of Proposition \ref{prp-fiberprod-gluing-general}
Finally, the supported Chern characters and traces split over the components,
so the shifted symplectic structures agree.
\end{proof}

\begin{thm}[Zero-dimensional degeneration formula for cycles]
\label{coro-zero-degeneration}
For the choices and orientations above, we have:
\begin{enumerate}
\item If $0\ne b\in\bb A^1$, then
\[
i_b^![\Hilb^{R_t}((\mcal X|\mcal D)/\bb A^1)_\varphi]^{\vir}
=[\Hilb^{R_b}(X_b|\partial X_b)_\varphi]^{\vir},
\]
where the fibre uses the induced choice.
\item If $b=0$, then
\begin{equation}\label{eqn-zero-dimensional-weighted-specialization}
\begin{aligned}
&i_0^![\Hilb^{R_t}((\mcal X|\mcal D)/\bb A^1)_\varphi]^{\vir}\\
&=n_{\varphi,*}\sum_\delta m_{\delta,\varphi}
[\Hilb^{R_t}((\mcal X|\mcal D)/\bb A^1)_{\delta,\varphi}]^{\vir}\\
&=\sum_\gamma j_{\gamma,H,*}[\Hilb_{W_\gamma}]^{\vir},
\end{aligned}
\end{equation}
where $j_{\gamma,H}:\Hilb_{W_\gamma}\longrightarrow
\Hilb^{R_t}((\mcal X|\mcal D)/\bb A^1)_{0,\varphi}$
is the base change of $j_\gamma$, satisfies
\[
\rho_{\gamma,H,*}[\Hilb_{W_\gamma}]^{\vir}=\bigboxtimes_{\nu\text{ from }\gamma}
[\Hilb^{\gamma(\nu)}
(X_{0,\nu}|\partial X_{0,\nu})_{\mu_\nu}]^{\vir}.
\]
Here $[\Hilb_{W_\gamma}]^{\vir}:=
\sqrt{\varpi^{R_t,!}_{W_\gamma}}[W_\gamma]$.
\end{enumerate}
\end{thm}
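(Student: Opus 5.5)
The whole argument will be carried out in Chow groups with Park's square-root virtual pullbacks. We use three properties of $\sqrt{(-)^!}$ from \cite{park21,park24}: compatibility with refined Gysin pullbacks along regular embeddings of the base, the functoriality used in Theorem~\ref{thm-independent-cone-models}, and compatibility with exterior products. In each case we combine one of these with the structural results on the expansion bases from Lemmas~\ref{lem-zero-dimensional-starting-comparison} and \ref{lem-zero-dimensional-Cartier-comparison}, and with the derived Cartesian square of Proposition~\ref{prp-decompo-derived}.

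For (1), let $b\neq0$. The point $i_b:\{b\}\hookrightarrow\bb A^1$ is a regular embedding, and its base change to $\mathsf{Exp}_\varphi$ is the fibre $\mathsf{Exp}_{\varphi,b}$. The symmetric obstruction theory of $\varpi^{R_t}_\varphi$ restricts to that of the fibrewise morphism. Indeed, the fibrewise derived Hilbert stack is the derived base change of $\dHilb^{R_t}((\mcal X|\mcal D)/\bb A^1)_\varphi$ by Remark~\ref{rmk-intrinsic-derived-hilb}, and the AKSZ forms and orientations are defined by base-change-compatible formulas. Commutativity of $\sqrt{\varpi^!}$ with the refined Gysin map $i_b^!$ (\cite[Thm.~A.?]{park21}, the standard bivariance) then gives
\[
i_b^!\sqrt{\varpi^{R_t,!}_\varphi}[\mathsf{Exp}_\varphi]=\sqrt{\varpi_b^!}\,i_b^![\mathsf{Exp}_\varphi]=\sqrt{\varpi_b^!}[\mathsf{Exp}_{\varphi,b}].
\]
Here $i_b^![\mathsf{Exp}_\varphi]=[\mathsf{Exp}_{\varphi,b}]$ because $t-b$ is a non-zero-divisor on the smooth stack $\mathsf{Exp}_\varphi$ and the fibre is reduced.

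For (2), the first equality is obtained in the same way. We get $i_0^![\cdots]^{\vir}=\sqrt{\varpi^!_0}\,[\mathsf{Exp}_{\varphi,0}]$, where $[\mathsf{Exp}_{\varphi,0}]$ is the cycle of the Cartier divisor $\operatorname{div}(t)$. We then expand this cycle as $n_{\varphi,*}\sum_\delta m_{\delta,\varphi}[\mathsf{Exp}_{\delta,\varphi}]$. Next we push $\sqrt{\varpi^!}$ through the finite map $n_\varphi$ using the Cartesian diagram \eqref{Fig-normalization-Hilb}, since the square-root pullback commutes with proper pushforward on the base. The second equality follows in the same manner from $\sum_\gamma j_{\gamma,*}[W_\gamma]=[\mathsf{Exp}_{\varphi,0}]$ in Lemma~\ref{lem-zero-dimensional-Cartier-comparison}, applied to the base changes $j_{\gamma,H}$. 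Both expressions equal $\sqrt{\varpi_0^!}$ of the same cycle, so they agree.

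The main step is the product identity. Proposition~\ref{prp-decompo-derived} shows that $\dHilb_{W_\gamma}$ is the derived pullback of $\prod_\nu\dHilb^{\gamma(\nu)}(X_{0,\nu}|\partial X_{0,\nu})_{\mu_\nu}$ along $\rho_\gamma$, with matching symmetric obstruction theories. We must also check that the orientations match; this uses the componentwise total-space description stated after Proposition~\ref{prp-0-dim-shifted-main}, together with the fact that the determinant identifications split over disjoint components. Since $\rho_\gamma$ is proper and both bases have the same pure dimension, the pushforward argument in the proof of Theorem~\ref{thm-independent-cone-models} (the Hu--Wang style statement $f_*\sqrt{(p')^!}=\sqrt{p^!}\rho_*$) gives
\[
\rho_{\gamma,H,*}\sqrt{\varpi^!_{W_\gamma}}[W_\gamma]=\sqrt{\varpi^!_{\gamma,\mu}}\,\rho_{\gamma,*}[W_\gamma]=\sqrt{\varpi^!_{\gamma,\mu}}[\mathsf{Exp}_{\gamma,\mu}].
\]
Finally, $\varpi_{\gamma,\mu}$ is a product of oriented $(-2)$-shifted symplectic morphisms over the product base, so $\sqrt{\varpi^!_{\gamma,\mu}}$ of the fundamental class is the exterior product $\bigboxtimes_\nu[\Hilb^{\gamma(\nu)}(X_{0,\nu}|\partial X_{0,\nu})_{\mu_\nu}]^{\vir}$, by the product compatibility of square-root Gysin maps.

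I expect the main obstacle to be the pushforward step when $W_\gamma$ is non-reduced. There the Hu--Wang comparison of intrinsic normal cones, which needs degree one over generic points, has to be applied to the cycle $[W_\gamma]$ rather than to a reduced fundamental class. I would handle this componentwise on $[W_\gamma]$ using linearity and Lemma~\ref{lem-zero-dimensional-Cartier-comparison}. The sign of the orientation under splitting is a secondary point to check.
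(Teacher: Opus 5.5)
Your proposal is correct and follows the paper's proof essentially step for step. It uses compatibility with refined Gysin maps for (1), and the normalization diagram (\ref{Fig-normalization-Hilb}) together with $\sum_\gamma j_{\gamma,*}[W_\gamma]=[\mathsf{Exp}_{\varphi,0}]$ for the two equalities in (2). For the last identity it uses $\rho_{\gamma,H,*}\sqrt{\varpi^{R_t,!}_{W_\gamma}}[W_\gamma]=\sqrt{\varpi^{\,!}_{\gamma,\mu}}\,\rho_{\gamma,*}[W_\gamma]=\sqrt{\varpi^{\,!}_{\gamma,\mu}}[\mathsf{Exp}_{\gamma,\mu}]$ via the Cartesian square of Proposition~\ref{prp-decompo-derived}, followed by the product decomposition.

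The obstacle you anticipate does not arise. The middle step is only the commutation of the square-root pullback with proper pushforward, which holds for arbitrary cycles on a possibly non-reduced $W_\gamma$ (compare the footnote in the proof of Theorem~\ref{thm-independent-cone-models}). All the degree-one input is already contained in $\rho_{\gamma,*}[W_\gamma]=[\mathsf{Exp}_{\gamma,\mu}]$ from Lemma~\ref{lem-zero-dimensional-Cartier-comparison}, so no Hu--Wang comparison of intrinsic normal cones and no equal-dimension hypothesis is needed at that point.
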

\begin{proof}
The first statement is trivial. 
For the second, the first equality of Eqn. (\ref{eqn-zero-dimensional-weighted-specialization}) follows from
Fig.~(\ref{Fig-normalization-Hilb}) and the second equality
follows by applying the family virtual pullback to the second
equality in Eqn. \eqref{eqn-Cartier-product-cycle-comparison}:
\[
\sqrt{\varpi_{0,\varphi}^{R_t,!}}[\mathsf{Exp}_{\varphi,0}]
=\sum_\gamma j_{\gamma,H,*}
\sqrt{\varpi_{W_\gamma}^{R_t,!}}[W_\gamma].
\]
Now we focus on the final equality.
By Lemma \ref{lem-zero-dimensional-Cartier-comparison} and Proposition \ref{prp-decompo-derived}, 
we have
\begin{align*}
\rho_{\gamma,H,*}[\Hilb_{W_\gamma}]^{\vir}
&=\rho_{\gamma,H,*}\sqrt{\varpi_{W_\gamma}^{R_t,!}}[W_\gamma]=\sqrt{\varpi_{\gamma,\mu}^{\,!}}\,\rho_{\gamma,*}[W_\gamma]\\
&=\sqrt{\varpi_{\gamma,\mu}^{\,!}}[\mathsf{Exp}_{\gamma,\mu}]
=\bigboxtimes_{\nu\text{ from }\gamma}
[\Hilb^{\gamma(\nu)}
(X_{0,\nu}|\partial X_{0,\nu})_{\mu_\nu}]^{\vir}.
\end{align*}
This proves the results.
\end{proof}

\begin{nota}\label{rmk-zero-dimensional-multiplicities}
The coefficients $m_{\delta,\varphi}$ need not be one and
the scheme-theoretic cycles $[W_\gamma]$ and stack pushforwards
retain these multiplicities.

The formulas also hold equivariantly for a the case that there exists some Calabi-Yau torus
acting on these spaces with proper fixed loci.
\end{nota}

\section{Applications to zero-dimensional logarithmic \texorpdfstring{$\mathsf{DT}_4$}{DT4}-invariants}\label{sec-applications}
The zero-dimensional $\mathsf{DT}_4$-invariants for $\bb C^4$ were fully computed
in \cite{CZZ24}, with the K-theoretic analogue treated in \cite{KR25}.
The computation extends to all toric Calabi-Yau $4$-folds under suitable
compatibility assumptions on orientations (see \cite[Thm.~A]{Liu25} for the proof
of the case $Y\times\bb C$). To treat non-toric cases,
we use the degeneration formula and cobordism groups
as in \cite{Li06,LP09}.

The objects in the cobordism group in \cite{LP09}, however, do not preserve the Calabi-Yau structure,
even after taking canonical bundles. We therefore keep track of
anti-canonical divisors in the degeneration. This approach was developed in \cite{Guz25}
to compute zero-dimensional log $\mathsf{DT}_3$-invariants.
In this section, we adapt this method to verify the conjectural formula for the 
zero-dimensional logarithmic $\mathsf{DT}_4$-invariants of the local surface pairs 
considered below.

\subsection{Logarithmic cobordism groups}
In \cite{Guz25}, Guzman developed the so-called logarithmic cobordism group,
generalizing the results in \cite{LP09}. Here we give a brief review and a slight modification.

\begin{defi}[Double point relation]\label{def-log-cobordism}
Let $\pi:\mcal X\to (U,0)$ be a flat family of $n$-dimensional varieties over a smooth pointed curve $U$, such that
\begin{enumerate}
    \item $\pi^{-1}(0)=X_1\cup_EX_2$ is the union of two smooth varieties that intersect transversally along a smooth divisor 
    $E=X_1\cap X_2$;
    \item $\mcal X$ is smooth and the morphism $\pi$ is smooth away from $0\in U$.
\end{enumerate}
Furthermore, suppose we are given a tuple of line bundles $(\mcal L_1,...,\mcal L_r)$ and a family of snc divisors, i.e. an 
snc divisor $\mcal D\subset \mcal X$ such that
\begin{itemize}
    \item $\pi|_{\mcal D}$ is flat over $U$;
    \item The intersection $\mcal D\cap\pi^{-1}(t)$ is an snc divisor in $\pi^{-1}(t)$ for all 
	    $t\in U$. In particular, we have an snc divisor $D = X\cap\mcal D$ on the general fibre $X$ 
	    and snc divisors $D_1=X_1\cap\mcal D$ and $D_2 =X_2\cap\mcal D$ on the special fibre.
    \item $\mcal D$ meets $E$ transversally.
\end{itemize}
Then we say this is a \textit{logarithmic double point degeneration}
\[(X,D;\mcal L_1|_X,...,\mcal L_r|_X)\rightsquigarrow^{\pi} 
(X_1,D_1+E;\mcal L_1|_{X_1},...,\mcal L_r|_{X_1})\cup_E (X_2,D_2+E;\mcal L_1|_{X_2},...,\mcal L_r|_{X_2}).
\]
The \textit{double point relation} defined by such data is
\[
[X,D;\mcal L_1|_X,...,\mcal L_r|_X]=[X_1,D_1+E;\mcal L_1|_{X_1},...,\mcal L_r|_{X_1}]+[X_2,D_2+E;\mcal L_1|_{X_2},...,\mcal L_r|_{X_2}].
\]
\end{defi}

\begin{defi}[Logarithmic cobordism group]
Let $\mcal M_{n,1^r}$ be the free abelian group spanned by pairs 
$[X, D;\scr L_1,...,\scr L_r]$ of smooth projective varieties of dimension $n$ and snc divisors $D\subset X$
with line bundles $\scr L_i\in\mathrm{Pic}(X)$. Let 
$\sim$ be the relation spanned by the
double point relations arising from logarithmic double point degenerations
of $\pi:\mcal X\to (U,0)$ for some Zariski open subcurve $U\subset\bb P^1$ containing $0$.
We define the \textit{logarithmic cobordism group} to be
\[
\omega_{n,1^r}^{\log}:=\mcal M_{n,1^r}/\sim.
\]
\end{defi}
\begin{nota}
We use a slightly different definition from \cite{Guz25}; this convention is compatible with 
the ordinary algebraic cobordism groups 
defined in \cite{LP09,LP10} since we can always extend and take resolutions.
Hence all the results, especially for \cite[Prop.~3.2,~Prop.~3.3]{Guz25}, remain valid; see \cite[\S~6]{Hu24} for more details.
\end{nota}

\begin{nota}
In \cite[\S~6]{Guz25}, logarithmic cobordism with 
logarithmic modifications of targets is considered to simplify the calculation of zero-dimensional 
logarithmic DT-invariants of $3$-folds. By arguments
similar to those in \cite[Thm.~1.1.1]{AW18}, the invariants do not change after logarithmic
modifications on the targets $(X,D)$; see also \cite[Rmk.~5.2.2]{MR24}. 
In the present paper, we leave the behavior of $\mathsf{DT}_4$-virtual classes
under target logarithmic modifications open.
\end{nota}

Moreover, Guzman computed the generators
of $\omega_3^{\log}\otimes_{\bb Z}\bb Q$ in \cite[Thm.~3.8,~Rmk.~3.9]{Guz25}.
Similarly, we can prove the 
following result.
\begin{prp}\label{prp-span-omega-2-11-log}
The group $\omega_{2,1^1}^{\log}\otimes_{\bb Z}\bb Q$ is spanned as a vector space by 
$(\bb P^2,\emptyset;\scr O)$, $(\bb P^2,\emptyset;\scr O(1))$, $(\bb P^1\times\bb P^1,\emptyset;\scr O)$,
$(\bb P^1\times\bb P^1,\emptyset;\scr O(1,0))$, $(\bb P^1\times\bb P^1,\bb P^1;\scr O)$,
$(\bb P^1\times\bb P^1,\bb P^1;\scr O(1,0))$ and $(\bb F_1,\bb P^1;\scr O)$.
\end{prp}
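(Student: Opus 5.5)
The plan is to use a rank count: compute enough invariants to bound $\dim\omega_{2,1^1}^{\log}\otimes\bb Q$ from above by $7$, then check that the seven listed classes are linearly independent under those invariants. We follow the approach Guzman uses for $\omega_3^{\log}$ in \cite[Thm.~3.8,~Rmk.~3.9]{Guz25}, and split the problem into a \emph{boundary-free part} and a \emph{boundary correction}.

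\textbf{Step 1: reduce to boundary-free classes plus data on the divisor.}
\begin{itemize}
\item By \cite[Prop.~3.2,~Prop.~3.3]{Guz25}, each class $[S,C;\scr L]$ with $C=\sum C_i$ snc can be rewritten using double point degenerations to the normal cone. Concretely, degenerate $S$ to $S\cup_{C_i}\bb P_{C_i}(N\oplus\scr O)$.
\item This lets us express $[S,C;\scr L]$ as $[S,\emptyset;\scr L]$ plus a combination of classes supported on projective bundles over the boundary curves and their intersection points.
\item Those boundary contributions depend only on cobordism-type data of $(C_i,N_{C_i/S},\scr L|_{C_i})$ and on the nodes.
\end{itemize}

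\textbf{Step 2: the boundary-free part.}
\begin{itemize}
\item By Levine--Pandharipande \cite{LP09,LP10}, $\omega_{2,1^1}\otimes\bb Q$ is detected by the Chern numbers $c_1^2$, $c_2$, $c_1(\scr L)c_1(T_S)$ and $c_1(\scr L)^2$.
\item It is therefore spanned by $(\bb P^2;\scr O)$, $(\bb P^2;\scr O(1))$, $(\bb P^1\times\bb P^1;\scr O)$ and $(\bb P^1\times\bb P^1;\scr O(1,0))$.
\item This takes a $4\times 4$ determinant check on these Chern numbers, which is routine.
\end{itemize}

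\textbf{Step 3: the boundary part.}
\begin{itemize}
\item We reduce the boundary contributions to invariants of curves: the genus of $C_i$, the self-intersection $C_i^2$, the degree $\deg\scr L|_{C_i}$, and the number of nodes of $C$.
\item Nodes are removed by a log double point relation near a node, for example blowing up the node and using the degeneration $\bb F_1$ versus $\bb P^2$. This expresses each node as a combination of smooth-boundary classes.
\item The remaining additive boundary invariants are $\chi(C)$, $C^2$ and $\deg\scr L|_C$, taken modulo relations. We realize them by $(\bb P^1\times\bb P^1,\bb P^1;\scr O)$ with a fibre of self-intersection $0$, by $(\bb P^1\times\bb P^1,\bb P^1;\scr O(1,0))$ where $\scr L$ has degree $1$ on $C$, and by $(\bb F_1,\bb P^1;\scr O)$ with the exceptional curve, which has self-intersection $-1$.
\item Higher genus boundary curves are handled by degenerating the curve into rational components inside the surface. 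This is the log analogue of the degeneration of $\bb P^1\times C$.
\end{itemize}

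\textbf{Step 4: independence (optional for spanning).} We exhibit seven additive invariants and check that they separate the seven listed classes:
\[c_1^2,\quad c_2,\quad c_1(\scr L)c_1(T_S),\quad c_1(\scr L)^2,\quad \chi(C),\quad C\cdot C,\quad \deg\scr L|_C.\]
We work with log Chern numbers of $T_S(-\log C)$ together with mixed numbers involving $C$. Each of these must be shown invariant under log double point relations, as in \cite[\S~3]{Guz25}.

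\textbf{Main obstacle.} The hardest step is Step 3, the boundary reduction: showing that every boundary configuration with several components, nodes and arbitrary genus lies in the span. The first thing we would try is to prove additivity of the boundary correction $[S,C;\scr L]-[S,\emptyset;\scr L]$ in the components of $C$ via normal-cone degenerations. After that, we would compute the curve-type cobordism group of triples (curve, normal bundle degree, $\scr L$-degree) tensored with $\bb Q$, and check that it has rank $3$.
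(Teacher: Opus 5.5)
Your Steps 1 and 2 are essentially the paper's argument. You degenerate to the normal cone of one boundary component at a time, so each correction term is a projective bundle over $C_k$ whose boundary is a section plus the fibres over the nodes. For the boundary-free part you use the Lee--Pandharipande/Tzeng span of $\omega_{2,1^1}\otimes\bb Q$.

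The problems are in the framing and in Step 3. On the framing: additive invariants only bound $\dim\omega^{\log}_{2,1^1}\otimes\bb Q$ from below, so they cannot give the upper bound your rank count needs. Moreover, the seven listed classes are linearly dependent, so Step 4 is false, not merely optional. Take $\bb P^1$ times the degeneration $\mathrm{Bl}_{(0,0)}(\bb P^1\times\bb P^1)\to\bb P^1$, which gives $(\bb P^1,\emptyset)=2(\bb P^1,\{0\})$. This yields $(\bb P^1\times\bb P^1,\emptyset;\scr O)=2(\bb P^1\times\bb P^1,\bb P^1;\scr O)$, and likewise with $\scr O(1,0)$ pulled back from the undegenerated factor. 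Hence the span has dimension at most $5$. Several of your proposed invariants are also not invariant. In the normal-cone relation $(S,\emptyset;\scr L)=(S,C;\scr L)+(\bb P_C(N\oplus\scr O),C_\infty;\eta^*\scr L)$, the quantities $c_2(T_S)$, $\chi(C)$ and $\deg\scr L|_C$ jump by $2e(C)$, $2\chi(C)$ and $2\deg\scr L|_C$. Log Chern numbers of $T_S(-\log C)$, by contrast, do behave additively.

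For the spanning statement itself, the gap is Step 3.
\begin{itemize}
\item \emph{Nodes.} Degenerating to the normal cone of a node $p\in C_1\cap C_2$ gives $(S,C_1+C_2;\scr L)=(\mathrm{Bl}_pS,\widetilde C_1+\widetilde C_2+E;\scr L)+(\bb P^2,\ell_1+\ell_2+\ell_\infty;\scr O)$. This replaces one node by two on $\mathrm{Bl}_pS$ and three on $\bb P^2$, so it does not remove nodes.
\item \emph{Higher genus.} Degenerating a higher-genus boundary curve into rational curves inside the surface is not available in general; an abelian surface contains no rational curves at all.
\end{itemize}

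The missing idea is to do both reductions in the curve group, via Guzman's homomorphism $b:\omega^{\log}_{1,1^2}\to\omega^{\log}_{2,1^1}$, $(C,Z;\scr L_1,\scr L_2)\mapsto(\bb P_C(\scr L_2\oplus\scr O),C\cup\eta^{-1}(Z);\eta^*\scr L_1)$. Your Step 1 correction terms are exactly $b(C_k,Z;\scr L|_{C_k},N_{C_k/S})$, with the nodes appearing as the marked points $Z$.
\begin{itemize}
\item In $\omega^{\log}_{1,1^2}$, remove marked points one at a time using $\mathrm{Bl}_{(p,0)}(C\times\bb P^1)$. This gives $(C,Z\setminus\{p\})=(C,Z)+(\bb P^1,\mathrm{pt};\scr O,\scr O)$.
\item The leftover term satisfies $b(\bb P^1,\mathrm{pt};\scr O,\scr O)=(\bb P^1\times\bb P^1,\text{two rulings};\scr O)=\tfrac14(\bb P^1\times\bb P^1,\emptyset;\scr O)$.
\item Once $Z=\emptyset$ you are in the image of $\omega_{1,1^2}$. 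Its rational span by $(\bb P^1;\scr O(1),\scr O)$, $(\bb P^1;\scr O,\scr O(1))$, $(\bb P^1;\scr O,\scr O)$ is already known and handles all genera at once.
\item The images of these three classes under $b$ are exactly the last three listed classes.
\end{itemize}
Your closing remark about the rank-$3$ curve group points in this direction. However, that rank is an existing input, and it should replace the surface-level manipulations of nodes and genus.
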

\begin{proof}
There are two morphisms to $\omega_{2,1^1}^{\log}$ described in \cite[Prop.~3.2,~3.3]{Guz25}:
\[\begin{tikzcd}
	{\omega_{2,1^1}} & {\omega_{2,1^1}^{\log}} & {\omega_{1,1^2}^{\log}} & {\omega_{1,1^2}}
	\arrow["a", from=1-1, to=1-2]
	\arrow["b"', from=1-3, to=1-2]
	\arrow["c"', from=1-4, to=1-3]
\end{tikzcd}\]
where $a$ (and similarly for $c$) is given by $(X;\scr L)\mapsto(X,\emptyset;\scr L)$
and $b$ is given by $(C,D;\scr L_1,\scr L_2)\mapsto(\bb P_C(\scr L_2\oplus\scr O),C\cup\eta^{-1}(D);\eta^*\scr L_1)$
for $\eta:\bb P_C(\scr L_2\oplus\scr O)\to C$.

\smallskip\noindent\textit{Step 1. $\omega_{2,1^1}^{\log}\otimes_{\bb Z}\bb Q$ 
is spanned by smooth pairs and surfaces without boundaries.}

We first claim that it suffices to prove that this is true for any 
$(\bb P_C(\scr L_2\oplus\scr O),C\cup\eta^{-1}(D);\eta^*\scr L_1)$
for $\eta:\bb P_C(\scr L_2\oplus\scr O)\to C$ arising from $(C,D;\scr L_1,\scr L_2)\in\omega_{1,1^2}^{\log}$.
Indeed, for any $(S,\overline C;\scr L)\in\omega_{2,1^1}^{\log}$, we argue by induction on $k$, where 
$\overline C=C_1\cup\cdots\cup C_k$. The case $k=1$ is trivial.
We consider 
\[
\mathrm{Bl}_{C_k\times\{0\}}(S\times\bb P^1)\to\bb P^1
\]
and the pullback of $\scr L$. This induces the relation 
\[
(S,C_1\cup\cdots\cup C_k;\scr L)=(S,C_1\cup\cdots\cup C_{k-1};\scr L)
-(\bb P_{C_k}(N_{C_k/S}\oplus\scr O),C_k\cup\eta^*Z;\eta^*\scr L)
\]
for $Z=(C_1\cap C_k)\cup\cdots\cup(C_{k-1}\cap C_k)$. Then the claim follows.

To prove step 1, we will use the claim and consider $(\bb P_C(\scr L_2\oplus\scr O),C\cup\eta^{-1}(D);\eta^*\scr L_1)$
for $\eta:\bb P_C(\scr L_2\oplus\scr O)\to C$ arising from $(C,D;\scr L_1,\scr L_2)\in\omega_{1,1^2}^{\log}$.
Let $D=\{p_1,...,p_k\}\subset C$ and argue by induction on $k$. The case $k=0$ is trivial by 
construction. For general $k$, we can consider 
$\mathrm{Bl}_{(p_k,0)}(C\times\bb P^1)\to\bb P^1$ again. Then we have 
\[
(C,p_1+\cdots+p_{k-1};\scr L_1,\scr L_2)=(C,p_1+\cdots+p_{k};\scr L_1,\scr L_2)
+(\bb P^1,p_k;\scr O,\scr O).
\]
It remains to consider $(\bb P^1,p_k;\scr O,\scr O)$.
Its image is equal to $(\bb P^1\times\bb P^1;\bb P^1\times\{0\}+\{0\}\times\bb P^1;\scr O)$.
From deformation to the normal cone along $\{0\}\subset\bb P^1$, one gets 
$(\bb P^1,\emptyset)=2(\bb P^1,\{0\})$.
Then by product, we have 
\[(\bb P^1\times\bb P^1;\bb P^1\times\{0\}+\{0\}\times\bb P^1)=\frac{1}{4}(\bb P^1\times\bb P^1;\emptyset).\]
Applying the natural map $\omega_{2}^{\log}\to\omega_{2,1^1}^{\log}$ defined by 
$(S,C)\mapsto(S,C;\scr O)$, we get the result.

\smallskip\noindent\textit{Step 2. Finish the proof.}

Consider a smooth pair $(S,C;\scr L)$ and $\mathrm{Bl}_{C\times\{0\}}(S\times\bb P^1)\to\bb P^1$.
Then we have a relation 
\[
(S,C;\scr L)=(S,\emptyset;\scr L)-(\bb P_C(N_{C/S}\oplus\scr O),C;\eta^*\scr L)
\]
and the final term follows from $(C,\emptyset;\scr L,N_{C/S})\in\omega_{1,1^2}$.
Then $\omega_{2,1^1}^{\log}\otimes_{\bb Z}\bb Q$ 
is spanned by $\mathrm{Im}(a)\otimes_{\bb Z}\bb Q$ and $\mathrm{Im}(b\circ c)\otimes_{\bb Z}\bb Q$.
By the results in \cite{LP10} or \cite{Tze12}, we know that $\omega_{2,1^1}\otimes_{\bb Z}\bb Q$ is spanned 
by $(\bb P^2;\scr O)$, $(\bb P^2;\scr O(1))$, $(\bb P^1\times\bb P^1;\scr O)$ and $(\bb P^1\times\bb P^1;\scr O(1,0))$
and $\omega_{1,1^2}\otimes_{\bb Z}\bb Q$ is spanned 
by $(\bb P^1;\scr O(1),\scr O)$, $(\bb P^1;\scr O,\scr O(1))$ and $(\bb P^1;\scr O,\scr O)$.
Then our main result follows.
\end{proof}

\subsection{Zero-dimensional logarithmic \texorpdfstring{$\mathsf{DT}_4$}{DT4}-invariants
for local surfaces}
In this section, we will consider an snc pair $(S,C)$ with $S$ a smooth projective surface
with snc divisor $C$ and compute 
$\DT_0(\mathrm{Tot}(\omega_S(C))\times\bb C,\mathrm{Tot}(\omega_C)\times\bb C)_{\scr L,m}$
for the Calabi-Yau torus $\bb T=\{tt'=1\}\subset(\bb C^*)^2$ acting on the fibres of this rank two vector bundle.
Let $\hbar$ be its equivariant parameter, so the two fibre weights are $\hbar$ and $-\hbar$.
In this case $\Hilb^n(\mathrm{Tot}(\omega_S(C))\times\bb C,\mathrm{Tot}(\omega_C)\times\bb C)^{\bb T}$
is proper, and the same holds for the family version; this follows from \cite[Thm.~6.3.1]{MR25}
and \cite[Rmk.~2.23]{CZZ24}.

\begin{prp}\label{prp-homo-omega2-DT}
There exists a group homomorphism
\begin{align*}
&(\omega^{\log}_{2,1^1},+)\to (1+q\mathrm{Frac}(\bb Q[\hbar,m])[[q]],\cdot),\\ 
&(S,C;\scr L)\mapsto\DT_0(\mathrm{Tot}(\omega_S(C))\times\bb C,\mathrm{Tot}(\omega_C)\times\bb C)_{\scr L,m},
\end{align*}
where we still denote the pulled-back line bundle by $\scr L$ by an abuse of notation.
Here we choose the orientations using the presentations $\mathrm{Tot}(\omega_S(C))\times\bb C=\Tot(\omega_{\mathrm{Tot}(\omega_S(C))}(\mathrm{Tot}(\omega_C)))$
and $\mathrm{Tot}(\omega_C)\times\bb C=\Tot(\omega_{\mathrm{Tot}(\omega_C)})$,
which makes the construction compatible.
\end{prp}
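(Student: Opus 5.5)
The map is well defined on generators; to get a homomorphism on $\omega^{\log}_{2,1^1}$ I must show it sends each double point relation to the corresponding product identity. So take a logarithmic double point degeneration $\pi:\mcal S\to(U,0)$ with $\pi^{-1}(0)=S_1\cup_E S_2$, horizontal snc divisor $\mcal C$ meeting $E$ transversally, and line bundle $\mcal L$. Since all spaces are projective over $U$, I may replace $\bb A^1$ by $U$ as in Remark~\ref{rmk-zariski-open-replace}.

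\textbf{Step 1: build the threefold and fourfold families.} Put $\mcal Y:=\Tot(\omega_{\mcal S/U}(\mcal C))\times\bb C$, an snc degeneration of relative dimension $3$ with central fibre $\mcal Y_0=Y_1\cup Y_2$, where $Y_i=\Tot(\omega_{\mcal S/U}(\mcal C)|_{S_i})\times\bb C$. Its horizontal divisor is $\mcal S':=\Tot(\omega_{\mcal C/U})\times\bb C$. Then put $\mcal X:=\Tot(\omega_{\mcal Y/U}(\mcal S'))$ and $\mcal D':=\Tot(\omega_{\mcal S'/U})$. These identify fibrewise with $\Tot(\omega_S(C))\times\bb C$ and $\Tot(\omega_C)\times\bb C$, and this is the presentation used to fix the orientations. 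By adjunction, $\omega_{\mcal S/U}(\mcal C)|_{S_i}=\omega_{S_i}(C_i+E)$, so the central components are exactly the local pairs attached to $(S_i,C_i+E;\mcal L|_{S_i})$, matching the adjunction remark after Proposition~\ref{prp-0-dim-shifted-main}. Lemma~\ref{lem-compactifiable-local-families}, applied to the projective base $\mcal S\times\bb P^1$ with the $\bb C$-factor compactified, gives Assumption~\ref{ass-compactifiable-families}. The torus $\bb T$ acts fibrewise and preserves the relative Calabi--Yau form, and properness of the fixed loci in the family follows from \cite[Thm.~6.3.1]{MR25} and \cite[Rmk.~2.23]{CZZ24}.

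\textbf{Step 2: apply the degeneration formula.} I apply Theorem~\ref{coro-zero-degeneration} and localize $\bb T$-equivariantly. The integrand $e((\scr L^{[n]})^\vee\otimes e^m)$ comes from a tautological complex defined on the whole family, and it is compatible with the Gysin maps $i_b^!$. For $b\neq0$ the fibre invariants are deformation invariant, since the fixed loci are proper over $U$. For $b=0$ I need two facts. First, the universal family over $W_\gamma$ splits as a disjoint union, so the tautological complex splits as a box sum and its Euler class as a box product. Second, Corollary~\ref{coro-0dimlogDT4-independence-cone-models} removes the dependence on the cone structures $\mu_\nu$. Summing over $\gamma$, i.e.\ over $n=n_1+n_2$, then gives
\[
\DT_0(X_b|\partial X_b)=\DT_0(X_{0,1}|\partial X_{0,1})\cdot\DT_0(X_{0,2}|\partial X_{0,2}).
\]

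\textbf{Step 3: the main obstacle.} The hard part is the multiplicities. I must show that the total weight of $[W_\gamma]$ pushed to $\mathsf{Exp}_{\gamma,\mu}$ is exactly $1$ for each $\gamma$, so that no extra factors $m_{\delta,\varphi}$ appear. Lemma~\ref{lem-zero-dimensional-Cartier-comparison} gives $\rho_{\gamma,*}[W_\gamma]=[\mathsf{Exp}_{\gamma,\mu}]$. This uses that the map to $\bb R_{\geq0}$ has reduced fibres for a double point degeneration, so in Lemma~\ref{lem-zero-dimensional-starting-comparison} one has $\operatorname{div}(t)=\sum_\gamma\mathsf{Exp}_{\gamma,\lambda}$ with multiplicity one. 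A secondary check is that the total-space orientations on the central components agree with the restriction of the family orientation. Both are canonical positive-sign orientations defined through the same spectral triangle (as in Proposition~\ref{prp-family-shifted}(2)), so they agree by construction. Finally, additivity in the free group $\mcal M_{2,1^1}$ is automatic from the definition, which completes the homomorphism.
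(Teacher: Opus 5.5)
Your proposal follows the paper's proof step for step. You build the total-space family over $\mcal S\to U$, apply Theorem~\ref{coro-zero-degeneration}, use the splitting $j_{\gamma,H}^*\mcal L^{[n]}\cong\rho_{\gamma,H}^*(\scr L_-^{[n_-]}\boxplus\scr L_+^{[n_+]})$ with the projection formula and cone-structure independence, and sum over $n=n_-+n_+$; the multiplicity issue you raise in Step~3 is already part of the conclusion of Theorem~\ref{coro-zero-degeneration}.

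One slip needs fixing. The threefold family and its divisor should be $\Tot(\omega_{\mcal S/U}(\mcal C))$ and $\Tot(\omega_{\mcal C/U})$, without the extra factor $\times\bb C$; as written, your $\mcal X$ has relative dimension $5$. Also, Lemma~\ref{lem-compactifiable-local-families} is most directly applied over the projective base $\mcal S$ with $\mcal E=\omega_{\mcal S/U}(\mcal C)\oplus\scr O_{\mcal S}$.
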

\begin{proof}
Let 
\[
[S,C;\mcal L|_S]=[S_-,D_-+E;\mcal L|_{S_-}]+[S_+,D_++E;\mcal L|_{S_+}]
\]
in $\omega^{\log}_{2,1^1}$ connected by a family $\mcal S\to (U,0)$ for some $U\subset\bb P^1$ 
and $\mcal L\in\mathrm{Pic}(\mcal S)$.
If $U=\bb P^1$, after removing an irrelevant fibre, we may work over an open subcurve 
$U=\bb A^1$. If $U\neq\bb P^1$, then $U\subset\bb A^1$.
Let $(X,D;L)=(\mathrm{Tot}(\omega_S(C))\times\bb C,\mathrm{Tot}(\omega_C)\times\bb C;\mcal L|_S)$
and $(Y_{\pm},T_{\pm};L_{\pm})=(\mathrm{Tot}(\omega_{S_{\pm}}(D_{\pm}+E))\times\bb C,\mathrm{Tot}(\omega_{D_{\pm}+E})\times\bb C;\mcal L|_{S_{\pm}})$.
Then they are connected by the family $(\mcal X,\mcal D)\to U$
arising as a total space over $\mcal S$.

Then Theorem \ref{coro-zero-degeneration}, Remark \ref{rmk-zariski-open-replace}
and Lemma \ref{lem-compactifiable-local-families} give
\[
\begin{aligned}
i_b^![\Hilb^n((\mcal X|\mcal D)/U)_\varphi]^{\vir}
&=[\Hilb^n(X|D)_\varphi]^{\vir},
\qquad b\ne0,\\
i_0^![\Hilb^n((\mcal X|\mcal D)/U)_\varphi]^{\vir}
&=\sum_{\gamma=(n_-,n_+)}
j_{\gamma,H,*}[\Hilb_{W_\gamma}]^{\vir},
\end{aligned}
\]
where $n_-+n_+=n$, and
\[
\rho_{\gamma,H,*}[\Hilb_{W_\gamma}]^{\vir}
=
[\Hilb^{n_-}(Y_-|T_-)_{\mu_-}]^{\vir}
\boxtimes
[\Hilb^{n_+}(Y_+|T_+)_{\mu_+}]^{\vir}.
\]
Let $\mcal L_\varphi^{[n]}$ be the tautological vector bundle. Finite flat base change and the disjoint-union
decomposition of the universal subscheme therefore give
\[
j_{\gamma,H}^*
\bigl(\mcal L_\varphi^{[n]}|_{\Hilb_{\varphi,0}}\bigr)
\cong
\rho_{\gamma,H}^*
\bigl(\scr L_-^{[n_-]}\boxplus\scr L_+^{[n_+]}\bigr).
\]
Apply the projection formula to the left-hand maps of the
correspondences with $e^{\bb T\times\bb C_m^*}
\left((\mcal L_\varphi^{[n]})^\vee\otimes e^m\right)$
and combining Theorem \ref{thm-independent-cone-models} and
Corollary \ref{coro-0dimlogDT4-independence-cone-models}, we have
\begin{align*}
\int_{[\Hilb^n(X|D)]^{\vir}_{\bb T}}
 e^{\bb T\times\bb C_m^*}
 \left((\scr L^{[n]})^\vee\otimes e^m\right)&=\sum_{n_-+n_+=n}
\left(
\int_{[\Hilb^{n_-}(Y_-|T_-)]^{\vir}_{\bb T}}
 e^{\bb T\times\bb C_m^*}
 \left((\scr L_-^{[n_-]})^\vee\otimes e^m\right)
\right)\\
&\cdot\left(
\int_{[\Hilb^{n_+}(Y_+|T_+)]^{\vir}_{\bb T}}
 e^{\bb T\times\bb C_m^*}
 \left((\scr L_+^{[n_+]})^\vee\otimes e^m\right)
\right).
\end{align*}
Summing over $n$ therefore yields
\[
\DT_0(X|D)_{\scr L,m}
=
\DT_0(Y_-|T_-)_{\scr L_-,m}
\cdot
\DT_0(Y_+|T_+)_{\scr L_+,m}.
\]
This proves the result.
\end{proof}

By Propositions \ref{prp-span-omega-2-11-log} and \ref{prp-homo-omega2-DT}, to compute the zero-dimensional 
logarithmic invariants of total spaces of (relative-) canonical bundles
with tautological insertion, we only need to compute those for total spaces 
of toric projective smooth pairs and toric projective surfaces with empty boundaries.

\begin{prp}[{\cite[Thm.~A]{Liu25}}]\label{prp-toric-empty-boundaries}
Let $X=Y\times\bb C$, where $Y$ is any toric Calabi-Yau $3$-fold, and let $\scr L$ be any $\bb T'$-equivariant line bundle on $X$.
Then we have 
\[
\mathsf{DT}_0(X|\emptyset)_{\scr L,m}=M(-q)^{\int_Xc_1^{\bb T'\times\bb C^*_m}(\scr L\otimes e^{-m})c_3^{\bb T'}(T_X)},
\]
where $\bb T'=\{t_1t_2t_3t_4=1\}\subset(\bb C^*)^4$ with the canonical action.
\end{prp}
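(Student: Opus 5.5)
The statement is cited from \cite[Thm.~A]{Liu25}, so the plan is to recall how that argument goes and check that it applies verbatim with $D=\emptyset$. With empty boundary, $\mathsf{Exp}_{0,R}(X|\emptyset)$ is a point, so $\Hilb^n(X|\emptyset)=\Hilb^n(X)$. The virtual class of Proposition~\ref{prp-points-exact-sympl} is then the Oh--Thomas square-root class of the $(-2)$-shifted symplectic derived Hilbert scheme of points on the Calabi--Yau $4$-fold $X=Y\times\bb C=\Tot(\omega_{Y})$, carrying the canonical total-space orientation.

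\emph{Step 1: reduction to torus fixed points.} Since $Y$ is toric, the fixed locus $\Hilb^n(X)^{\bb T'}$ consists of finitely many isolated points. These are indexed by tuples of solid partitions, one at each torus-fixed point of $X$, that is, at each vertex of the toric diagram of $Y$, which is automatically properly supported in the $\bb C$-direction. Apply the Oh--Thomas virtual localization. The generating series then becomes a sum over solid-partition tuples of products of local vertex contributions, each of the form (square-root Euler class of the $\bb C^4$ vertex) times (tautological Euler factor). Each factor depends only on the local weights at that vertex and on the weight of $\scr L$ there.

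\emph{Step 2: compare signs with the $\bb C^4$ case and factorize.} The main obstacle is showing that the canonical total-space orientation restricts, at each fixed point, to the sign convention for which the $\bb C^4$ computation holds. That computation is the formula $\DT_0(\bb C^4)_{\scr L,m}=M(-q)^{\int c_1(\scr L\otimes e^{-m})c_3(T)}$ of \cite[Thm.~1.13]{CZZ24} and \cite{KR25}. I would attack this in two moves. First, use the product structure $Y\times\bb C$, together with the compatibility of orientations on $\Tot(\omega_Y)$ with restriction to $\bb T'$-invariant open charts $\bb C^3\times\bb C$. Second, check that the sign at each fixed point agrees with the one computed by Kool--Rennemo/CZZ via the choice $(-\sqrt{-1})^{\rank\bb V}$ with the plus sign (Theorem~\ref{thm-curves-shifted-lag}(3)). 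Once the signs match, the sum factorizes as a product over vertices $p$ of the $\bb C^4$ series with the local weights:
\[
\prod_{p}M(-q)^{\frac{c_1(\scr L\otimes e^{-m})|_p\, c_3(T_X)|_p}{e(T_pX)}}.
\]

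\emph{Step 3: reassemble.} By Atiyah--Bott, the exponent sum $\sum_p c_1(\scr L\otimes e^{-m})|_p\,c_3(T_X)|_p/e(T_pX)$ equals the equivariant integral $\int_X c_1^{\bb T'\times\bb C^*_m}(\scr L\otimes e^{-m})\,c_3^{\bb T'}(T_X)$, which is defined via localization since $X^{\bb T'}$ is proper. This gives the formula.
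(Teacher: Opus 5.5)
Your plan is correct and follows the argument the paper relies on through \cite[Thm.~A]{Liu25}; its structure is visible in the proof of Proposition~\ref{prp-toric-smooth-pair-surface-total}. That argument localizes with respect to $\bb T'$ to isolated fixed points, uses compatibility of the canonical total-space orientation of $X=\Tot(\omega_Y)$ with the charts $U_a\cong\Tot(\omega_{\bb C^3})$ and with disjoint unions to factor the series into $\bb C^4$ vertex series evaluated by \cite[Thm.~1.13]{CZZ24}, and then reassembles by Atiyah--Bott. In Step~2 you should state explicitly the two facts doing the work, as in \cite[Prop.~3.5, Prop.~3.6]{Liu25}: the canonical orientation is natural under open restriction in $Y$, and it is multiplicative on disjoint unions. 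With these, the $\bb C^4$ sign comparison is automatic rather than a separate check against an explicit sign rule.
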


\begin{prp}\label{prp-toric-smooth-pair-surface-total}
Let $(S,C)$ be a smooth toric surface pair. Let $(X=\Tot(\omega_S(C))\times\bb C,D=\Tot(\omega_C)\times\bb C)$
and let $\scr L$ be any $\bb T'$-equivariant line bundle on $X$.
Then we have 
\[
\mathsf{DT}_0(X|D)_{\scr L,m}=M(-q)^{\int_Xc_1^{\bb T'\times\bb C^*_m}(\scr L\otimes e^{-m})c_3^{\bb T'}(T_X(-\log D))},
\]
where $\bb T'=\{t_1t_2t_3t_4=1\}\subset(\bb C^*)^4$ with the canonical action.
\end{prp}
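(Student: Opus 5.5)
The plan is to reduce the smooth-divisor case to the empty-boundary case of Proposition~\ref{prp-toric-empty-boundaries}, using the $\bb T'$-equivariant degeneration formula. Concretely, I will check that both sides of the claimed identity are multiplicative under logarithmic double point degenerations of toric surface pairs, and then express $(S,C)$ through pairs whose invariants are already known.

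First, I will set up the equivariant version of Proposition~\ref{prp-homo-omega2-DT} for the larger torus $\bb T'$. Every construction in \S~\ref{sec-zero-dim-deg} is equivariant for a torus that preserves the relative volume form. Deformation to the normal cone, $\mathrm{Bl}_{C\times 0}(S\times\bb A^1)$, is toric, so $\bb T'$ lifts to the family $\mcal X=\Tot(\omega_{\mcal S/\bb A^1}(\mcal S_C))\times\bb C$. The fixed loci stay proper by the same argument as for $\bb T$. This should give the equivariant relation
\[
\DT_0(\Tot(\omega_S)\times\bb C|\emptyset)_{\scr L,m}
=\DT_0(X|D)_{\scr L,m}\cdot\DT_0(X_P|D_P)_{\scr L,m},
\]
where $P=\bb P_C(N_{C/S}\oplus\scr O)$, $X_P=\Tot(\omega_P(C_0))\times\bb C$, and $D_P=\Tot(\omega_{C_0})\times\bb C$, with $C_0\subset P$ the section along which $P$ is glued to $S$ (strictly, it is the proper transform of $S\times 0$ that meets the exceptional divisor $P$ in $C_0$). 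Second, I will check that the right-hand side exponent
\[
e(X,D):=\int_X c_1^{\bb T'\times\bb C^*_m}(\scr L\otimes e^{-m})\,c_3^{\bb T'}(T_X(-\log D)),
\]
computed by localization, is additive under the same degeneration. The log tangent bundle of the total space, restricted to the central fibre, is $T_{Y_\pm}(-\log(T_\pm))$ on each component. Equivariant integration over the smooth total space is locally constant in $t$, and at $t=0$ it splits as a sum over the components, because the double locus is now part of the log boundary. So it suffices to prove the formula for $(X_P,D_P)$.

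Third, I will compute $(X_P,D_P)$. Since $(S,C)$ is toric, $C\cong\bb P^1$ and $P$ is a Hirzebruch surface $\bb F_k$ with $C_0$ a toric section. I will apply the same deformation to the normal cone to $(P,\emptyset)$ along the opposite section $C_\infty$. This gives $(P,\emptyset)=(P,C_\infty)+(P',C'_0)$, where $P'=\bb P(N^{-1}\oplus\scr O)$ is isomorphic to $P$ with its two sections exchanged. I will also apply it to $(P,C_0)$ along $C_0$ itself, which gives the bubble relation $(P,C_0)=(P,C_0)+(P,C_0+C_\infty)$. The bubble relation forces $\DT_0(P,C_0+C_\infty)=1$, and the exponent of the tube pair vanishes as well, because $c_3(T(-\log))$ of a $\bb P^1$-bundle with both sections removed has a trivial factor. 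To separate $(P,C_0)$ from $(P,C_\infty)$, I will additionally degenerate the base $\bb P^1$ into two copies of $\bb P^1$ meeting at a point. This produces relations in which the ruling fibres become boundary, and these reduce to pairs equivariantly equivalent to products (local curve type) $\Tot_{\bb P^1}(\cdot)$ with fibre boundary. For those I will invoke \cite[Thm.~6.16]{CZZ24}, or equivalently a direct Li--Wu relative localization whose rubber contributions are trivial.

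The main obstacle is this last step. The linear relations obtained from Hirzebruch surfaces must actually determine $\DT_0(P,C_0)$, and the pieces must be genuinely of local curve type with the correct equivariant line bundle, so that \cite{CZZ24} applies with $\bb T'$ weights. If that route does not close, I will fall back on direct $\bb T'$-localization on $\Hilb^n(X_P|D_P)$ over the Li--Wu stack. There the fixed points are solid partitions at the toric vertices off $D_P$, together with rubber configurations on the $\widetilde D$ bubbles, and I would try to show that the rubber terms assemble into the $(P,C_0+C_\infty)$ tube invariant, which equals $1$.
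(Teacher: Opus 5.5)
\paragraph{A genuine gap.} The degeneration part of your plan never actually reduces the problem.

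Steps 1 and 2 are sound, but they only trade $(S,C)$ for $(P,C_0)$. That is again a smooth toric surface pair of exactly the kind in the statement. The further relations you list, $(P,\emptyset)=(P,C_\infty)+(P,C_0)$ and the tube relation, do not isolate $(P,C_0)$.

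Degenerating the base $\bb P^1$ does not help either. It produces pairs such as $(\bb F_a,C_0+F)$, whose boundary contains the horizontal section $C_0$. Their total spaces $\Tot(\omega_{\bb F_a}(C_0+F))\times\bb C$ are not of the form $\Tot_C(\scr L_1\oplus\scr L_2\oplus\scr L_3)$ with boundary $\eta^{-1}(\text{points})$. So \cite[Thm.~6.16]{CZZ24} does not apply. Moreover, that result covers only the fibre Calabi--Yau torus with $\scr L$ pulled back from the curve, not $\bb T'$ with an arbitrary equivariant $\scr L$.

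In the paper the logic runs the other way. This proposition is computed directly, and it then serves as a base case for the cobordism argument in Theorem~\ref{thm-total-surface-log-DT4}.

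\paragraph{The fallback.} Direct localization is the right framework, and it is essentially what the paper does. However, your expected outcome is wrong.

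In the Li--Wu relative localization, each $\bb T'$-fixed point $p_a\in D$ contributes a rubber integral. That integral carries the $\psi$-type factor coming from the rubber $\bb C^*$; it is not the ordinary invariant of the tube $(P,C_0+C_\infty)$, and it is not $1$. By \cite[Cor.~6.10]{CZZ24}, after converting the insertion via $e(E^\vee\otimes e^m)=(-1)^{\rank E}e(E\otimes e^{-m})$, one gets
$W_{\infty,a}=M(-q)^{c_1^{\bb T'\times\bb C^*_m}(\scr L|_{p_a}\otimes e^{-m})/\lambda_1^a}$,
where $\lambda_1^a$ is the normal weight of $D$.

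These factors are exactly the logarithmic correction. At $p_a\in D$ the weights of $T_X(-\log D)$ are $0,-\lambda_2^a,-\lambda_3^a,-\lambda_4^a$. Hence the Atiyah--Bott contribution of $c_1(\scr L\otimes e^{-m})c_3(T_X(-\log D))$ there is $c_1(\scr L|_{p_a}\otimes e^{-m})/\lambda_1^a$. Trivial rubber would leave only the vertex exponents at the points $p_a\notin D$, which is not the claimed exponent.

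\paragraph{The paper's proof.} It establishes
$\DT_0(X|D)_{\scr L,m}=\prod_{p_a\notin D}\DT_0(U_a)_{\scr L|_{U_a},m}\cdot\prod_{p_a\in D}W_{\infty,a}$,
using \cite[Prop.~3.6, Thm.~3.9]{Liu25} with the orientation of \cite[Prop.~3.5]{Liu25}. It then evaluates the vertices by Proposition~\ref{prp-toric-empty-boundaries} and the rubber factors by \cite[Cor.~6.10]{CZZ24}, and reassembles the exponent by Atiyah--Bott. No degeneration is used.
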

\begin{proof}
Let $\{U_a\cong\bb C^4\}_{a}$ be the maximal $(\bb C^*)^4$-invariant open subsets
corresponding to the $(\bb C^*)^4$-fixed points $\{p_a\}$ in $X$.
For $p_a\in D$, we choose $\lambda_1^a$ as the normal weight of $D$ and let $-\lambda_2^a,-\lambda_3^a,-\lambda_4^a$ be the weights 
tangent to $D$.
Our orientation agrees with the one used in \cite[Prop.~3.5]{Liu25}. Therefore, the localization and orientation arguments of \cite[Prop.~3.6, Thm.~3.9]{Liu25} give 
\[
\mathsf{DT}_0(X|D)_{\scr L,m}=\prod_{p_a\notin D}\mathsf{DT}_0(U_a)_{\scr L|_{U_a},m}\cdot 
\prod_{p_a\in D}W_{\infty,a},
\]
where $W_{\infty,a}$ is the rubber invariant as in \cite[\S~6]{CZZ24}. 
By Proposition \ref{prp-toric-empty-boundaries} and \cite[Cor.~6.10]{CZZ24}, using
$e(E^\vee\otimes e^m)=(-1)^{\mathrm{rk}E}e(E\otimes e^{-m})$ to convert the tautological insertion in the latter, we have
\begin{align*}
\mathsf{DT}_0(X|D)_{\scr L,m}&=\prod_{p_a\notin D}M(-q)^{\int_{U_a}c_1^{\bb T'\times\bb C^*_m}(\scr L|_{U_a}\otimes e^{-m})c_3^{\bb T'}(T_{U_a})}\cdot 
\prod_{p_a\in D}M(-q)^{c_1^{\bb T'\times\bb C^*_m}(\scr L|_{p_a}\otimes e^{-m})/\lambda_1^a}\\ 
&=M(-q)^{\int_Xc_1^{\bb T'\times\bb C^*_m}(\scr L\otimes e^{-m})c_3^{\bb T'}(T_X(-\log D))},
\end{align*}
where the last formula follows from Atiyah-Bott localization formula and the fact that 
the weights of $T_X(-\log D)$ at $p_a$ for $p_a\in D$ are $0,-\lambda_2^a,-\lambda_3^a,-\lambda_4^a$;
see also \cite[Exam.~6.14]{CZZ24} for the local description.
\end{proof}

\begin{nota}\label{rmk-equivariant-prp2}
Assume that $S$ is projective.
Inside the Calabi-Yau torus $\bb T'$, the canonical lift of the surface torus and the
fibrewise Calabi-Yau torus are, respectively,
\[
\widetilde{\bb T}_S
=
\{(a,b,(ab)^{-1},1):a,b\in\bb C^*\},\quad\text{and}\quad
\bb T_{\mathrm f}
=
\{(1,1,\lambda,\lambda^{-1}):\lambda\in\bb C^*\}.
\]
Consequently, we have an isomorphism $\widetilde{\bb T}_S\times\bb T_{\mathrm f}
\xrightarrow{\ \cong\ }\bb T'$ given by $(a,b;\lambda)
\longmapsto\bigl(a,b,(ab)^{-1}\lambda,\lambda^{-1}\bigr)$.
Let $u_1,u_2$ denote the additive equivariant parameters of
$\widetilde{\bb T}_S$, and let $\hbar$ denote that of
$\bb T_{\mathrm f}$. Thus restriction from $\bb T'$ to
$\bb T_{\mathrm f}$ means taking the limit $u_1,u_2\to0$.

Let $\scr L=\eta^*\scr L'$ for $\scr L'\in\operatorname{Pic}(S)$.
Choose any $\bb T_S$-linearization of $\scr L'$ and let
$\bb T_{\mathrm f}$ act trivially on $\scr L$. Then we claim that the
$\bb T'$-equivariant formulae in Proposition
\ref{prp-toric-empty-boundaries} and
\ref{prp-toric-smooth-pair-surface-total} admit well-defined
restrictions to $\bb T_{\mathrm f}$. Here we only consider the latter case.

Indeed, one may apply the localization theorem for
$\bb T_{\mathrm f}$ in $\bb T'$-equivariant Chow theory.
Note that $\bb T_{\mathrm f}$-fixed locus $S$ is proper.
Writing $i:S\hookrightarrow X$ for the zero section and
\[
\alpha=
c_1^{\bb T'\times\bb C_m^*}(\scr L\otimes e^{-m})
c_3^{\bb T'}(T_X(-\log D)),
\]
the $\bb T'$-equivariant zero-section formula gives
\[
\int_X\alpha
=
\int_S\frac{i^*\alpha}{e^{\bb T'}(N_{S/X})}
=\int_S\frac{i^*\alpha}{\bigl(c_1^{\widetilde{\bb T}_S}(\omega_S(C))+\hbar\bigr)(-\hbar)}.
\]
Note that since
$i^*\alpha$ is an honest ${\bb T'}$-equivariant Chow class, it has no
pole in $u_1,u_2$.
Moreover, $\lim_{u_1,u_2\to0}e^{\bb T'}(N_{S/X})=\bigl(c_1(\omega_S(C))+\hbar\bigr)(-\hbar)$
which is invertible in
$\CH^*(S)_{\bb Q}[\hbar^{\pm1}]$.
Consequently,
\[
\mathsf{DT}^{\bb T_{\mathrm f}}_0(X|D)_{\scr L,m}=\lim_{u_1,u_2\to0}
\mathsf{DT}^{\bb T'}_0(X|D)_{\scr L,m}
=
M(-q)^{
\int_X
c_1^{\bb T_{\mathrm f}\times\bb C_m^*}
(\scr L\otimes e^{-m})
c_3^{\bb T_{\mathrm f}}(T_X(-\log D))
}.
\]
The restriction is independent of the chosen
$\bb T_S$-linearization of $\scr L'$.

\end{nota}

Similar to \cite[Thm.~4.1]{Guz25}, we have the following result.

\begin{lem}\label{lem-c1L-TX-logD}
There is a morphism 
\begin{align*}
\omega_{2,1^1}^{\log}&\to\mathrm{Frac}(\bb Q[\hbar,m]),\\
(S,C;\scr L)&\mapsto \int_Xc_1^{\bb T\times\bb C^*_m}(\scr L\otimes e^{-m})c_{3}^{\bb T}(T_X(-\log D)),
\end{align*}
for $X=\Tot(\omega_S(C))\times\bb C$ and $D=\Tot(\omega_C)\times\bb C$.
Here $\int_X$ is the torus equivariant pushforward.
\end{lem}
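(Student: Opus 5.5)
The plan is to reduce the equivariant integral on $X$ to a non-equivariant Chern number on the projective surface $S$, and then to check the double point relation using the relative logarithmic tangent bundle of the degeneration, as in \cite[Thm.~4.1]{Guz25}.

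\textit{Step 1: reduction to $S$.} Since $\bb T$ acts only on the fibres of $\eta:X\to S$ with weights $\hbar,-\hbar$, the fixed locus is the zero section $S$. This is proper, so the equivariant pushforward is defined by localization, exactly as in Remark \ref{rmk-equivariant-prp2}:
\[
\int_X\alpha=\int_S\frac{i^*\alpha}{\bigl(c_1(\omega_S(C))+\hbar\bigr)(-\hbar)}.
\]
Because $D=\eta^{-1}(C)$, we have the equivariant splitting
\[
T_X(-\log D)\cong\eta^*\bigl(T_S(-\log C)\oplus\omega_S(C)\otimes t^{\hbar}\oplus\scr O\otimes t^{-\hbar}\bigr).
\]
Together with $c_1(\omega_S(C))=-c_1(T_S(-\log C))$, this rewrites the invariant as
\[
\int_S P\bigl(c_1(\scr L'),c_1(T_S(-\log C)),c_2(T_S(-\log C))\bigr).
\]
Here $P$ is a universal polynomial with coefficients in $\mathrm{Frac}(\bb Q[\hbar,m])$. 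Only its degree-two part in the Chern classes contributes, and inverting $(c_1+\hbar)$ is harmless because $c_1$ is nilpotent. The map is clearly additive for disjoint unions, so it remains to check the double point relation.

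\textit{Step 2: the double point relation.} Take a logarithmic double point degeneration $\mcal S\to(U,0)$ with horizontal divisor $\mcal C$ and a line bundle $\mcal L$. Equip $\mcal S$ with the divisorial log structure from $\mcal C+S_1+S_2$ and $U$ with the one from $0$; the morphism is then log smooth. Its relative log tangent bundle
\[
\mcal T:=T_{\mcal S/U}(-\log(\mcal C+\mcal S_0))
\]
is a vector bundle on $\mcal S$ with the following restrictions:
\begin{itemize}
\item $\mcal T|_{S_t}\cong T_{S_t}(-\log C_t)$ on general fibres;
\item $\mcal T|_{S_i}\cong T_{S_i}(-\log(D_i+E))$ on the two components of the central fibre.
\end{itemize}
I expect checking these restrictions to be the main obstacle. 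My approach is to work in local coordinates $xy=t$ along $E$ and $x=0$ along $\mcal C$, and verify that the log forms $dx/x=-dy/y$ (modulo $dt/t$) generate the restriction exactly as the log forms of $(S_i,D_i+E)$; elsewhere this is the usual relative adjunction. The classes $c_i(\mcal T)$ and $c_1(\mcal L)$ then restrict correctly to every fibre and every component.

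Finally, put $\Phi:=P(c_1(\mcal L),c_1(\mcal T),c_2(\mcal T))$. Cartier divisor intersection gives $i_b^!\bigl(\Phi\cap[\mcal S]\bigr)=\Phi\cap[S_b]$, and on the central fibre $[\mcal S_0]=[S_1]+[S_2]$ since the central fibre is reduced. The degree of $\pi_*(\Phi\cap[\mcal S])\in\CH_1(U)$ restricted to points is constant in $b$: $\pi$ is projective and flat, so specialization of $0$-cycles preserves degree even for open $U$ (Remark \ref{rmk-zariski-open-replace}). Hence
\[
\int_S P=\int_{S_1}P+\int_{S_2}P,
\]
which is the double point relation, and the morphism descends to $\omega_{2,1^1}^{\log}$.
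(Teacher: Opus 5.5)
Your argument is correct. It reorders the paper's proof in a modest but genuine way.

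\textbf{The paper's route.} The paper does not localize first. It forms the family of total spaces $\mcal X=\Tot(\omega_{\mcal S/U}(\mcal C))\times\bb C\to U$ with $\mcal D=\Tot(\omega_{\mcal C/U})\times\bb C$. It then takes the relative logarithmic tangent bundle of the $5$-dimensional log smooth family $(\mcal X,\mcal D+\mcal X_0)\to(U,0)$. Finally it compares fibres using $[X]=[Y_-]+[Y_+]$ and the compatibility of refined Gysin pullback with the $\bb T\times\bb C^*_m$-localized pushforward over $U$. That pushforward is defined because $\mcal X^{\bb T}=\mcal S$ is proper over $U$.

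\textbf{Your route.} You localize fibrewise at the outset. This turns the invariant into a universal Chern number $\int_S P\bigl(c_1(\scr L'),c_1(T_S(-\log C)),c_2(T_S(-\log C))\bigr)$. You then run the same flatness argument on the projective surface family, using $T_{\mcal S/U}(-\log(\mcal C+\mcal S_0))$.

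\textbf{What each buys.} Your route needs only ordinary proper pushforward and Gysin maps on $\mcal S\to U$. It also makes explicit that the value depends only on Chern numbers of $(S,C,\scr L')$. The paper's route avoids the splitting of $T_X(-\log D)$ and the bookkeeping of $P$. The price is invoking the Gysin compatibility of the family localized pushforward; once unwound, this amounts to your Step~1 carried out over $U$.

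\textbf{Restriction check.} The paper asserts the restrictions of its $5$-dimensional $\mcal T$ to fibres and components without proof. Your local check with $dx/x=-dy/y$ modulo $dt/t$ is exactly the verification needed in either version.

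\textbf{Two minor points.}
\begin{enumerate}
\item $T_X(-\log D)$ is only an extension of $\eta^*T_S(-\log C)$ by $\eta^*(\omega_S(C)\oplus\scr O)$, not a direct sum. This is harmless for Chern classes.
\item You should take $\Phi$ to be the codimension-two part of $P$. Alternatively, note that the other graded pieces die under $j_b^!\pi_*$.
\end{enumerate}
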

\begin{proof}
Let $(\mcal S,\mcal C;\scr L)\longrightarrow (U,0)\subset\bb P^1$
be a degeneration as in $\omega_{2,1^1}^{\log}$. Let $S$ be a
general fibre and let $\mcal S_0=A\cup_E B$
be the central fibre. Write $C_A=\mcal C\cap A, C_B=\mcal C\cap B$.
Thus the induced double point relation is
\[
(S,C;\scr L|_S)
=
(A,C_A+E;\scr L|_A)
+
(B,C_B+E;\scr L|_B).
\]
Consider the associated family of total spaces
\[
\mcal X
:=
\Tot(\omega_{\mcal S/U}(\mcal C))\times \bb C
=
\Tot(\omega_{\mcal S/U}(\mcal C)\oplus \scr O_{\mcal S})
\longrightarrow U
\]
and the divisor
$\mcal D
:=
\Tot(\omega_{\mcal C/U})\times \bb C
\subset \mcal X$.
Let $X=\mcal X_{\zeta}$ be a smooth fibre over a closed point $\zeta\in U\setminus\{0\}$ and
$
\mcal X_0=Y_-\cup_F Y_+,
$
where
\[
Y_-=\Tot(\omega_A(C_A+E))\times \bb C,
\quad
Y_+=\Tot(\omega_B(C_B+E))\times \bb C,
\]
and $
F=Y_-\cap Y_+
$
is the total space over the double locus $E$. Put
$
D_-=\mcal D\cap Y_-,
D_+=\mcal D\cap Y_+,
$
and
$
\scr L_-=\scr L|_{Y_-},
\scr L_+=\scr L|_{Y_+}.
$
By adjunction, the pairs appearing on the central fibre are precisely
\[
(Y_-,D_-+F;\scr L_-),
\qquad
(Y_+,D_++F;\scr L_+).
\]

We now put on $\mcal X$ the divisorial logarithmic structure induced by
$\mcal D+\mcal X_0$
and put on $U$ the divisorial logarithmic structure induced by
$0\in U$. The morphism
\[
(\mcal X,\mcal D+\mcal X_0)\longrightarrow (U,0)
\]
is logarithmically smooth (the family $\mcal S\to U$ is a log smooth
degeneration by definition of $\omega_{2,1^1}^{\log}$, and taking
$\Tot(\omega_{\mcal S/U}(\mcal C))\times\bb C$ preserves log smoothness).
Consider the relative logarithmic tangent bundle
\[
\mcal T
:=
T_{\mcal X/U}(-\log(\mcal D+\mcal X_0)/-\log\{0\}).
\]
It is a $\bb T\times\bb C^*_m$-equivariant vector bundle, and its
restrictions to the fibres are 
\[
\mcal T|_X
\simeq
T_X(-\log D),\quad
\mcal T|_{Y_-}
\simeq
T_{Y_-}(-\log(D_-+F)),
\qquad
\mcal T|_{Y_+}
\simeq
T_{Y_+}(-\log(D_++F)).
\]
Since $\pi:\mcal X\to U$ is flat, the fibre classes satisfy
\[
[X]=[\mcal X_{\zeta}]=[\mcal X_0]=[Y_-]+[Y_+].
\]
Let $j_\zeta:\{\zeta\}\hookrightarrow U$ and
$j_0:\{0\}\hookrightarrow U$ be the inclusions.
Then compatibility of refined Gysin pullback with the relative
localized pushforward gives
\[
\begin{aligned}
&
\int_X
c_1^{\bb T\times\bb C^*_m}(\scr L|_X\otimes e^{-m})\,
c_3^{\bb T}\bigl(T_X(-\log D)\bigr)\\
&=
\int_X
c_1^{\bb T\times\bb C^*_m}(\scr L\otimes e^{-m})\,
c_3^{\bb T}(\mcal T|_X)=
j_{\zeta}^!\pi_*^{\bb T\times\bb C_m^*}\left(
c_1^{\bb T\times\bb C^*_m}(\scr L\otimes e^{-m})\,
c_3^{\bb T}(\mcal T)\cap [\mcal X]\right)\\
&=
j_{0}^!\pi_*^{\bb T\times\bb C_m^*}\left(
c_1^{\bb T\times\bb C^*_m}(\scr L\otimes e^{-m})\,
c_3^{\bb T}(\mcal T)\cap [\mcal X]\right)\\
&=
\int_{Y_-}
c_1^{\bb T\times\bb C^*_m}(\scr L_-\otimes e^{-m})\,
c_3^{\bb T}(\mcal T|_{Y_-})+
\int_{Y_+}
c_1^{\bb T\times\bb C^*_m}(\scr L_+\otimes e^{-m})\,
c_3^{\bb T}(\mcal T|_{Y_+})\\
&=
\int_{Y_-}
c_1^{\bb T\times\bb C^*_m}(\scr L_-\otimes e^{-m})\,
c_3^{\bb T}\bigl(T_{Y_-}(-\log(D_-+F))\bigr)\\ 
&\qquad+
\int_{Y_+}
c_1^{\bb T\times\bb C^*_m}(\scr L_+\otimes e^{-m})\,
c_3^{\bb T}\bigl(T_{Y_+}(-\log(D_++F))\bigr).
\end{aligned}
\]
Here the localized pushforward $\pi_*^{\bb T\times\bb C_m^*}$ is defined since
$\mcal X^{\bb T}=\mcal S$ is proper over $U$.
This proves the lemma.
\end{proof}

\begin{thm}\label{thm-total-surface-log-DT4}
Consider an snc pair $(S,C)$, where $S$ is a smooth projective surface
and $C$ is an snc divisor. Let $X=\mathrm{Tot}(\omega_S(C))\times\bb C$
and $D=\mathrm{Tot}(\omega_C)\times\bb C$, and 
consider the Calabi-Yau torus $\bb T=\{(t_1,t_1^{-1})\}\subset(\bb C^*)^2_{t_1,t_2}$ acting on the fibres of this rank two vector bundle.
Let $\scr L=\eta^*\scr L'$ for $\scr L'\in\mathrm{Pic}(S)$ and $\eta:\mathrm{Tot}(\omega_S(C))\times\bb C\to S$,
with its canonical $\bb T$-equivariant structure.
Then there exists a choice of orientation such that
\[
\mathsf{DT}_0(X|D)_{\scr L,m}=M(-q)^{\int_Xc_1^{\bb T\times\bb C^*_m}(\scr L\otimes e^{-m})c_3^{\bb T}(T_X(-\log D))},
\]
where $M(q)=\prod_{n\geq1}(1-q^n)^{-n}$ is the MacMahon function.
\end{thm}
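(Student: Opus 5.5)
The plan is to reduce everything to the toric cases via logarithmic cobordism. By Proposition \ref{prp-homo-omega2-DT}, the assignment $(S,C;\scr L')\mapsto \mathsf{DT}_0(X|D)_{\scr L,m}$ is a group homomorphism from $(\omega^{\log}_{2,1^1},+)$ to the multiplicative group $1+q\,\mathrm{Frac}(\bb Q[\hbar,m])[[q]]$. Similarly, Lemma \ref{lem-c1L-TX-logD} shows that the exponent
\[
I(S,C;\scr L'):=\int_Xc_1^{\bb T\times\bb C^*_m}(\scr L\otimes e^{-m})c_3^{\bb T}(T_X(-\log D))
\]
is additive on $\omega^{\log}_{2,1^1}$. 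Consequently $(S,C;\scr L')\mapsto M(-q)^{I(S,C;\scr L')}$ is also a homomorphism into the same group; here $M(-q)^{a}:=\exp(a\log M(-q))$ is well defined for $a\in\mathrm{Frac}(\bb Q[\hbar,m])$. The target is a uniquely divisible abelian group: it is torsion-free, and $\log$ identifies it with the additive group $q\,\mathrm{Frac}(\bb Q[\hbar,m])[[q]]$. Hence both homomorphisms extend to $\omega^{\log}_{2,1^1}\otimes_{\bb Z}\bb Q$, and it suffices to check that they agree on a spanning set. Proposition \ref{prp-span-omega-2-11-log} supplies one, consisting of seven toric pairs. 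Here we use the fixed orientation convention of Proposition \ref{prp-homo-omega2-DT}, which is compatible with degenerations; that is the "choice of orientation" in the statement.

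Next I would check the seven generators $(\bb P^2,\emptyset;\scr O)$, $(\bb P^2,\emptyset;\scr O(1))$, $(\bb P^1\times\bb P^1,\emptyset;\scr O)$, $(\bb P^1\times\bb P^1,\emptyset;\scr O(1,0))$, $(\bb P^1\times\bb P^1,\bb P^1;\scr O)$, $(\bb P^1\times\bb P^1,\bb P^1;\scr O(1,0))$ and $(\bb F_1,\bb P^1;\scr O)$. Each is a smooth toric pair, so $X$ is toric with the four-dimensional torus $\bb T'$, and the pulled-back line bundle admits a $\bb T'$-linearization. For empty boundary, $X=\Tot(\omega_S)\times\bb C=Y\times\bb C$ with $Y$ a toric Calabi-Yau threefold, so Proposition \ref{prp-toric-empty-boundaries} gives the $\bb T'$-equivariant formula. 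For a smooth boundary curve, Proposition \ref{prp-toric-smooth-pair-surface-total} gives it instead. Remark \ref{rmk-equivariant-prp2} then restricts both sides from $\bb T'$ to the fibre torus $\bb T_{\mathrm f}=\bb T$. This step uses two facts: the $\bb T_{\mathrm f}$-fixed locus $S$ is proper, and the left side is a $\bb T'$-equivariant integral over a proper fixed locus. Hence the limit $u_1,u_2\to0$ commutes with the virtual localization defining $\mathsf{DT}_0$. The result is independent of the chosen linearization of $\scr L'$, so it matches the canonical $\bb T$-structure in the statement.

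The main obstacle is ensuring that the equivariant restriction is legitimate on the DT side, not just on the cohomological exponent. Concretely, one must show that the $\bb T'$-equivariant invariant $\mathsf{DT}^{\bb T'}_0(X|D)_{\scr L,m}$, which is computed by $\bb T'$-localization, specializes to the $\bb T$-equivariant invariant defined via the $\bb T$-fixed locus. The approach is to note that $\Hilb^n(X|D)^{\bb T}$ is proper. Therefore the $\bb T'$-equivariant virtual integral is a genuine polynomial in $u_1,u_2$ over $\mathrm{Frac}(\bb Q[\hbar,m])$, namely the localized pushforward relative to $\bb T_{\mathrm f}$, and setting $u_1=u_2=0$ recovers the $\bb T$-invariant. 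A second point to confirm is that the orientations used for the toric computations (via \cite[Prop.~3.5]{Liu25}) agree with the canonical total-space orientations used in Proposition \ref{prp-homo-omega2-DT}. I would argue this by noting that both arise from the same spectral presentation $X=\Tot(\omega_{\Tot(\omega_S(C))}(\Tot(\omega_C)))$ with the positive sign. Once both homomorphisms agree on the generators, they agree on all of $\omega^{\log}_{2,1^1}$, and this proves the theorem.
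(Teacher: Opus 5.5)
Your proposal is correct and is essentially the paper's argument. Extending both homomorphisms to $\omega^{\log}_{2,1^1}\otimes_{\bb Z}\bb Q$ by unique divisibility of the target is the same as the paper's step of writing $\lambda(S,C;\scr L)=\sum_i a_i(S_i,C_i;\scr L_i)$ and extracting a $\lambda$-th root of power series with constant term $1$, and you handle the generators exactly as the paper does, via Propositions \ref{prp-toric-empty-boundaries}, \ref{prp-toric-smooth-pair-surface-total} and Remark \ref{rmk-equivariant-prp2}. The only slip is minor: the $\bb T'$-equivariant invariants are in general rational rather than polynomial in $u_1,u_2$ (with denominators such as $\hbar$ plus a linear form in $u_1,u_2$). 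However, you only need regularity at $u_1=u_2=0$, which is what Remark \ref{rmk-equivariant-prp2} provides.
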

\begin{proof}
By Proposition \ref{prp-span-omega-2-11-log}, we have 
\[
\lambda(S,C;\scr L)=\sum_{i=1}^ra_i(S_i,C_i;\scr L_i)
\]
for some integers $\lambda>0$ and $a_i\in\bb Z_{\neq0}$, where $(S_i,C_i;\scr L_i)$ are cases in Proposition
\ref{prp-span-omega-2-11-log}. Let 
$X_i=\mathrm{Tot}(\omega_{S_i}(C_i))\times\bb C$
and $D_i=\mathrm{Tot}(\omega_{C_i})\times\bb C$.
By Remark \ref{rmk-equivariant-prp2}, Proposition \ref{prp-toric-empty-boundaries}, Proposition \ref{prp-toric-smooth-pair-surface-total}
and Lemma \ref{lem-c1L-TX-logD}
we have 
\begin{align*}
\mathsf{DT}_0(X|D)_{\scr L,m}^{\lambda}&=\prod_{i=1}^r\mathsf{DT}_0(X_i|D_i)_{\scr L_i,m}^{a_i}\\ 
&=\prod_{i=1}^rM(-q)^{a_i\int_{X_i}c_1^{\bb T\times\bb C^*_m}(\scr L_i\otimes e^{-m})c_3^{\bb T}(T_{X_i}(-\log D_i))}\\
&=M(-q)^{\lambda\int_Xc_1^{\bb T\times\bb C^*_m}(\scr L\otimes e^{-m})c_3^{\bb T}(T_X(-\log D))}.
\end{align*}
Since the characteristic of the field $\mathrm{Frac}(\bb Q[\hbar,m])\cong\bb Q(\hbar,m)$ is zero
and the constant terms of $\mathsf{DT}_0(X|D)_{\scr L,m}$ and $M(-q)$ are both $1$,
this forces 
\[\mathsf{DT}_0(X|D)_{\scr L,m}=M(-q)^{\int_Xc_1^{\bb T\times\bb C^*_m}(\scr L\otimes e^{-m})c_3^{\bb T}(T_X(-\log D))}\]
and the result follows.
\end{proof}

\begin{nota}[Log modification invariance for local surfaces]
Let $f:(\widetilde S,\widetilde C)\to(S,C)$ be a logarithmic
modification of smooth projective snc surface pairs, i.e. a sequence of
blow-ups along strata, with $\widetilde C$ the reduced total transform.
Set
$X=\Tot(\omega_S(C))\times\mathbb C,
D=\Tot(\omega_C)\times\mathbb C$
and similarly
$\widetilde X=\Tot(\omega_{\widetilde S}(\widetilde C))\times\mathbb C,
\widetilde D=\Tot(\omega_{\widetilde C})\times\mathbb C$.
Then for any line bundle $L\in\mathrm{Pic}(S)$, after choosing the compatible
total-space orientations,
\[
\DT_0(\widetilde X|\widetilde D)_{f^*L,m}
=
\DT_0(X|D)_{L,m}.
\]
This follows directly from the fact
$\omega_{\widetilde S}(\widetilde C)\cong f^*\omega_S(C)$
and Theorem \ref{thm-total-surface-log-DT4}.
\end{nota}

\begin{coro}
Consider any smooth projective surface $S$ and a line bundle $\scr L\in\mathrm{Pic}(S)$.
Then there exists a choice of orientation such that
\[
\mathsf{DT}_0(\mathrm{Tot}(\omega_S)\times\bb C)_{\scr L,m}=M(-q)^{\int_{\mathrm{Tot}(\omega_S)\times\bb C}c_1^{\bb T\times\bb C^*_m}(\scr L\otimes e^{-m})c_3^{\bb T}(T_{\mathrm{Tot}(\omega_S)\times\bb C})},
\]
where $M(q)=\prod_{n\geq1}(1-q^n)^{-n}$ is the MacMahon function.
\end{coro}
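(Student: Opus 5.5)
The corollary is the special case $C=\emptyset$ of Theorem \ref{thm-total-surface-log-DT4}, so the plan is a direct specialization. First we check that $(S,\emptyset)$ is an admissible snc pair in the sense of that theorem. With empty boundary, $\omega_S(C)=\omega_S$ and $D=\mathrm{Tot}(\omega_\emptyset)\times\bb C=\emptyset$. Hence $X=\mathrm{Tot}(\omega_S)\times\bb C$ with no logarithmic boundary. Then $T_X(-\log D)=T_X$, and $\Hilb^n(X|\emptyset)$ is the ordinary Hilbert scheme of points, since no expansions occur and $\mathsf{Exp}_0(X|\emptyset)$ is a point. So $\mathsf{DT}_0(X|\emptyset)_{\scr L,m}$ coincides with the absolute invariant $\mathsf{DT}_0(\mathrm{Tot}(\omega_S)\times\bb C)_{\scr L,m}$.

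Next we match the remaining data. The line bundle $\scr L\in\mathrm{Pic}(S)$ is pulled back along $\eta:X\to S$, as the theorem requires, with its canonical $\bb T$-equivariant structure in which the fibre torus acts trivially. The torus $\bb T=\{t_1t_2=1\}$ acts on the fibres of $\omega_S\oplus\scr O_S$ and preserves the Calabi-Yau form. The fixed loci are proper because $S$ is projective, as already noted before Proposition \ref{prp-homo-omega2-DT}.

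The orientation is the canonical total-space orientation of Proposition \ref{prp-points-exact-sympl}(2), applied to $X=\mathrm{Tot}(\omega_{Y})$ with $Y=\mathrm{Tot}(\omega_S)$ and $S=\emptyset$ in that statement. This is exactly the choice used in Proposition \ref{prp-homo-omega2-DT}.

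Finally, we read off the exponent. Substituting $C=\emptyset$ into Theorem \ref{thm-total-surface-log-DT4} gives $\int_X c_1^{\bb T\times\bb C^*_m}(\scr L\otimes e^{-m})c_3^{\bb T}(T_X)$, which is the stated formula.

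The only step needing any care is the identification of the logarithmic invariant with the absolute one when $D=\emptyset$, including its virtual class. We expect this to be straightforward from the definitions: the expansion stack is trivial, and the relative symmetric obstruction theory of Proposition \ref{prp-points-exact-sympl} reduces to the usual one with $\mathbf R\scr Hom_\pi(\scr I,\scr I)_0[3]$. Consequently the square-root pullback $\sqrt{r_R^!}[\mathrm{pt}]$ is the Oh-Thomas/Park virtual class of the ordinary Hilbert scheme.
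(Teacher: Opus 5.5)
Your proposal is correct and matches the paper's approach: the corollary is obtained there as the case $C=\emptyset$ of Theorem \ref{thm-total-surface-log-DT4}, so $D=\emptyset$, $T_X(-\log D)=T_X$, and the orientation comes from the presentation $\mathrm{Tot}(\omega_S)\times\bb C=\mathrm{Tot}(\omega_{\mathrm{Tot}(\omega_S)})$. Your further checks (trivial expansion stack, pulled-back line bundle, Calabi--Yau torus, properness of the fixed loci) are exactly the bookkeeping this specialization requires.
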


Finally, we can consider the invariants with free insertions by specialization.
Let $(X,D)$ be a log Calabi-Yau pair with reduced simple 
normal crossing divisor $D$ and $\dim X=4$, and suppose there exists an orientation $o$
and a Calabi-Yau torus $\bb T$-action (possibly trivial) with proper fixed locus. Let $\scr L$ be a $\bb T$-equivariant
line bundle.
Consider the following invariant with free insertion
\begin{align*}
    \mathsf{DT}_0(X|D)_{\mathrm{free}}&:=1+\sum_{n>0}q^n\int_{[\Hilb^n(X|D)]^{\vir}_{\bb T}}1\\ 
    &=:1+\sum_{n>0}q^nI_{n,\mathrm{free}}(X|D).
\end{align*}

Then by a proof similar to that of \cite[Thm.~6.8]{CKM22}, we have the following result.
\begin{thm}
We have 
\[
\lim_{m\to\infty}I_{n}(X|D;\scr L,m)\cdot q^n|_{q'=qm}=I_{n,\mathrm{free}}(X|D)\cdot (q')^n,
\]
where we denote $\mathsf{DT}_0(X|D)_{\scr L,m}=1+\sum_{n>0}I_{n}(X|D;\scr L,m)\cdot q^n$.
\end{thm}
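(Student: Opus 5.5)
The plan is a degree count, following \cite[Thm.~6.8]{CKM22}. The integrand is $e^{\bb T\times\bb C_m^*}((\scr L^{[n]})^\vee\otimes e^m)$, where $\scr L^{[n]}$ is a vector bundle of rank $n$ on $\Hilb^n(X|D)$; its flatness and the base-change compatibility come from the finite flat universal subscheme, as in Corollary \ref{coro-0dimlogDT4-independence-cone-models}. By the splitting principle,
\[
e\bigl((\scr L^{[n]})^\vee\otimes e^m\bigr)=\sum_{i=0}^{n}m^{n-i}\,c_i^{\bb T}\bigl((\scr L^{[n]})^\vee\bigr).
\]
Pairing with the virtual class gives
\[
I_n(X|D;\scr L,m)=\sum_{i=0}^n m^{n-i}\int_{[\Hilb^n(X|D)]^{\vir}_{\bb T}}c_i^{\bb T}\bigl((\scr L^{[n]})^\vee\bigr).
\]
Each summand is a polynomial in $m$ with coefficients in $\operatorname{Frac}H^*_{\bb T}(\mathrm{pt})$. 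These coefficients are defined by localization to the proper fixed locus $\Hilb^n(X|D)^{\bb T}$, whose properness is assumed.

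Next I will compute the virtual dimension of $[\Hilb^n(X|D)]^{\vir}$. The obstruction theory is $\mathbf R\scr Hom_\pi(\scr I,\scr I)_0[3]$ relative to $\mathsf{Exp}_{0,n}(X|D)$, which is smooth of dimension $0$, and the square-root pullback halves the rank. By Riemann--Roch for points on a Calabi--Yau $4$-fold, $\chi(\scr I,\scr I)_0=-2n$ (this uses Lemma \ref{lem:log-crepant-MR-pair} on the universal family), so the virtual dimension is $n$. It follows that the integral of $c_i$ has cohomological degree $2(i-n)$ in $H^*_{\bb T}(\mathrm{pt})$, localized, and $I_n$ is homogeneous of degree $0$ in $(m,\text{equivariant parameters})$.

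Substituting $q'=qm$ turns $I_n q^n$ into $(q')^n m^{-n}I_n$, and
\[
m^{-n}I_n=\sum_{i=0}^n m^{-i}\int c_i^{\bb T}\bigl((\scr L^{[n]})^\vee\bigr).
\]
As $m\to\infty$, only the $i=0$ term survives, and it equals $\int_{[\Hilb^n(X|D)]^{\vir}_{\bb T}}1=I_{n,\mathrm{free}}(X|D)$. This gives the claim.

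The main obstacle is justifying the limit when the coefficients are rational functions in the $\bb T$-parameters rather than numbers. Since $m$ is the weight of the independent trivial torus $\bb C_m^*$, each coefficient is independent of $m$. The expansion is therefore finite and polynomial in $m^{-1}$ with constant coefficients, so the limit is simply the constant term. I will check this via the $\bb T\times\bb C_m^*$-localization formula on the fixed locus, where $\bb C_m^*$ acts trivially.
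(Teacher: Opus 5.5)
Your proposal is correct and follows the same degree-count argument that the paper invokes from \cite[Thm.~6.8]{CKM22}. Since $\bb C_m^*$ acts trivially, $I_n(X|D;\scr L,m)$ is a polynomial in $m$ of degree at most $n$ with coefficients in $\operatorname{Frac}H^*_{\bb T}(\mathrm{pt})$ and leading coefficient $\int_{[\Hilb^n(X|D)]^{\vir}_{\bb T}}1$, so after substituting $q=q'/m$ the limit simply extracts this coefficient; your virtual-dimension and homogeneity check is harmless but not needed for the statement.
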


Combining this result with Theorem \ref{thm-total-surface-log-DT4} and \cite[Thm.~6.16]{CZZ24},
we have the following corollaries of invariants with free insertions.
\begin{coro}
Under the same assumptions as Theorem \ref{thm-total-surface-log-DT4}, 
there exists a choice of orientation such that
\begin{align*}
\mathsf{DT}_0(X|D)_{\mathrm{free}}&
=\exp\left(q\int_Xc_3^{\bb T}(T_X(-\log D))\right)=1.
\end{align*}
\end{coro}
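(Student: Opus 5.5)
The plan is to specialize the closed formula of Theorem \ref{thm-total-surface-log-DT4} through the free-insertion limit theorem just stated, and then check that the resulting exponent vanishes.

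\emph{Step 1: the limit.} Put $A(m):=\int_Xc_1^{\bb T\times\bb C^*_m}(\scr L\otimes e^{-m})c_3^{\bb T}(T_X(-\log D))=a_0-m\,c$, where $c:=\int_Xc_3^{\bb T}(T_X(-\log D))$ and $a_0$ does not depend on $m$. Theorem \ref{thm-total-surface-log-DT4} gives $\mathsf{DT}_0(X|D)_{\scr L,m}=\exp\bigl(A(m)\log M(-q)\bigr)$. Since $\log M(-q)=-q+O(q^2)$, the coefficient $I_n(X|D;\scr L,m)$ is a polynomial in $m$ of degree at most $n$. Its $m^n$-coefficient comes only from $\bigl(A(m)\cdot(-q)\bigr)^n/n!$, so it equals $c^n/n!$. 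Every other contribution carries a factor $q^k$ with $k>n$ for each power $m^n$, or a lower power of $m$. After the substitution $q'=qm$ and $m\to\infty$, only this leading coefficient survives. The limit theorem therefore gives $I_{n,\mathrm{free}}(X|D)=c^n/n!$, and hence $\mathsf{DT}_0(X|D)_{\mathrm{free}}=\exp(qc)$. The orientation is the one fixed in Theorem \ref{thm-total-surface-log-DT4}.

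\emph{Step 2: $c=0$.} Compute $c$ by $\bb T$-localization to the proper fixed locus $S$ (the zero section), as in Remark \ref{rmk-equivariant-prp2}. Write $E:=T_S(-\log C)$, so $c_1(E)=-K$ with $K:=c_1(\omega_S(C))$. The restriction splits as
\[
T_X(-\log D)|_S\cong E\oplus\bigl(\omega_S(C)\otimes\hbar\bigr)\oplus\scr O\otimes(-\hbar),
\]
where the first summand uses that $D$ restricts to $C$ in the zero section. The normal bundle has Euler class $(K+\hbar)(-\hbar)$. Set $a=K+\hbar$ and $b=-\hbar$. Then
\[
c_3=c_2(E)(a+b)+c_1(E)ab=c_2(E)K+K(K+\hbar)\hbar.
\]
The term $c_2(E)K$ has cohomological degree $3>\dim S$, so it vanishes on $S$. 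Dividing the remaining term by $(K+\hbar)(-\hbar)$ leaves $-K$. Thus $c=\int_S(-K)$, which is the integral of a degree-one class over a surface, so $c=0$. Consequently $\exp(qc)=1$.

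The main obstacle is Step 1: one must confirm that the normalization conventions of the limit theorem (the sign in $e(E^\vee\otimes e^m)$ and the substitution $q'=qm$) really produce $(+c)^n/n!$ and no extra sign. If the sign came out differently, the answer would be $\exp(-qc)$ instead. This does not matter for the final claim, since $c=0$ either way. The vanishing computation in Step 2 is routine.
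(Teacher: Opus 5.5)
Your proposal is correct and follows the paper's route: the corollary is obtained by feeding the closed formula of Theorem \ref{thm-total-surface-log-DT4} into the free-insertion limit theorem, and your leading $m^n$-coefficient computation is exactly what that specialization amounts to. The paper does not explain why the exponent vanishes, and your zero-section localization reducing $\int_X c_3^{\bb T}(T_X(-\log D))$ to $\int_S\bigl(-c_1(\omega_S(C))\bigr)=0$ is a correct way to supply that step.
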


\begin{coro}
Let $C$ be a smooth projective curve with three line bundles $\scr L_i\in\mathrm{Pic}(C)$
and $r$ distinct points $S=\{p_1,...,p_r\}$ such that $\scr L_1\otimes\scr L_2\otimes\scr L_3\cong\omega_C(S)$.
Let $X:=\Tot_{C}(\scr L_1\oplus\scr L_2\oplus\scr L_3)\xrightarrow{\eta}C$ with $D:=\eta^{-1}(S)$,
and let the torus $\bb T=\{t_1t_2t_3=1\}\subset(\bb C^*)^3$ act by scaling the fibres of $X$.
Write $-s_i$ for the equivariant weight of $\scr L_i$, following the convention of \cite[Exam.~6.14]{CZZ24}.
Then there exists a choice of orientation such that
\begin{align*}
\mathsf{DT}_0(X|D)_{\mathrm{free}}&
=\exp\left(-q\left(\frac{1}{s_1}+\frac{1}{s_2}+\frac{1}{s_3}\right)(2-2g(C)-r)\right).
\end{align*}
\end{coro}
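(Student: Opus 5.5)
The plan is to specialize the previous limit theorem to the local curve case, using \cite[Thm.~6.16]{CZZ24}. That result gives the tautological series for the pair $(X,D)$ with $\scr L$ pulled back from $C$:
\[
\mathsf{DT}_0(X|D)_{\scr L,m}=M(-q)^{\int_Xc_1^{\bb T\times\bb C^*_m}(\scr L\otimes e^{-m})c_3^{\bb T}(T_X(-\log D))}.
\]
I take $\scr L=\scr O_X$ with trivial linearization. This is allowed since $\scr O_C$ pulls back to it, and the invariants do not depend on $\scr L$ in the free limit anyway. The same orientation as in \cite{CZZ24} is used throughout, and it is the canonical total-space one.

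First compute the exponent. With $\scr L=\scr O$, we have $c_1(\scr O\otimes e^{-m})=-m$, so the exponent is $-m\,N$ with
\[
N:=\int_Xc_3^{\bb T}(T_X(-\log D)).
\]
Localize to the zero section $C=X^{\bb T}$. There
\[
T_X(-\log D)|_C=T_C(-S)\oplus\scr L_1\oplus\scr L_2\oplus\scr L_3,
\]
and the normal bundle is $N_{C/X}=\bigoplus\scr L_i$, which has equivariant Euler class $\prod_i(c_1(\scr L_i)-s_i)$. Expand $c_3$ to first order in the nonequivariant classes on $C$ and divide by the Euler class of the normal bundle. Using $\sum_i c_1(\scr L_i)=\deg\omega_C(S)=2g-2+r$, $\deg T_C(-S)=2-2g-r$ and the Calabi-Yau condition $s_1+s_2+s_3=0$, the terms involving the individual $\deg\scr L_i$ cancel. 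What should remain is
\[
N=(2-2g-r)\left(\frac{1}{s_1}+\frac{1}{s_2}+\frac{1}{s_3}\right).
\]
Sign conventions follow \cite[Exam.~6.14]{CZZ24}. This degree bookkeeping is the main place errors can enter, so I will check it against the $r=0$, $C=\bb P^1$ toric case.

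Next take the limit. Write $M(-q)^{-mN}=\exp\bigl(-mN\log M(-q)\bigr)$, with $\log M(-q)=-q+O(q^2)$. Then substitute $q=q'/m$ and let $m\to\infty$, as in the preceding theorem. A term $q^k$ with $k\ge 2$ contributes $m\,(q'/m)^k\to0$, so only the linear term survives. The coefficient $I_n(X|D;\scr O,m)$ is a polynomial in $mN$ of degree $n$, so the limit $\lim m^n I_n$ exists. This gives
\[
\mathsf{DT}_0(X|D)_{\mathrm{free}}\big|_{q'}=\exp(Nq'),
\]
and renaming $q'$ as $q$ gives $\exp(Nq)$.

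Finally I substitute $N$ and fix the sign. The claimed formula is $\exp\bigl(-q(\sum 1/s_i)(2-2g-r)\bigr)$, so the sign of $N$ must come out as $-(2-2g-r)\sum 1/s_i$. With the weights of $\scr L_i$ equal to $-s_i$, the normal Euler class is $\prod(-s_i)$, and that introduces exactly this sign. The main obstacle is keeping all these signs consistent: the $(-1)^{\rank}$ conversion between $\scr L^{[n]\vee}\otimes e^m$ and the $e^{-m}$ convention, the sign of $\log M(-q)$, and the weight convention. I would pin them down by checking the $n=1$ coefficient directly. On $\Hilb^1(X|D)$, the virtual class is the localized $c_3$-type class of the log tangent bundle, matching \cite[Exam.~6.14]{CZZ24}.
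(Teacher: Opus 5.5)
Your proposal is correct and follows the paper's route: the $m\to\infty$ limit theorem applied to \cite[Thm.~6.16]{CZZ24} gives $\mathsf{DT}_0(X|D)_{\mathrm{free}}=\exp\bigl(q\int_Xc_3^{\bb T}(T_X(-\log D))\bigr)$, and localization to the zero section then evaluates the exponent. One bookkeeping correction: with $w_i=-s_i$ the weights, the $H^2(C)$-part of the localized integrand is identically $c_1(T_C(-S))\,e_2(w)/e_3(w)$, because the $c_1(\scr L_i)$-terms cancel without using $\sum_i c_1(\scr L_i)=\deg\omega_C(S)$ or $\sum_i s_i=0$. This gives $N=(2-2g-r)\sum_i 1/w_i=-(2-2g-r)\sum_i 1/s_i$ directly, so your intermediate display $N=(2-2g-r)\sum_i 1/s_i$ has the wrong sign; you should get the sign from this computation rather than read it off from the target formula.
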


\appendix
\section{Splitting and genus reduction formulae of GLSMs}\label{appendix-GLSM}
The splitting and genus reduction formulae recorded in this appendix
essentially follow from the constructions of \cite{CZ23,KP25},
together with the shifted Lagrangian classes in
Conjecture \ref{conj-Joyce-conj},
especially of Remark \ref{rmk-KKPS-lagrangian-classes}. For the reader's convenience, we formulate the relevant
statements in the restricted non-orbifold setting used here and sketch
how the existing arguments apply.

\subsection{Derived gauged linear sigma models}
We restrict to the original setting of gauged linear sigma models as follows.
\begin{defi}\label{defi-GLSM}
A \textit{GLSM input datum} consists of the following.
\begin{itemize}
	    \item A finite-dimensional vector space $V$ with an action of $H:=G\times F$ for a reductive group $G$ (referred to as the \textit{gauge group}) and an algebraic torus $F$ 
		(referred to as the \textit{flavor group}).
    \item Let $\chi:F\to\bb C^*$ be a nontrivial character
	and define the Calabi-Yau torus as $F_0:=\ker\chi$. We extend this to $\chi:H\to\bb C^*$ which does not depend on $G$.
    \item (GIT quotient) Fix a character $\theta:G\to\bb C^*$ such that $V^{\theta\text{-st}}=V^{\theta\text{-ss}}$ and $G$ acts freely on this locus.
    \item An $H$-equivariant function $\widehat{w}:V\to\bb C(\chi)$ which descends to
	$w:[V/G]\to\bb C(\chi)$.
	\item An $R$-charge is a group morphism $R:\bb C^*\to F$ and we denote the composition by $R_{\chi}:\bb C^*\xrightarrow{R}F\xrightarrow{\chi}\bb C^*$.
	Here we will assume $\ker R_{\chi}=\{1\}$, which is the setting of \cite{CZ23}, to avoid twisted orbicurves.
\end{itemize}
\end{defi}

Let $\mathfrak M_{g,n}^{\mathrm{pre}}$ be the moduli stack of $n$-marked prestable curves of genus $g$, which is a smooth Artin stack. It has a universal curve $\mcal C_{g,n}\to\mathfrak M_{g,n}^{\mathrm{pre}}$ with sections $\Sigma_{g,n;k}\subset\mcal C_{g,n}$ for $k=1,...,n$. Define $\omega^{\log}:=\omega\left(\sum_k\Sigma_{g,n;k}\right)$.
Then we consider the derived Artin stack 
\begin{align*}
\dMap_{g,n}^{R_{\chi}=\omega^{\log}}(\dCrit_{[V/G]}(w)):=&
	\dMap_{\mathfrak M_{g,n}^{\mathrm{pre}}}^{\bb C^*}(\omega^{\log,\circ},\dCrit_{[V/G]}(w)\times\mathfrak M_{g,n}^{\mathrm{pre}})\\ 
=&\Res_{\mcal C_{g,n}/\mathfrak M_{g,n}^{\mathrm{pre}}}\left([\dCrit_{[V/G]}(w)/_{R'}\bb C^*]\times_{B\bb C^*,\omega^{\log}}\mcal C_{g,n}\right),
\end{align*}
where the $\bb C^*$-action on $V$ is given by $R'=R\circ R_{\chi}^{-1}$ which is available
since $\ker R_{\chi}=\{1\}$.
Since $\chi\circ R'=\mathrm{id}_{\bb C^*}$, the function $\widehat w$, and hence its
descent $w$, has weight one with respect to the induced
$\bb C^*$-action, satisfying the conditions in \cite[Prp.~7.7]{KP25}.
Here, $\omega^{\log,\circ}$ is the corresponding principal $\bb C^*$-bundle,
and $\Res$ is the Weil restriction; see \cite{HKR25,KP25} for details.
See \cite[Rmk.~7.14]{KP25} for the relation between this and \cite{CZ23}.


Here we consider the $0^+$-stability of \cite{FJR18}; see also
\cite{CZ23}. Then we have an open substack
\[
	\QM_{g,n}^{R_{\chi}=\omega^{\log}}(\Crit_{V\sslash_{\theta}G}(w),\beta)\subset\left(\dMap_{g,n}^{R_{\chi}=\omega^{\log}}(\dCrit_{[V/G]}(w))\right)_{\mathrm{cl}}
\]
given by $0^+$-stable objects. Here $\beta$ is the class defined as in \cite[Def.~2.7]{CZ23}. 
Here we note that $(\dCrit_{[V/G]}(w))_{\mathrm{cl}}=[\dCrit_{V}(\widehat{w})/G]_{\mathrm{cl}}$, so 
the $0^+$-stability and class $\beta$ make sense using the language of \cite{CZ23}.
By \cite[Prop.~2.1]{STV15}, there exists a unique open derived enhancement
\[
	\dQM_{g,n}^{R_{\chi}=\omega^{\log}}(\dCrit_{V\sslash_{\theta}G}(w),\beta)\subset\dMap_{g,n}^{R_{\chi}=\omega^{\log}}(\dCrit_{[V/G]}(w)),
\]
which is called the \textit{(derived)
moduli stack of genus $g$, $n$-pointed stable $R$-twisted quasimaps of class $\beta$}.
\begin{nota}
Although we write the moduli stacks as 
$(\mathsf{d})\QM_{g,n}^{R_{\chi}=\omega^{\log}}(\dCrit_{V\sslash_{\theta}G}(w),\beta)$,
this does not mean that we are counting curves contained in $V\sslash_{\theta}G$.
The objects may have base points, disjoint from nodes and markings, in
$[V/G]\backslash(V\sslash_{\theta}G)$;
see \cite[Def.~2.7(1)]{CZ23} for details.
\end{nota}

\begin{prp}
In this case, $\QM_{g,n}^{R_{\chi}=\omega^{\log}}(\Crit_{V\sslash_{\theta}G}(w),\beta)$ is a separated Deligne-Mumford stack of finite type. 
The fixed locus $\QM_{g,n}^{R_{\chi}=\omega^{\log}}(\Crit_{V\sslash_{\theta}G}(w),\beta)^{F_0}$ is proper if $\Crit_{V\sslash_{\theta}G}(w)^{F_0}$ is.
\end{prp}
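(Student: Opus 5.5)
The plan is to reduce both assertions to the corresponding statements for quasimaps to the classical GIT quotient $V\sslash_\theta G$ in the setting of \cite{CZ23} (and \cite{FJR18}), using that the classical truncation of the derived stack only sees $(\dCrit_{[V/G]}(w))_{\mathrm{cl}}=[\Crit_V(\widehat w)/G]$. First, I will identify $\QM_{g,n}^{R_{\chi}=\omega^{\log}}(\Crit_{V\sslash_{\theta}G}(w),\beta)$ with the closed substack of the stack of $0^+$-stable $R$-twisted quasimaps to $[V/G]$ (with $\omega^{\log}$-twisting via $R'$) cut out by requiring the section of the associated $V$-bundle to land in $\Crit_V(\widehat w)$. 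The point is that the Weil restriction $\Res_{\mcal C_{g,n}/\mathfrak M_{g,n}^{\mathrm{pre}}}$ of the closed substack $[\Crit_V(\widehat w)/_{R'}\bb C^*]\times_{B\bb C^*}\mcal C_{g,n}$ of $[V/G\times_{R'}\bb C^*]\times_{B\bb C^*}\mcal C_{g,n}$ is a closed substack of the Weil restriction of the ambient one. This is a formal property of Weil restriction along a proper flat family, so I would check it on test schemes.

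Second, for the ambient quasimap stack (quasimaps to $V\sslash_\theta G$ with $R$-charge twisting), I would invoke the standard results of \cite{FJR18} and \cite[\S~2]{CZ23}. These give that the $0^+$-stable locus is a Deligne--Mumford stack of finite type, separated, with finitely many $\beta$ components. The hypothesis $\ker R_\chi=\{1\}$ means no orbifold markings, so the non-orbifold versions apply. I expect this step to be routine citation. Properness fails in general only because $V$ is affine, not proper.

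Third, and this is the main obstacle, is properness of the $F_0$-fixed locus. Here I would follow the argument of \cite[\S~2]{CZ23}, which uses the valuative criterion. Take a family of $F_0$-fixed $0^+$-stable quasimaps over a punctured DVR. By $F_0$-invariance, the image of the generic fibre away from base points lies in $\Crit_{V\sslash_\theta G}(w)^{F_0}$, which is proper by hypothesis. Then argue as in the proof of properness of quasimap spaces with proper target (\cite{FJR18}, using the GIT/affine-quotient map $V\sslash_\theta G\to V/\!\!/_0 G$): extend the underlying curve and principal bundle after base change, and control base points by $0^+$-stability.

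The delicate points are twofold. The first is showing the limit still maps into the critical locus, which holds because $\Crit_V(\widehat w)$ is closed. The second is showing the limiting quasimap remains $F_0$-fixed, which follows from closedness of fixed loci. The remaining concern is ensuring the affine coordinates of the section stay bounded. I would handle this by composing with $V\to V/\!\!/_0G$ and using that the $F_0$-fixed part of $\Crit$ maps to a proper locus, so the induced map from the curve is constant, as in \cite[Thm.~2.12]{CZ23}. Separatedness and uniqueness of limits then follow from the corresponding statement in the ambient quasimap stack, since closed substacks of separated stacks are separated.
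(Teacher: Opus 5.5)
Your Steps 1 and 2 are exactly what the paper does. Its proof consists only of the citations \cite[Prop.~5.2.3, 5.3.1, 5.4.1]{FJR18} and \cite[Thm.~2.12, 2.15]{CZ23}, applied to the classical truncation $[\Crit_V(\widehat w)/G]$ of the derived target. The trouble is your own sketch of Step 3, the step you call the main obstacle. As written it would not work, for two concrete reasons.

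\emph{Fixed quasimaps need not map into the fixed locus.} An $F_0$-fixed point of the moduli stack is a quasimap isomorphic to each of its translates through automorphisms of the domain and the bundle. So its image is only $F_0$-invariant; it need not lie in $\Crit_{V\sslash_\theta G}(w)^{F_0}$. Take $w=0$, $V\sslash_\theta G=\bb C^2\sslash\bb C^*=\bb P^1$, $R'$ acting trivially and $F_0=\bb C^*$ scaling one coordinate. A rational component with a node at $z=0$ (attached to a contracted component carrying the other markings), a marking at $z=\infty$, and quasimap $z\mapsto[1:z^d]$ is $F_0$-fixed via $z\mapsto t^{1/d}z$, yet it surjects onto $\bb P^1$. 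So you cannot reduce to quasimaps with proper target.

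\emph{The affine-quotient composite is not constant here.} In the $R$-twisted setting, composing with $V\to\spec\bb C[V]^G$ does not give a map from $C$ to the affine quotient. It gives a section of $\omega^{\log,\circ}\times_{\bb C^*}\spec\bb C[V]^G$, with $\bb C^*$ acting through $R'$. An invariant of nonzero $R'$-weight therefore pulls back to a section of a tensor power of $\omega^{\log}$, which need not be constant. Even where constancy does hold, it comes from properness of $C$, not from $F_0$-fixedness.

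For example, take $G=1$, $V=\bb C$, $w=0$, with $R'$ and $F_0$ both acting by weight one so that quasimaps are sections of $\omega^{\log}$; here $\Crit^{F_0}=\{0\}$. Then $\QM$ is the total space of the vector bundle over $\overline{\mcal M}_{g,n}$ with fibres $H^0(C,\omega^{\log})$, which is not proper. Only constants have $R'$-weight zero, so no bound via an affine quotient is available. Nevertheless $\QM^{F_0}=\overline{\mcal M}_{g,n}$ is proper, because fixedness kills the sections of nonzero $F_0$-weight.

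So properness of the fixed locus needs an argument that controls the $F_0$-weights of the twisted sections, including on $F_0$-invariant two-pointed rational components of the domain. Alternatively, simply quote it from \cite[Thm.~2.15]{CZ23}, as the paper does. Your valuative-criterion sketch does not supply it.
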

\begin{proof}
See \cite[Prop.~5.2.3,~5.3.1,~5.4.1]{FJR18} and \cite[Thm.~2.12,~2.15]{CZ23}.
\end{proof}

\begin{itemize}
	\item From now on, we assume $\Crit_{V\sslash_{\theta}G}(w)^{F_0}$ is proper.
\end{itemize}

By \cite{CZ23} we have an evaluation map together with the forgetful map
\[
	\mathrm{ev}_{g,n}:=(\mathrm{ev}_{g,n;1},...,\mathrm{ev}_{g,n;n},f):\dMap_{g,n}^{R_{\chi}=\omega^{\log}}(\dCrit_{[V/G]}(w))\to (\dCrit_{[V/G]}(w))^{\times n}\times\mathfrak M_{g,n}^{\mathrm{pre}}.
\]
By \cite[Prop.~7.7,~Prop.~A.12(2)]{KP25}, this is an exact $(-1)$-shifted Lagrangian
relative to $\mathfrak M_{g,n}^{\mathrm{pre}}$.
Then restricting it to the stable locus and using $F_0$-equivariant shifted Lagrangian classes in Remark \ref{rmk-KKPS-lagrangian-classes}, we have 
the following diagram
\[\begin{tikzcd}
	{H^*_{F_0}(V\sslash_{\theta}G,w)^{\otimes n}\otimes H^*\left(\overline{\mcal M}_{g,n}\right)} & {H^*_{F_0}(V\sslash_{\theta}G,w)^{\otimes n}\otimes H^*\left(\mathfrak M_{g,n}^{\mathrm{pre}}\right)} \\
	{H^{F_0}_*(\mathrm{pt})_{\mathrm{loc}}} & {H_*^{\BM,F_0}\left(\dQM_{g,n}^{R_{\chi}=\omega^{\log}}(\dCrit_{V\sslash_{\theta}G}(w),\beta)\right)}
	\arrow["{\mathrm{st}^*}", from=1-1, to=1-2]
	\arrow["{\Phi_{g,n,\beta}}"', dashed, from=1-1, to=2-1]
	\arrow["{[\mathrm{ev}_{g,n}]^{\mathrm{Lag}}}", from=1-2, to=2-2]
	\arrow["{p_{\QM,*}}", from=2-2, to=2-1]
\end{tikzcd}\]
and we call $\Phi_{g,n,\beta}$ the \textit{quasimap invariants}. Here 
we use that $\QM_{g,n}^{R_{\chi}=\omega^{\log}}(\Crit_{V\sslash_{\theta}G}(w),\beta)^{F_0}$ is proper 
and we have the $F_0$-equivariant pushforward $p_{\QM,*}$ for the structure morphism
$p_{\QM}:\QM_{g,n}^{R_{\chi}=\omega^{\log}}(\Crit_{V\sslash_{\theta}G}(w),\beta)\to\spec\bb C$. We also use the fact that
the stabilization map
\[
    \mathrm{st}:\mathfrak M_{g,n}^{\mathrm{pre}}\to\overline{\mcal M}_{g,n}
\]
is flat, which follows from \cite[Prop.~2]{Beh97}, and we always assume $2g-2+n>0$.

\begin{exam}[Quasimap invariants of dimensional reduction data]
Consider the compatible dimensional reduction data 
$(X,Y,s,\phi,w)$ and $(X',Y',s',\phi',w')$ in \cite[Def.~6.2]{COZZ-1}. Then by the 
$(-1)$-shifted version of \cite[Cor.~5.3]{KP25} we have
	\[
		\dCrit_X(w)\cong\dCrit_{Z^{\mathrm{der}}(s)}(\phi)\cong\dCrit_{Z^{\mathrm{der}}(s')}(\phi')\cong\dCrit_{X'}(w')
	\]
	of $(-1)$-shifted symplectic stacks with canonical orientations. 
	If these identifications are compatible with the ambient GLSM data, then the corresponding quasimap invariants are equivalent. See also \cite[Lem.~4.4]{KKPS2} for the 
	special case and \cite[Prop.~6.5,~Prop.~6.9]{COZZ-1} for the direct proof of the isomorphism 
	of classical critical loci and critical cohomology.

One of the most important examples of compatible dimensional reduction data is
given by Nakajima quiver varieties and triple quiver varieties with cubic potential; see \cite[Exam.~6.13,~6.14]{COZZ-1} for details.
In this case, the results of
\cite[Thm.~1.3,~1.5]{COZZ-2} show that the corresponding
two-point functions are compatible with dimensional reduction.
\end{exam}

\subsection{Split-gluing formula for quasimap invariants}
Let $g_1+g_2=g$ and $n_1+n_2=n$. We consider the following gluing morphisms:
\[\begin{tikzcd}
	{\mathfrak M_{g_1,n_1+1}^{\mathrm{pre}}\times\mathfrak M_{g_2,n_2+1}^{\mathrm{pre}}} & {\mathfrak M_{g,n}^{\mathrm{pre}}} \\
	{\overline{\mcal M}_{g_1,n_1+1}\times\overline{\mcal M}_{g_2,n_2+1}} & {\overline{\mcal M}_{g,n}.}
	\arrow["{\mathrm{gl}^{\mathrm{pre}}}", from=1-1, to=1-2]
	\arrow[from=1-1, to=2-1]
	\arrow[from=1-2, to=2-2]
	\arrow["{\mathrm{gl}}", from=2-1, to=2-2]
\end{tikzcd}\]
Let $\QM$ and $\dQM$ be $\QM_{g,n}^{R_{\chi}=\omega^{\log}}(\Crit_{V\sslash_{\theta}G}(w),\beta)$ 
and $\dQM_{g,n}^{R_{\chi}=\omega^{\log}}(\dCrit_{V\sslash_{\theta}G}(w),\beta)$. 
Let $\QM_i(\beta_i)$ and $\dQM_i(\beta_i)$ be 
$\QM_{g_i,n_i+1}^{R_{\chi}=\omega^{\log}}(\Crit_{V\sslash_{\theta}G}(w),\beta_i)$ and 
$\dQM_{g_i,n_i+1}^{R_{\chi}=\omega^{\log}}(\dCrit_{V\sslash_{\theta}G}(w),\beta_i)$. 

For use below, we record a homotopical formulation of the gluing
diagram. The corresponding classical pullback diagram and
cotangent-complex calculation appear in \cite[Lem.~4.24]{CZ23}.

\begin{lem}\label{lem-gluing-QM-pullback-nonorbifold}
We have the following homotopical pullback diagram of derived stacks:
\[\begin{tikzcd}
    {\coprod_{\beta_1+\beta_2=\beta}\dQM_1(\beta_1)\times_{+,\dCrit_{V\sslash_{\theta}G}(w),-}\dQM_2(\beta_2)} & \dQM \\
    {\dCrit_{V\sslash_{\theta}G}(w)^n\times\mathfrak M_{g_1,n_1+1}^{\mathrm{pre}}\times\mathfrak M_{g_2,n_2+1}^{\mathrm{pre}}} & {\dCrit_{V\sslash_{\theta}G}(w)^n\times\mathfrak M_{g,n}^{\mathrm{pre}}.}
    \arrow["{\mathrm{gl}_{\QM}}", from=1-1, to=1-2]
    \arrow["{\mathrm{ev}_{g,n}^{\mathrm{node}}}", from=1-1, to=2-1]
    \arrow["\square"{description}, draw=none, from=1-1, to=2-2]
    \arrow["{\mathrm{ev}_{g,n}}", from=1-2, to=2-2]
    \arrow["{\mathrm{gl}^{\mathrm{pre}}}", from=2-1, to=2-2]
\end{tikzcd}\]
Here the fiber product 
is given by $\mathrm{ev}_{g_1,n_1+1;n_1+1}$ and
$\mathrm{ev}_{g_2,n_2+1;n_2+1}^{-}:=R'(-1)\circ\mathrm{ev}_{g_2,n_2+1;n_2+1}$.
\end{lem}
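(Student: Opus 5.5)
The plan is to obtain the diagram in two layers: a gluing statement for prestable curves, transported to derived mapping stacks by Weil restriction, followed by an open restriction to the $0^+$-stable loci. The model is the proof of Proposition \ref{prp-fiberprod-gluing-general}.

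First, over $\mathfrak M_{g_1,n_1+1}^{\mathrm{pre}}\times\mathfrak M_{g_2,n_2+1}^{\mathrm{pre}}$, the pullback of the universal curve along $\mathrm{gl}^{\mathrm{pre}}$ is the pushout of the two universal curves $\mcal C_1\sqcup\mcal C_2$ along the two marking sections $\Sigma_{1;n_1+1}$ and $\Sigma_{2;n_2+1}$. Both sections are identified with the base. Since the markings are smooth points and the node is a transverse gluing, this pushout exists and is universally a pushout, in the sense of \cite[Thm.~1.3, Rmk.~1.7]{Cho25}, after testing on derived affine schemes and étale locally.

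Next, I would check that $\omega^{\log}$ on the glued family restricts to $\omega^{\log}_i$ on each branch. The key point is that the residue at the node identifies the fibres of $\omega^{\log}_1$ and $\omega^{\log}_2$ at the two markings up to the sign $-1$. Consequently the principal $\bb C^*$-bundle $\omega^{\log,\circ}$ is glued from the branch bundles via the inversion $-1\in\bb C^*$. Acting through $R'$, this is exactly why the second evaluation is twisted to $\mathrm{ev}^-=R'(-1)\circ\mathrm{ev}$.

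With these two facts in hand, the twisted target $[\dCrit_{[V/G]}(w)/_{R'}\bb C^*]\times_{B\bb C^*,\omega^{\log}}\mcal C$ over the glued curve is the corresponding pushout of the branch targets. Applying the Weil restriction $\Res$, which converts pushouts of sources into fibre products, gives a homotopy pullback square. Its top-left corner is the fibre product of the two mapping stacks over $\dCrit_{[V/G]}(w)$ via $\mathrm{ev}_{n_1+1}$ and $\mathrm{ev}^-_{n_2+1}$, and its right column is $\dMap_{g,n}$ over $\mathfrak M^{\mathrm{pre}}_{g,n}$. The remaining evaluations $\mathrm{ev}_1,\ldots,\mathrm{ev}_n$ are compatible with this square, and adjoining them with the identity on $\dCrit^n$ yields the displayed square of $\dMap$-stacks.

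Finally, I would restrict to open loci. Under the glued map, a glued quasimap is $0^+$-stable exactly when both pieces are stable and the nodal value lies in the stable locus, using the fact that base points avoid nodes. The class $\beta$ splits as $\sum\beta_i$ over the connected components of the fibre product. Since open immersions are stable under base change, the square restricts to the stated one with $\dCrit_{V\sslash_\theta G}(w)$ at the node. On classical truncations this recovers \cite[Lem.~4.24]{CZ23}, which serves as a consistency check.

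I expect the main obstacle to be the sign and twist at the node. Concretely, one has to verify that the identification $\omega^{\log}|_{\Sigma_1}\cong\omega^{\log}|_{\Sigma_2}$ is given by the residue with the sign $-1$, and that this sign enters the twisted target exactly through $R'(-1)$. The derived enhancement is not a concern at this step, since \cite[Prop.~2.1]{STV15} makes it unique.
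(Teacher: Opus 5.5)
Your proposal is correct and follows essentially the same route as the paper. The steps match: you identify the base change of $\dMap_{g,n}^{R_{\chi}=\omega^{\log}}(\dCrit_{[V/G]}(w))$ along $\mathrm{gl}^{\mathrm{pre}}$ with the Weil restriction over the nodal family (the paper cites \cite[Lem.~2.3]{HKR25}), write that family as a pushout of the two branches along the node sections (the paper uses \cite[Thm.~5.6.4]{Lur04} and étale descent where you use \cite{Cho25}), turn the pushout into a fibre product of mapping stacks with the opposite residues producing $R'(-1)$, and restrict to the $0^+$-stable open loci. One small correction: drop the claim that the twisted target over the glued curve is itself a pushout of the branch targets, which is unproved and not needed, since the argument only uses the universal property of the base-changed nodal curve over each derived affine $T$ for maps into $[\dCrit_{[V/G]}(w)/_{R'}\bb C^*]$ over $B\bb C^*$.
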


\begin{proof}
We denote $\mathfrak M_{12}:=\mathfrak M_{g_1,n_1+1}^{\mathrm{pre}}\times\mathfrak M_{g_2,n_2+1}^{\mathrm{pre}}$.
By \cite[Lem.~2.3]{HKR25} and the following diagram of universal curves,
\[\begin{tikzcd}
    {\mcal C_{\mathrm{node}}} & {\mathfrak M_{12}} \\
    {\mcal C} & {\mathfrak M_{g,n}^{\mathrm{pre}},}
    \arrow[from=1-1, to=1-2]
    \arrow[from=1-1, to=2-1]
    \arrow["\square"{description}, draw=none, from=1-1, to=2-2]
    \arrow["{\mathrm{gl}^{\mathrm{pre}}}", from=1-2, to=2-2]
    \arrow[from=2-1, to=2-2]
\end{tikzcd}\]
we have
\begin{align*}
&\Res_{\mcal C/\mathfrak M_{g,n}^{\mathrm{pre}}}
\left([\dCrit_{[V/G]}(w)/_{R'}\bb C^*]\times_{B\bb C^*}\mcal C\right)
\times_{\mathfrak M_{g,n}^{\mathrm{pre}}}\mathfrak M_{12}\\
\cong&\Res_{\mcal C_{\mathrm{node}}/\mathfrak M_{12}}
\left([\dCrit_{[V/G]}(w)/_{R'}\bb C^*]\times_{B\bb C^*}\mcal C_{\mathrm{node}}\right).
\end{align*}

Note that we also have the following homotopical pushout diagram in the category of derived 
Artin stacks by \cite[Thm.~5.6.4]{Lur04} and étale descent:
\[\begin{tikzcd}
    \mathfrak M_{12} & {\mcal C_1\times\mathfrak M_{g_2,n_2+1}^{\mathrm{pre}}} \\
    {\mathfrak M_{g_1,n_1+1}^{\mathrm{pre}}\times\mcal C_2} & {\mcal C_{\mathrm{node}}}
    \arrow["{p_1}", from=1-1, to=1-2]
    \arrow["{p_2}", from=1-1, to=2-1]
    \arrow["\circ"{description}, draw=none, from=1-1, to=2-2]
    \arrow[from=1-2, to=2-2]
    \arrow[from=2-1, to=2-2]
\end{tikzcd}\]
where $p_1,p_2$ correspond to the two extra marked points.

We claim that we have the following pullback diagram of derived Artin stacks:
\begin{tiny}
\[\begin{tikzcd}
    {\Res_{\mcal C_{\mathrm{node}}/\mathfrak M_{12}}\left([\dCrit_{[V/G]}(w)/_{R'}\bb C^*]\times_{B\bb C^*}\mcal C_{\mathrm{node}}\right)} & {\Res_{\mcal C_1/\mathfrak M_{g_1,n_1+1}^{\mathrm{pre}}}\left([\dCrit_{[V/G]}(w)/_{R'}\bb C^*]\times_{B\bb C^*}\mcal C_1\right)\times\mathfrak M_{g_2,n_2+1}^{\mathrm{pre}}} \\
    {\Res_{\mcal C_2/\mathfrak M_{g_2,n_2+1}^{\mathrm{pre}}}\left([\dCrit_{[V/G]}(w)/_{R'}\bb C^*]\times_{B\bb C^*}\mcal C_2\right)\times\mathfrak M_{g_1,n_1+1}^{\mathrm{pre}}} & {\dCrit_{[V/G]}(w)\times\mathfrak M_{12}.}
    \arrow[from=1-1, to=1-2]
    \arrow[from=1-1, to=2-1]
    \arrow["\square"{description}, draw=none, from=1-1, to=2-2]
    \arrow["{\mathrm{ev}_{g_1,n_1+1;n_1+1}}", from=1-2, to=2-2]
    \arrow["{\mathrm{ev}_{g_2,n_2+1;n_2+1}^{-}}", from=2-1, to=2-2]
\end{tikzcd}\]
\end{tiny}
In the last square, the node target is trivialized using the residue on the first branch.
The opposite residue on the second branch accounts for the superscript $-$.
Indeed, for a derived affine scheme $T\to\mathfrak M_{12}$, 
the pushout property gives an equivalence of mapping-space functors
on derived Artin stacks over $B\bb C^*$:
\begin{align*}
	\operatorname{Map}_{B\bb C^*}(\mcal C_{\mathrm{node}}\times_{\mathfrak M_{12}}T,-)
\cong&
\operatorname{Map}_{B\bb C^*}(\mcal C_1\times_{\mathfrak M_{g_1,n_1+1}^{\mathrm{pre}}}T,-)\\
&{}\times_{\operatorname{Map}_{B\bb C^*}(T,-)}
\operatorname{Map}_{B\bb C^*}(\mcal C_2\times_{\mathfrak M_{g_2,n_2+1}^{\mathrm{pre}}}T,-).
\end{align*}
Evaluating at $[\dCrit_{[V/G]}(w)/_{R'}\bb C^*]$, the three
mapping spaces involving curves are precisely the $T$-points
of the corresponding Weil restrictions.
The residue trivialization on the first branch identifies the node
term with $\operatorname{Map}(T,\dCrit_{[V/G]}(w))$.
These identifications are natural in $T$.

Restricting to the stable locus gives the stated fibre product.
The map $\mathrm{ev}_{g,n}^{\mathrm{node}}$ records the evaluations at the remaining
$n$ markings and the underlying pair of curves.
\end{proof}

\begin{nota}
In the general case of orbicurves as in \cite{FJR18,KP25}, we can also generalize this to the following
homotopical pullback diagram of derived stacks:
\[\begin{tikzcd}
    {\coprod_{\beta_1+\beta_2=\beta}\dQM_1(\beta_1)\times_{+,I(\dCrit_{V\sslash_{\theta}G}(w)),-}\dQM_2(\beta_2)} & \dQM \\
    {I(\dCrit_{V\sslash_{\theta}G}(w))^n\times\mathfrak M_{g_1,n_1+1}^{\mathrm{tw}}\times_{B^2\mu}\mathfrak M_{g_2,n_2+1}^{\mathrm{tw}}} & {I(\dCrit_{V\sslash_{\theta}G}(w))^n\times\mathfrak M_{g,n}^{\mathrm{tw}}}
    \arrow["{\mathrm{gl}_{\QM}}", from=1-1, to=1-2]
    \arrow["{\mathrm{ev}_{g,n}^{\mathrm{node}}}", from=1-1, to=2-1]
    \arrow["\square"{description}, draw=none, from=1-1, to=2-2]
    \arrow["{\mathrm{ev}_{g,n}}", from=1-2, to=2-2]
    \arrow["{\mathrm{gl}^{\mathrm{tw}}}", from=2-1, to=2-2]
\end{tikzcd}\]
where $I(-):=\mathrm{Res}_{\mathrm{pt}/B^2\mu}(-)$ is the moduli stack of cyclotomic gerbes
from \cite[Def.~7.4]{KP25}, and $\mathfrak M_{g,n}^{\mathrm{tw}}$ is the moduli stack of
$n$-marked twisted prestable curves of genus $g$, which is a smooth Artin stack.
\end{nota}

We can consider
\begin{align*}
&\Phi_{g_1,n_1+1,\beta_1}\otimes\Phi_{g_2,n_2+1,\beta_2}:\\
&H^*_{F_0}(V\sslash_{\theta}G,w)^{\otimes(n+2)}
\otimes H^*\left(\overline{\mcal M}_{g_1,n_1+1}\times\overline{\mcal M}_{g_2,n_2+1}\right)
\to H^{F_0}_*(\mathrm{pt})_{\mathrm{loc}}.
\end{align*}

The preceding formulation yields the following version of \cite[Thm.~5.7]{CZ23} in our notation.

\begin{thm}\label{thm-quasimap-gluing}
For any $\alpha\in H^*\left(\overline{\mcal M}_{g_1,n_1+1}\times\overline{\mcal M}_{g_2,n_2+1}\right)$
and $\gamma\in H^*_{F_0}(V\sslash_{\theta}G,w)^{\otimes n}$, we have
\[
\Phi_{g,n,\beta}((\mathrm{gl}_*\alpha)\boxtimes\gamma)
=\sum_{\beta_1+\beta_2=\beta}
\Phi_{g_1,n_1+1,\beta_1}\otimes\Phi_{g_2,n_2+1,\beta_2}
(\alpha\boxtimes(\gamma\boxtimes\Delta^{\mathrm{Lag}})),
\]
where $\Delta^{\mathrm{Lag}}\in H^*_{F_0}(V\sslash_{\theta}G,w)^{\otimes2}$
is represented by
\begin{align*}
\bb Q\to(\Delta^-)_{*}\bb Q
&\to
(\Delta^-)_*(\Delta^-)^!\left(\varphi_w^{\boxtimes2}\right)
\xrightarrow{\mathrm{can}}\varphi_w^{\boxtimes2}.
\end{align*}
Here $\Delta^-:=(\mathrm{id},R'(-1)):
\dCrit_{V\sslash_{\theta}G}(w)
\longrightarrow\dCrit_{V\sslash_{\theta}G}(w)^{\times2}$.
\end{thm}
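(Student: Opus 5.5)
The plan is to run the same associativity argument as in Theorem \ref{thm-degeneration-general-curves}, with the gluing square of Lemma \ref{lem-gluing-QM-pullback-nonorbifold} in place of Proposition \ref{prp-fiberprod-gluing-general}. I would then push forward to a point using properness of the $F_0$-fixed loci.

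First, I would record the two oriented exact $(-1)$-shifted Lagrangians relative to $\mathfrak M_{12}:=\mathfrak M_{g_1,n_1+1}^{\mathrm{pre}}\times\mathfrak M_{g_2,n_2+1}^{\mathrm{pre}}$. The first is the product $\mathrm{ev}_{g_1,n_1+1}\times\mathrm{ev}_{g_2,n_2+1}$, which is Lagrangian by \cite[Prop.~7.7]{KP25} applied factorwise. The second is the twisted diagonal
\[
\Delta^-:\dCrit_{V\sslash_{\theta}G}(w)\to\dCrit_{V\sslash_{\theta}G}(w)^{\times 2},
\]
taken with the opposite symplectic sign on the second factor, exactly as for $\delta_\gamma$. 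The sign twist $R'(-1)$ is what makes $\Delta^-$ Lagrangian for the product form: $R'(-1)$ acts on the $(-1)$-shifted form by $\chi\circ R'(-1)=-1$, so $\Delta^-$ is the graph of an anti-symplectomorphism. Lemma \ref{lem-gluing-QM-pullback-nonorbifold} then exhibits the source $\coprod_{\beta_1+\beta_2=\beta}\dQM_1(\beta_1)\times_{\dCrit}\dQM_2(\beta_2)$ as the composition of these correspondences. It also exhibits this source as the derived base change of $\mathrm{ev}_{g,n}$ along $\mathrm{gl}^{\mathrm{pre}}$.

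Second, I would compare the two sides of the formula sheaf-theoretically. For the left-hand side, the classes $\mathrm{st}^*\mathrm{gl}_*\alpha$ are handled via the Cartesian square of $\mathrm{gl}^{\mathrm{pre}}$ over $\mathrm{gl}$, which is compatible with the flat map $\mathrm{st}$ (using \cite{Beh97}). This gives $\mathrm{st}^*\mathrm{gl}_*\alpha=\mathrm{gl}^{\mathrm{pre}}_*\mathrm{st}_{12}^*\alpha$, at least in the finite-gluing-map sense. Then the finite base change property, Conjecture \ref{conj-Joyce-conj}(3), for the finite morphism $\mathrm{gl}^{\mathrm{pre}}$ gives
\[
[\mathrm{ev}_{g,n}]^{\mathrm{Lag}}(\mathrm{gl}^{\mathrm{pre}}_*\,\cdot)=\mathrm{gl}_{\QM,*}[\mathrm{ev}^{\mathrm{node}}_{g,n}]^{\mathrm{Lag}}(\cdot),
\]
where $\mathrm{ev}^{\mathrm{node}}_{g,n}$ carries the base-changed Lagrangian structure. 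For the right-hand side, associativity, Conjecture \ref{conj-Joyce-conj}(2), expresses $[\mathrm{ev}^{\mathrm{node}}_{g,n}]^{\mathrm{Lag}}$ as the composite of $[\Delta^-]^{\mathrm{Lag}}$ and $[\mathrm{ev}_1\times\mathrm{ev}_2]^{\mathrm{Lag}}$. I would then reproduce the three commuting diagrams of Theorem \ref{thm-degeneration-general-curves}: the unit/$\mathrm{Ex}_!^*$ square, naturality of $\mathrm{Ex}_!^*$, and the adjunction square. These identify the resulting class with $[\mathrm{ev}_1\times\mathrm{ev}_2]^{\mathrm{Lag}}(\alpha\boxtimes\gamma\boxtimes\Delta^{\mathrm{Lag}})$, with $\Delta^{\mathrm{Lag}}$ defined by the displayed composite. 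Since $\Delta^-$ is a closed immersion (separatedness), $\Delta^-_*=\Delta^-_!$ holds, as there. Thom--Sebastiani identifies $\varphi_w^{\boxtimes 2}$ with the perverse pullback for the product.

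Third, I would push forward to a point $F_0$-equivariantly. Properness of the fixed loci makes the pushforwards along $p_{\QM}$ and $p_{\QM_1}\times p_{\QM_2}$ well defined after localization. The projection formula then gives the product $\Phi_{g_1,n_1+1,\beta_1}\otimes\Phi_{g_2,n_2+1,\beta_2}$. Finally, summing over the components $\beta_1+\beta_2=\beta$ of the disjoint union gives the formula.

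I expect the main obstacle to be the $F_0$-equivariant version of the finite base change and associativity statements. Concretely, this means checking that the orientation on the glued $\dQM$ restricts to the product orientation composed with the canonical one on $\Delta^-$, including the sign introduced by $R'(-1)$. I would try to deduce this by comparing the determinant line of the glued deformation complex with the normalization sequence at the node, as in \cite[Lem.~4.24]{CZ23}.
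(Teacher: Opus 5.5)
Your strategy is the paper's. You identify $\mathrm{st}^*\mathrm{gl}_*\alpha$ with $\mathrm{gl}^{\mathrm{pre}}_*(\mathrm{st}_1\times\mathrm{st}_2)^*\alpha$, apply finite base change (Conjecture~\ref{conj-Joyce-conj}(3)) along $\mathrm{gl}^{\mathrm{pre}}$ using the square of Lemma~\ref{lem-gluing-QM-pullback-nonorbifold}, and split $[\mathrm{ev}^{\mathrm{node}}_{g,n}]^{\mathrm{Lag}}$ by associativity with the three diagrams of Theorem~\ref{thm-degeneration-general-curves}. You then push forward to a point. Two steps are justified incorrectly, however.

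First, the square formed by $\mathrm{gl}^{\mathrm{pre}}$, $\mathrm{gl}$ and the stabilization maps is not Cartesian. The fibre product $\mathfrak Q=(\overline{\mcal M}_{g_1,n_1+1}\times\overline{\mcal M}_{g_2,n_2+1})\times_{\overline{\mcal M}_{g,n}}\mathfrak M^{\mathrm{pre}}_{g,n}$ contains prestable curves in which the separating node of the stabilization is replaced by a chain of unstable rational components. Consequently the comparison map $i_{\mathfrak Q}:\mathfrak M^{\mathrm{pre}}_{g_1,n_1+1}\times\mathfrak M^{\mathrm{pre}}_{g_2,n_2+1}\to\mathfrak Q$ is finite but not an isomorphism. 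The paper applies flat base change to the genuinely Cartesian $\mathfrak Q$-square, giving $\mathrm{st}^*\mathrm{gl}_*\alpha=j_*p^*\alpha$. It then uses \cite[Eqn.~(5.9)]{CZ23} for $p^*\alpha=i_{\mathfrak Q,*}(\mathrm{st}_1\times\mathrm{st}_2)^*\alpha$. The same factorization $\mathrm{gl}^{\mathrm{pre}}=j\circ i_{\mathfrak Q}$, with $j$ finite as the base change of $\mathrm{gl}$ (finite by \cite{Knu83}), is also what supplies the finiteness of $\mathrm{gl}^{\mathrm{pre}}$. You simply assume that finiteness when invoking Conjecture~\ref{conj-Joyce-conj}(3).

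Second, your sign bookkeeping for $\Delta^-$ counts the sign twice. The node evaluations $\mathrm{ev}_{\beta_1}\times\mathrm{ev}_{\beta_2}$ are Lagrangian for a form in which both node factors carry the same sign, since the markings are treated symmetrically. $\Delta^-$ is Lagrangian for that form exactly because $R'(-1)$ is anti-symplectic. If you additionally reverse the form on the second factor ``exactly as for $\delta_\gamma$'', the form pulls back along $\Delta^-$ to $\omega-R'(-1)^*\omega=2\omega$. So the twisted diagonal is then not isotropic; the untwisted diagonal would be the Lagrangian instead. In the degeneration formula the opposite signs come from the two flags of an edge; here they are encoded entirely by $R'(-1)$, as the paper says ("the sign is encoded by $\Delta^-$").

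Once these are fixed, your proposed determinant-line check of orientations via the normalization sequence is extra work. The paper does not carry it out and absorbs it into the compatible-orientation hypotheses of Conjecture~\ref{conj-Joyce-conj}(2),(3).
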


\begin{proof}
We have the following diagram:
\[\begin{tikzcd}
    {\mathfrak M_{g_1,n_1+1}^{\mathrm{pre}}\times\mathfrak M_{g_2,n_2+1}^{\mathrm{pre}}} & {\mathfrak Q} & {\mathfrak M_{g,n}^{\mathrm{pre}}} \\
    & {\overline{\mcal M}_{g_1,n_1+1}\times\overline{\mcal M}_{g_2,n_2+1}} & {\overline{\mcal M}_{g,n}}
    \arrow["{i_{\mathfrak Q}}"{description}, from=1-1, to=1-2]
    \arrow["{\mathrm{gl}^{\mathrm{pre}}}"{description}, bend left=15, from=1-1, to=1-3]
    \arrow["{\mathrm{st}_1\times\mathrm{st}_2}"{description}, from=1-1, to=2-2]
    \arrow["j"{description}, from=1-2, to=1-3]
    \arrow["p", from=1-2, to=2-2]
    \arrow["\square"{description}, draw=none, from=1-2, to=2-3]
    \arrow["{\mathrm{st}}", from=1-3, to=2-3]
    \arrow["{\mathrm{gl}}", from=2-2, to=2-3]
\end{tikzcd}\]

By \cite[Proof of Thm.~5.7, Eqn.~(5.9)]{CZ23}, we know that the morphism
$i_{\mathfrak Q}$ is finite and
\begin{align*}
\mathrm{st}^*\mathrm{gl}_*\alpha
&=j_*p^*\alpha
=j_*i_{\mathfrak Q,*}(\mathrm{st}_1\times\mathrm{st}_2)^*\alpha\\
&=\mathrm{gl}^{\mathrm{pre}}_*(\mathrm{st}_1\times\mathrm{st}_2)^*\alpha.
\end{align*}
By Lemma \ref{lem-gluing-QM-pullback-nonorbifold}, we have:
\[\begin{tikzcd}
    {\coprod_{\beta_1+\beta_2=\beta}\dQM_1(\beta_1)\times_{+,\dCrit_{V\sslash_{\theta}G}(w),-}\dQM_2(\beta_2)} && \dQM \\
    {\dCrit_{V\sslash_{\theta}G}(w)^n\times\mathfrak M_{g_1,n_1+1}^{\mathrm{pre}}\times\mathfrak M_{g_2,n_2+1}^{\mathrm{pre}}} && {\dCrit_{V\sslash_{\theta}G}(w)^n\times\mathfrak M_{g,n}^{\mathrm{pre}}.}
    \arrow["{\mathrm{gl}_{\QM}}", from=1-1, to=1-3]
    \arrow["{\mathrm{ev}_{g,n}^{\mathrm{node}}}", from=1-1, to=2-1]
    \arrow["\square"{description}, draw=none, from=1-1, to=2-3]
    \arrow["{\mathrm{ev}_{g,n}}", from=1-3, to=2-3]
    \arrow["{\mathrm{gl}^{\mathrm{pre}}}", from=2-1, to=2-3]
\end{tikzcd}\]
Moreover, by \cite[Cor.~3.9]{Knu83}, $\mathrm{gl}$ and hence $j$ are finite.
Therefore the morphism $\mathrm{gl}^{\mathrm{pre}}=j\circ i_{\mathfrak Q}$ is finite.
Combining this with Conjecture \ref{conj-Joyce-conj}(3)
and Remark \ref{rmk-KKPS-lagrangian-classes}, we get
\[
[\mathrm{ev}_{g,n}]^{\mathrm{Lag}}\circ\mathrm{gl}^{\mathrm{pre}}_*
=\mathrm{gl}_{\QM,*}\circ[\mathrm{ev}_{g,n}^{\mathrm{node}}]^{\mathrm{Lag}}.
\]
Hence we have
\[
\Phi_{g,n,\beta}((\mathrm{gl}_*\alpha)\boxtimes\gamma)
=\sum_{\beta_1+\beta_2=\beta}
\left(p_{\QM_1(\beta_1)\times_{+,\dCrit(w)}\QM_2(\beta_2),-}\right)_*
[\mathrm{ev}_{g,n}^{\mathrm{node},\beta_1,\beta_2}]^{\mathrm{Lag}}
\left((\mathrm{st}_1\times\mathrm{st}_2)^*\alpha\boxtimes\gamma\right).
\]

Consider the following analogous diagram, which is a composition of two Lagrangian
correspondences relative to
$\mathfrak M_{g_1,n_1+1}^{\mathrm{pre}}\times\mathfrak M_{g_2,n_2+1}^{\mathrm{pre}}$:
\[\begin{tikzcd}
    {\dQM_1(\beta_1)\times_{+,\dCrit_{V\sslash_{\theta}G}(w),-}\dQM_2(\beta_2)} & {\dCrit_{V\sslash_{\theta}G}(w)\times\left(\mathfrak M_{g_1,n_1+1}^{\mathrm{pre}}\times\mathfrak M_{g_2,n_2+1}^{\mathrm{pre}}\right)} \\
    {\dQM_1(\beta_1)\times\dQM_2(\beta_2)} & {\dCrit_{V\sslash_{\theta}G}(w)^{\times2}\times\left(\mathfrak M_{g_1,n_1+1}^{\mathrm{pre}}\times\mathfrak M_{g_2,n_2+1}^{\mathrm{pre}}\right)} \\
    {\dCrit_{V\sslash_{\theta}G}(w)^{\times n}\times\left(\mathfrak M_{g_1,n_1+1}^{\mathrm{pre}}\times\mathfrak M_{g_2,n_2+1}^{\mathrm{pre}}\right).}
    \arrow["{\mathrm{ev}^{\Delta^-}_{1,2}}", from=1-1, to=1-2]
    \arrow["{i_{\Delta^-}^{\beta_1,\beta_2}}", from=1-1, to=2-1]
    \arrow["\square"{description}, draw=none, from=1-1, to=2-2]
    \arrow["{\Delta^-\times\mathrm{id}}", from=1-2, to=2-2]
    \arrow["{\mathrm{ev}^{\mathrm{pre}}_{\beta_1}\times\mathrm{ev}_{\beta_2}^{\mathrm{pre}}}", from=2-1, to=2-2]
    \arrow["{\mathrm{ev}_n}", from=2-1, to=3-1]
\end{tikzcd}\]
Here $\mathrm{ev}^{\mathrm{pre}}_{\beta_1}\times\mathrm{ev}^{\mathrm{pre}}_{\beta_2}$
uses the two unchanged node evaluations; the sign is encoded by $\Delta^-$.
By arguments similar to those in Theorem \ref{thm-degeneration-general-curves},
using the associativity of shifted Lagrangian classes in
Conjecture \ref{conj-Joyce-conj}(2)
and Remark \ref{rmk-KKPS-lagrangian-classes}, we have
\begin{align*}
&i_{\Delta^-,*}^{\beta_1,\beta_2}
[\mathrm{ev}_{g,n}^{\mathrm{node},\beta_1,\beta_2}]^{\mathrm{Lag}}
\left((\mathrm{st}_1\times\mathrm{st}_2)^*\alpha\boxtimes\gamma\right)\\
&=[\mathrm{ev}_{\beta_1}\times\mathrm{ev}_{\beta_2}]^{\mathrm{Lag}}
\left((\mathrm{st}_1\times\mathrm{st}_2)^*\alpha
\boxtimes\left(\gamma\boxtimes\Delta^{\mathrm{Lag}}\right)\right).
\end{align*}
Hence we have
\begin{align*}
\Phi_{g,n,\beta}((\mathrm{gl}_*\alpha)\boxtimes\gamma)
&=\sum_{\beta_1+\beta_2=\beta}
p_{\QM_1(\beta_1)\times\QM_2(\beta_2),*}\circ i_{\Delta^-,*}^{\beta_1,\beta_2}
[\mathrm{ev}_{g,n}^{\mathrm{node},\beta_1,\beta_2}]^{\mathrm{Lag}}
\left((\mathrm{st}_1\times\mathrm{st}_2)^*\alpha\boxtimes\gamma\right)\\
&=\sum_{\beta_1+\beta_2=\beta}
p_{\QM_1(\beta_1)\times\QM_2(\beta_2),*}
[\mathrm{ev}_{\beta_1}\times\mathrm{ev}_{\beta_2}]^{\mathrm{Lag}}
\left((\mathrm{st}_1\times\mathrm{st}_2)^*\alpha
\boxtimes\left(\gamma\boxtimes\Delta^{\mathrm{Lag}}\right)\right)\\
&=\sum_{\beta_1+\beta_2=\beta}
\Phi_{g_1,n_1+1,\beta_1}\otimes\Phi_{g_2,n_2+1,\beta_2}
\left(\alpha\boxtimes\left(\gamma\boxtimes\Delta^{\mathrm{Lag}}\right)\right).
\end{align*}
This proves the result.
\end{proof}

\begin{nota}
In \cite{CTZ25} and \cite{CO}, K-theoretic Lagrangian classes of $(-1)$-shifted Lagrangians
over derived critical loci are described
by different constructions of specialization maps. In \cite[\S~8]{CTZ25}, they also prove 
the K-theoretic degeneration and gluing formulae.
\end{nota}

\subsection{Small quantum critical cohomology}\label{sec-quantum-critical-coho}

Following \cite[\S~5.6]{CZ23}, we use the preceding gluing formula to define a small quantum product in the present setting.

\begin{defi}
Consider the Lagrangian diagonal $\Delta^{\mathrm{Lag}}=\sum_i\Delta_i\boxtimes\Delta^i\in H^*_{F_0}(V\sslash_{\theta}G,w)^{\otimes 2}$ and let $\gamma_1,...,\gamma_n\in H^*_{F_0}(V\sslash_{\theta}G,w)$.
\begin{enumerate}
    \item Define $\left\langle\gamma_1,...,\gamma_n\right\rangle_{g,n,\beta}:=\Phi_{g,n,\beta}\left([\overline{\mcal M}_{g,n}]\boxtimes\gamma_1\boxtimes\cdots\boxtimes\gamma_n\right)$
	and define
    \[\left\langle\gamma_1,...,\gamma_n,*\right\rangle_{g,n+1,\beta}:=\sum_i\left\langle\gamma_1,...,\gamma_n,\Delta_i\right\rangle_{g,n+1,\beta}\Delta^i\in H^*_{F_0}(V\sslash_{\theta}G,w)_{\mathrm{loc}}.\]

	    \item The \textit{small quantum product} is given by 
    \[\gamma_1\star\gamma_2:=\sum_{\beta\in N_+(\Crit(w))}
	\left\langle\gamma_1,\gamma_2,*\right\rangle_{0,3,\beta}z^{\beta}\in 
	H^*_{F_0}(V\sslash_{\theta}G,w)_{\mathrm{loc}}[[z]].\]
    This can be extended to $H^*_{F_0}(V\sslash_{\theta}G,w)_{\mathrm{loc}}[[z]]$ naturally.
\end{enumerate}
\end{defi}

The gluing formula implies associativity of the product by arguments similar to those for the Witten-Dijkgraaf-Verlinde-Verlinde (WDVV) equation.
\begin{prp}[associativity]
For any $\gamma_1,\gamma_2,\gamma_3\in H^*_{F_0}(V\sslash_{\theta}G,w)$, we have
\[(\gamma_1\star\gamma_2)\star\gamma_3=\gamma_1\star(\gamma_2\star\gamma_3).\]
\end{prp}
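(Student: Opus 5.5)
The plan is the standard WDVV argument, with the splitting formula of Theorem \ref{thm-quasimap-gluing} playing the role that the splitting axiom plays in Gromov--Witten theory. Work in genus $0$ with four markings, so that $\overline{\mcal M}_{0,4}\cong\bb P^1$. Fix $\gamma_1,\gamma_2,\gamma_3$ and a test class $\gamma_4$. We would then pair both sides against $\gamma_4$ using the Lagrangian diagonal $\Delta^{\mathrm{Lag}}=\sum_i\Delta_i\boxtimes\Delta^i$. The first thing to record is that this pairing is nondegenerate on $H^*_{F_0}(V\sslash_\theta G,w)_{\mathrm{loc}}$: it should be the pairing induced by $\varphi_w\cong\bb D\varphi_w$ and properness of $\Crit(w)^{F_0}$. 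Consequently the coefficient of $z^\beta$ in $(\gamma_1\star\gamma_2)\star\gamma_3$, paired with $\gamma_4$, equals
\[
\sum_{\beta_1+\beta_2=\beta}\sum_i\langle\gamma_1,\gamma_2,\Delta_i\rangle_{0,3,\beta_1}\langle\Delta^i,\gamma_3,\gamma_4\rangle_{0,3,\beta_2}.
\]
This rests on the identity $\langle \Delta_i\cdot,\gamma_3,\gamma_4\rangle$-contraction and on the symmetry of $\Delta^{\mathrm{Lag}}$ under swapping factors. We expect to check this using the unitality and sign conventions in the definition of $\Delta^-$.

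Next, let $D_{12|34}$ and $D_{14|23}$ be the boundary points of $\overline{\mcal M}_{0,4}$, each equal to $\mathrm{gl}_*[\overline{\mcal M}_{0,3}\times\overline{\mcal M}_{0,3}]$ for the appropriate distribution of markings. Since these are rationally equivalent points of $\bb P^1$, they define the same class in $H^*(\overline{\mcal M}_{0,4})$. Applying $\Phi_{0,4,\beta}(-\boxtimes\gamma_1\boxtimes\gamma_2\boxtimes\gamma_3\boxtimes\gamma_4)$ to both gives equal numbers. Theorem \ref{thm-quasimap-gluing} with $g_1=g_2=0$ and $n_1=n_2=2$ then rewrites each side as a sum over $\beta_1+\beta_2=\beta$ of products of three-point invariants contracted with $\Delta^{\mathrm{Lag}}$. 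The $12|34$ side is exactly the expression above for $(\gamma_1\star\gamma_2)\star\gamma_3$. The $14|23$ side, after using the $S_n$-symmetry of the invariants, gives $\langle\gamma_1\star(\gamma_2\star\gamma_3),\gamma_4\rangle$.

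That symmetry is obtained by relabelling markings: $\Phi_{g,n,\beta}$ is equivariant under permutations acting simultaneously on $\overline{\mcal M}_{g,n}$ and on the tensor factors, with Koszul signs coming from the parity of the classes. Summing over $\beta$ with $z^\beta$ and invoking nondegeneracy then yields the associativity. The $\beta=0$, three-point terms need a separate check, because $\overline{\mcal M}_{0,3}$ stability requires $2g-2+n>0$; these terms are allowed and should reduce to the classical cup product on critical cohomology.

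The main obstacle is the bookkeeping of $\Delta^{\mathrm{Lag}}$. We must confirm that it is symmetric, including under the $R'(-1)$ twist hidden in $\Delta^-$, and that it is the inverse of the Poincaré-type pairing. Without this, the two splittings need not be comparable, and the contraction with $\gamma_4$ cannot be inverted. For that step we would first try to identify $\Delta^{\mathrm{Lag}}$, via Conjecture \ref{conj-Joyce-conj}(1) and (4), with the class of the diagonal under Verdier self-duality of $\varphi_w$. Its symmetry should then follow from the commutativity constraint of the Thom--Sebastiani isomorphism, as in Theorem \ref{thm-perverse-pullback}.
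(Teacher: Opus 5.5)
\textbf{Assessment.} Your overall strategy is the paper's: WDVV via the two boundary points of $\overline{\mcal M}_{0,4}$, together with Theorem~\ref{thm-quasimap-gluing} for $g_1=g_2=0$, $n_1=n_2=2$. The gap is the detour through a test class $\gamma_4$. That detour needs two facts:
\begin{enumerate}
\item[(i)] a nondegenerate pairing on $H^*_{F_0}(V\sslash_\theta G,w)_{\mathrm{loc}}$;
\item[(ii)] $\Delta^{\mathrm{Lag}}$ is the K\"unneth decomposition of the inverse of that pairing, i.e.\ $\sum_j\Delta_j\,(\Delta^j,\gamma_4)=\gamma_4$.
\end{enumerate}
You need (ii) to pass from $\sum_{i,j}\langle\gamma_1,\gamma_2,\Delta_i\rangle_{0,3,\beta_1}\langle\Delta^i,\gamma_3,\Delta_j\rangle_{0,3,\beta_2}\Delta^j$ to your scalar expression. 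You need (i) to pass back from ``equal pairings with every $\gamma_4$'' to equality of classes.

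Neither fact is proved in your proposal; you list them as the main obstacle and only sketch a hope. The paper does not establish them either. In particular, Conjecture~\ref{conj-Joyce-conj}(4) is a comparison with Park's square-root pullback for $(-2)$-shifted fibrations. It says nothing about the diagonal class of a $(-1)$-shifted symplectic stack being inverse to a perfect global pairing. So as written your argument is conditional on an unestablished statement.

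\textbf{The fix: do not pair at all.} Put $\Delta_j$ in the fourth slot and multiply by $\Delta^j$.
\begin{enumerate}
\item By linearity, the $z^\beta$-coefficient of $(\gamma_1\star\gamma_2)\star\gamma_3$ is $\sum_{\beta_1+\beta_2=\beta}\sum_{i,j}\langle\gamma_1,\gamma_2,\Delta_i\rangle_{0,3,\beta_1}\langle\Delta^i,\gamma_3,\Delta_j\rangle_{0,3,\beta_2}\Delta^j$.
\item Theorem~\ref{thm-quasimap-gluing} with $\gamma=\gamma_1\boxtimes\gamma_2\boxtimes\gamma_3\boxtimes\Delta_j$ identifies this with $\sum_j\Phi_{0,4,\beta}(\xi_{12|34}\boxtimes\gamma_1\boxtimes\gamma_2\boxtimes\gamma_3\boxtimes\Delta_j)\Delta^j$.
\item The same computation gives the coefficient for $\gamma_1\star(\gamma_2\star\gamma_3)$ with $\xi_{14|23}$ in place of $\xi_{12|34}$.
\item Since $\xi_{12|34}=\xi_{14|23}$ in $H^2(\overline{\mcal M}_{0,4})$, the two sides agree.
\end{enumerate}
This is exactly the paper's proof.

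\textbf{What bookkeeping remains.} In this form you only need the $S_n$-relabelling required to match the marking conventions of Theorem~\ref{thm-quasimap-gluing}; the paper uses this tacitly too. Put the component carrying the inner product's $\Delta_i$ first, i.e.\ $\{1,2\}$, respectively $\{2,3\}$. Then you never need the swap symmetry of $\Delta^{\mathrm{Lag}}$ either, only $S_3$-symmetry of three-point invariants.

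Finally, the $\beta=0$ three-point terms need no separate check. For $(0,3)$ we have $2g-2+n=1>0$, and the gluing theorem applies uniformly to all splittings, including $\beta_i=0$. Whether those terms recover the cup product is irrelevant to associativity.
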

\begin{proof}
Fix a total class $\beta$. Let
$\xi_{12|34},\xi_{14|23}\in H^2(\overline{\mcal M}_{0,4})$
be the classes of the corresponding boundary points.
By Theorem \ref{thm-quasimap-gluing}, we have
\begin{align*}
&\sum_{\beta_1+\beta_2=\beta}
\left\langle
\left\langle\gamma_1,\gamma_2,*\right\rangle_{0,3,\beta_1},
\gamma_3,*
\right\rangle_{0,3,\beta_2}\\
&=\sum_{\beta_1+\beta_2=\beta}\sum_{i,j}
\left\langle\gamma_1,\gamma_2,\Delta_i\right\rangle_{0,3,\beta_1}
\left\langle\Delta^i,\gamma_3,\Delta_j\right\rangle_{0,3,\beta_2}
\Delta^j\\
&=\sum_j\Phi_{0,4,\beta}
\left(\xi_{12|34}\boxtimes\gamma_1\boxtimes\gamma_2
\boxtimes\gamma_3\boxtimes\Delta_j\right)\Delta^j.
\end{align*}
This is the coefficient of $z^\beta$ in
$(\gamma_1\star\gamma_2)\star\gamma_3$.
Similarly, the coefficient of $z^\beta$ in
$\gamma_1\star(\gamma_2\star\gamma_3)$ is the same expression
with $\xi_{12|34}$ replaced by $\xi_{14|23}$.
Since these point classes agree in
$H^2(\overline{\mcal M}_{0,4})$, the result follows.
\end{proof}

\begin{nota}
Let $\gamma_1,\gamma_2,\tau\in H^*_{F_0}(V\sslash_{\theta}G,w)$.
Then we can define the \textit{big quantum product}:
\[ 
\gamma_1\star_{\mathrm{big},\tau}\gamma_2
:=\sum_{n\geq0}\sum_{\beta\in N_+(\Crit(w))}\sum_i\frac{1}{n!}\Phi_{0,n+3,\beta} 
\left([\overline{\mcal M}_{0,n+3}]\boxtimes\gamma_1\boxtimes\gamma_2\boxtimes\Delta_i\boxtimes\tau^{\boxtimes n}
\right)\Delta^i z^{\beta}.
\]
The gluing formula yields a WDVV-type equation as above and hence the associativity.
\end{nota}

\subsection{Genus reduction formula for quasimap invariants}
In this section we will consider the behavior of quasimap invariants along another 
gluing morphism:
\[\begin{tikzcd}
	{\mathfrak M_{g-1,n+2}^{\mathrm{pre}}} & {\mathfrak M_{g,n}^{\mathrm{pre}}} \\
	{\overline{\mcal M}_{g-1,n+2}} & {\overline{\mcal M}_{g,n}}
	\arrow["{\psi_{g,n}^{\mathrm{pre}}}", from=1-1, to=1-2]
	\arrow["{\mathrm{st}_{g-1,n+2}}", from=1-1, to=2-1]
	\arrow["{\mathrm{st}_{g,n}}", from=1-2, to=2-2]
	\arrow["{\psi_{g,n}}", from=2-1, to=2-2]
\end{tikzcd}\]
by gluing the final two marked points together.

\begin{thm}[Genus reduction formula]\label{thm-quasimap-self-gluing}
For $\alpha\in H^*(\overline{\mcal M}_{g-1,n+2})$
and $\gamma\in H_{F_0}^*(V\sslash_{\theta}G,w)^{\otimes n}$, we have
\[
\Phi_{g,n,\beta}((\psi_{g,n,*}\alpha)\boxtimes\gamma)
=\Phi_{g-1,n+2,\beta}
(\alpha\boxtimes\gamma\boxtimes\Delta^{\mathrm{Lag}})
\]
for the Lagrangian diagonal $\Delta^{\mathrm{Lag}}$
defined in Theorem \ref{thm-quasimap-gluing}.
\end{thm}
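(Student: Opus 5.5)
The proof will mirror that of Theorem \ref{thm-quasimap-gluing}, with the separating gluing replaced by the self-gluing $\psi^{\mathrm{pre}}_{g,n}$. We first need the analogue of Lemma \ref{lem-gluing-QM-pullback-nonorbifold}: a homotopical pullback square
\[
\begin{tikzcd}
{\dQM_{g-1,n+2}(\beta)\times_{(\mathrm{ev}_{n+1},\mathrm{ev}^-_{n+2}),\dCrit(w)^{2},\Delta}\dCrit(w)} & \dQM \\
{\dCrit(w)^n\times\mathfrak M_{g-1,n+2}^{\mathrm{pre}}} & {\dCrit(w)^n\times\mathfrak M_{g,n}^{\mathrm{pre}}.}
\arrow["{\psi_{\QM}}", from=1-1, to=1-2]
\arrow[from=1-1, to=2-1]
\arrow["\square"{description}, draw=none, from=1-1, to=2-2]
\arrow["{\mathrm{ev}_{g,n}}", from=1-2, to=2-2]
\arrow["{\psi^{\mathrm{pre}}_{g,n}}", from=2-1, to=2-2]
\end{tikzcd}
\]
To construct it, base change the universal curve along $\psi^{\mathrm{pre}}_{g,n}$, identify the Weil restrictions via \cite[Lem.~2.3]{HKR25}, and present the nodal universal curve as the homotopy pushout of $\mcal C_{g-1,n+2}\leftarrow \mathfrak M^{\mathrm{pre}}_{g-1,n+2}\sqcup\mathfrak M^{\mathrm{pre}}_{g-1,n+2}\rightrightarrows \mathfrak M^{\mathrm{pre}}_{g-1,n+2}$, which identifies the two extra sections. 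Mapping spaces then turn this coequalizer into an equalizer. The residue trivializations on the two branches again differ by $R'(-1)$, and this produces $\Delta^-$. Unlike the separating case, the class $\beta$ is not split, so there is a single summand. Restricting to the $0^+$-stable locus gives the square above, since stability is detected on the normalization.

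Next, we take the stabilization square for $\psi$. Let $\mathfrak Q=\overline{\mcal M}_{g-1,n+2}\times_{\overline{\mcal M}_{g,n}}\mathfrak M^{\mathrm{pre}}_{g,n}$. Exactly as in \cite[Proof of Thm.~5.7]{CZ23}, we should have $\mathrm{st}^*\psi_*\alpha=\psi^{\mathrm{pre}}_*\mathrm{st}_{g-1,n+2}^*\alpha$, with $\psi^{\mathrm{pre}}_{g,n}$ finite, using finiteness of $\psi_{g,n}$ \cite{Knu83}. Combining Conjecture \ref{conj-Joyce-conj}(3) (finite base change) with Remark \ref{rmk-KKPS-lagrangian-classes} then gives $[\mathrm{ev}_{g,n}]^{\mathrm{Lag}}\circ\psi^{\mathrm{pre}}_*=\psi_{\QM,*}\circ[\mathrm{ev}^{\mathrm{node}}]^{\mathrm{Lag}}$. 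After pushing forward to a point, $\Phi_{g,n,\beta}((\psi_*\alpha)\boxtimes\gamma)$ is therefore the degree of $[\mathrm{ev}^{\mathrm{node}}]^{\mathrm{Lag}}(\mathrm{st}^*\alpha\boxtimes\gamma)$ on the self-glued moduli space.

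Finally, the self-glued space is the Lagrangian intersection of $\mathrm{ev}_{g-1,n+2}$ (restricted to its last two evaluations) with $\Delta^-\times\mathrm{id}$ over $\mathfrak M^{\mathrm{pre}}_{g-1,n+2}$. Associativity (Conjecture \ref{conj-Joyce-conj}(2)), together with the same three exchange-isomorphism diagrams used in the proof of Theorem \ref{thm-degeneration-general-curves}, identifies $i_{\Delta^-,*}[\mathrm{ev}^{\mathrm{node}}]^{\mathrm{Lag}}(\cdots)$ with $[\mathrm{ev}_{g-1,n+2}]^{\mathrm{Lag}}(\mathrm{st}^*\alpha\boxtimes\gamma\boxtimes\Delta^{\mathrm{Lag}})$. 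Pushing forward via proper $F_0$-fixed loci then gives the formula.

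The main obstacle is the first step. We must check that the coequalizer description of the self-nodal curve is a homotopy colimit in derived stacks, so that \cite[Thm.~5.6.4]{Lur04} applies to a non-separating node, and we must track the sign and orientation of the residue at the two branches so that the fibre product is taken along $\Delta^-$ with compatible orientations. Everything else copies the separating case.
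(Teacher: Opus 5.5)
Your proposal is correct and follows essentially the same route as the paper. Both arguments combine three ingredients, all run relative to $\mathfrak M^{\mathrm{pre}}_{g-1,n+2}$ exactly as in Theorem~\ref{thm-quasimap-gluing}: first, the self-gluing analogue of Lemma~\ref{lem-gluing-QM-pullback-nonorbifold}, obtained from the pushout presentation of the nodal universal curve, with opposite residues at the two branches producing $\Delta^-$; second, the stabilization square with finite $\psi^{\mathrm{pre}}_{g,n}$, combined with finite base change for Lagrangian classes; third, associativity applied to the composition with $\Delta^-$.
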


\begin{proof}
The result follows by the same argument as in Theorem \ref{thm-quasimap-gluing},
applied to the following pullback diagram:
\[\begin{tikzcd}
    {\mathfrak M_{g-1,n+2}^{\mathrm{pre}}} & {\mathfrak D} & {\mathfrak M_{g,n}^{\mathrm{pre}}} \\
    & {\overline{\mcal M}_{g-1,n+2}} & {\overline{\mcal M}_{g,n}}
    \arrow["{i_{\mathfrak D}}"{description}, from=1-1, to=1-2]
    \arrow["{\psi^{\mathrm{pre}}_{g,n}}"{description}, bend left=15, from=1-1, to=1-3]
    \arrow["{\mathrm{st}_{g-1,n+2}}"{description}, from=1-1, to=2-2]
    \arrow["j"{description}, from=1-2, to=1-3]
    \arrow["p", from=1-2, to=2-2]
    \arrow["\square"{description}, draw=none, from=1-2, to=2-3]
    \arrow["{\mathrm{st}_{g,n}}", from=1-3, to=2-3]
    \arrow["{\psi_{g,n}}", from=2-2, to=2-3]
\end{tikzcd}\]
and the following composition of Lagrangian correspondences
relative to $\mathfrak M_{g-1,n+2}^{\mathrm{pre}}$:
\[\begin{tikzcd}
    {\dQM_{g-1,n+2}^{R_{\chi}=\omega^{\log}}\left(\dCrit_{V\sslash_{\theta}G}(w),\beta\right)\times_{\dCrit_{V\sslash_{\theta}G}(w)^2,\Delta^-}\dCrit_{V\sslash_{\theta}G}(w)} & {\dCrit_{V\sslash_{\theta}G}(w)\times\mathfrak M_{g-1,n+2}^{\mathrm{pre}}} \\
    {\dQM_{g-1,n+2}^{R_{\chi}=\omega^{\log}}\left(\dCrit_{V\sslash_{\theta}G}(w),\beta\right)} & {\dCrit_{V\sslash_{\theta}G}(w)^2\times\mathfrak M_{g-1,n+2}^{\mathrm{pre}}} \\
    {\dCrit_{V\sslash_{\theta}G}(w)^n\times\mathfrak M_{g-1,n+2}^{\mathrm{pre}}}
    \arrow[from=1-1, to=1-2]
    \arrow[from=1-1, to=2-1]
    \arrow["\square"{description}, draw=none, from=1-1, to=2-2]
    \arrow["{\Delta^-}", from=1-2, to=2-2]
    \arrow[from=2-1, to=2-2]
    \arrow[from=2-1, to=3-1]
\end{tikzcd}\]
and the following homotopical pullback diagram of derived stacks:
\[\begin{tikzcd}
    {\dQM_{g-1,n+2}^{R_{\chi}=\omega^{\log}}\left(\dCrit_{V\sslash_{\theta}G}(w),\beta\right)\times_{\dCrit_{V\sslash_{\theta}G}(w)^2,\Delta^-}\dCrit_{V\sslash_{\theta}G}(w)} & {\dQM_{g,n}^{R_{\chi}=\omega^{\log}}\left(\dCrit_{V\sslash_{\theta}G}(w),\beta\right)} \\
    {\dCrit_{V\sslash_{\theta}G}(w)^n\times\mathfrak M_{g-1,n+2}^{\mathrm{pre}}} & {\dCrit_{V\sslash_{\theta}G}(w)^n\times\mathfrak M_{g,n}^{\mathrm{pre}}.}
    \arrow[from=1-1, to=1-2]
    \arrow["{\mathrm{ev}^{\mathrm{node}}_{g,n}}", from=1-1, to=2-1]
    \arrow["\square"{description}, draw=none, from=1-1, to=2-2]
    \arrow["{\mathrm{ev}_{g,n}}"', from=1-2, to=2-2]
    \arrow["{\psi_{g,n}^{\mathrm{pre}}}", from=2-1, to=2-2]
\end{tikzcd}\]
Here we note that the final diagram follows from a proof similar to that of
Lemma \ref{lem-gluing-QM-pullback-nonorbifold} by considering the pullback
and the pushout
diagrams induced by the gluing construction:
\[\begin{tikzcd}
    {\mcal C_{g,n}^{\mathrm{node}}} & {\mcal C_{g,n}} && {\mathfrak M_{g-1,n+2}^{\mathrm{pre}}\sqcup\mathfrak M_{g-1,n+2}^{\mathrm{pre}}} & {\mcal C_{g-1,n+2}} \\
    {\mathfrak M_{g-1,n+2}^{\mathrm{pre}}} & {\mathfrak M_{g,n}^{\mathrm{pre}},} && {\mathfrak M_{g-1,n+2}^{\mathrm{pre}}} & {\mcal C_{g,n}^{\mathrm{node}}.}
    \arrow[from=1-1, to=1-2]
    \arrow[from=1-1, to=2-1]
    \arrow["\square"{description}, draw=none, from=1-1, to=2-2]
    \arrow[from=1-2, to=2-2]
    \arrow[from=1-4, to=1-5]
    \arrow[from=1-4, to=2-4]
    \arrow["\circ"{description}, draw=none, from=1-4, to=2-5]
    \arrow[from=1-5, to=2-5]
    \arrow["{\psi_{g,n}^{\mathrm{pre}}}", from=2-1, to=2-2]
    \arrow[from=2-4, to=2-5]
\end{tikzcd}\]
All these spaces have compatible morphisms to $B\bb C^*$ induced by $\omega^{\log}$.
In the pushout square, the two fibres at the node are identified using opposite residues;
the induced morphism
$\mathfrak M_{g-1,n+2}^{\mathrm{pre}}\to B\bb C^*$
classifies the trivial bundle.
Thus the matching of the two unchanged evaluations at the last two markings
is along $\Delta^-$.
The compatibility of shifted Lagrangian classes with these pullback and gluing diagrams
gives the stated identity.
\end{proof}

\end{document}